\theoremstyle{plain}
\newtheorem{theorem}{Theorem}[section]
\newtheorem{lemma}[theorem]{Lemma}
\newtheorem{lem}[theorem]{Lemma}
\newtheorem{corollary}[theorem]{Corollary}
\newtheorem{cor}[theorem]{Corollary}
\newtheorem{question}[theorem]{Question}
\newtheorem{proposition}[theorem]{Proposition}
\newtheorem{prop}[theorem]{Proposition}
\theoremstyle{definition}
\newtheorem{definition}[theorem]{Definition}
\theoremstyle{remark}
\newtheorem{remark}[theorem]{Remark}
\newtheorem{rem}[theorem]{Remark}
\DeclareSymbolFont{AMSb}{U}{msb}{m}{n}
\DeclareMathSymbol{\N}{\mathalpha}{AMSb}{"4E}
\DeclareMathSymbol{\R}{\mathalpha}{AMSb}{"52}
\DeclareMathSymbol{\Z}{\mathalpha}{AMSb}{"5A}
\DeclareMathSymbol{\D}{\mathalpha}{AMSb}{"44}
\DeclareMathSymbol{\s}{\mathalpha}{AMSb}{"53}
\newcommand{\MCP}{\mathrm{MCP}}
\newcommand{\CD}{\mathrm{CD}}
\newcommand{\RCD}{\mathrm{RCD}}
\newcommand{\CAT}{\mathrm{CAT}}
\newcommand{\CBA}{\mathrm{CBA}}
\newcommand{\DC}{\mathrm{DC}}
\newcommand{\BV}{\mathrm{BV}}
\newcommand{\eps}{\varepsilon}
\renewcommand{\phi}{\varphi}
\newcommand{\Xreg}{\mathcal{R}}
\DeclareMathOperator{\supp}{supp}
\DeclareMathOperator{\Ric}{Ric}
\DeclareMathOperator{\Lip}{Lip}
\DeclareMathOperator{\vol}{vol}
\renewcommand{\tilde}{\widetilde}
\newcommand{\Sph}{{S}} 
\newcommand{\cvertex}{\mathbf{0}}
\DeclareMathOperator{\diam}{diam}
\DeclareMathOperator{\rad}{rad}
\newcommand{\X}{X}
\newcommand{\ke}{{K}}
\newcommand{\uk}{\kappa}
\DeclareMathOperator{\Ch}{Ch}
\newcommand{\m}{m}
\definecolor{grey}{rgb}{0.800781, 0.800781, 0.800781}
\def\S2k{\mathbb{S}^2_\kappa}
\def\co{\colon\thinspace}
\def\SS{\mathbb S}
\def\const{\mathrm{const}}
\def\D{\mathbf D}
\def\greg{\mathcal R^g}
\def\CC{\mathcal C}
\def\TT{\mathcal T}
\def\reg{\mathcal R}
\def\greg{\mathcal R^g}
\DeclareMathOperator{\dims}{\dim_{split}}
\DeclareMathOperator{\dimg}{\dim_{geom}}
\DeclareMathOperator{\dimh}{\dim_{Haus}}
\DeclareMathOperator{\dimt}{\dim_{top}}
\newcommand{\bCAT}{\boldsymbol\partial}
\newcommand{\red}{\color{red}}
\newcommand{\blue}{\color{blue}}
\newcommand{\cyan}{\color{cyan}}
\DeclareMathOperator{\Intr}{Int}
\author{Vitali Kapovitch$^*$}
\address{$^*$University of Toronto}
\email{vtk@math.utoronto.ca}
\author{Martin Kell$^\dagger$}
\address{$^\dagger$Universit\"at T\"ubingen}
\email{martin.kell@math.uni-tuebingen.de}
\author{Christian Ketterer$^*$}
\email{ckettere@math.toronto.edu}
\title{On the structure of $\RCD$ spaces with upper curvature bounds}\thanks{\textit{2010 Mathematics Subject classification}. Primary 53C20, 53C21, Keywords: Riemannian curvature-dimension condition, upper curvature bound, Alexandrov space, optimal transport}
\begin{document}
\begin{abstract} 
We develop a structure theory for $\RCD$ spaces with curvature bounded above in Alexandrov sense. In particular, we show  that any such space is a topological manifold with boundary whose interior is equal to the set of regular points. Further the set of regular points is a  smooth manifold and is geodesically convex. Around regular points there are $\DC$ coordinates and the distance is induced by a continuous $\BV$ Riemannian metric. 
\end{abstract}
\maketitle
\tableofcontents
\section{Introduction}
{A natural well-studied subclass of the class of  metric measure  spaces $(X,d,m)$ satisfying the $\RCD(K,N)$ condition with $K\in \R$ and $N\in (0,\infty)$ is given by $n$-dimensional Alexandrov spaces of curvature $\ge K/(n-1)$ with $n\le N$ equipped with the $n$-dimensional Hausdorff measure  $m=\mathcal H_n$. 
 In this paper we investigate another  natural  subclass  given by metric measure spaces $(X,d,m)$ such that
\begin{equation}\label{eq:rcd+cat}
\text{$(X,d,m)$ is $\RCD(K,N)$ and  $(X,d)$ is $\CAT(\kappa).$}
\end{equation}
This subclass is natural because it is stable under measured Gromov\textendash  Hausdorff convergence.
It is well known that there are Alexandrov spaces which are not $\CAT(\kappa)$ for any $\kappa$ even locally.
 On the other hand it was  shown in ~\cite{Kap-Ket-18} that there exist spaces satisfying \eqref{eq:rcd+cat} which have no lower curvature bounds in Alexandrov sense. Thus neither of these classes is contained in the other.  However it was shown in \cite{Kap-Ket-18}  that in the special case when $X$ satisfying \eqref{eq:rcd+cat} is \emph{non-collapsed} (i.e. when $N=n$ and $m=\mathcal H_n$) then $X$ is Alexandrov with
 curvature bounded from below by $K-(n-2)\kappa$.  It then follows by a theorem of   Berestovski\u{\i} and Nikolaev \cite{nikolaev} that the set of regular points is a $C^3$ manifold and the distance function derives from a $C^{1,\alpha}$-Riemannian metric.

The aim of this article is  to study the case of spaces satisfying   \eqref{eq:rcd+cat} in general. Note that it was shown in ~\cite{Kap-Ket-18} that  if $(X,d,m)$ is $\CD(K,N)$ and $\CAT(\kappa)$ then it's automatically infinitesimally Hilbertian and hence $\RCD(K,N)$. Thus $X$ satisfies \eqref{eq:rcd+cat} if and only if it is $\CD(K,N)$ and $\CAT(\kappa)$.

 In particular, we are interested in the structure of the regular set $\Xreg=\cup_{k\ge 0} \Xreg_k$ where  $\Xreg_k$ is the set of points that have a unique Gromov\textendash  Hausdorff tangent cone isometric  to $\mathbb R^k$. 
 
 It is known that for a general $\RCD(K,N)$ space  $(Y,d,m)$ with $N<\infty$ there exists unique $n\le N$ such $m(\reg_n)>0$ \cite{brusem}.
 The number  $n$ is  called the \emph{geometric dimension} of $Y$. We will denote it by $\dimg Y$.

 We also introduce a notion of the \emph{geometric boundary}\footnote{when there is no ambiguity we will often refer to the geometric boundary as just boundary} $\bCAT X$ of $X$ which is defined inductively on the geometric dimension and is analogous to the definition of the boundary of Alexandrov spaces and of non-collapsed $\RCD$ spaces as defined in~\cite{KapMon}.

Our main result is the following structure theorem

\begin{theorem}[Theorem \ref{th:reg-points}, Corollary~\ref{rcd+cat-imply-mnfld-bry}, Theorem~\ref{thm-DC-metric-structure}]\label{rcd-reg}

Suppose $X$ satisfies \eqref{eq:rcd+cat}. 

Then $X$ is a topological $n$-manifold with boundary where $n=\dimg X$ is the geometric dimension of $X$. Furthermore the following properties hold
\begin{enumerate}
\item $\Intr X$, the manifold  interior of $X$, is equal to $\reg_n$ and $\reg_k=\varnothing$ for $k\ne n$. In particular  $\Xreg=\Xreg_n$.
\item $\reg$  is geodesically convex.
\item Geodesics in $\reg$ are locally extendible.
\item $\reg$ has a structure of a $C^1$-manifold with a $\BV\cap C^0$-Riemannian metric which induces the original distance $d$.
\item The manifold  boundary $\partial X$ of $X$ is equal to the geometric boundary $\bCAT X$.
\end{enumerate}

\end{theorem}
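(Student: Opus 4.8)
My plan is to prove the three constituent statements by a joint induction on the geometric dimension $n=\dimg X$, with the analysis of tangent cones as the engine. The starting observation is that the $\CAT(\kappa)$ condition forces, at every point $p$, a \emph{unique} blow-up tangent cone which is the Euclidean cone $\Con(\Sigma_p)$ over the space of directions $\Sigma_p$, and that this cone is $\CAT(0)$; equivalently $\Sigma_p$ is $\CAT(1)$. On the other hand, tangent cones of $\RCD(K,N)$ spaces are $\RCD(0,N)$ metric measure cones, so by the cone characterization $\Sigma_p$ is $\RCD(n-2,n-1)$ with $\diam(\Sigma_p)\le\pi$. Hence $\Sigma_p$ again satisfies \eqref{eq:rcd+cat} in dimension $n-1$, which is exactly the inductive hypothesis. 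I would first record, using \cite{brusem}, that $m$-a.e.\ point lies in $\reg_n$ for the single dimension $n$, so that the generic tangent cone is $\R^n$ and $\Sigma_p=\SS^{n-1}$ generically.

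The heart of the argument, and what I expect to be the main obstacle, is the \emph{tangent cone classification}: at every point $p$ the link $\Sigma_p$ is isometric either to the round sphere $\SS^{n-1}$ or to the round hemisphere $\SS^{n-1}_+$, so that $\Con(\Sigma_p)$ is $\R^n$ or the half-space $\R^n_+$. By the inductive hypothesis $\Sigma_p$ is already a topological $(n-1)$-manifold with boundary carrying both curvature bounds, so it remains to upgrade this to the metric statement, and here the two bounds must be played against each other: the upper bound $\CAT(1)$, the lower bound $\RCD(n-2,n-1)$ with the resulting Bishop\textendash Gromov volume monotonicity and $\diam(\Sigma_p)\le\pi$, together with the rigidity of the extremal cases (Ketterer's maximal-diameter theorem and the $\RCD$ splitting theorem) and the inductively known manifold structure, should pin $\Sigma_p$ down to a sphere when $\partial\Sigma_p=\varnothing$ and to a hemisphere when $\partial\Sigma_p\ne\varnothing$. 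This classification simultaneously yields statement (1): the points with $\Sigma_p\cong\SS^{n-1}$ are exactly $\reg_n$ and constitute an open set equal to $\Intr X$, the points with $\Sigma_p\cong\SS^{n-1}_+$ form the boundary, and no other cone type occurs\textemdash in particular no $\R^k$ with $k\ne n$ can appear, giving $\reg_k=\varnothing$ for $k\ne n$ and $\Xreg=\Xreg_n$.

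Granting the classification, the topological manifold-with-boundary structure (Corollary~\ref{rcd+cat-imply-mnfld-bry}) follows because the open cone over the round sphere $\SS^{n-1}$ is literally $\R^n$ and the cone over the hemisphere $\SS^{n-1}_+$ is $\R^n_+$, so each point carries a Euclidean or half-Euclidean chart; local contractibility of $\CAT(\kappa)$ spaces and a Reifenberg-type gluing upgrade these pointwise cone pictures to genuine charts. For geodesic convexity (2) and local extendability (3) I would combine the $\RCD$ fact that $m$-a.e.\ geodesic avoids the singular set with uniqueness and non-branching of geodesics in $\CAT(\kappa)$ and openness of $\reg=\reg_n$: a geodesic between two regular points is approximated by geodesics lying in $\reg$, and since $\reg$ is open and geodesics do not branch, the limit geodesic stays in $\reg$; extendability then follows since the interior is locally geodesically complete, the cone over $\SS^{n-1}$ being all of $\R^n$.

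Finally, the differentiable and metric structure (Theorem~\ref{thm-DC-metric-structure}) is obtained following the Perelman\textendash Otsu\textendash Shioya scheme for Alexandrov spaces, but feeding on the upper bound rather than the lower one: in a $\CAT(\kappa)$ space distance functions are semiconcave and squared distances are uniformly semiconvex, so suitable collections of distance functions furnish $\DC$ coordinates near a regular point and produce a $C^1$-atlas on $\reg$. The induced Riemannian metric tensor is then of bounded variation, and\textemdash using that at regular points the tangent cone is genuinely $\R^n$\textemdash continuous, and one checks that it induces the original length metric $d$. The identification of the manifold boundary with the geometric boundary (5) is then immediate from the classification, since both $\partial X$ and $\bCAT X$ are defined inductively through the presence of boundary in $\Sigma_p$, i.e.\ through $\Sigma_p$ being a hemisphere rather than a sphere; matching the two inductive definitions term by term completes the proof.
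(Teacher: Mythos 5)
Your proposal hinges on a tangent cone classification that is false: it is not true that at every point the link $\Sigma_p$ is isometric to $\SS^{n-1}$ or to the round hemisphere $\SS^{n-1}_+$. Take $X$ a convex polygon in $\R^2$ with Lebesgue measure: this is $\CAT(0)$ and $\RCD(0,2)$, but at a corner the space of directions is a circular arc of length strictly less than $\pi$, so the tangent cone is a wedge, not $\R^2$ or $\R^2_+$. The inductive setup feeding this classification also breaks down at two points that the paper explicitly flags as open. First, you assume the blow-up tangent cone at every point is unique and equal to $C(\Sigma_p^g)$; the paper only proves (Lemma \ref{geom-vs-blowup-t-cones}) that $T_p^gX$ embeds into any blow-up cone as a closed convex subset, with surjectivity known only where geodesics uniformly extend, and states that surjectivity in general is unknown. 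Second, you assume $\Sigma_p$ inherits the $\RCD$ condition via Ketterer's cone theorem, but $T_p^gX$ may have measure zero in $T_pX$; Remark \ref{geod-cone-rcd} constructs an $\RCD(0,N)$ structure on $T_p^gX$ only by a limiting procedure, notes it is not known to be a volume cone, and concludes precisely that one \emph{cannot} endow $\Sigma_p^g$ with a natural $\RCD$ measure. So the induction on dimension through links, with metric rigidity (maximal diameter, splitting) pinning down sphere versus hemisphere, cannot get started. The paper instead works topologically: Kramer's homotopy-equivalence results and the Lytchak--Schroeder extendibility criterion show regular $=$ geodesically regular $=$ inner (Propositions \ref{reg=reg-g} and \ref{prop:reg-pooints}), Kleiner's dimension theory gives $\reg_k\ne\varnothing$ only for $k=\dim X$, the boundary is defined inductively via $T_p^g$ and the manifold-with-boundary charts near $\partial X$ come from an endpoint map along maximal geodesics issuing from a nearby regular point (Theorem \ref{c-manifold-with-boundary}); notably the paper leaves open whether $\dim T_p^gX=\dim X$ at boundary points, which your classification would force.

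Two further steps in your sketch would fail as written. For convexity of $\reg$, the argument ``geodesics between regular points are approximated by geodesics in $\reg$, and since $\reg$ is open the limit stays in $\reg$'' is not valid: openness gives no control on limits, and ``$m$-a.e.\ geodesic avoids the singular set'' does not constrain the one geodesic you care about. The paper's proof propagates non-contractibility of $\Sigma^g_{\gamma(t)}$ along the geodesic via Kramer's lemma together with the suspension dichotomy (Lemma \ref{lem-geod-noncotr-2} and Proposition \ref{inner-iff-non-contr-iff-non-contr-minus-geod}); an alternative closed-plus-open argument exists, but it requires the continuity of same-scale tangent cones along geodesics (Theorem \ref{t-cone-cont}), a substantive result, not a soft approximation. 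Finally, for the $\DC$ structure, distance functions in $\CAT(\kappa)$ spaces are semi\emph{convex}, not semiconcave, and the Perelman--Otsu--Shioya scheme does not transfer directly; the paper instead invokes the Lytchak--Nagano theory for $\CAT$ spaces with locally extendible geodesics, which is exactly what extendibility on $\reg$ makes available.
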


In the special case when $\uk=1, K=N-1$ we obtain the following rigidity result.

\begin{theorem}[Sphere-Theorem (Corollary~\ref{rcd+cat-sphere-thm})]\label{intro:rcd+cat-sphere-thm}
Let $(X,d,m)$ be $\RCD(N-1,N)$ and $\CAT(1)$ { for some $N>1$}.

 If $\bCAT X\ne \varnothing$ then $X$ is homeomorphic to a closed disk of dimension $\le N$.

If $\bCAT X= \varnothing$ then $N$ is an integer and $X$ is metric measure isomorphic to $(\SS^{N}, const\cdot \mathcal H_N)$.
\end{theorem}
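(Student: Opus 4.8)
The plan is to argue by induction on the geometric dimension $n=\dimg X$, treating both alternatives of the dichotomy simultaneously and using the diameter of $X$ as the organizing principle. By the generalized Bonnet--Myers theorem for $\RCD(N-1,N)$ spaces one has $\diam X\le\pi$, and by Theorem~\ref{rcd-reg} the space $X$ is a compact topological $n$-manifold with $\partial X=\bCAT X$. The base cases $n\le 1$ are checked by hand (a $1$-dimensional space in the class \eqref{eq:rcd+cat} with $N>1$ is an interval carrying a weight of the form $\sin^{N-1}$, hence a disk $D^1$ with $\bCAT X\ne\varnothing$). The inductive step then splits according to whether $\diam X=\pi$ or $\diam X<\pi$.

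Suppose first $\diam X=\pi$. The maximal diameter theorem for $\RCD(N-1,N)$ spaces (applicable since $\RCD$ spaces are essentially non-branching) produces a measure-preserving isometry $X\cong[0,\pi]\times_{\sin}^{N-1}Y$ onto a spherical suspension, where $Y$ is an $\RCD(N-2,N-1)$ space; by Berestovski\u{\i}'s theorem the spherical suspension is $\CAT(1)$ if and only if $Y$ is, so $Y$ satisfies \eqref{eq:rcd+cat} in dimension $n-1$. Since the suspension is compatible with the geometric boundary, in the sense that $\bCAT X$ is the suspension of $\bCAT Y$, we have $\bCAT X=\varnothing\iff\bCAT Y=\varnothing$. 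By the inductive hypothesis either $Y\cong(\SS^{N-1},c\,\mathcal H_{N-1})$ with $N-1\in\N$, whence $X\cong[0,\pi]\times_{\sin}^{N-1}\SS^{N-1}\cong(\SS^{N},c'\,\mathcal H_{N})$ and $\bCAT X=\varnothing$; or $Y$ is homeomorphic to a disk $D^{n-1}$, in which case, since the metric suspension is topologically the unreduced suspension $\Sigma Y$ and $\Sigma D^{n-1}\cong D^{n}$, the space $X$ is homeomorphic to $D^{n}$ and $\bCAT X\ne\varnothing$. This yields both conclusions when $\diam X=\pi$.

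Suppose now $\diam X<\pi$. Fixing any $p\in X$, the $\CAT(1)$ condition gives a unique geodesic from $p$ to every point, varying continuously with the endpoint, so geodesic contraction shows that $X$ is contractible. A closed topological $n$-manifold with $n\ge 1$ is never contractible, so the empty-boundary alternative is impossible here and $\partial X=\bCAT X\ne\varnothing$. The boundary $\bCAT X$ is then a boundaryless $\RCD(N-2,N-1)\cap\CAT(1)$ space of geometric dimension $n-1$, so the inductive hypothesis forces $\bCAT X\cong\SS^{n-1}$ (and, incidentally, $n=N$, so this subcase is non-collapsed). Finally $X$ is a compact contractible topological $n$-manifold whose boundary is a sphere $\SS^{n-1}$; by the topological Poincar\'e conjecture and the $h$-cobordism theorem this forces $X\cong D^{n}$, completing the induction.

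The main obstacle is the structural input that drives the induction: that the geometric boundary $\bCAT X$ is itself a space in the class \eqref{eq:rcd+cat}, with the parameters dropping to $(N-2,N-1)$ and $\kappa=1$, and that it is compatible with the suspension decomposition in the form $\bCAT(\Sigma Y)=\Sigma(\bCAT Y)$. Establishing these uses the full boundary theory developed earlier, namely the inductive definition of $\bCAT X$, the identification $\partial X=\bCAT X$ from Theorem~\ref{rcd-reg}, and the behaviour of tangent cones along the boundary. A secondary difficulty is purely topological: upgrading ``compact, contractible, with spherical boundary'' to ``homeomorphic to a disk'' is immediate in high dimensions via the $h$-cobordism theorem but relies on the resolved Poincar\'e conjecture in dimensions $3$ and $4$; one must also verify that the identifications produced by the maximal diameter and suspension steps are genuine homeomorphisms (and not merely measurable maps), which is exactly where essential non-branching of $\RCD$ spaces and continuity of the $\CAT(1)$ geodesic maps enter.
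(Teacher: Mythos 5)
There is a genuine gap, and it sits exactly where you locate ``the main obstacle'': the inductive step in the case $\diam X<\pi$. You assert that $\bCAT X$, with its induced metric, is a boundaryless $\RCD(N-2,N-1)\cap\CAT(1)$ space of dimension $n-1$, and that the inductive hypothesis then forces $\bCAT X\cong\SS^{n-1}$. Nothing in the paper's boundary theory gives $\bCAT X$ any metric-measure structure of this kind, and the claim is in fact false: take $X$ to be a convex spherical cap in $\SS^2$ strictly smaller than a hemisphere, which is $\RCD(1,2)$ and $\CAT(1)$ with $\diam X<\pi$. Its geometric boundary is a circle of length $<2\pi$, which is not $\CAT(1)$ (a closed geodesic of length $<2\pi$ is forbidden) and certainly not isometric to the round $\SS^1$. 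The parenthetical conclusion that this subcase is non-collapsed ($n=N$) fails for the same reason; e.g.\ $([0,\pi/2],\,\cos^{N-1}t\,dt)$ is $\RCD(N-1,N)$ and $\CAT(1)$ with $n=1<N$. Since your topological endgame (compact contractible manifold with boundary a sphere, then Poincar\'e plus $h$-cobordism) takes the identification $\bCAT X\cong\SS^{n-1}$ as input --- and without it the boundary of a compact contractible manifold is only a homology sphere, which in dimension $4$ need not be $\SS^3$ --- the induction cannot close.

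The paper's proof avoids any metric structure on the boundary. For the disk case it reduces, via the suspension splitting at $\diam=\pi$ (your first case is essentially the paper's, though the paper stays inside the class $\CC$ using the Lune Lemma rather than the $\RCD$ maximal diameter theorem), to $\diam X<\pi$, picks a regular point $p$ and a small metric sphere $S_r(p)\cong\SS^{m-1}$, proves that the function $f(z)$ recording where the maximal geodesic extension of $[p,z]$ terminates is continuous (using non-branching, closedness of $\bCAT X$, and the fact that geodesics from regular points cannot be extended past boundary points, Theorem~\ref{th:reg-points}), and then shows the radial map $\Psi(t,z)=\gamma_z(tf(z)/r)$ is a homeomorphism $\bar D^m\to X$ --- entirely elementary, no surgery theory. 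For the boundaryless case the paper does not induct at all: the Sphere Theorem gives an isometry $X\cong\SS^l$, and the measure is pinned down by observing that the metric measure cone $C(X)$ is $\RCD(0,N+1)$ and isometric to $\R^{l+1}$, so the splitting theorem forces $m=\mathrm{const}\cdot\mathcal H_l$ and $l=N$. This cone-plus-splitting step also quietly disposes of the base-case bookkeeping your induction would need (e.g.\ ruling out circles of the wrong length at the bottom of the chain), which your sketch leaves unaddressed. Your observations that $\diam X<\pi$ implies contractibility and hence $\bCAT X\ne\varnothing$ are correct and consistent with the paper; the repair is to replace the boundary induction by the paper's radial-map argument.
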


 By the structure theory for $\RCD(K,N)$ spaces it is known that for a general $\RCD(K,N)$ space  $(Y,d,m)$ with $N<\infty$ it holds that $m$ is absolutely continuous with respect to $\mathcal H_n$   where $n=\dimg X$ (for instance \cite{mondinonaber, kellmondino, brusem}).
We show
\begin{theorem}[Subsection \ref{sec:noncol}]\label{th:function-semiconcave}
The limit $
f(x)=\lim\limits_{r\rightarrow 0} \frac{\m(B_r(x))}{\omega_n r^n}
$ exists for all $x\in \reg$. 

Furthermore,  $f$ is semi-concave and  locally Lipschitz on $\Xreg$ and $m|_\reg=f\cdot \mathcal H_n$.

\end{theorem}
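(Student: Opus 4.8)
The plan is to exploit the weak Riemannian structure on $\reg$ established in Theorem~\ref{rcd-reg}. On $\reg$ there is a $C^1$-atlas in which $d$ is induced by a Riemannian metric $g$ of class $\BV\cap C^0$, and $\mathcal H_n$ coincides with the associated volume measure $\vol_g$. Since $m\ll\mathcal H_n$ on $\reg$ by the cited structure theory, I would write $m|_\reg=\rho\,\mathcal H_n$ with $\rho\in L^1_{loc}$, $\rho>0$ a.e., and set $V:=-\log\rho$, so that $m|_\reg=e^{-V}\vol_g$ realizes $\reg$ as a weighted Riemannian manifold. Everything then reduces to showing that $V$ admits a semi-convex (in particular locally Lipschitz) representative.

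The first substantive step is to translate the two synthetic curvature conditions into distributional tensor inequalities on $(\reg,g)$. Localized to the open set $\reg$ and read on the weighted manifold $(\reg,g,e^{-V}\vol_g)$, the $\RCD(K,N)$ condition is equivalent to the Bakry--\'Emery bound
$$\Ric_g+\Hess_g V-\tfrac{1}{N-n}\,dV\otimes dV\ \ge\ K\,g,$$
interpreted weakly (the case $N=n$ forces $V$ locally constant, making the statement immediate). On the other hand $\CAT(\uk)$ gives an upper sectional curvature bound, hence $\Ric_g\le (n-1)\uk\,g$ in the same weak sense, where the $\BV$-regularity of $g$ developed earlier lets one make sense of $\Ric_g$ as a tensor-valued measure. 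Discarding the nonnegative term $\tfrac{1}{N-n}dV\otimes dV\ge 0$ and inserting the upper Ricci bound yields
$$\Hess_g V\ \ge\ \big(K-(n-1)\uk\big)\,g$$
as distributions on $\reg$.

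This uniform lower Hessian bound forces $V$ to be locally semi-convex, hence to admit a locally Lipschitz representative that is twice differentiable $\mathcal H_n$-a.e. and semi-convex along $g$-geodesics. Consequently $f:=e^{-V}=\rho$ has a locally Lipschitz representative, and a second-variation computation along a unit-speed geodesic, $(e^{-V})''=e^{-V}\big((V')^2-V''\big)$, together with $V''\ge K-(n-1)\uk$ and the local boundedness of $e^{-V}$ and $|V'|$, shows that $f$ is semi-concave. Replacing $\rho$ by its continuous representative $f$ gives the identity $m|_\reg=f\cdot\mathcal H_n$ with $f$ locally Lipschitz and semi-concave.

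For the everywhere-existence of the density limit I would use that $x\in\reg_n$ has tangent cone $\mathbb R^n$, so $\mathcal H_n(B_r(x))=\vol_g(B_r(x))=\omega_n r^n(1+o(1))$ as $r\to 0$; since $f$ is continuous, $m(B_r(x))=\int_{B_r(x)}f\,d\mathcal H_n=f(x)\,\mathcal H_n(B_r(x))(1+o(1))$, whence $\tfrac{m(B_r(x))}{\omega_n r^n}\to f(x)$ at every $x\in\reg$. The main obstacle is the second step: rigorously upgrading the synthetic $\RCD$ and $\CAT$ hypotheses to the weak Bakry--\'Emery and upper Ricci inequalities for a merely $\BV\cap C^0$ metric and an a priori only $L^1_{loc}$ density, and controlling the singular part of $\Ric_g$ so that the Hessian lower bound survives.
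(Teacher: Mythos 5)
Your overall architecture is coherent, but it hinges on a step that is not available at this level of regularity, and you have in fact flagged it yourself: the passage from the synthetic hypotheses to distributional tensor inequalities on $(\Xreg,g)$. In the paper, $g$ is only $\BV\cap C^0$ in $\DC$ coordinates, so its Christoffel symbols are merely $\BV$ functions whose derivatives are measures; $\Ric_g$ then involves derivatives of measures plus products of distributions, and $\Hess_g V$ for a weight $V=-\log\rho$ that is a priori only $L^1_{loc}$ requires pairing $dV$ against measure-valued connection coefficients, which is ill-defined without further regularity. Neither the claimed equivalence of $\RCD(K,N)$ with the weak Bakry--\'Emery inequality, nor the implication $\CAT(\uk)\Rightarrow\Ric_g\le (n-1)\uk\, g$ as a tensor-valued measure inequality, is established in the paper or in the literature it cites, and your proposal offers no argument for either; since everything downstream (semi-convexity of $V$, the locally Lipschitz representative, the second-variation computation) rests on this, what you call ``the main obstacle'' is in fact the entire content of the proof, so the proposal has a genuine gap. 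A secondary problem: your parenthetical that the case $N=n$ ``forces $V$ locally constant, making the statement immediate'' is precisely the De~Philippis--Gigli conjecture which this paper proves \emph{after} the present theorem (Corollary \ref{cor:noncol}), so it cannot be invoked here.

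The paper's own proof (Subsection \ref{sec:noncol}) deliberately avoids all second-order calculus by staying synthetic. It sets $\hat\theta(x)=\liminf_{r\to 0}\m(B_r(x))/(\omega_n r^n)$ and proves directly (Proposition \ref{prop-semiconcave}) the concavity inequality $\hat\theta(\gamma(t))^{1/N}\ge \sigma_\eta^{(t)}(l)\,\hat\theta(\gamma(0))^{1/N}+\sigma_\eta^{(1-t)}(l)\,\hat\theta(\gamma(1))^{1/N}$ along geodesics, by combining the Brunn--Minkowski inequality coming from the $\CD$ condition with the $\CAT(\uk)$ fact that the $t$-contraction of an $r$-ball toward an endpoint lands inside a ball of radius comparable to $tr(1+c_1l^2)$; a Taylor comparison of the error factors with $\sigma_\eta^{(1/2)}(l)$ and a midpoint iteration give the inequality for all $t$. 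Local Lipschitzness and positivity on $\Xreg$ then follow from this semiconcavity together with local extendability of geodesics at regular points (Corollary \ref{Cor-Lip-1}), and the everywhere-existence of the limit on $\Xreg$ is obtained by a density/midpoint argument along geodesics to points of a full-measure set, using continuity of $\theta$ at the base point (Corollary \ref{dens-limit-exists}) --- not via the volume asymptotics $\mathcal H_n(B_r(x))=\omega_n r^n(1+o(1))$ that you propose (although that particular step is plausible given the continuity of $g$). To salvage your route one would first need a weak second-order Riemannian calculus for $\BV\cap C^0$ metrics strong enough to encode both curvature bounds; the ball-volume argument is exactly what lets the paper prove the theorem without any such machinery.
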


Finally, we consider the subclass of \emph{weakly non-collapsed} $\RCD$ spaces with upper curvature bounds. Weakly non-collapsed $\RCD(K,N)$ spaces  are $\RCD(K,N)$ spaces for which the geometric dimension $n$ is the maximal possible, i.e. it  is equal to $N$.  
It was conjectured by Gigli and DePhilippis in \cite{GP-noncol} that in this case up to a constant multiple  $\m=\mathcal H_n$ i.e. up to rescaling the measure by a constant weakly non-collapsed spaces are non-collapsed.  The conjecture was proved for spaces satisfying ~\eqref{eq:rcd+cat} in \cite{KaKe19}. In \cite{Han19} Han proved the conjecture when the underlying metric space is a smooth Riemannian manifold with boundary.


In this paper we give a much simpler proof of this conjecture for spaces satisfying ~\eqref{eq:rcd+cat} using somewhat similar ideas to those in  \cite{Han19}.
\begin{theorem}[Corollary \ref{cor:noncol}]\label{cat+cd-weak-noncol}
Suppose $X$ satisfies \eqref{eq:rcd+cat} and  is  weakly non-collapsed. Then $f=\const$ $\m$-almost everywhere on $X$.
\end{theorem}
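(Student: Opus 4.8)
The plan is to reduce the statement to the pointwise vanishing of $\nabla\log f$ on the regular set $\reg$ and to exploit the dimensional rigidity hidden in the weakly non-collapsed hypothesis, namely $N=n=\dimg X$. By Theorem~\ref{th:function-semiconcave} we already know that $\reg$ carries a $C^1$-structure with a $C^0\cap\BV$ Riemannian metric $g$ inducing $d$, that $m|_\reg=f\cdot\mathcal H_n=f\cdot\vol_g$, and that $f$ is positive, locally Lipschitz and semi-concave. Since $m(X\setminus\reg)=0$ and $\reg$ is the interior of the topological manifold $X$ (which we may assume connected), it suffices to prove that $f$ is locally constant on $\reg$.

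First I would record the smooth model of the argument, in the spirit of Han's approach. Writing $m=e^{-V}\vol_g$ with $V=-\log f$, the $\RCD(K,N')$ condition for $N'>n$ is the Bakry--\'Emery bound $\Ric_g+\Hess V-\tfrac{1}{N'-n}\,dV\otimes dV\ge K$, and the elementary Cauchy--Schwarz inequality
\[
(\Delta_m u)^2=\bigl(\Delta_g u-\langle\nabla V,\nabla u\rangle\bigr)^2\le N'\Bigl(\tfrac{(\Delta_g u)^2}{n}+\tfrac{\langle\nabla V,\nabla u\rangle^2}{N'-n}\Bigr)
\]
shows precisely how the effective dimension enters the dimensional Bochner inequality. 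In our situation $N=n$, so the reference measure must satisfy $\CD(K,n)$ with effective dimension equal to the geometric dimension $n$; as $N'\downarrow n$ the correction term above degenerates, and the only way the sharp-dimension Bochner inequality can coexist with the Riemannian Bochner identity, whose Hilbert--Schmidt term contributes exactly $(\Delta_g u)^2/n$, is if $\langle\nabla V,\nabla u\rangle$ vanishes for all $u$, i.e. $\nabla V\equiv 0$. This is the rigidity forcing $f$ to be constant in the smooth setting.

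The real work, and the point I expect to be the main obstacle, is to run this rigidity with only a $C^0\cap\BV$ metric, where neither $\Hess u$ nor $\Ric_g$ is classically defined and one cannot integrate by parts naively. Here the semi-concavity of $f$ from Theorem~\ref{th:function-semiconcave} is the decisive input: being semi-concave and locally Lipschitz, $f$ is twice differentiable $\mathcal H_n$-almost everywhere on $\reg$ by Alexandrov's theorem, so $\nabla\log f$ and a pointwise second-order jet of $f$ exist a.e., while the $\BV$ regularity of $g$ provides Christoffel symbols in $L^1_{\mathrm{loc}}$ and an a.e.-defined geometric Ricci tensor in the $\DC$-calculus of the regular charts. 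I would then test the self-improved $\RCD(K,n)$ Bochner inequality against the $\DC$ coordinate functions (or harmonic replacements of them), expand it in a regular chart, and compare it term by term with the weighted Riemannian Bochner identity. The defect between the two is exactly the non-negative quantity carrying the factor $1/(N'-n)$ in the display above; matching the sharp dimension $n$ on both sides forces this defect to vanish a.e., yielding $\nabla\log f=0$ $m$-almost everywhere. Since $f$ is continuous and $\reg$ is connected, $f$ is then constant on $\reg$, hence $f=\const$ $m$-a.e. on $X$. Controlling the singular part of the distributional Ricci curvature of the $\BV$ metric, and ensuring the sharp-dimension comparison survives the passage out of the smooth setting, is where the argument will require the most care.
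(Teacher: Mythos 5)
Your reduction to showing that $f$ is locally constant on $\reg$ is sound, and your smooth model is the correct heuristic for why $N=n$ forces $\nabla\log f=0$ in the weighted Riemannian setting. But your route diverges from the paper's, and the divergence is exactly where your argument has a genuine gap: the entire proof is deferred to a term-by-term comparison of the self-improved $\RCD(K,n)$ Bochner inequality with ``the weighted Riemannian Bochner identity'' for the $\BV_0$ metric $g$, and no such identity is available at this regularity. The Christoffel symbols of $g$ are (at best) Radon measures, not $L^1_{loc}$ functions as you assert --- they involve first derivatives of a function that is merely $\BV$ --- and a Ricci tensor would involve derivatives of these measures plus quadratic products of measures, which are undefined; there is no a.e.-defined geometric Ricci tensor in the $\DC$ charts of Section \ref{sec:dc}, and constructing distributional curvature for such metrics is a well-known open difficulty even for Alexandrov spaces. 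Similarly, Alexandrov twice-differentiability of the semi-concave function $f$ gives only a pointwise a.e.\ second-order jet, while the distributional Laplacian of a semi-concave function can carry a nontrivial singular part, and it is the distributional object that enters the measure-valued Bochner inequality; so matching the Hilbert--Schmidt term $(\Delta_g u)^2/n$ pointwise a.e.\ is not a step you can actually perform. The only nonsmooth implementation of this Bochner-type rigidity in the literature is Honda's, via $L^2$-embeddings by the heat kernel, which the paper explicitly contrasts as more involved.

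The paper's own proof avoids second-order calculus entirely and is first-order and measure-theoretic. For a geodesic $\gamma$ in a ball of radius $R$, the $\CAT(\kappa)$ geodesic contraction toward $\gamma(0)$ maps $B_r(\gamma(1))$ into $B_{tr}(\gamma(t))$, and the Brunn--Minkowski inequality from $\CD(K,n)$ --- it is precisely here that the weakly non-collapsed hypothesis $N=n$ enters, making the exponent $\frac1n$ match the volume scaling $r^n$ so the powers of $t$ cancel --- yields $\theta(\gamma(1))^{1/n}\le(1+CR^2)\,\theta(\gamma(t))^{1/n}$ (Lemma \ref{lem-theta-est}). Exchanging the roles of the endpoints gives a two-sided bound with error quadratic in $d(x,y)$, so $\theta^{1/n}$ has vanishing derivative along geodesics and is constant on their interiors (Corollary \ref{cor:theta-const}); since any two regular points are interior points of a common local geodesic by Proposition \ref{prop:reg-pooints}, $\theta=f$ is constant on $\reg$, which has full measure (Corollary \ref{cor:noncol}). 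To salvage your approach you would have to either develop distributional curvature for $\BV_0$ metrics (open) or replace the pointwise Bochner comparison with an integral argument robust under this regularity; the Brunn--Minkowski route sidesteps both, using only set-level volume estimates.
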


 Some of the results of this article already appear in \cite{KaKe19}.

We remark that the previous results also apply if $(X,d,m)$ is $\RCD$ and $(X,d)$ has curvature bounded above by $\kappa$ ($\CBA(\kappa)$), i.e. it is $\CAT(\kappa)$ locally. In this case small  closed balls  are geodesically convex  and are  $\CAT(\kappa)$  and therefore $\RCD$. Hence, if we replace the condition $\CAT(\kappa)$ by $\CBA(\kappa)$,  the conclusions of the previous theorems are true for such balls, and in the case of Theorem \ref{rcd-reg}, Theorem \ref{th:function-semiconcave} and Theorem \ref{cat+cd-weak-noncol} the conclusion globalizes to the whole space.

Note however that the globalization theorem for the condition $\CBA$ holds only under additional topological constraints. More precisely, a metric space that satisfies $\CBA(\kappa)$ is $\CAT(\kappa)$ if and only if any loop of length less than  $2\pi_{\kappa}$ is contractible through loops of length less than $2\pi_{\kappa}$ where $\pi_\kappa\in (0,\infty]$ is the diameter of a simply connected space $\mathbb S_{\kappa}^2$ of constant curvature $\kappa$.
Moreover, unlike $\CAT(\kappa)$ the condition $\CBA(\kappa)$  is not stable under Gromov-Hausdorf convergence.

In particular, for fixed $N, K,\kappa$  the above discussion applies to closed convex domains in  weighted Riemannian manifolds $(M,g, m)$ which are $\RCD(K,N)$ and have sectional curvature bounded above by $\kappa$,  hence { recovering} the corresponding results in \cite{Han19}. 

In the special case when $m$ is smooth, i.e. $m= f\cdot d\vol_g$ for some smooth function $f$, this means that the theory applies to closed convex domains in 
$(M^n,g, f\cdot d\vol_g)$ with $n\le N$, $\sec\le \kappa$ and the Bakry-Emery Ricci curvature of $M$  satisfying $\Ric_{N,f}\ge K$. Moreover if such domains in addition satisfy the above globalization assumption on loops then their measured Gromov\textendash Hausdorff limits are $\CAT(\kappa)$ and $\RCD(K,N)$ and hence the theory applies to them as well.

While we were preparing this article we became aware of a recent preprint \cite{Honda19} by S. Honda. He confirms Gigli and DePhilippis' conjecture in the compact case without assuming an upper curvature bound, though his argument is again  more involved and based on the  $L^2$-embedding theory for $\RCD$ space via the heat kernel \cite{AHPT18}. This does imply Theorem~\ref{cat+cd-weak-noncol} for general, possibly noncompact $X$ satisfying \eqref{eq:rcd+cat} because condition  \eqref{eq:rcd+cat}  implies that all small closed balls in $X$ are convex and compact.

To close this introduction we remark again that the class of spaces satisfying ~\eqref{eq:rcd+cat} is natural because it is stable w.r.t. measured Gromov\textendash  Hausdorff convergence. 
As a subclass of the class of $\RCD$ spaces it can serve as a test case for proving general conjectures about $\RCD$ spaces. In particular in Section ~\ref{sec: t-cones-cont} we prove that for spaces  satisfying ~\eqref{eq:rcd+cat} same scale tangent cones are measured Gromov\textendash  Hausdorff continuous along interiors of geodesics. This is known to be true for limit geodesics in Ricci limits  ~\cite{coldingnaberI,Kap-Li} but is currently not known for general $\RCD(K,N)$ spaces.

It is also quite natural to relax the assumptions further and replace the $\RCD$ condition by the even weaker but still measured Gromov\textendash  Hausdorff stable measure contraction property $\MCP$ \cite{ohtamcp, stugeo2}. The authors plan to investigate this class in a separate paper.

The article is structured as follows. 

In Section \ref{sec: prelim} we recall definitions and results about $\RCD$ spaces with curvature bounded above and calculus in $\DC$ coordinates.

In Section \ref{sec: CD+CAT-str} we develop structure theory of $\RCD$+$\CAT$ spaces.

In Section~\ref{sec:boundary} we introduce and study the geometric boundary of $\RCD$+$\CAT$ spaces.

In Section  \ref{sec:dc} we study the  $\DC$ structure on $\reg$.

In Section \ref{sec: dens} we study the density function, first in the non-collapsed,  and then in the collapsed case.

In Section ~\ref{sec: t-cones-cont} we prove the continuity of same scale tangent cones along interiors of geodesics.

In the development of the structure theory of $\RCD$+$\CAT$ spaces in sections \ref{sec: CD+CAT-str}  and ~\ref{sec:boundary} only a few consequences of the $\RCD$ condition were used, chief of which are non-branching and the splitting theorem.
In Section~\ref{sec-class-c}  we try to find the fewest number of extra conditions one needs in addition to $\CAT$ to make much of the structure theory from sections \ref{sec: CD+CAT-str}  and ~\ref{sec:boundary} work.

\subsection{Acknowledgements}
The first author is funded by a Discovery grant from NSERC.  
The third author is funded by the Deutsche Forschungsgemeinschaft (DFG, German Research Foundation) -- Projektnummer 396662902.
We are grateful to Alexander Lytchak for a number of  helpful conversations.
\section{Preliminaries}\label{sec: prelim}
\subsection{Notations}
Given a metric space $X$ we will denote by  $C(X)$  the Euclidean cone over  $X$ and  by $S(X)$  spherical suspension over $X$. 
We use the parametrization $X\times[0,\infty)$ for $C(X)$ and $X\times[0,\pi]$ for $S(X)$ and use $\cvertex$ for the point $(x,0)$ 
in $C(X)$ and resp. $S(X)$.

For $p\in X$ we'll denote by $B_r(p)$ the ball of radius $r$ centered at $p$ and by $S_r(p)$ the sphere of radius $r$ around $p$.

 Given $p,q$ in a geodesic metric space $X$ we'll denote by $[x,y]$ a shortest geodesic between $p$ and $q$. 

The \emph{radius} of metric space $X$ is defined as $\rad X:=\inf r$  such that $B_r(p)=X$ for some $p\in X$.

\subsection{Curvature-dimension condition}
A \textit{metric measure space} is a triple $(X,d,\m)$ where $(X,d)$ is a complete and separable metric space and $\m$ is a locally finite measure.

$\mathcal{P}^2(X)$ denotes the set of Borel probability measures $\mu$ on $(X,d)$ such that $\int_Xd(x_0,x)^2d\mu(x)<\infty$ for some $x_0\in X$ equipped with the $L^2$-Wasserstein distance $W_2$. The subspace of $\m$-absolutely continuous probability measures in $\mathcal{P}^2(X)$ is denoted  $\mathcal{P}^2(X,\m)$.

The \textit{$N$-Renyi entropy} is
\begin{align*}
S_N(\cdot|\m):\mathcal{P}^2_b(X)\rightarrow (-\infty,0],\ \ S_N(\mu|\m)=\begin{cases}-\int \rho^{1-\frac{1}{N}}d\m& \ \mbox{ if $\mu=\rho\m$, and }\smallskip\\
0&\ \mbox{ otherwise}.
\end{cases}
\end{align*}
$S_N$ is lower semi-continuous, and $S_N(\mu)\geq - \m(\supp\mu)^{\frac{1}{N}}$ by Jensen's inequality.

For $\kappa\in \mathbb{R}$ we define 
\begin{align*}
\cos_{\kappa}(x)=\begin{cases}
 \cosh (\sqrt{|\kappa|}x) & \mbox{if } \kappa<0\\
1& \mbox{if } \kappa=0\\
\cos (\sqrt{\kappa}x) & \mbox{if } \kappa>0
                \end{cases}
                \quad \& \quad
   \sin_{\kappa}(x)=\begin{cases}
\frac{ \sinh (\sqrt{|\kappa|}x)}{\sqrt{|\kappa|}} & \mbox{if } \kappa<0\\
x& \mbox{if } \kappa=0\\
\frac{\sin (\sqrt{\kappa}x)}{\sqrt \kappa} & \mbox{if } \kappa>0.
                \end{cases}                 
                \end{align*}
Let $\pi_\kappa$ be the diameter of a simply connected space form $\S2k$ of constant curvature $\kappa$, i.e.
\[
\pi_\kappa= \begin{cases}
 \infty \ &\textrm{ if } \kappa\le 0\\
\frac{\pi}{\sqrt \kappa}\ &  \textrm{ if } \kappa> 0.

\end{cases}
\]
For $K\in \mathbb{R}$, $N\in (0,\infty)$ and $\theta\geq 0$ we define the \textit{distortion coefficient} as
\begin{align*}
t\in [0,1]\mapsto \sigma_{K,N}^{(t)}(\theta)=\begin{cases}
                                             \frac{\sin_{K/N}(t\theta)}{\sin_{K/N}(\theta)}\ &\mbox{ if } \theta\in [0,\pi_{K/N}),\\
                                             \infty\ & \ \mbox{otherwise}.
                                             \end{cases}
\end{align*}
Note that $\sigma_{K,N}^{(t)}(0)=t$.
For $K\in \mathbb{R}$, $N\in [1,\infty)$ and $\theta\geq 0$ the \textit{modified distortion coefficient} is
\begin{align*}
t\in [0,1]\mapsto \tau_{K,N}^{(t)}(\theta)=\begin{cases}
                                            \theta\cdot\infty \ & \mbox{ if }K>0\mbox{ and }N=1,\\
                                            t^{\frac{1}{N}}\left[\sigma_{K,N-1}^{(t)}(\theta)\right]^{1-\frac{1}{N}}\ & \mbox{ otherwise}.
                                           \end{cases}\end{align*}
\begin{definition}[\cite{stugeo2,lottvillani}]
We say $(X,d,\m)$ satisfies the \textit{curvature-dimension condition} $\CD(\ke,N)$ for $\ke\in \mathbb{R}$ and $N\in [1,\infty)$ if for every $\mu_0,\mu_1\in \mathcal{P}_b^2(X,\m)$ 
there exists an $L^2$-Wasserstein geodesic $(\mu_t)_{t\in [0,1]}$ and an optimal coupling $\pi$ between $\mu_0$ and $\mu_1$ such that 
$$
S_N(\mu_t|\m)\leq -\int \left[\tau_{K,N}^{(1-t)}(d(x,y))\rho_0(x)^{-\frac{1}{N}}+\tau_{K,N}^{(t)}(d(x,y))\rho_1(y)^{-\frac{1}{N}}\right]d\pi(x,y)
$$
where $\mu_i=\rho_id\m$, $i=0,1$.

\begin{remark}
If $(X,d,\m)$ is complete and satisfies the condition $\CD(\ke,N)$ for $N<\infty$, then $(\supp \m, d)$ is a geodesic space and $(\supp\m,  d,\m)$ is 
$\CD(\ke,N)$. 
\\
\\
In the following we always assume that $\supp\m=X$.
\end{remark}
%
%
\end{definition}
\subsection{Calculus on metric measure spaces}
For further details about the { properties of $\RCD$ spaces} we refer to
\cite{agslipschitz,agsheat,agsriemannian,giglistructure, gmsstability}.

Let $(X,d,m)$ be a metric measure space, and let $\Lip(X)$ be the space of Lipschitz functions. 
For $f\in \Lip(X)$ the local slope is
\begin{align*}
\mbox{Lip}(f)(x)=\limsup_{y\rightarrow x}\frac{|f(x)-f(y)|}{d(x,y)}, \ \ x\in X.
\end{align*}
If $f\in L^2(m)$, a function $g\in L^2(\m)$ is called \textit{relaxed gradient} if there exists sequence of Lipschitz functions $f_n$ which $L^2$-converges to $f$, and there exists $h$ such that 
$\mbox{Lip}f_n$ weakly converges to $h$ in $L^2(m)$ and $h\leq g$ $\m$-a.e.\ . A function $g\in L^2(\m)$ is called the \textit{minimal relaxed gradient} of $f$ and denoted by $|\nabla f|$ if it is a relaxed gradient and minimal w.r.t. the $L^2$-norm amongst all relaxed gradients.
The space of \textit{$L^2$-Sobolev functions} is then $$W^{1,2}(X):= D(\Ch^X):= \left\{ f\in L^2(\m): \int |\nabla f|^2 d\m<\infty\right\}.$$
%
$W^{1,2}(X)$ equipped with the norm 
$
\left\|f\right\|_{W^{1,2}(X)}^2=\left\|f\right\|^2_{L^2}+\left\||\nabla f|\right\|_{L^2}^2
$
is a Banach space.
If $W^{1,2}(X)$ is a Hilbert space, we say $(X,d,m)$ is \textit{infinitesimally Hilbertian.}

In this case one can define 
\begin{align}\label{rcdinnerproduct}
(f,g)\in W^{1,2}(X)^2\mapsto \langle \nabla f,\nabla g\rangle := \frac{1}{4}|\nabla (f+g)|^2-\frac{1}{4}|\nabla (f-g)|^2\in L^1(m).
\end{align}

Assuming $X$  is locally compact, if $U$ is an open subset of $X$, we say that $f\in W^{1,2}(X)$ is in the domain $D({\bf \Delta},U)$ of the \textit{measure-valued Laplace} ${\bf \Delta}$ on $U$ if there exists a signed Radon functional ${\bf \Delta}f$ on the set of all Lipschitz functions $g$ with bounded support in $U$ such that
\begin{align}\label{equ:integrationbyparts}
\int\langle \nabla g,\nabla f\rangle dm = -\int g d{\bf \Delta}f.
\end{align}
If $U=X$ and ${\bf \Delta}f= [{\bf\Delta} f]_{ac} \m$ with $[{\bf\Delta} f]_{ac}\in L^2(\m)$, we write $[{\bf\Delta} f]_{ac}=:\Delta f$ and $D({\bf \Delta}, X)=D_{L^2(\m)}(\Delta)$. 
$\mu_{ac}$ denotes the $\m$-absolutely continuous part in the Lebesgue decomposition of a Borel measure $\mu$.
\begin{definition}\label{def:rcd}
A metric measure space $(X,d,m)$ satisfies the \emph{Riemannian curvature-dimension condition} $\RCD(\ke,N)$ for $\ke\in \mathbb{R}$ and {$N\in [1,\infty)$} if it satisfies a curvature-dimension
conditions $\CD(\ke,N)$ and is infinitesimally Hilbertian.
\end{definition}
{Let $(X,d,\m)$ be an $RCD(K,N)$ space for some $K\in \R$ and $N\in (0,\infty)$.}
{The set of $k$-regular points $\Xreg_k$
is the collection of all points $p\in X$  such that \emph{every} mGH-tangent cone  is isomorphic to 
$(\mathbb R^k, d_{\mathbb R^k}, c_k \mathcal {H}^k)$ for some positive constant $c_k$. The union $\Xreg=\cup_{k\ge 0} \Xreg_k$ is the set of all regular points.

By \cite{brusem} one has that there exists $n\in \mathbb{N}$ (called the \emph{geometric dimension} of $X$) such 
that the set of $n$-regular points has full $\m$-measure. 
}

\subsection{Spaces with upper curvature bounds}

We will assume familiarity with the notion of $\CAT(\kappa)$ spaces. We refer to ~\cite{BBI, BH99} or ~\cite{Kap-Ket-18} for the basics of the theory.
\begin{definition}
Given a point $p$ in a $\CAT(\kappa)$ space $X$ we say that two unit speed geodesics starting at $p$ define the same direction if the angle between them is zero. This is an equivalence relation by the triangle inequality for angles and the angle induces a metric on the set $S_p^g(X)$ of equivalence classes. The metric completion  $\Sigma_p^gX$ of $S_p^gX$ is called the \emph{space of geodesic directions} at $p$.
The Euclidean cone $C(\Sigma_p^gX)$ is called the \emph{geodesic tangent cone} at $p$ and will be denoted by $T^g_pX$.
\end{definition}
The following theorem is due to Nikolaev~\cite[Theorem 3.19]{BH99}:
\begin{theorem}\label{geod-tangent-cone}
$T_p^gX$ is $\CAT(0)$ and $\Sigma_p^gX$ is  $\CAT(1)$. 
\end{theorem}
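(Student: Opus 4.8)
The plan is to deduce both assertions from a single statement about the Euclidean cone, using the classical fact (Berestovski\u{\i}'s cone construction, see \cite{BH99}) that for a metric space $Y$ the Euclidean cone $C(Y)$ is $\CAT(0)$ if and only if $Y$ is $\CAT(1)$. Since $T_p^gX=C(\Sigma_p^gX)$ by definition, it suffices to prove that $T_p^gX$ is $\CAT(0)$; the $\CAT(1)$ property of $\Sigma_p^gX$ is then automatic. First I would record the two analytic inputs supplied by the $\CAT(\kappa)$ hypothesis: geodesics emanating from $p$ are locally unique, and for two such geodesics $\gamma_1,\gamma_2$ the $\kappa$-comparison angle $\bar\angle_\kappa(\gamma_1(s),p,\gamma_2(t))$ is monotone in $(s,t)$, decreasing to the Alexandrov angle $\alpha=\angle_p(\gamma_1,\gamma_2)$ as $s,t\to 0$. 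The first variation then gives the asymptotic $d(\gamma_1(rs),\gamma_2(rt))=r\sqrt{s^2+t^2-2st\cos\alpha}+o(r)$, whose right-hand side is exactly the cone distance in $C(\Sigma_p^gX)$.

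The core of the argument is a blow-up. I would consider the rescaled pointed spaces $(X,\tfrac1r d,p)$, which are $\CAT(\kappa r^2)$ because multiplying a $\CAT(\kappa)$ metric by $\lambda$ produces a $\CAT(\kappa/\lambda^2)$ space. As $r\to 0$ the bound $\kappa r^2\to 0$, and since the $\CAT(\kappa')$ condition is closed under pointed Gromov\textendash Hausdorff (or ultra-) limits, any limit of these rescalings is $\CAT(0)$. The asymptotic distance formula shows precisely that the image of the geodesic cone $C(S_p^gX)$ sits isometrically inside this limit, while the comparison-angle monotonicity guarantees that angles, and hence cone distances, are \emph{reproduced} in the limit rather than merely bounded. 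Passing to the completion, $T_p^gX=C(\Sigma_p^gX)$ embeds isometrically into a $\CAT(0)$ space.

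It remains to promote this embedding to the conclusion that $T_p^gX$ is itself $\CAT(0)$, and this is where I expect the main obstacle. One must verify that $C(\Sigma_p^gX)$ is a convex subset of the limit cone\textemdash equivalently, that a geodesic joining two geodesic directions is again a limit of geodesic directions\textemdash so that the $\CAT(0)$ inequality restricts to it, since closed convex subsets of $\CAT(0)$ spaces are $\CAT(0)$. Alternatively one can bypass the limit space and check the four-point $\CAT(0)$ condition directly on $T_p^gX$: represent four cone points by geodesics in $X$, apply the $\CAT(\kappa)$ comparison to their rescaled representatives, and let $r\to 0$ so that the model $\mathbb{S}^2_\kappa$ flattens to $\R^2$ and the comparison angles converge to the true angles, producing the Euclidean cone inequality. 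Either route reduces the theorem to careful bookkeeping of the completion together with the monotonicity and first-variation facts; the delicate point throughout is ensuring that the angular metric on directions is faithfully the infinitesimal limit of the rescaled distances, which is exactly what the $\CAT(\kappa)$ angle monotonicity provides.
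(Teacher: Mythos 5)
Your overall framework is sound and, in fact, close in spirit to the classical argument: the paper itself gives no proof of this statement but cites it as Nikolaev's theorem (\cite[Theorem 3.19]{BH99}), and the textbook proof does proceed through Berestovski\u{\i}'s cone criterion, monotone convergence of comparison angles, and a flattening-of-comparison-configurations argument much like your second route. The reduction via ``$C(Y)$ is $\CAT(0)$ iff $Y$ is $\CAT(1)$'', the rescaling $(X,\tfrac1r d)$ being $\CAT(\kappa r^2)$, and the isometric embedding of $C(S_p^gX)$ into a blow-up via the first variation formula (with monotonicity giving exact limits, not just bounds) are all correct.

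The genuine gap is the one you flag but do not close: nothing in your proposal establishes that $T_p^gX$ is a \emph{geodesic} space, and this is precisely the non-obvious content of the theorem --- the paper remarks immediately after the statement that geodesicity of $T_p^gX$ ``is not obvious from the definition.'' Your second route fails as stated: the four-point condition, even together with completeness, does not imply $\CAT(0)$, since the $\CAT(0)$ condition presupposes a geodesic space (any complete non-geodesic subset of $\R^2$ satisfies the four-point condition). Your first route also stalls: to show the embedded copy of $C(\Sigma_p^gX)$ is convex in the ultralimit you would want the directions $w_r$ of the geodesics $[p,m_r]$ (where $m_r$ is the midpoint of $[\gamma_u(r),\gamma_v(r)]$) to converge in $\Sigma_p^gX$, but $\Sigma_p^gX$ is merely complete, not compact, in a general $\CAT(\kappa)$ space --- compactness is only available under extra hypotheses such as the doubling assumption of Lemma \ref{geom-vs-blowup-t-cones}, which the theorem does not make. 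The missing idea is Nikolaev's approximate-midpoint construction carried out \emph{intrinsically}: for directions $u,v$ with $\alpha=\angle(u,v)<\pi$, the $\CAT(\kappa)$ comparison inequality gives $\limsup_{r\to0}\angle(u,w_r)\le\alpha/2$ and $\limsup_{r\to0}\angle(v,w_r)\le\alpha/2$, while the triangle inequality for angles forces $\angle(u,w_r)+\angle(w_r,v)\ge\alpha$; hence both tend to $\alpha/2$ and $w_r$ is an approximate midpoint. Since $\Sigma_p^gX$ is complete by construction, one then invokes the standard characterization that a complete metric space satisfying the four-point condition and admitting approximate midpoints is $\CAT(0)$ (applied to the cone, or its $\CAT(1)$ analogue applied to $\Sigma_p^gX$). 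With that step supplied, your outline becomes a complete proof; without it, the argument only shows that $T_p^gX$ embeds isometrically into a $\CAT(0)$ space, which is strictly weaker.
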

Note that this theorem in particular implies that $T_p^gX$ is a geodesic metric space which is not obvious from the definition.
More precisely, it means  that each path component of $\Sigma_p^gX$ is $\CAT(1)$ (and hence geodesic) and the distance between points in different components is $\pi$. Note however, that $\Sigma_p^gX$ itself need not to be path connected. 
\smallskip

We use the following terminology: a point $v\in\Sigma$
in a $\CAT(1)$ space has an \emph{opposite} $-v$ if  $d(v,-v)\ge \pi$.
This is easily seen to be equivalent to the statement that 
\begin{align*}
t\mapsto\begin{cases}
(v,t) & t\ge0\\
(-v,-t) & t\le0
\end{cases}
\end{align*}
 is a geodesic line in the Euclidean cone $C(\Sigma)$ over $\Sigma$.

Similarly, if $\gamma:[0,1]\to X$ is a geodesic in a $\CAT(\kappa)$ space
and $s\in(0,1)$ then $\dot{\gamma}(s)$ denotes the point in $\Sigma_{\gamma(s)}X$
corresponding to the direction of $s'\mapsto\gamma(s')$ at $s'=s$.
It is easy to verify that $\dot{\gamma}(s)$ has an opposite which
we denote by $-\dot{\gamma}(s')$. 

\subsection{$\BV$ functions and $\DC$ calculus}\label{subsection:BV}
Recall that a function $g: V\subset \mathbb R^n\rightarrow \mathbb R$ of bounded variation ($\BV$) admits a derivative in the distributional sense \cite[Theorem 5.1]{Gar-Evans} that is a signed vector-valued Radon measure 
$[Dg]=(\frac{\partial g}{\partial x_1}, \dots, \frac{\partial g}{\partial x_n})=[Dg]_{ac}+[Dg]_s$. Moreover, if $g$ is $\BV$, then it is $L^1$ differentiable \cite[Theorem 6.1]{Gar-Evans} a.e. with $L^1$-derivative $[Dg]_{ac}$, and approximately differentiable a.e. \cite[Theorem 6.4]{Gar-Evans} with approximate derivative $D^{ap}g=(\frac{\partial^{ap} g}{\partial x_1}, \dots, \frac{\partial^{ap} g}{\partial x_n})$ that coincides almost everywhere with $[Dg]_{ac}$. The set of $\BV$ functions $\BV(V)$ on $V$ is closed under addition and multiplication \cite[Section 4]{Per-DC}. 
We'll call $\BV$ functions $\BV_0$ if they are continuous. 
{
\begin{remark}
In ~\cite{Per-DC} and ~\cite{ambrosiobertrand}  $\BV$ functions are called  $\BV_0$ if they are continuous away from an $\mathcal H_{n-1}$-negligible set.  However, for the purposes of the present paper it will be more convenient to work with the more restrictive definition above.
\end{remark}
}

For  $f,g\in \BV_0(V)$ we have 
\begin{align}\label{equ:leibniz}
\frac{\partial (f g)}{\partial x_i}= \frac{\partial f}{\partial x_i} g + f \frac{\partial g}{\partial x_j} 
\end{align}
as signed Radon measures \cite[Section 4, Lemma]{Per-DC}. By taking the $\mathcal{L}^n$-absolutely continuous
part of this equality it follows that  \eqref{equ:leibniz} also holds a.e. in the sense of approximate derivatives. In fact, it  holds at \emph{all} points of approximate differentiability of $f$ and $g$. 
This easily follows by a minor variation of the standard  proof  that $d(fg)=fdg+gdf$ for differentiable functions.
\medskip

A function $f: V\subset \mathbb{R}^n\rightarrow \mathbb{R}$ is called a $\DC$ function if in a small neighborhood of each point $x\in V$ one can write $f$ as a difference of two semi-convex functions. The set of $\DC$ functions on $V$ is denoted by $\DC(V)$ and contains the class $C^{1,1}(V)$. The set $\DC(V)$ is closed under addition and multiplication.
The first partial derivatives $\frac{\partial f}{\partial x_i}$ of a $\DC$ function $f: V\rightarrow \mathbb R$ are $\BV$, and hence the second partial derivatives $\frac{\partial}{\partial x_j}\frac{\partial f}{\partial x_i}$ exist as signed Radon measure that satisfy $$\frac{\partial}{\partial x_i}\frac{\partial f}{\partial x_j}=\frac{\partial}{\partial x_j}\frac{\partial f}{\partial x_i}$$
\cite[Theorem 6.8]{Gar-Evans}, and hence 
\begin{equation}\label{2n-der-commute}
\frac{\partial^{ap}}{\partial x_i}\frac{\partial f}{\partial x_j}=\frac{\partial^{ap}}{\partial x_j}\frac{\partial f}{\partial x_i}\quad \text{  a.e. on $V$.}
\end{equation}
A map $F: V\rightarrow \mathbb{R}^l$, $l\in\mathbb{N}$, is called a $\DC$ map if each coordinate function $F_i$ is $\DC$. The composition of two $\DC$--maps is again $\DC$.
A function $f$ on $V$ is   called $\DC_0$ if it's $\DC$ and $C^1$.
\medskip

Let $(X,d)$ be a geodesic metric space. A function $f:X\rightarrow \mathbb R$ is called a $\DC$ function if it can be locally represented as the difference of two Lipschitz  semi-convex functions. A map $F:Z\rightarrow Y$ between metric spaces $Z$ and $Y$ that is locally Lipschitz is called a $\DC$ map if for each 
$\DC$ function $f$ that is defined on an open set $U\subset Y$ the composition $f\circ F$ is $\DC$ on $F^{-1}(U)$. In particular, a map $F: Z\rightarrow \mathbb R^l$ is $\DC$
if and only if its coordinates are $\DC$. If $F$ is a bi-Lipschitz homeomorphism and its
inverse is $\DC$, we say that $F$ is a $\DC$-isomorphism.

 \section{Structure  theory of $\RCD$+$\CAT$ spaces}\label{sec: CD+CAT-str}

In this section we study the following class of metric  measure spaces
\begin{equation}\label{eq:cd+cat}
\begin{gathered}
\mbox{$(X,d,m)$ is $\CAT(\uk)$ and satisfies the condition $\RCD(K,N)$ for some $1\le N<\infty$, $K,\uk<\infty$.}
\end{gathered}
\end{equation}

The following result was proved in ~\cite{Kap-Ket-18}

\begin{theorem}[\cite{Kap-Ket-18}]\label{CD+CAT implies RCD}
Let $(X,d,\m)$ satisfy $\CD(K,N)$ and  $\CAT(\uk)$ for $1\le N<\infty$, $K,\uk\in \mathbb{R}$. Then $X$ is infinitesimally Hilbertian.
In particular, $(X,d,\m)$ satisfies $\RCD(K,N)$.
\end{theorem}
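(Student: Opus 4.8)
The plan is to reduce the claim to the statement that the Cheeger energy $\Ch$ is a quadratic form and then to verify the associated parallelogram law by analyzing the infinitesimal (tangent-cone) structure, where the upper curvature bound does the essential work. Since $(X,d,\m)$ is already assumed to be $\CD(K,N)$, it only remains to prove infinitesimal Hilbertianity, i.e. that $W^{1,2}(X)$ is a Hilbert space. As the $L^2$-norm is automatically quadratic, this is equivalent to the identity
\[
\Ch(f+g)+\Ch(f-g)=2\Ch(f)+2\Ch(g)\qquad\text{for all }f,g\in W^{1,2}(X),
\]
and, by locality of the minimal relaxed gradient, to the pointwise parallelogram identity $|\nabla(f+g)|^2+|\nabla(f-g)|^2=2|\nabla f|^2+2|\nabla g|^2$ holding $\m$-a.e.

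Next I would bring in the two sides of the curvature bound. The $\CD(K,N)$ assumption makes $(X,d,\m)$ locally uniformly doubling and a Poincar\'e (PI) space, so by Cheeger's differentiation theory the minimal relaxed gradient of a Lipschitz function agrees $\m$-a.e. with its local slope and is computed through a measurable tangent/cotangent structure with a well-defined fibrewise norm; equivalently, it is governed by blow-ups at $\m$-a.e. point. The parallelogram identity above is then equivalent to the statement that these infinitesimal norms are induced by inner products, i.e. that the relevant tangent cones are Euclidean. At this point the $\CAT(\uk)$ hypothesis enters: after rescaling, every geodesic tangent cone $T_p^gX=C(\Sigma_p^gX)$ is $\CAT(0)$ by Theorem~\ref{geod-tangent-cone}, while rescaling the $\CD(K,N)$ condition makes it $\CD(0,N)$. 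The core of the argument is an infinitesimal rigidity statement: a norm (or cone) that simultaneously satisfies the two-sided synthetic bound must be Euclidean, because the $\CAT$ triangle comparison rules out the non-Hilbertian, Finsler-type infinitesimal norms (an $\ell^p$-type norm with $p\ne 2$ violates the $\CAT$ quadrilateral/comparison inequality on arbitrarily small scales).

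I expect the main obstacle to be exactly this infinitesimal rigidity together with its rigorous transfer to the global statement. Concretely, one must (i) show that the two-sided bound forces the measurable tangent norms coming from Cheeger's theorem --- a priori only Banach norms --- to satisfy the parallelogram law $\m$-a.e., and (ii) match the analytic (Sobolev) tangent structure with the metric geodesic tangent cones of Theorem~\ref{geod-tangent-cone}, so that the $\CAT$ comparison can be applied fibrewise. An appealing alternative that sidesteps some of the measurable-bundle bookkeeping is to verify the parallelogram law directly on a class of functions that is dense in $W^{1,2}(X)$ --- for instance distance-type or $\DC$ functions --- using the first-variation formula in $\CAT(\uk)$ spaces, where differentials live in the $\CAT(0)$ tangent cones and angles obey the comparison law. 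Since $\Ch$ is continuous with respect to the $W^{1,2}$-norm and Lipschitz functions are $W^{1,2}$-dense in a PI space, proving the parallelogram identity on such a dense, energy-approximating class would extend it to all of $W^{1,2}(X)$, yielding that $\Ch$ is quadratic and hence that $(X,d,\m)$ is $\RCD(K,N)$.
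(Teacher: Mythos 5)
First, note that this paper does not prove Theorem~\ref{CD+CAT implies RCD} at all: it is quoted from \cite{Kap-Ket-18}, so your attempt must be measured against the proof there. Your blueprint matches that proof's architecture — reduce to the parallelogram identity for the Cheeger energy, use that $\CD(K,N)$ gives local doubling and a Poincar\'e inequality so that Cheeger's differentiation theory applies and $\Lip f=|\nabla f|$ a.e.\ \cite{cheegerlipschitz}, then let the $\CAT(\uk)$ bound force the measurable tangent norms to be Hilbertian. But the step you defer as ``obstacles (i) and (ii)'' is not a technicality to be expected later; it is the entire content of the theorem, and as formulated your mechanism for it does not work. The rigidity you invoke — a normed space that is $\CAT(0)$ must be Euclidean — is correct but inapplicable as stated, because nothing in your argument produces a \emph{normed} space: blow-ups of a PI-doubling space at generic points need not be normed; for a $\CD$ space one cannot invoke the $\RCD$ structure theory (Gigli--Mondino--Rajala, Mondino--Naber, or the splitting theorem) to get Euclidean tangents without circularity, since infinitesimal Hilbertianity is exactly what is being proved; and the $\CAT$ side only yields that tangent cones are $\CAT(0)$ cones $C(\Sigma)$, which are generally far from normed. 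Moreover the geodesic tangent cone, where your first-variation formula lives, embeds into a blow-up tangent cone only as a closed convex subcone, possibly proper and even $m$-negligible — this paper itself remarks after Lemma~\ref{geom-vs-blowup-t-cones} that surjectivity of this embedding is open — so ``applying the $\CAT$ comparison fibrewise'' to the Cheeger norms has no direct meaning.

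The missing idea, which is how \cite{Kap-Ket-18} closes exactly this gap, is to blow up the Cheeger \emph{chart functions} themselves: at a.e.\ point their rescalings converge to generalized linear (affine) functions on the tangent cone, and by the Lytchak--Schroeder theorem on affine functions \cite{Lyt-Schr07} a $\CAT(0)$ space carrying nonconstant affine functions splits off a corresponding Euclidean factor; iterating over the $k$ chart coordinates splits off an $\R^k$-factor that realizes the Cheeger tangent norm, which is therefore an inner-product norm a.e., giving the parallelogram identity for minimal relaxed gradients and hence that $W^{1,2}(X)$ is Hilbert. This is a purely $\CAT$-side splitting statement and is what avoids circular use of $\RCD$ results. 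Your fallback route — verifying bilinearity of $\langle\nabla f,\nabla g\rangle$ on distance-type functions via first variation and extending by density — is closer in spirit to the later, more general argument of \cite{DGPS18} (infinitesimal Hilbertianity of $\CAT(\kappa)$ spaces with \emph{any} Radon measure), but as sketched it is also incomplete: density of such a class in $W^{1,2}$ in the energy sense, with control of gradients, is not established, and the a.e.\ bilinearity for differentials of distance functions is precisely the fine differentiability analysis that constitutes the real work in that approach.
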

\begin{remark}
It was shown in \cite{Kap-Ket-18} that the above theorem also holds if the $\CD(K,N)$ assumption is replaced by $\CD^*(K,N)$ or $\CD^e(K,N)$ conditions (see \cite{Kap-Ket-18} for the definitions).
Moreover, in a recent paper~\cite{DGPS18} Di Marino, Gigli,  Pasqualetto and Soultanis show that a $\CAT(\kappa)$ space with \emph{any} Radon measure  is infinitesimally Hilbertian. For these reasons \eqref{eq:cd+cat} is equivalent to assuming that $X$ is $\CAT(\uk)$ and satisfies one of the assumptions  $\CD(K,N), CD^*(K,N)$ or $\CD^e(K,N)$ with  $1\le N<\infty$, $K,\uk<\infty$.

\end{remark}
The following key property of spaces satisfying  \eqref{eq:cd+cat} was also established in  \cite{Kap-Ket-18}
\begin{proposition}[\cite{Kap-Ket-18}]\label{prop:nonbra}
Let $X$ satisfy \eqref{eq:cd+cat}. Then $X$ is non-branching.
\end{proposition}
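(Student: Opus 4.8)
The statement is that $X$ contains no branching geodesics: if two geodesics $\gamma_1,\gamma_2\colon[0,1]\to X$ coincide on an initial segment $[0,t_0]$ with $t_0\in(0,1)$, then $\gamma_1\equiv\gamma_2$. The plan is to reduce a hypothetical branching to the existence of two distinct geodesic lines sharing a ray inside a measured tangent cone, and then to rule this out by the splitting theorem. It is worth noting at the outset that the $\CAT(\uk)$ condition alone cannot suffice, since a metric tree branches while being $\CAT(\uk)$ for every $\uk$; the argument must genuinely use the Ricci bound, and the upper curvature bound will serve only to guarantee uniqueness of geodesics and good behaviour of angles.

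First I would record the local consequence of $\CAT(\uk)$: between points at distance $<\pi_\uk$ geodesics are unique, and if two geodesics issuing from a common point make angle zero then one is an initial sub-segment of the other. Taking $t_0=\sup\{t:\gamma_1|_{[0,t]}=\gamma_2|_{[0,t]}\}$ to be maximal and writing $p=\gamma_1(t_0)$, the two branches coincide before $t_0$, so their incoming directions at $p$ agree, while for every $\eps>0$ they differ on $[t_0,t_0+\eps]$. By the angle-zero dichotomy just recalled, the outgoing directions $\dot\gamma_1(t_0),\dot\gamma_2(t_0)\in\Sigma_p^gX$ must then be distinct, i.e.\ the outgoing angle at $p$ is positive.

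Next I would blow up at $p$. Rescaling $(X,d,m)$ by $1/r_j$ with $r_j\downarrow 0$ and passing to a subsequence yields a pointed measured Gromov\textendash Hausdorff tangent $(T_pX,\cvertex,m_p)$, which is $\RCD(0,N)$ by the scaling behaviour of the curvature-dimension condition together with the stability of $\RCD$ under pointed measured Gromov\textendash Hausdorff convergence. Since $p$ is interior to both $\gamma_1$ and $\gamma_2$, the rescaled geodesics have lengths tending to $\infty$ on both sides of $p$ and subconverge to geodesic lines $L_1,L_2$ through $\cvertex$. Because $\gamma_1$ and $\gamma_2$ literally coincide on $[0,t_0]$, the limit lines share the incoming ray; because the outgoing angle at $p$ is positive, their outgoing rays are distinct, so $L_1\ne L_2$. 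Applying the splitting theorem for $\RCD(0,N)$ spaces to the line $L_1$ gives a metric measure isomorphism $T_pX\cong\mathbb R\times Y$ with $L_1=\mathbb R\times\{y^*\}$ and the shared incoming ray equal to $(-\infty,0]\times\{y^*\}$. In a product $\mathbb R\times Y$ the only geodesic line extending the half-line $(-\infty,0]\times\{y^*\}$ is $\mathbb R\times\{y^*\}$ itself, so $L_2=L_1$, contradicting $L_1\ne L_2$. This contradiction shows that $X$ is non-branching.

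The main obstacle is the blow-up step, namely verifying that the two branches survive the rescaling as genuinely distinct lines in the \emph{same} tangent cone. Concretely, one must show that the positive angle between the outgoing directions in the geodesic space of directions $\Sigma_p^gX$ (which is $\CAT(1)$ by Theorem~\ref{geod-tangent-cone}) persists as a positive distance between the outgoing rays of $L_1$ and $L_2$ in the measured tangent $T_pX$, and that the shared initial segment yields a genuinely shared ray rather than merely a shared point. This is precisely where the interplay between the $\CAT$ notion of angle and tangent cone on the one hand, and the measured tangent cone supplied by the $\RCD$ structure on the other, must be controlled. Once the two distinct lines sharing a ray are in place, the splitting theorem closes the argument at once.
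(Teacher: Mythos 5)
You state a key step as a recalled fact: \emph{``if two geodesics issuing from a common point make angle zero then one is an initial sub-segment of the other''}. This is false in general $\CAT(\uk)$ spaces, and it is precisely the point where your proof has a genuine gap. The angle is in general only a pseudometric on the set of geodesics issuing from $p$ --- this is exactly why the paper must define $S_p^g(X)$ as a set of \emph{equivalence classes} of geodesics. A concrete counterexample: take the flat plane with cone points $q_n\to p$ on a ray from $p$, of cone angles $2\pi+\eps_n$ with $\sum_n\eps_n<\infty$. Each finite-cone-point stage is $\CAT(0)$ by Cartan--Hadamard, and the limit space is $\CAT(0)$ since the four-point condition passes to Gromov--Hausdorff limits. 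The straight geodesic through all the $q_n$ and the limit of the geodesics branching off tangentially at $q_n$ are two geodesics issuing from $p$ which meet \emph{only} at $p$ and whose separation at parameter $t$ is of order $\sum_{x_m<t}\eps_m(t-x_m)=o(t)$, i.e.\ they make angle zero at $p$. So zero-angle branching is possible for $\CAT(\uk)$ spaces, and nothing in your argument before the blow-up invokes the $\CD$ condition to exclude it. This omission is fatal rather than cosmetic: if the outgoing angle at the branch point vanishes, then $d(\gamma_1(t_0+s),\gamma_2(t_0+s))=o(s)$, so under \emph{any} rescaling $r_j\downarrow0$ the two branches converge to the \emph{same} line in $T_pX$; the blow-up forgets the branching entirely and the splitting theorem produces no contradiction. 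No choice of blow-up scale can fix this, since sub-linear separation is annihilated by linear rescaling.

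Ironically, the step you flagged as the main obstacle is the sound one. When the outgoing angle $\alpha$ at $p$ is positive, monotonicity of comparison angles in $\CAT(\uk)$ gives $d(\gamma_1(t_0+s),\gamma_2(t_0+s))\geq 2s\sin(\alpha/2)\bigl(1-o(1)\bigr)$, which survives rescaling, so the limit lines $L_1\neq L_2$ genuinely share the incoming ray; and your endgame is correct (tangent cones are $\RCD(0,N)$ by scaling and stability, geodesics in an $l^2$-product project to constant-speed geodesics in each factor, so a line containing the ray $(-\infty,0]\times\{y^*\}$ must be $\R\times\{y^*\}$). What is missing is the one place where the curvature-dimension condition must interact with the $\CAT$ structure \emph{at the branch point itself}, namely ruling out tangential (zero-angle) branching --- and this cannot be delegated to a tangent-cone argument. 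Note also that the paper under review offers no proof of this proposition: it is quoted from \cite{Kap-Ket-18}, so any complete argument must in particular handle the sub-linear separation regime that your blow-up scheme cannot see.
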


Recall that a metric space $X$ is called $C$-\emph{doubling} with respect to a non-decreasing function $C\co (0,\infty)\to (1,\infty)$ if  for any $p\in X$ and any $r>0$ the ball $B_r(p)$ can be covered by $C(r)$ balls of radius $r/2$.
The  doubling condition implies that for any $p\in X$ and any $r_i\to 0$ the sequence $(\frac{1}{r_i}X,p)$ is precompact in the  pointed Gromov\textendash  Hausdorff topology. Therefore one can define tangent cones $T_pX$ at $p$ as limits of such subsequences. Obviously, any tangent cone $T_pX$ is $\CAT(0)$.  We will frequently make use of the following general lemma.
\begin{lemma}\label{geom-vs-blowup-t-cones}
Let $X$ be $\CAT(\uk)$ and  $C$-doubling for some non-decreasing  $C\co (0,\infty)\to (1,\infty)$ and let $p\in X$. Then
\begin{enumerate}[(i)]
\item\label{geom-tcone-embeds} For any tangent cone $T_pX$  the geodesic tangent cone $T_p^gX$ isometrically embeds into $T_pX$ as a convex closed subcone. In particular $\Sigma_p^gX$ is compact.
\item\label{t-cone-emb-onto} If there exists $\eps>0$ such that every geodesic starting at $p$ extends to length $\eps$ then the embedding from part \eqref{geom-tcone-embeds} is onto. In particular $T_pX$ is unique and is isometric to $T_p^gX$.
\end{enumerate}
\begin{proof}
Let $T_pX$ be a tangent cone at $p$. The doubling condition passes to the limit and becomes globally $C(1)$-doubling, i.e. any ball of any radius $r>0$ in $T_pX$ can be covered by $C(1)$ balls of radius $r/2$. This implies that $T_pX$ is proper, i.e. all closed balls in $T_pX$ are compact.   Let $\eps<1/100$ and let  $v_1,\ldots v_k\in \Sigma_p^gX$ be a finite $\eps$-separated net given by geodesic directions. Let $\alpha_{ij}=\angle v_i v_j$. Let $\gamma_i(t), i=1,\ldots, k$, be  unit speed geodesic with $\gamma_i(0)=p,\gamma_i'(0)=v_i$. Then by the definition of angles we have that $d(\gamma_i(t), \gamma_j(s))=\sqrt{t^2+s^2-2st\cos \alpha_{ij}}+o(r)$ for $s,t\le r$. This immediately implies that the cone $C(\{v_1,\ldots, v_k\})$ isometrically embeds into $T_pX$ as a subcone. Furthermore, the images of $v_1,\ldots, v_k$ are $\eps/2$-separated in $T_pX$. Since $T_pX$ is $C(1)$-doubling it holds that  $k\le n=n(C(1),\eps)$. Since this holds for all small $\eps$ we get that $ \Sigma_p^gX$ is compact. Now a diagonal Arzela-Ascoli argument gives that there is a distance preserving embedding $f\co T_p^gX\to T_pX$. Since both spaces are geodesic and geodesics in $CAT(0)$ spaces are unique this implies that the image $f(T_p^gX)$ is a convex subset of $T_pX$. Since $f$ is continuous and $T_p^gX$ is proper we can also conclude that $f(T_p^gX)$ is closed.  This proves part \eqref{geom-tcone-embeds}.

Now suppose that all geodesics starting at $p$ extend to uniform distance $\eps>0$. Let $0<R<\min\{\eps, 1,\pi_\kappa/100\}$. Let $\delta>0$ and choose a finite $\delta\cdot R$ net in $S_R(p)$ given by $\gamma_i(Rv_i), i=1,\ldots, k$ for some $v_1,\ldots,v_k\in \Sigma_p^gX$ and unit speed geodesics $\gamma_1,\ldots, \gamma_k$ with $\gamma_i(0)=p,\gamma_i'(0)=v_i$. Then the $\CAT(\kappa)$-condition implies that for any $0<r\le R$  the set $\cup_i \gamma_i([0,r])$ is $\delta\cdot r$ dense in  $B_r(p)$. This implies that for the embedding $f\co T_p^gX\to T_pX$ constructed in part \eqref{geom-tcone-embeds} the image of the unit ball around the vertex in $T_p^gX$ is $\delta$-dense in the unit ball around the vertex in $T_pX$. Since this holds for any $\delta>0$ and the image of $f$ is closed we get that $f$ is onto. This proves  \eqref{t-cone-emb-onto}.
\end{proof}
\end{lemma}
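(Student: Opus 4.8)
The plan is to build an isometric embedding $f\colon T_p^gX\to T_pX$ from consistent embeddings of finite nets of geodesic directions, and then to use the $\CAT(0)$ geometry of $T_pX$ together with properness to recognize the image as a convex closed subcone. The one structural input I need at the outset is that the doubling hypothesis survives the blow-up: rescaling by $1/r_i$ turns a ball of radius $\rho$ into a ball of radius $r_i\rho$ in $X$, which for $r_i\rho\le1$ is covered by at most $C(r_i\rho)\le C(1)$ balls of half the radius since $C$ is non-decreasing. Passing to the pointed Gromov--Hausdorff limit, every ball of $T_pX$ of any radius is $C(1)$-doubling, so $T_pX$ is proper; this is what makes the compactness arguments below run.

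For part \eqref{geom-tcone-embeds}, I would start from a finite $\eps$-separated set $v_1,\dots,v_k\in\Sigma_p^gX$ of geodesic directions, represented by unit-speed geodesics $\gamma_i$ with $\gamma_i(0)=p$. The defining property of the angle gives $d(\gamma_i(t),\gamma_j(s))=\sqrt{t^2+s^2-2st\cos\angle(v_i,v_j)}+o(r)$ for $s,t\le r$, so under the blow-up the finite cone $C(\{v_1,\dots,v_k\})$ embeds isometrically into $T_pX$ as a subcone, with the images of the $v_i$ still $\eps/2$-separated on the unit sphere. Since $T_pX$ is $C(1)$-doubling, such a separated set has cardinality bounded by some $n=n(C(1),\eps)$; letting $\eps\to0$ shows $\Sigma_p^gX$ is totally bounded and hence, being complete, compact. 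A diagonal Arzel\`a--Ascoli argument over a nested sequence of ever finer nets then yields a single distance-preserving map $f\colon T_p^gX\to T_pX$. Because geodesics in the $\CAT(0)$ space $T_pX$ are unique, the image of any geodesic segment of $T_p^gX$ is the unique geodesic between its endpoints, so $f(T_p^gX)$ is convex; and since $T_p^gX$ is a cone over the compact $\Sigma_p^gX$ and hence proper, the $f$-image of each closed ball is compact and therefore closed.

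For part \eqref{t-cone-emb-onto}, the extendability hypothesis is exactly what forces $f$ to be onto. Fix $0<R<\min\{\eps,1,\pi_\kappa/100\}$ and $\delta>0$. Every point of $S_R(p)$ is joined to $p$ by a geodesic, so I may choose a $\delta R$-net of $S_R(p)$ of the form $\gamma_i(Rv_i)$ with $v_i\in\Sigma_p^gX$, extendability to length $\eps\ge R$ guaranteeing that these directions are genuinely available. The $\CAT(\uk)$ comparison propagates the net down to all smaller scales: for every $0<r\le R$ the set $\bigcup_i\gamma_i([0,r])$ is $\delta r$-dense in $B_r(p)$. After rescaling this says that the $f$-image of the unit ball of $T_p^gX$ is $\delta$-dense in the unit ball of $T_pX$; as $\delta$ was arbitrary and $f(T_p^gX)$ is closed, $f$ is surjective, whence $T_pX$ is unique and isometric to $T_p^gX$.

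The step I expect to be the main obstacle is the passage from the finite-net embeddings to the global distance-preserving map $f$: one must verify that the embeddings of successively finer nets are mutually compatible in the limit, so that the Arzel\`a--Ascoli limit is a genuine isometric embedding of all of $T_p^gX$ and not merely of a dense countable subcone. The compactness of $\Sigma_p^gX$ proved along the way is precisely what keeps this limiting argument under control.
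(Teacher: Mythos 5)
Your proposal is correct and follows essentially the same route as the paper's own proof: the limit of the doubling condition giving properness of $T_pX$, the finite $\eps$-separated nets of directions with the angle expansion yielding compactness of $\Sigma_p^gX$, the diagonal Arzel\`a--Ascoli construction of the distance-preserving map $f$, convexity and closedness of the image via uniqueness of geodesics in $\CAT(0)$ and properness, and the same $\delta R$-net density argument in $S_R(p)$ for surjectivity under uniform extendability. Even the step you flag as delicate (compatibility of the finite-net embeddings in the limit) is treated at exactly the same level of detail in the paper, which likewise invokes the diagonal argument without further elaboration.
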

The above Lemma obviously applies to spaces satisfying  \eqref{eq:cd+cat}.
We currently don't know if  for such spaces the embeddings $T_p^gX\subset T_pX$  constructed in part  \eqref{geom-tcone-embeds} of the lemma are always onto.
\begin{remark}\label{geod-cone-rcd}


Recall that in  $\CAT(0)$ spaces distance functions to convex sets are convex and therefore  an $\eps$-neighbourhood of a convex set is convex. Therefore, even if $T_p^gX\subset T_pX$ has  measure zero it still inherits the structure of an $\RCD(0,N)$ space as follows. Consider $Y_\eps=U_\eps(T_p^gX)\subset T_pX$ and equip it with the renormalized measure $m_\infty^\eps={m_\infty(B_1(o)\cap Y_\eps)}^{-1}m_\infty|_{Y_\eps}$. Then $(Y_\eps,d_\infty,m_\infty^\eps,o)$ is $\RCD(0,N)$ and as $\eps\to 0$ it pmGH-subconverges to $(T_p^gX,d_\infty, m_\infty^g,o)$ for some (possibly non-unique) limit measure $m_\infty^g$ and this space is $\RCD(0,N)$.  Note however, that even though $T_p^gX$ is a metric cone by construction, it's not clear if $(T_p^gX,d_\infty, m_\infty^g,o)$ is always a volume cone. Therefore we can not conclude that $\Sigma_p^g$ has any natural measure that turns it into and $\RCD$ space. Nevertheless the splitting theorem guarantees that for any $v\in \Sigma_p^g$ it holds that $T_vT_p^gX\cong \R\times T_v\Sigma_p^g X$ and therefore  $T_v^g\Sigma_p^g X$ does inherit a natural structure of an $\RCD(0,N-1)$ space.
\end{remark}

The following can be obtained by adjusting the proof of \cite[Theorem A]{Kramer11}
(see Footnote $5$ of \cite[Section 3]{Kramer11}). 
\begin{lem}\label{lem-kramer}
Let $(X,d)$ be a non-branching $\CAT(\kappa)$ space and $\gamma\co [0,1]\to X$
be a geodesic. Then for all balls $\bar{B}_{r}(\gamma_{t})$, $t\in(0,1)$,
$r<\frac{\pi_{\kappa}}{2}$ with $\gamma_{0},\gamma_{1}\notin\bar{B}_{r}(\gamma_{t})$, for any $s$ such that $\gamma(s)\in B_{r}(\gamma(t))$
the space $\Sigma^g_{\gamma(s)}X\backslash\{\pm\dot{\gamma}(s)\}$
is homotopy equivalent to $\bar{B}_{r}(\gamma_{t})\backslash\gamma((0,1))$.
In particular, $\Sigma^g_{\gamma(s)}X\backslash\{\pm\dot{\gamma}(s)\}$,
$s\in(0,1)$, are homotopy equivalent. 
\end{lem}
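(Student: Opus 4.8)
The plan is to analyze the \emph{direction map} (logarithm) based at $p:=\gamma(s)$ and to transfer the topology of $\bar B_r(\gamma_t)\setminus\gamma((0,1))$ to that of $\Sigma^g_pX\setminus\{\pm\dot\gamma(s)\}$ through it. Since $r<\frac{\pi_\kappa}{2}$, the ball $\bar B_r(\gamma_t)$ is convex and uniquely geodesic, so for every $x\in\bar B_r(\gamma_t)\setminus\{p\}$ there is a unique geodesic $[p,x]$ whose starting direction defines a point $\log_p(x)\in\Sigma^g_pX$. First I would record that $\log_p$ is continuous: if $x_n\to x$ then the comparison angle $\tilde\angle_p(x_n,x)\to 0$, and the $\CAT(\kappa)$ inequality bounds the genuine angle $\angle_p([p,x_n],[p,x])$ by the comparison angle, so the images converge in the angular metric. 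Because $\gamma_0,\gamma_1\notin\bar B_r(\gamma_t)$, the trace $\gamma\cap\bar B_r(\gamma_t)$ is a single subarc $\gamma([a,b])$ meeting $S_r(\gamma_t)$ at its two endpoints and containing $p$ in its interior.

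The key step, and the only place the hypothesis enters decisively, is to identify the preimages of the two poles: for $x\in\bar B_r(\gamma_t)\setminus\{p\}$ one has $\log_p(x)=\dot\gamma(s)$ (resp.\ $-\dot\gamma(s)$) if and only if $x=\gamma(s')$ with $s'>s$ (resp.\ $s'<s$). The forward implication is the nontrivial one: if $[p,x]$ and $\gamma|_{[s,1]}$ start at $p$ with angle zero, then the $\CAT(\kappa)$ law of cosines forces the two geodesics to coincide on an initial segment, and \emph{non-branching} (Proposition~\ref{prop:nonbra}) propagates this coincidence, so $[p,x]\subseteq\gamma$ and hence $x\in\gamma$. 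Consequently $\log_p$ restricts to a continuous surjection of $\bar B_r(\gamma_t)\setminus\gamma((0,1))$ onto the set of realized directions in $\Sigma^g_pX\setminus\{\pm\dot\gamma(s)\}$, and every fibre $\log_p^{-1}(v)$ is a subsegment of the unique geodesic ray from $p$ in direction $v$, hence contractible.

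With these two ingredients I would build the homotopy equivalence. The geodesic contraction $c_\tau(x)=$ the point of $[p,x]$ at distance $\tau\, d(p,x)$ from $p$ is, for $\tau\in(0,1]$, a homotopy of $\bar B_r(\gamma_t)\setminus\gamma((0,1))$ into itself: since a radial segment off $\gamma$ stays off $\gamma$ (every point of $[p,x]\setminus\{p\}$ has the same $\log_p$-image as $x$, so by the previous paragraph it cannot lie on $\gamma$), the complement of $\gamma$ is preserved, the contraction collapses each fibre, and it realizes $\bar B_r(\gamma_t)\setminus\gamma((0,1))$ as homotopy equivalent to a ``link'' of $p$ that $\log_p$ identifies with $\Sigma^g_pX\setminus\{\pm\dot\gamma(s)\}$. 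Conceptually this is the statement that, near $p$, the pair $(\bar B_r(\gamma_t),\gamma)$ behaves like $(T^g_pX,\text{axis})=(C(\Sigma^g_pX),\mathbb R\cdot(\pm\dot\gamma(s)))$, whose complement deformation retracts onto $\Sigma^g_pX\setminus\{\pm\}$. \textbf{The main obstacle} is the rigorous construction of a homotopy inverse, i.e.\ of a continuous section $\Sigma^g_pX\setminus\{\pm\dot\gamma(s)\}\to\bar B_r(\gamma_t)\setminus\gamma((0,1))$: geodesics issuing from $p$ need not all extend to a common radius, and the completion $\Sigma^g_pX$ may contain directions not realized by any geodesic, so one must choose radii continuously and control the passage to limit directions. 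This is precisely the point handled by adapting the argument of \cite[Theorem A]{Kramer11} (cf.\ Footnote~5 there), and I would import that mechanism rather than reprove it.

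Finally, the ``in particular'' is immediate once the homotopy equivalence is established: the space $\bar B_r(\gamma_t)\setminus\gamma((0,1))$ does not depend on $s$, so for any two parameters $s,s'$ with $\gamma(s),\gamma(s')\in B_r(\gamma_t)$ the two links are homotopy equivalent to the same space, hence to each other. To reach \emph{all} $s\in(0,1)$ I would cover $\gamma((0,1))$ by finitely many such balls $B_{r_i}(\gamma_{t_i})$ (each with $\gamma_0,\gamma_1$ outside) so that consecutive balls contain a common parameter of $\gamma$, and chain the resulting homotopy equivalences along the cover.
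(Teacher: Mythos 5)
Your overall route is the paper's: you base everything on the logarithm map $\rho_s$ at $p=\gamma(s)$, identify the preimage of the two poles via non-branching, and then import the refinement of \cite[Theorem A]{Kramer11} (Footnote 5 there) that $\rho_s$ restricts to a homotopy equivalence over the preimage of any open subset of $\Sigma^g_pX$, applied to the open set $\Sigma^g_pX\setminus\{\pm\dot\gamma(s)\}$; chaining overlapping balls for the ``in particular'' is likewise how the paper concludes. However, there is a genuine flaw at the one step you yourself single out as decisive. You assert that if $[p,x]$ and $\gamma|_{[s,1]}$ make angle zero at $p$, then ``the $\CAT(\kappa)$ law of cosines forces the two geodesics to coincide on an initial segment.'' This is false in a general $\CAT(\kappa)$ space: the comparison inequality bounds the Alexandrov angle from \emph{above} by the comparison angle and gives no lower bound, and two distinct geodesics issuing from the same point can perfectly well make angle zero --- this is precisely why the space of geodesic directions is defined as a \emph{quotient} of the set of geodesic germs by the angle-zero relation (see the definition of $S_p^g(X)$ in Section 2). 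Since non-branching only bites on geodesics that already agree on a segment of positive length, your appeal to Proposition~\ref{prop:nonbra} has nothing to propagate, and the identification $\rho_s^{-1}(\{\pm\dot\gamma(s)\})\subset\gamma$ --- hence the lemma --- is unproved as written.

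The correct mechanism, and the place where interiority of $s$ together with $\gamma_0,\gamma_1\notin\bar B_r(\gamma_t)$ really enters, is a concatenation through $p$: pick $s'<s$ with $\gamma(s')\in\bar B_r(\gamma_t)$. If $\xi$ is a geodesic issuing from $p$ with $\angle(\dot\xi(0),\dot\gamma(s))=0$, the triangle inequality for angles gives $\angle(\dot\xi(0),-\dot\gamma(s))\ge\angle(\dot\gamma(s),-\dot\gamma(s))-\angle(\dot\xi(0),\dot\gamma(s))=\pi$, hence equality; in a $\CAT(\kappa)$ space an angle equal to $\pi$ forces the comparison angle to equal $\pi$, so distances add along the concatenation of $\gamma|_{[s',s]}$ with an initial piece of $\xi$, i.e.\ that concatenation is a genuine geodesic. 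It coincides with $\gamma$ on the nontrivial segment $[s',s]$, so non-branching forces $\xi$ to continue along $\gamma$; a continuation argument extends this along all of $\xi$, and since $\xi$ stays in $\bar B_r(\gamma_t)$ while $\gamma_0,\gamma_1$ do not, one gets $\xi([0,1])\subset\gamma((0,1))$, which is exactly the paper's assertion. With this step repaired, your argument coincides with the paper's proof; your geodesic-contraction sketch of a deformation retraction (and the worry about a continuous section) is then superfluous, since the footnote-5 refinement of Kramer's theorem already supplies the homotopy equivalence over the open complement of the poles.
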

\begin{proof}
Since $r<\frac{\pi_{\kappa}}{2}$, all geodesics in $\bar{B}_{r}(\gamma(t))$
are unique. As in \cite{Kramer11} there is a natural "log" map $\rho_{s}:\bar{B}_{r}(\gamma(t))\backslash\{\gamma(s)\}\to\Sigma^g_{\gamma(s)}X$
induced by the angle metric between geodesics starting at $\gamma(s)$
and ending in a point $\bar{B}_{r}(\gamma(t))$. By \cite[Theorem A]{Kramer11}
this map is a homotopy equivalence.  Moreover the proof of \cite[Theorem A]{Kramer11} shows that for any { open}
$U_{s}\subset\Sigma_{\gamma(s)}X$ this map is a homotopy
equivalence between $\rho_{s}^{-1}(U_{s})$ and $U_{s}$, see \cite[Section 3, Footnote 5]{Kramer11}. 

Since $(X,d)$ is non-branching it holds that whenever $\xi:[0,1]\rightarrow \bar{B}_{r}(\gamma(t))$ is a geodesic with $\xi(0)=\gamma(s)$ and
$\dot{\xi}(0)=\pm\dot{\gamma}(s)$ then $\xi([0,1])\subset\gamma((0,1))$.
However, this implies that 
\[ 
\rho_{s}^{-1}(\Sigma^g_{\gamma(s)}X\backslash\{\pm\dot{\gamma}(s)\})=\bar{B}_{r}(\gamma(t))\backslash\gamma((0,1)).
\]
Since $\Sigma^g_{\gamma(s)}X\backslash\{\pm\dot{\gamma}(s)\}$ is open
in $\Sigma^g_{\gamma(s)}X$ the claim is proved.
\end{proof}

\begin{lem}\label{lem-geod-noncotr-2}
Assume $\Sigma$ is a spherical suspension over a $\CAT(1)$ space $Y$ and
denote the vertex points of $\Sigma$ by $\pm v\in\Sigma$. Then either
for all $w\in\Sigma$ there is an opposite direction $-w\in\Sigma$
or there is a $w\in\Sigma$ such that any geodesic between $w$ and
$\pm v$ cannot be extended beyond $\pm v$. In the latter case, both
spaces $\Sigma$ and $\Sigma\backslash\{\pm v\}$ are contractible.
\end{lem}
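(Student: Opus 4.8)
The plan is to exploit the explicit suspension structure $\Sigma=S(Y)=(Y\times[0,\pi])/\!\sim$, where $Y\times\{0\}$ and $Y\times\{\pi\}$ are collapsed to $+v$ and $-v$ respectively, together with the spherical law of cosines
\[
\cos d\big((y,\theta),(y',\theta')\big)=\cos\theta\cos\theta'+\sin\theta\sin\theta'\cos\big(\min(d_Y(y,y'),\pi)\big).
\]
First I would record that $d(+v,-v)=\pi$, so the vertices are always opposite to each other. The key computation is a dichotomy for interior points: for $w=(y,\theta)$ with $0<\theta<\pi$, I claim $w$ has an opposite in $\Sigma$ if and only if $y$ has an opposite in $Y$. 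The ``if'' direction is immediate, since whenever $d_Y(y,y')\ge\pi$ the point $(y',\pi-\theta)$ satisfies $\cos d(w,(y',\pi-\theta))=\cos(\theta+(\pi-\theta))=-1$. For the converse I would fix $w'=(y',\theta')$, set $c=\cos(\min(d_Y(y,y'),\pi))$, and observe that if $y$ has no opposite then $c>-1$ for every $y'$; writing the right-hand side above as $R\cos(\theta'-\phi)$ with $R=\sqrt{\cos^2\theta+c^2\sin^2\theta}<1$ (using $\sin\theta>0$ and $c^2<1$) shows $\cos d(w,w')\ge -R>-1$, so $w$ has no opposite. Consequently the first alternative (every $w\in\Sigma$ has an opposite) holds exactly when every $y\in Y$ has an opposite; if it fails, fix $y_0\in Y$ without opposite.

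Next I would verify the second alternative for $w=(y_0,\theta)$ with any $0<\theta<\pi$. Geodesics from an interior point to a vertex of a spherical suspension are the radial meridians, so the unique geodesics $[w,\pm v]$ are segments of $\{(y_0,s):s\in[0,\pi]\}$, arriving at $+v$ and at $-v$ with direction $y_0$ in the space of directions $\Sigma_{\pm v}\Sigma=Y$. A geodesic through a vertex corresponds to a pair of opposite directions in $Y$, so extending $[w,\pm v]$ beyond $\pm v$ would require an opposite of $y_0$ in $Y$, which does not exist. Hence no geodesic between $w$ and $\pm v$ extends beyond the vertex.

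Finally, for the contractibility statement I would use that $y_0$ having no opposite means $d_Y(y_0,y)<\pi$ for all $y\in Y$. Since $Y$ is complete (as in our applications, where $\Sigma$ is a space of directions and $Y$ its equator), any two points at distance $<\pi$ in the $\CAT(1)$ space $Y$ are joined by a unique geodesic depending continuously on its endpoints, so the geodesic homotopy $G\co Y\times[0,1]\to Y$ sliding each $y$ to $y_0$ along $[y_0,y]$ is a contraction of $Y$ fixing $y_0$. This induces $\tilde H\co \Sigma\times[0,1]\to\Sigma$, $\tilde H((y,\theta),t)=(G(y,t),\theta)$, which descends to the suspension (it sends $\theta=0$ to $+v$ and $\theta=\pi$ to $-v$ regardless of $y$, hence is continuous across the vertices) and deformation retracts $\Sigma$ onto the meridian $M=\{(y_0,\theta):\theta\in[0,\pi]\}\cong[0,\pi]$ and $\Sigma\setminus\{\pm v\}$ onto $M\setminus\{\pm v\}\cong(0,\pi)$; as both targets are contractible, so are $\Sigma$ and $\Sigma\setminus\{\pm v\}$.

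I expect the only genuinely delicate points to be the estimate in the law-of-cosines computation, where establishing $R<1$ is exactly what forces $\cos d(w,w')>-1$ and thereby pins down when opposites exist, and the two continuity checks in the last paragraph: continuity of the contraction $G$, which rests on unique and continuously-varying geodesics of length $<\pi$ in a complete $\CAT(1)$ space, and continuity of the induced homotopy $\tilde H$ across the collapsed vertices. Everything else is bookkeeping with the suspension coordinates.
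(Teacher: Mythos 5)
Your proof is correct, but it takes a genuinely different route from the paper's. The paper argues entirely inside $\Sigma$: if a geodesic from $w$ through a vertex extends as a local geodesic beyond it, then, since local geodesics of length $\le\pi$ in $\CAT(1)$ spaces are minimizing, the extended segment realizes distance $\pi$ and its endpoint $(z',\pi-s)$ is an opposite of $w$; and in the oppositeless case it contracts $\Sigma$ geodesically toward $w$ itself (every point lies in the open $\pi$-ball around $w$), noting that the contraction avoids $\pm v$ for $t<1$ precisely because a geodesic to $w$ passing through a vertex in its interior would extend beyond it. You instead work in suspension coordinates: the law of cosines reduces the whole dichotomy to existence of opposites in the equator $Y$ (interior $(y,\theta)$ has an opposite in $\Sigma$ iff $y$ has one in $Y$), the non-extendibility at the vertices follows from the angle-$\pi$ criterion in $\Sigma_{\pm v}\Sigma\cong Y$, and contractibility comes from contracting $Y$ to $y_0$ and suspending the homotopy -- which preserves $\Sigma\backslash\{\pm v\}$ automatically since it fixes the latitude $\theta$, neatly sidestepping the paper's extra observation about $\Phi_t$ avoiding the vertices. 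Your approach buys an explicit characterization of opposites and an identified retraction target (a meridian), at the cost of coordinate bookkeeping and of needing existence and continuous dependence of geodesics in $Y$ (fine, since $Y$ is a complete $\CAT(1)$ space and $d_Y(y_0,\cdot)<\pi$); the paper's argument is shorter and coordinate-free. One small patch: your estimate $R=\sqrt{\cos^2\theta+c^2\sin^2\theta}<1$ fails when $c=1$ (e.g.\ $y'=y$), but there $\cos d(w,w')=\cos(\theta-\theta')>-1$ directly because $|\theta-\theta'|<\pi$ for $\theta\in(0,\pi)$, and likewise $d(w,\pm v)\in\{\theta,\pi-\theta\}<\pi$, so the conclusion that $w$ has no opposite is unaffected.
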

\begin{proof}
It suffices to assume $w\in \Sigma\backslash\{\pm v\}$. We may parametrize
points in $\Sigma\backslash\{\pm v\}$ by $Y\times(0,\pi)$. Assume $w=(z,s)$
for $s\in(0,\pi)$ and $z\in Y$. Let $\gamma:[0,\pi]\to\Sigma$
be a geodesic between $\tilde{v}$ and $-\tilde{v}$ with $\gamma(s)=(z,s)$
where $\tilde{v}\in\{\pm v\}$. If $\gamma$ can be extended to a
local geodesic $\gamma:[0,\pi+\epsilon] \rightarrow \Sigma$ beyond $-\tilde{v}$ then
there is a point $z'\in Y$ such that $\gamma(\pi+\epsilon)=(z',\pi-\epsilon)$.
Since local geodesics of length $\le \pi$ in $\CAT(1)$ spaces are geodesics  we see that $\gamma\big|_{[\epsilon,\pi+\epsilon]}$
is a minimal geodesic implying 
\[
d((z,\epsilon),(z',\pi-\epsilon)=\pi
\]
and thus $d(z,z')=\pi$. However, this shows that $(z',\pi-s)$ is
an opposite to $w$. 

If $w$ does not have an opposite then the open ball of radius $\pi$ around $w$
contains all points. However, uniqueness of those geodesics implies
that that the geodesic contraction $(\Phi)_{t\in[0,1]}$ towards $w$
induces a contraction of $\Sigma$ to $w$. To see that $\Sigma\backslash\{\pm v\}$
is also contractible observe that $\Phi_{t}(\Sigma)\subset\Sigma\backslash\{\pm v\}$
for $t\in[0,1)$ by choice of $w$, i.e. $\Phi_{t}:\Sigma\backslash\{\pm v\}\to\Sigma\backslash\{\pm v\}$ for any $t<1$.
\end{proof}

\begin{lemma}\label{cat-susp}
Let $\Sigma$ be a $\CAT(1)$ space. Suppose there are
 points  $v, -v\in \Sigma$ such that for any $x\in\Sigma$ it holds that $d(v,x)+d(-v,x)=\pi$.

Then $X$ is a spherical suspension over the $\CAT(1)$ space $Y=\{y\in \Sigma:\quad d(y,v)=d(y,-v)\}$ with vertices $v,-v$.
\end{lemma}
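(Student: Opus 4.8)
The plan is to realize $\Sigma$ as the spherical suspension $S(Y)$ by writing down an explicit isometry built from the \emph{meridians} joining $v$ to $-v$. First I would record the elementary consequences of the hypothesis: taking $x=v$ gives $d(v,-v)=\pi$, and since $d(y,v)+d(y,-v)=\pi$ for every $y$, the set $Y$ is exactly $\{y : d(y,v)=d(y,-v)=\pi/2\}$. Every point $x\notin\{v,-v\}$ lies on a geodesic from $v$ to $-v$: the concatenation of the geodesics $[v,x]$ and $[x,-v]$ (each unique, being of length $<\pi$) has length $d(v,x)+d(x,-v)=\pi=d(v,-v)$, hence is a minimizing geodesic $\gamma_x\colon[0,\pi]\to\Sigma$ through $x$, and it is the unique meridian through $x$. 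I then set $P(x):=\gamma_x(\pi/2)\in Y$ and $\theta(x):=d(v,x)$, and define $\Phi\colon S(Y)\to\Sigma$ by sending the two vertices to $v$ and $-v$ and $(y,t)$ to $\gamma_y(t)$, where $\gamma_y$ is the meridian through $y\in Y$. A short check shows $\Phi$ is a bijection with inverse $x\mapsto(P(x),\theta(x))$.

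The geometric heart is a convexity-plus-rigidity statement for $Y$. Fix $y,y'\in Y$ with $\ell:=d(y,y')<\pi$ and let $q$ lie in the interior of $[y,y']$. Comparing the triangle $\Delta(v,y,y')$ with its comparison triangle on the unit sphere---whose apex $\bar v$ is at distance $\pi/2$ from both base vertices, so that every point of the base stays at distance $\pi/2$ from $\bar v$---the $\CAT(1)$ inequality gives $d(v,q)\le\pi/2$, and likewise $d(-v,q)\le\pi/2$. Since $d(v,q)+d(-v,q)=\pi$ both are equalities, so $q\in Y$; thus $Y$ is convex, and with the induced (equal to the restricted) metric it is itself $\CAT(1)$. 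Moreover, equality in the comparison inequality places us in the rigidity case of $\CAT(1)$ comparison \cite[Proposition 2.9]{BH99}: each of $\Delta(v,y,y')$ and $\Delta(-v,y,y')$ is isometric to its spherical comparison triangle. The spherical law of cosines then forces the angle of $\Delta(v,y,y')$ at $v$ to equal $\ell$, and similarly at $-v$; since $[v,x]$ runs along the meridian $\gamma_y$, the angle at $v$ between $\gamma_y$ and $\gamma_{y'}$ is also $\ell$.

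With this in hand both inequalities defining the suspension metric fall out. For the lower bound I would apply the $\CAT(1)$ angle comparison to $\Delta(v,x,x')$ with $x=\gamma_y(\theta)$ and $x'=\gamma_{y'}(\theta')$: the actual angle at $v$ equals $\ell$ while the comparison angle is at least as large, so the spherical law of cosines yields $\cos d(x,x')\le\cos\theta\cos\theta'+\sin\theta\sin\theta'\cos\ell$, i.e. $d(x,x')\ge d_{S(Y)}(\Phi^{-1}x,\Phi^{-1}x')$. For the matching upper bound I would glue the two rigid spherical triangles $\Delta(\pm v,y,y')$ along the common edge $[y,y']$: because the base $[\bar y,\bar y']$ of the comparison triangles is exactly the intersection of the spherical lune of angle $\ell$ with its equator, the two triangles assemble into that lune, and the lune-geodesic from $\bar x$ to $\bar x'$ (which, by convexity of the lune, crosses the equator inside $[\bar y,\bar y']$ or else stays in a single triangle) transports back to an honest path in $\Sigma$ through the two isometrically embedded triangles, of length $\arccos(\cos\theta\cos\theta'+\sin\theta\sin\theta'\cos\ell)$. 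The degenerate cases, $x\in\{v,-v\}$ and $d(y,y')\ge\pi$, are immediate directly or by continuity, and together these give that $\Phi$ is an isometry onto $\Sigma$.

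The step I expect to be the main obstacle is the passage from the local rigidity (each $\Delta(\pm v,y,y')$ isometric to a spherical triangle) to the global distance formula, and specifically the upper bound: one must check that the geodesic of the assembled lune genuinely crosses the shared edge $[y,y']$ rather than escaping the two triangles, so that it furnishes a legitimate competitor path in $\Sigma$. Securing the rigidity itself, via the equality case of the $\CAT(1)$ comparison, is the other point that requires care.
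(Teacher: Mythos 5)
Your construction is essentially correct and is a genuinely different route from the paper: the paper's entire proof is a one-line reduction to the Lune Lemma of Ballmann--Brin \cite[Lemma 2.5]{Ballmann-Brin}, whereas you rebuild the suspension by hand (meridians through every point, convexity of $Y$ via the equality case of comparison, rigidity of the triangles $\Delta(\pm v,y,y')$ by \cite[Proposition 2.9]{BH99}, then the two-sided distance estimate assembled from the glued lune). What your argument buys is self-containedness --- it amounts to proving the relevant case of the Lune Lemma itself --- at the cost of edge-case bookkeeping that the citation hides. For $\ell=d(y,y')<\pi$ your steps check out: the perimeter of $\Delta(v,y,y')$ is $\pi+\ell<2\pi$, so comparison and its rigidity case apply; the angle at $v$ is $\ell$; the lower bound follows from angle comparison (note that since $\diam\Sigma\le\pi$, at least one of the apexes $v,-v$ gives a triangle $\Delta(\pm v,x,x')$ of perimeter $<2\pi$, and either apex yields the same inequality); and your worry about the lune geodesic crossing the shared edge $[y,y']$ is resolved exactly as you indicate, since the equator of the lune meets it precisely in $[\bar y,\bar y']$ and both embedded triangles contain $[y,y']$ as their common side.

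The genuine gap is the case $d(y,y')=\pi$, which you dismiss as ``immediate directly or by continuity.'' Continuity fails: nothing guarantees points of $Y$ near $y'$ at distance $<\pi$ from $y$. For $\Sigma=\SS^1$ the set $Y$ consists of exactly two points at mutual distance $\pi$, so there is nothing in $Y$ to approximate with. And ``directly'' only covers the same-hemisphere subcases: if $\theta,\theta'\le\pi/2$, the path $y\to x\to x'\to y'$ gives $(\pi/2-\theta)+d(x,x')+(\pi/2-\theta')\ge\pi$, hence $d(x,x')\ge\theta+\theta'$, matching the path through $v$ (symmetrically via $-v$ when both $\ge\pi/2$). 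But in the mixed case $\theta<\pi/2<\theta'$ the triangle inequality only yields $d(x,x')\ge\pi+\theta-\theta'$, strictly weaker than the required $\min(\theta+\theta',\,2\pi-\theta-\theta')$, so the suspension formula is not established there.

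The gap is repairable with one more idea. Assume $\theta<\pi/2<\theta'$ and $\theta+\theta'\le\pi$. From $d(y,y')=\pi$ and $d(y,x)=\pi/2-\theta$ one gets $d(x,y')\ge\pi/2+\theta$; since the concatenation $[y',v]\cup[v,x]$ has length exactly $\pi/2+\theta<\pi$, it is the (unique) geodesic $[y',x]$, so $[y',x]$ leaves $y'$ in the meridian direction towards $v$. That direction is opposite (angle $\pi$) to the direction of $[y',x']$ along $\gamma_{y'}$ towards $-v$; hence the comparison angle at $y'$ in $\Delta(y',x,x')$ equals $\pi$, which forces $d(x,x')=d(x,y')+d(y',x')=\theta+\theta'$, as required. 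The subcase $\theta+\theta'>\pi$ is symmetric, exchanging the roles of $v$ and $-v$ (the geodesic $[y,x']$ then passes through $-v$, giving $d(x,x')=2\pi-\theta-\theta'$). With this patch, your proof is complete and fully independent of \cite{Ballmann-Brin}.
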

\begin{proof}
This is an immediate corollary of the "Lune Lemma" of Ballmann\textendash Brin \cite[Lemma 2.5]{Ballmann-Brin}.
\end{proof}

Next we isolate  the necessary properties of spaces satisfying \eqref{eq:cd+cat} that will be needed for developing their structure theory.
Note that if $(X,d,m)$ satisfies  \eqref{eq:cd+cat} then for an appropriately chosen large $\lambda$ the space $(X,\lambda d,m)$ satisfies $\RCD(-1,N)$ and $\CAT(1)$.
Therefore for the purposes of a structure theory we can always assume that $K=-1$ and $\kappa=1$ in \eqref{eq:cd+cat}.

Let $\mathcal C$ be a class of connected $\CAT(1)$ spaces  satisfying the following properties

\begin{enumerate}[(i)]
\item \label{c-closed-limits}$\mathcal C$ is closed under pointed GH-limits.
\item  \label{c-non-branch} Every $X\in \mathcal C$ is non-branching.
\item There is a non-decreasing function $C\co (0,\infty)\to (1,\infty)$ such that every $X\in \mathcal C$ is $C$-doubling.
\item \label{c-rescale} If $ X\in \CC$  then $\lambda X$ is also in $\CC$ for any $\lambda\ge 1$.
\item \label{c-convex}  If $X\in \CC$ and $p\in X$ then $T_p^gX\in \CC$ as well. 
\item \label{c-cone}If $C(\Sigma)\in \CC$ then $\Sigma\in \CC$ unless $C(\Sigma)\cong \R$.\footnote{$C(\Sigma)\cong \R$ is excluded since in this case $\Sigma $ is not connected.}
\item \label{c-split} If $C(\Sigma)\in \CC$ and if $v,-v\in  \Sigma$ are opposite then $\Sigma$ is a spherical suspension with vertices $v,-v$.
\end{enumerate}

By the previous discussion  an example of such class is given by the class consisting of spaces satisfying  \eqref{eq:cd+cat} with $K=-1,\kappa=1$ and of geodesic spaces of directions to points in such spaces.

Next we investigate geometric and topological properties of any class $\CC$ satisfying the above conditions.

The uniform doubling condition ensures that  $\CC$ is precompact in the pointed GH-topology which in conjunction with \eqref{c-rescale} means that we can talk about   (possibly non-unique) tangent cones at points of $X$ and all these tangent cones belong to $\CC$ as well. The doubling condition also implies that any $X\in \CC$ is proper (i.e. all closed balls are compact) and there is $N\in \N$ such that all $X\in \CC$ have Hausdorff dimension at most $N$.
By Lemma~\ref{geom-vs-blowup-t-cones} we have that for any $p\in X$ the geodesic tangent cone $T_p^gX$ embeds as a closed convex subset into any tangent cone $T_pX$. Since by \eqref{c-convex} $T_p^gX\in \CC$ condition  \eqref{c-cone} implies that the geodesic space of directions $\Sigma_p^gX\in \CC$ as well. 
Let us note here that this property makes the class $\CC$ more convenient to work with than the class of spaces satisfying \eqref{eq:cd+cat} which is not known to be closed under taking geodesic spaces of directions.

If $C(\Sigma)\in \CC$ then $\diam  \Sigma\le \pi$ since $C(\Sigma)$ is non-branching and moreover if $v\in \Sigma$ has an opposite then this opposite is unique.

Condition \eqref{c-split} further implies that in this case $C(\Sigma)$  satisfies the splitting theorem. This in particular applies to $T_p^g X$  for any $X\in \CC$ and any $p\in X$.

In analogy with $\RCD$ spaces given $X\in \CC$ and $m\in \N$ we say that a point $p\in X$ is $m$-regular if \emph{every} tangent cone $T_pX$ is isomorphic to $\R^m$.

Similarly we say that $p$ is geodesically $m$-regular if $T_p^gX\cong \R^m$. We set $\reg_m$ ( $\greg_m$ ) to be the set of all (geodesically) m-regular points and $\reg=\cup_m \reg_m$ ( $\greg=\cup_m \greg_m$) is the set of all (geodesically) regular points.  Note that since every $T_pX\in\CC$ we have that $\reg_m=\varnothing$ for $m>N$.

For $X\in \CC$ we  will call a point $p\in X$ \emph{inner} if every
geodesic $\gamma$ ending at $p$ can be locally extended beyond $p$.

Since all spaces in $\CC$ are of {finite Hausdorff dimension}  repeated application of \eqref{c-split} gives
\begin{lemma}\label{cor-cd-cut-susp}
Suppose $C(\Sigma)\in \CC$ and every point in $\Sigma$ has an opposite. Then $\Sigma\cong \SS^k$ for some $k\le N$.
\end{lemma}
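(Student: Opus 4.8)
\emph{Plan.} The plan is to argue by induction on the Hausdorff dimension $k=\dimh\Sigma$, peeling off one spherical suspension at each step by means of the splitting property \eqref{c-split}. First note that $k$ is finite: since $C(\Sigma)\in\CC$ we have $\dimh C(\Sigma)\le N$, and $\dimh\Sigma=\dimh C(\Sigma)-1\le N-1$. Recall also from the discussion preceding the lemma that $\diam\Sigma\le\pi$ and that whenever a point of $\Sigma$ has an opposite that opposite is unique (this uses non-branching of $C(\Sigma)$, i.e. \eqref{c-non-branch}). If $C(\Sigma)\cong\R$ then $\Sigma\cong\SS^0$ and we are done; this is the base case. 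Otherwise fix any $v\in\Sigma$ and let $-v$ be its unique opposite. By \eqref{c-split}, $\Sigma$ is a spherical suspension $S(Y)$ with vertices $v,-v$ over its equator $Y=\{y\in\Sigma:\ d(y,v)=d(y,-v)=\pi/2\}$.

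The crucial structural step is the identification $T_v^g\Sigma\cong C(Y)$, where $Y$ carries its angular (length) metric. Indeed, the geodesics issuing from the pole $v$ of a spherical suspension are precisely the meridians $t\mapsto(y,t)$, $y\in Y$, and the angle at $v$ between the meridians through $y_1$ and $y_2$ equals $d_Y(y_1,y_2)$. Hence the space of geodesic directions $\Sigma_v^g\Sigma$ is isometric to $Y$ and $T_v^g\Sigma=C(\Sigma_v^g\Sigma)\cong C(Y)$. To return $C(Y)$ to the class I first apply \eqref{c-cone} (legitimate because $C(\Sigma)\not\cong\R$) to obtain $\Sigma\in\CC$, and then \eqref{c-convex} to conclude $C(Y)=T_v^g\Sigma\in\CC$; a further application of \eqref{c-cone} gives $Y\in\CC$ (when $C(Y)\not\cong\R$), and in particular $\diam Y\le\pi$, consistent with the angle computation above.

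Next I would check that the hypotheses pass to $Y$. Every point of $Y$ has an opposite lying in $Y$: writing points of $\Sigma\setminus\{\pm v\}$ as $Y\times(0,\pi)$, the suspension metric gives $d_\Sigma\big((y,\tfrac\pi2),(y',\tfrac\pi2)\big)=\min(d_Y(y,y'),\pi)$ for equatorial points, and the $\Sigma$-opposite of $(y,\tfrac\pi2)$ is forced to be an equatorial point $(y',\tfrac\pi2)$ with $d_Y(y,y')\ge\pi$; thus $y'$ is an opposite of $y$ inside $Y$. Since $\dimh Y=\dimh\Sigma-1=k-1$, the inductive hypothesis yields $Y\cong\SS^{k-1}$, and therefore $\Sigma=S(Y)\cong S(\SS^{k-1})\cong\SS^{k}$. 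As the dimension strictly drops by one at each reduction and stays non-negative, the recursion terminates after finitely many steps at the base case $C(\cdot)\cong\R$; in particular $k$ is a non-negative integer with $k\le N$.

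The \textbf{main obstacle} is the metric identification $T_v^g\Sigma\cong C(Y)$ together with the bookkeeping that the equator $Y$, with its intrinsic angular metric, genuinely re-enters $\CC$ with the opposite property intact; once \eqref{c-cone}, \eqref{c-convex} and \eqref{c-split} are assembled in this order the induction itself is routine. The only other point requiring care is that each peeling relies on non-branching, via \eqref{c-non-branch}, to guarantee uniqueness of opposites, so that the suspension decomposition produced by \eqref{c-split} is unambiguous at every level.
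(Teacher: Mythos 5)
Your proof is correct and takes essentially the same route as the paper: the paper's entire proof is the remark that finiteness of the Hausdorff dimension of spaces in $\CC$ allows repeated application of \eqref{c-split}, which is exactly your induction. The details you supply --- re-entering the class via \eqref{c-cone} and \eqref{c-convex} through the identification $T_v^g\Sigma\cong C(Y)$, the persistence of opposites on the equator, and the dimension drop forcing termination --- are precisely the bookkeeping the paper leaves implicit, and they check out.
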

This immediately yields
\begin{cor}\label{lem-inner-tangent}
Let $X\in \CC$. Suppose $p\in X$ is an inner point. Then  $p\in \greg_k$ for some $k\le N$. 
\end{cor}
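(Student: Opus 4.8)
The plan is to deduce the corollary directly from Lemma~\ref{cor-cd-cut-susp}. For this it suffices to prove that \emph{every} point of the space of geodesic directions $\Sigma_p^gX$ admits an opposite. Once this is known, observe that $T_p^gX=C(\Sigma_p^gX)\in\CC$ by \eqref{c-convex}, so Lemma~\ref{cor-cd-cut-susp} applies to the cone $C(\Sigma_p^gX)$ and yields $\Sigma_p^gX\cong\SS^m$ for some $m$. Consequently $T_p^gX\cong C(\SS^m)\cong\R^{m+1}$, i.e. $p\in\greg_{m+1}$, and the bound $m+1\le N$ is automatic since $T_p^gX\in\CC$ has Hausdorff dimension at most $N$. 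Setting $k=m+1$ gives the claim.

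The first step is to verify the opposite property on the dense subset $S_p^gX\subset\Sigma_p^gX$ of genuine geodesic directions, which is dense because $\Sigma_p^gX$ is by definition the completion of $S_p^gX$. Given $v\in S_p^gX$, represented by a geodesic issuing from $p$, its reversal is a geodesic ending at $p$, and by innerness this reversal extends to a local geodesic beyond $p$. After shrinking, and using that local geodesics of length $\le\pi$ in $\CAT(1)$ spaces are genuine geodesics, we realize $p$ as an interior point of a geodesic; as recorded in the preliminaries, an interior point of a geodesic carries opposite directions, so $v$ has an opposite $-v$ with $\angle(v,-v)=\pi$.

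The second, and main, step is to upgrade this from the dense set $S_p^gX$ to all of $\Sigma_p^gX$, since the hypothesis of Lemma~\ref{cor-cd-cut-susp} demands opposites at every point rather than merely on a dense subset. Here I would exploit that $\Sigma_p^gX$ is compact by Lemma~\ref{geom-vs-blowup-t-cones}\eqref{geom-tcone-embeds} and that $\diam\Sigma_p^gX\le\pi$, as recorded for links of cones in $\CC$. For arbitrary $w\in\Sigma_p^gX$, choose geodesic directions $v_n\to w$ with opposites $-v_n$; by compactness a subsequence of $(-v_n)$ converges to some $u\in\Sigma_p^gX$, and continuity of the distance together with $d(v_n,-v_n)\ge\pi$ gives $d(w,u)=\lim_k d(v_{n_k},-v_{n_k})\ge\pi$, so $u$ is an opposite of $w$. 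This closure-under-limits argument is the only delicate point; once it is in place, the dimension count above completes the proof.
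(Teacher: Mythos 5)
Your proof is correct and follows essentially the same route as the paper, which derives the corollary ``immediately'' from Lemma~\ref{cor-cd-cut-susp}: innerness gives opposites for genuine geodesic directions, every point of $\Sigma_p^gX$ then has an opposite, and the lemma yields $\Sigma_p^gX\cong\SS^m$, hence $T_p^gX\cong\R^{m+1}$ with $m+1\le N$ by the dimension bound on $\CC$. Your explicit handling of the passage from the dense set $S_p^gX$ to its completion via compactness of $\Sigma_p^gX$ and continuity of the distance is exactly the detail the paper leaves implicit, and it is carried out correctly.
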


\begin{prop}\label{reg=reg-g}
Let $X\in\CC$. Then $\Xreg_k \subset \greg_k$ for any $k\ge 0$.

\end{prop}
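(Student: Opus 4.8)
The plan is to reduce the statement to the claim that at a $k$-regular point \emph{every} geodesic direction has an opposite, and then feed this into Lemma~\ref{cor-cd-cut-susp}. First I would fix $p\in\reg_k$ and apply Lemma~\ref{geom-vs-blowup-t-cones}\eqref{geom-tcone-embeds} to realize $T_p^gX$ as a closed convex subcone of the (necessarily Euclidean) tangent cone $T_pX\cong\R^k$; correspondingly $\Sigma:=\Sigma_p^gX$ sits as a closed, geodesically convex subset of $\SS^{k-1}$, and by \eqref{c-convex} together with \eqref{c-cone} both $T_p^gX=C(\Sigma)$ and $\Sigma$ lie in $\CC$. Since a closed convex cone in $\R^k$ is isometric to $\R^k$ only when it is all of $\R^k$, the desired conclusion $p\in\greg_k$ is equivalent to $\Sigma=\SS^{k-1}$. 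The case $k=0$ is trivial (then $\Sigma=\varnothing$), so I assume $k\ge 1$; the task is to show that $\Sigma$ is a round sphere of the correct dimension.

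The first ingredient I would establish is a dichotomy: if it is \emph{not} true that every point of $\Sigma$ has an opposite, then $\Sigma$ is contractible. Recall that $\diam\Sigma\le\pi$ because $C(\Sigma)\in\CC$ is non-branching. If no point of $\Sigma$ has an opposite, then $\diam\Sigma<\pi$, so any two points are joined by a unique geodesic varying continuously with its endpoints, and geodesic contraction to a fixed basepoint contracts $\Sigma$. If instead some $v$ has an opposite $-v$ while some $w$ does not, then property \eqref{c-split} applies to $C(\Sigma)$ and shows that $\Sigma$ is a spherical suspension with vertices $\pm v$; Lemma~\ref{lem-geod-noncotr-2} then yields that $\Sigma$ is contractible, since the only alternative offered by that lemma is that every direction has an opposite. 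Thus in either failure mode $\Sigma$ is contractible.

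The heart of the argument, and the step I expect to be the main obstacle, is to contradict this contractibility using the regularity $p\in\reg_k$. Here I would use the geodesic log map exactly as in the reasoning behind Lemma~\ref{lem-kramer} (Kramer's theorem): the direction map $\rho_p\colon \bar B_r(p)\setminus\{p\}\to\Sigma_p^gX$ is a homotopy equivalence, so a small punctured ball about $p$ is homotopy equivalent to $\Sigma$. On the other hand, $p\in\reg_k$ should force $\bar B_r(p)\setminus\{p\}$ to have the homotopy type of $\SS^{k-1}$, which is \emph{not} contractible for $k\ge 1$. The delicate point is precisely this passage from the metric statement $T_pX\cong\R^k$ to the homotopy type of a punctured neighborhood: it does not follow from Gromov--Hausdorff convergence alone and must be extracted from the contractibility of $\CAT(\uk)$ balls together with the uniqueness of the tangent cone at a regular point. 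Granting it, the two homotopy equivalences force $\Sigma\simeq\SS^{k-1}$, contradicting the contractibility of $\Sigma$ from the previous paragraph; hence every point of $\Sigma$ has an opposite.

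Finally I would invoke Lemma~\ref{cor-cd-cut-susp}: since $C(\Sigma)\in\CC$ and every point of $\Sigma$ has an opposite, $\Sigma\cong\SS^{m}$ for some $m\le N$, whence $T_p^gX\cong\R^{m+1}$ and $p$ is inner. To pin down the dimension I would reuse the homotopy equivalences from the crux step, which give $\SS^{m}\cong\Sigma\simeq\SS^{k-1}$ and therefore $m=k-1$; alternatively, now that $p$ is inner one may extend geodesics and apply Lemma~\ref{geom-vs-blowup-t-cones}\eqref{t-cone-emb-onto} to make the embedding onto, again forcing $T_pX=T_p^gX$. Either way $T_p^gX\cong\R^{k}$, that is $p\in\greg_k$, which is the assertion of the proposition.
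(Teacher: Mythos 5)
There is a genuine gap, and you have in fact pointed at it yourself: the sentence ``Granting it, the two homotopy equivalences force $\Sigma\simeq\SS^{k-1}$'' concedes exactly the step that constitutes essentially the entire content of the paper's proof. Everything else in your proposal is soft: the embedding $\Sigma_p^gX\subset\SS^{k-1}$ via Lemma~\ref{geom-vs-blowup-t-cones}\eqref{geom-tcone-embeds}, the opposite-direction dichotomy via \eqref{c-split}, Lemma~\ref{lem-geod-noncotr-2} and Lemma~\ref{cor-cd-cut-susp}, and the identification of a punctured ball with $\Sigma_p^gX$ up to homotopy via Kramer's theorem are all available and correctly deployed (and your route through opposites is a legitimate repackaging of the paper's shorter route, which instead passes from non-contractibility of $\Sigma_p^gX$ to uniform geodesic extendibility by Lytchak--Schroeder and then applies Lemma~\ref{geom-vs-blowup-t-cones}\eqref{t-cone-emb-onto}). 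But the statement that $p\in\Xreg_k$ forces $B_r(p)\backslash\{p\}$ to be non-contractible --- the paper only needs and only proves $H_{k-1}(B_r(p)\backslash\{p\})\ne 0$, not a full homotopy equivalence with $\SS^{k-1}$ --- cannot be ``extracted from the contractibility of $\CAT(\uk)$ balls together with the uniqueness of the tangent cone,'' as you suggest. Gromov--Hausdorff closeness of $\frac1r B_r(p)$ to a Euclidean ball at each single scale carries no topological information at that scale, and uniqueness of the tangent cone does not by itself rule out that the punctured ball is contractible at every scale.

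What the paper actually does to close this is a quantitative homological induction across dyadic scales $r_i=2^{-i}$: it makes the GH approximations $f_i, g_i$ continuous via Kleiner's center-of-mass construction, pushes the fundamental class of $S_{3/4}(0)\subset\R^k$ into $H_{k-1}(B_{r_i}(p)\backslash\{p\})$ to get classes $[c_i]$, and shows that the radial geodesic contraction $\Phi_{1/2}$ (which is where the $\CAT(\uk)$ condition enters, through well-defined Lipschitz radial contractions and projections to metric spheres) almost commutes with the approximations at consecutive scales, so that the contracted classes $[z_j]$ satisfy $[z_j]=\pm[c_j]$ for all $j\ge i$, the sign ambiguity coming from the fact that the two rescaled approximations agree only up to an element of $O(k)$. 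If $[c_i]$ bounded in some punctured ball, compactness of the bounding chain would force $[z_j]=0$ for large $j$, contradicting $[z_j]=\pm[c_j]\ne 0$. None of this self-similar propagation mechanism is present, even in outline, in your proposal; without it the argument is circular at its key point, since nothing ties the metric regularity of $p$ to the homotopy type of small punctured neighborhoods.
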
 

\begin{proof}
Throughout the proof we'll denote by $\kappa(\delta)$ any function $\kappa\co (0,\infty)\to (0,\infty)$ such that $\kappa(\delta)\to 0$ as $\delta\to 0$.
Let $p\in \Xreg_k $ so that every tangent cone at $p$ is isometric to $\R^k$. This is equivalent to saying that $(\frac{1}{r} X, p)\overset{pGH}{\longrightarrow} (\R^k, 0)$ as $r\to 0$. We need to show that $T_p^gX\cong \R^k$.

It is enough to show that $H_{k-1}(B_r(p)\backslash \{p\})\ne 0$ for all small $r$. By ~\cite[Theorem A]{Kramer11}  this will imply that $\Sigma_p^g$ is not contractible, which by ~\cite[Theorem 1.5]{Lyt-Schr07} implies that all geodesics starting at $p$ are extendible to uniform distance $\eps>0$ 
which by  Lemma~\ref{geom-vs-blowup-t-cones} implies that the tangent cone  $T_pX$ is unique and isometric to $T_p^gX$.

Given $0<r<R$ we'll denote by $A_{r,R}(p)$ the {closed} annulus $\{r\le |xp|\le R\}$.

By assumption we have  that $(\frac{1}{r}X,p)\to (\R^k,0)$ as $r\to 0$. Let $r_i=1/2^i$. Let $f_i\co (\frac{1}{r_i}B_{r_i}(p), p)\to (B_1(0),0)$ be $\eps_i$ GH-approximations with $\eps_i\to 0$. Let $g_i$ be the "inverse" GH-approximations with $|id-f_i\circ g_i|\le \eps_i$ and $|id-g_i\circ f_i|\le \eps_i$.

Since both $\frac{1}{r_i}X$ and $\R^k$ are $\CAT(1/100)$ for large $i$, by a standard center of mass construction (e.g. by Kleiner \cite[{ Section 4}]{kleiner}) $f_i$ can be $\delta_i$-approximated by continuous maps with $\delta_i\to 0$. We will therefore assume that $f_i$ and $g_i$ are continuous to begin with. Note that the fundamental class of the sphere $S_{3/4}(0)$ is the generator of $H_{k-1}(A_{1/2,1}(0))\cong \Z$. Let $[c_i]=[g_{i}(S_{3/4}(0))]$ be its image in $H_{k-1}(B_{r_i}(p)\backslash\{p\})$. We claim that ${H_{k-1}(B_{r_i}(p)\backslash\{p\})\ni} [c_i]\ne 0$ for all large $i$ provided $\eps_i$ is small enough.

Suppose not and for some large $i$ we have that $[c_i]=0\in H_{k-1}(B_{r_i}(p)\backslash\{p\})$. Then $c_i=\partial w$ for some $k$-chain $w$ in $B_{r_i}(p)\backslash \left\{p\right\}$. Since the support of $w$ is compact in $B_{r_i}(p)\backslash \left\{p\right\}$, this implies that $[c_i]=0$ in the homology of some annulus $A_{\delta,r_i}(p)$ with $0<\delta<r_i$.  Applying radial geodesic contraction $\Phi_t$ to both $c_i$ and $w$ it follows that $(F_{\delta})_*(c_i)=0$ in $H_{n-1}(S_\delta(p))$ where $F_\delta$ is the nearest point projection onto $S_\delta(p)$ (it's Lipschitz and in particular continuous since $X$ is $\CAT(\kappa)$). 
Thus for all  $j$ such that $r_j<\delta$ it holds that $[z_j]=\Phi_{1/2^{j-i}}(c_i)=0$ in $H_{k-1}(A_{r_{j+1},r_j}(p))$.

We will show by induction on $j\ge i$ that $[z_j]\ne 0$ which will give a contradiction. In fact we claim that $[z_j]=\pm [c_j]$ in $H_{k-1}(A_{r_{j+1},r_j}(p))$ for all $j\ge i$. 

 This will give a contradiction when $j$ is large enough.

We only need to do the induction step from $j$ to $j+1$. Note that since $f_j\co (\frac{1}{r_i}B_{r_j}(p), p)\to (B_1(0),0)$ is an $\eps_j$-GH-approximation, it follows that the image of any radial geodesic $[px]$ in $\frac{1}{r_i}B_{r_j}(p)$ is $\kappa(\eps_j)$-close to the radial geodesic $[0f_j(x)]$ in $B_1(0)$. (The same is true in $\frac{1}{r_i}B_{r_j}(p)$ by the $\CAT(\kappa)$-condition). Therefore $f_j$ almost commutes with $\Phi_{1/2}$. That is $f_j(\Phi_{1/2}(x))$ is $\kappa(\eps_j)$-close to $\frac{1}{2}f_j(x)$ for $x\in B_{r_j}(p)$. The same holds for $g_i$. That is $g_j(\frac 1 2 y)$ is $\kappa(\eps_j)$-close to $\Phi_{1/2}(g_j(y))$ in $\frac{1}{r_i}B_{r_j}(p)$.

 Therefore if we rescale $B_{r_j}(p)$ and $B_1(0)$ by 2 (recall that $r_{j+1}=r_j/2$) it holds that $f_j(\Phi_{1/2}(x))$ is close to $f_j(x)$ in $2B_{1/2}(0)\cong B_1(0)$. Since close maps are homotopic via straight line homotopy it follows that $f_j(z_{j+1})$ is homologous to  $[S_{3/4}(0)] $ in $H_{k-1}(2A_{1/4,1/2}(0))\cong H_{k-1}(A_{1/2,1}(0))$. Note that the rescaled G-H approximation $f_j\co \frac{2}{r_j}B_{r_j/2}(p)\to 2B_{1/2}(0)\cong B_1(0)$ might be different from $f_{j+1}$ but they are $\kappa(\eps_j)$-close modulo post composition with an element of $O(k)$. An element of $O(k)$ maps $[S_{3/4}(0)]$ to $\pm [S_{3/4}(0)]$. Therefore $[z_{j+1}]=\pm [c_{j+1}]$ and the induction step is proved.

Thus $[z_i]\ne 0$ in $H_{k-1}(B_{r_i}(p)\backslash\{p\})$ which as was explained at the beginning implies the proposition.
\end{proof}

\begin{proposition}\label{inner-iff-non-contr-iff-non-contr-minus-geod} 
Let $X\in \CC$ and assume $\gamma\co (0,1)\to X$
is a non-trivial local geodesic. Then the following statements are equivalent: 
\begin{enumerate}
\item $\Sigma^g_{\gamma(s)}X$ is non-contractible for some $s\in(0,1)$
\item $\Sigma^g_{\gamma(s)}X$ is non-contractible for all $s\in(0,1)$
\item $\Sigma^g_{\gamma(s)}X\backslash\{\pm\dot{\gamma}(s)\}$ is non-contractible
for some  $s\in(0,1)$
\item $\Sigma^g_{\gamma(s)}X\backslash\{\pm\dot{\gamma}(s)\}$ is non-contractible
for all $s\in(0,1)$

\end{enumerate}

\end{proposition}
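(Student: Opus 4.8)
The plan is to split the cycle into one \emph{horizontal} ingredient (the relevant homotopy type does not depend on the parameter $s$) and one \emph{vertical} ingredient (at each fixed $s$, non-contractibility of the link is equivalent to non-contractibility of the punctured link), and then assemble them into the four-fold equivalence.

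First I would establish the horizontal statement for the punctured links. Since $\gamma$ is a local geodesic, it restricts to a genuine minimizing geodesic on a neighborhood of each $s_0\in(0,1)$, so for a suitable small ball $\bar B_r(\gamma_t)$ (with $r<\pi_\kappa/2$ and the sub-geodesic's endpoints outside the ball) Lemma~\ref{lem-kramer} applies and shows that $\Sigma^g_{\gamma(s)}X\setminus\{\pm\dot\gamma(s)\}$ is homotopy equivalent to the fixed set $\bar B_r(\gamma_t)\setminus\gamma((0,1))$ for all $s$ with $\gamma(s)\in B_r(\gamma(s_0))$. Thus the homotopy type of $\Sigma^g_{\gamma(s)}X\setminus\{\pm\dot\gamma(s)\}$ is locally constant in $s$, and since $(0,1)$ is connected it is constant; in particular all these punctured links are simultaneously contractible or simultaneously non-contractible, which yields the equivalence of (3) and (4) at once.

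Next I would prove the vertical statement at a fixed $s$. Writing $v=\dot\gamma(s)$, the directions $v$ and $-v$ are opposite because $\gamma$ is a local geodesic, and since $C(\Sigma^g_{\gamma(s)}X)=T^g_{\gamma(s)}X\in\CC$ by \eqref{c-convex}, property \eqref{c-split} shows that $\Sigma^g_{\gamma(s)}X$ is a spherical suspension with vertices $\pm v$ over a $\CAT(1)$ link. Now I apply the dichotomy of Lemma~\ref{lem-geod-noncotr-2}. If some direction has no opposite, that lemma gives that both $\Sigma^g_{\gamma(s)}X$ and $\Sigma^g_{\gamma(s)}X\setminus\{\pm v\}$ are contractible. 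In the complementary case, where every direction has an opposite, Lemma~\ref{cor-cd-cut-susp} forces $\Sigma^g_{\gamma(s)}X\cong\SS^k$, which is non-contractible, and removing the two suspension vertices leaves a space homotopy equivalent to the equatorial $\SS^{k-1}$, again non-contractible (with the convention $\SS^{-1}=\varnothing$ treated as non-contractible in the degenerate $k=0$ case). Either way the two non-contractibility statements agree, so "$\Sigma^g_{\gamma(s)}X$ non-contractible" $\Leftrightarrow$ "$\Sigma^g_{\gamma(s)}X\setminus\{\pm v\}$ non-contractible" holds at every fixed $s$.

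Finally I would assemble the pieces: applying the vertical equivalence existentially gives (1)$\Leftrightarrow$(3), applying it universally gives (2)$\Leftrightarrow$(4), and together with (3)$\Leftrightarrow$(4) from the first step this closes the chain (1)$\Leftrightarrow$(2)$\Leftrightarrow$(3)$\Leftrightarrow$(4). The step I expect to be the main obstacle is the vertical one: one must verify that the suspension structure is genuinely available—this is precisely where the class axioms \eqref{c-convex} and \eqref{c-split} are used, via the opposite pair $\pm\dot\gamma(s)$—and that the low-dimensional and empty-set edge cases of $\SS^k$ are handled by a consistent convention. The horizontal step is comparatively routine, the only care needed being that Lemma~\ref{lem-kramer}, although stated for global geodesics, localizes to local geodesics and patches over the connected interval $(0,1)$.
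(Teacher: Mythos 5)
Your proposal is correct and takes essentially the same route as the paper: the equivalence $(3)\Leftrightarrow(4)$ via Lemma~\ref{lem-kramer} (localized along the local geodesic), and the fixed-$s$ dichotomy from Lemma~\ref{lem-geod-noncotr-2} combined with Lemma~\ref{cor-cd-cut-susp} --- with the suspension structure supplied by the opposite pair $\pm\dot\gamma(s)$ and property \eqref{c-split} --- are exactly the paper's ingredients. The only cosmetic difference is that the paper deduces $(1)\Leftrightarrow(2)$ directly from Lemma~\ref{lem-kramer}, whereas you recover it by applying the pointwise equivalence existentially and universally and chaining through $(3)\Leftrightarrow(4)$, a logically equivalent assembly.
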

\begin{proof}

Equivalencies  $(1)\Leftrightarrow (2)$, $(3)\Leftrightarrow (4)$  
immediately follow  from Lemma~\ref{lem-kramer}.

{Let us prove $(1)\Leftrightarrow (3)$. By Lemma~\ref{lem-geod-noncotr-2} either both  $\Sigma^g_{\gamma(s)}X$ and $\Sigma^g_{\gamma(s)}X\backslash\{\pm\dot{\gamma}(s)\}$ are contractible or every point in $\Sigma^g_{\gamma(s)}X$ has an opposite. In the latter case  by Corollary \ref{cor-cd-cut-susp} $\Sigma^g_{\gamma(s)}X\cong \SS^k$ for some $k<N$ and hence  $\Sigma^g_{\gamma(s)}X\backslash\{\pm\dot{\gamma}(s)\}$ is homotopy equivalent to $\SS^{k-1}$. In particular both $\Sigma^g_{\gamma(s)}X$ and $\Sigma^g_{\gamma(s)}X\backslash\{\pm\dot{\gamma}(s)\}$ are non-contractible. This establishes  $(1)\Leftrightarrow (3)$.}



\end{proof}

\begin{proposition}\label{prop:reg-pooints}
Let $X\in \CC$. 
\begin{enumerate}[(i)]
\item \label{g-reg-implies-man} Let $p\in \greg_m$ for some $m\in \N$.

Then $m\le N$ and $p$ is inner, $p\in\Xreg_m$ and an open neighbourhood  of $p$ is homeomorphic to $\R^m$. 

\item \label{greg-iff-reg} $\Xreg_m=\greg_m$ for any $m$.
\item\label{reg-iff-inner} $q\in \Xreg$ if and only if $q$ is inner.
\item\label{non-contr-sp-dir} $q\in \Xreg$ if and only if $\Sigma_q^gX$ is non-contractible.

\item \label{man-implies-g-reg} If an open neighborhood $W$ of $p$ is homeomorphic to $\R^m$, then $W\subset \Xreg_m$.

\end{enumerate}

\end{proposition}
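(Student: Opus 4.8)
The plan is to prove part (i) first and then deduce (ii)--(v) by combining it with the results already established. For (i), let $p\in\greg_m$, equivalently $T^g_pX\cong\R^m$ and $\Sigma^g_pX\cong\SS^{m-1}$. Since $T^g_pX$ embeds into any tangent cone $T_pX\in\CC$ and all spaces in $\CC$ have Hausdorff dimension $\le N$, I immediately get $m\le N$. Next, as $\SS^{m-1}$ is non-contractible, I would invoke the Lytchak--Schramm extension theorem \cite[Theorem 1.5]{Lyt-Schr07} to conclude that every geodesic issuing from $p$ extends to a uniform length $\eps>0$; Lemma~\ref{geom-vs-blowup-t-cones}\eqref{t-cone-emb-onto} then forces $T_pX$ to be unique and equal to $T^g_pX\cong\R^m$, i.e. $p\in\Xreg_m$. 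To see that $p$ is inner, I take a geodesic $\gamma$ ending at $p$ with incoming direction $v\in S^g_pX$; since $\Sigma^g_pX\cong\SS^{m-1}$, $v$ has an opposite $-v$, which I realize by an honest geodesic $\sigma$ from $p$ as an Arzel\`a--Ascoli limit (using properness) of extended geodesics in directions $w_i\to -v$ with $w_i\in S^g_pX$. Because $v$ and $-v$ make angle $\pi$, the concatenation of $\gamma$ with $\sigma$ is a local geodesic, so $\gamma$ extends beyond $p$. Finally, for the manifold neighbourhood I would check that the logarithm map $B_r(p)\to B_r(\cvertex)\subset T^g_pX$ is a homeomorphism: it is a continuous bijection (geodesics from $p$ are unique in a small $\CAT(\uk)$-ball and directions depend continuously on endpoints), its inverse $(v,t)\mapsto\gamma_v(t)$ is continuous, and it is onto by extendibility; since $T^g_pX\cong\R^m$ this exhibits a neighbourhood of $p$ homeomorphic to $\R^m$. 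Part (ii) is then immediate: $\greg_m\subset\Xreg_m$ by (i), while $\Xreg_m\subset\greg_m$ is Proposition~\ref{reg=reg-g}.

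For (iii), if $q\in\Xreg$ then $q\in\Xreg_m=\greg_m$ for some $m$ by (ii) and hence is inner by (i); conversely if $q$ is inner then Corollary~\ref{lem-inner-tangent} gives $q\in\greg_k$ and (ii) yields $q\in\Xreg_k\subset\Xreg$. For (iv), one direction is clear: if $q\in\Xreg$ then $\Sigma^g_qX\cong\SS^{m-1}$ by (ii), which is non-contractible. Conversely, assuming $\Sigma^g_qX$ non-contractible, I would again use \cite[Theorem 1.5]{Lyt-Schr07} to get uniform extendibility of geodesics from $q$, and observe that every direction $v\in\Sigma^g_qX$ must have an opposite: otherwise $d(v,w)<\pi$ for all $w$ and the geodesic contraction towards $v$ in the $\CAT(1)$ space $\Sigma^g_qX$ would contract it, contradicting non-contractibility. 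Repeating the realization-and-concatenation argument from (i), $q$ is inner, so $q\in\Xreg$ by (iii).

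For (v), fix $q\in W\cong\R^m$. Since $q$ is a point of a topological $m$-manifold and small metric balls $B_r(q)$ are contractible (geodesic contraction in $\CAT(\uk)$), the local homology $H_*(W,W\setminus\{q\})\cong H_*(\R^m,\R^m\setminus\{0\})$ together with the long exact sequence shows that $B_r(q)\setminus\{q\}$ has the reduced homology of $\SS^{m-1}$; in particular it is non-contractible. By Kramer's theorem \cite[Theorem A]{Kramer11}, $B_r(q)\setminus\{q\}$ is homotopy equivalent to $\Sigma^g_qX$, so $\Sigma^g_qX$ is non-contractible; by (iv) $q\in\Xreg$ and by (ii) $\Sigma^g_qX\cong\SS^{k-1}$ for some $k$. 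Comparing the reduced homology of $\SS^{k-1}$ with that of $\SS^{m-1}$ forces $k=m$, so $q\in\Xreg_m$; as $q\in W$ was arbitrary, $W\subset\Xreg_m$.

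The main obstacle I anticipate is the homeomorphism claim in (i): Kramer's theorem only yields a homotopy equivalence $B_r(p)\setminus\{p\}\simeq\Sigma^g_pX$, and upgrading this to an actual homeomorphism $B_r(p)\cong\R^m$ requires carefully controlling the continuity of the logarithm map and the surjectivity of the exponential map, which is precisely where the $\CAT(\uk)$-comparison geometry, uniqueness of geodesics in small balls, and the uniform extendibility from \cite{Lyt-Schr07} have to be combined. The same delicate points --- continuity of directions under geodesic limits and properness --- underlie the realization of opposite directions used in (i) and (iv), and in all parts the degenerate case $m\le 1$ (where $\Sigma^g_qX$ may be a disconnected $\SS^0$) should be checked separately, though it causes no essential difficulty.
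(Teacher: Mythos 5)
Your proposal follows essentially the same route as the paper's proof: the same external inputs (Kramer's theorem \cite[Theorem A]{Kramer11}, Lytchak--Schroeder \cite[Theorem 1.5]{Lyt-Schr07}, Lemma~\ref{geom-vs-blowup-t-cones}, Proposition~\ref{reg=reg-g}, Corollary~\ref{lem-inner-tangent}), the same order of deduction (part (i) first, then (ii)--(v) formally), and your variations are cosmetic --- realizing the opposite direction as an Arzel\`a--Ascoli limit of geodesics in directions $w_i\to -v$ (the paper gets it from surjectivity of the log map; your version works, provided you add the upper semicontinuity of angles to see that the limit geodesic makes angle $\pi$ with $v$, which is all the concatenation needs), and concluding $k=m$ in (v) by local homology where the paper simply invokes $W\cong\R^m$.

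There is one step where the justification you wrote down would fail: injectivity of the logarithm map in (i) (equivalently, well-definedness of your inverse $(v,t)\mapsto\gamma_v(t)$). ``Geodesics from $p$ are unique in a small $\CAT(\uk)$ ball'' rules out two geodesics between the same pair of endpoints, but it does not rule out two \emph{distinct} geodesics from $p$ of equal length which make angle zero at $p$, i.e.\ define the same point of $\Sigma_p^gX$ --- and in a general $\CAT(\kappa)$ space distinct geodesics from a point can define the same direction (this is exactly why $S_p^g(X)$ is defined as a quotient by the angle-zero relation). The paper excludes this using $\Sigma_p^gX\cong\SS^{m-1}$: if $\gamma_1,\gamma_2$ issue from $p$ in the same direction $v$ with $\gamma_1(\eps')\ne\gamma_2(\eps')$, realize $-v$ by a geodesic $\gamma_3$; both concatenations $\gamma_3\ast\gamma_1$ and $\gamma_3\ast\gamma_2$ are local geodesics of length $<\pi_\kappa$, hence geodesics, contradicting the non-branching property (ii) of $\CC$. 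You have precisely this realization-and-concatenation tool two sentences earlier (you use it for innerness), so the repair is local, but as written ``continuous bijection'' is asserted from a reason that does not deliver it. Relatedly, once injectivity is known you can sidestep proving continuity of the inverse entirely, as the paper does: a continuous bijection between the compact Hausdorff spaces $\bar B_\eps(p)$ and $\bar B_\eps(\cvertex)$ is automatically a homeomorphism, whereas ``directions depend continuously on endpoints'' is false in general $\CAT$ spaces and would force you back through the same antipode/semicontinuity argument.
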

\begin{proof}
Let us first prove part \eqref{g-reg-implies-man}.
Suppose $T_p^gX\cong \R^m$.  Since $T_p^gX\in \CC$ we must have that $m\le N$. By \cite[Theorem A]{Kramer11} there is a small $R>0$ such that $B_R(p)\backslash \{p\}$ is homotopy equivalent to $\SS^{m-1}$.
Since $\SS^{m-1}$ is not contractible, by~\cite[Theorem 1.5]{Lyt-Schr07} there is $0<\eps<\pi_\kappa/2$ such that every geodesic starting at $p$ extends to a geodesic of length $\eps$. 
The natural "logarithm" map $\Phi\co \bar B_\eps(p)\to \bar B_\eps(0)\subset T_p^gX$ is Lipschitz since $X$ is $\CAT(\kappa)$. By the above mentioned result of Lytchak and Schroeder~\cite[Theorem 1.5] {Lyt-Schr07} $\Phi$ is \emph{onto}.  

We also claim that $\Phi$ is one-to-one. If $\Phi$ is not one-to-one then 
there exist two distinct unit speed geodesics $\gamma_1,\gamma_2$ of the same length $\eps'\le \eps$ such that $p=\gamma_1(0)=\gamma_2(0)$, $\gamma_1'(0)=\gamma_2'(0)$ but $\gamma_1(\eps')\ne\gamma_2(\eps')$.

Let $v=\gamma_1'(0)=\gamma_2'(0)$. Since $T_p^gX\cong \R^m$ the space of directions  $T_p^gX$ contains the opposite vector $-v$. Then there is a geodesic $\gamma_3$ of length $\eps$ starting at $p$ in the direction $-v$.
Since  $X$ is $\CAT(\kappa)$ {and $2\eps<\pi_k$, the concatenation of $\gamma_3$ with $\gamma_1$ is a geodesic} and the same is true for $\gamma_2$. This  contradicts the fact that $X$ is non-branching. 
Thus, $\Phi$ is a continuous bijection and since both $\bar B_\eps(p)$ and $\bar B_\eps(0)$ are compact and Hausdorff it's a homeomorphism.

The above argument also shows that $p$ is inner and  all geodesics starting at $p$ are uniformly extendible. Therefore by  Lemma~\ref{geom-vs-blowup-t-cones}  $T_pX$ is unique and  is equal to $T_p^gX$ and hence $p\in \Xreg_m$.
 This proves part \eqref{g-reg-implies-man}.

Part \eqref{greg-iff-reg} follows by part \eqref{g-reg-implies-man} and Proposition~\ref{reg=reg-g}.

Next let us prove part \eqref{reg-iff-inner}.

Suppose $p\in \Xreg_m$. Then $x\in \greg_m$ by part \eqref{greg-iff-reg} and hence $p$ is inner by part \eqref{g-reg-implies-man}.
Conversely, suppose $p$ is inner. Then  $T_p^gX\cong \R^m$ by Lemma~\ref{lem-inner-tangent}. Therefore $p\in \Xreg_m$ by part  \eqref{greg-iff-reg}.

Next, let us prove \eqref{non-contr-sp-dir}. If $\Sigma_p^gX$ is non-contractible then by the above mentioned result of Lytchak and Schroeder ~\cite[Theorem 1.5]{Lyt-Schr07} $p$ is inner. Hence it's regular by part \eqref{reg-iff-inner}. Conversely, if $p\in\reg_m$ then $p\in \greg_m$ by part \eqref{greg-iff-reg}  and hence $\Sigma_p^gX\cong \SS^{m-1}$ which is not contractible.

Lastly, let us  prove part \eqref{man-implies-g-reg}. Suppose an open neighborhood $W$ of $p$ is homeomorphic to $\R^m$.  \\By ~\cite[Lemma 3.1]{Kap-Ket-18} (or by the same argument as above using \cite[Theorem A]{Kramer11}  and ~\cite[Theorem 1.5]{Lyt-Schr07} ) any $q\in W$ is inner. Therefore it's regular and geodesically regular. Hence by part  \eqref{g-reg-implies-man} an open neighbourhood of $q$ is homeomorphic to $\R^{l(q)}$ for some $l(q)\le N$. Since $W$  is homeomorphic to $\R^m$ this can only happen if $l(q)=m$.

\end{proof}

 In ~\cite{kleiner} Kleiner studied the following notion of dimension of  $\CAT$ spaces.
 
 Let $X$ be $\CAT(\kappa)$. Pick any $p\in X$. Let $\Sigma_1=\Sigma^g_pX$. Recall that  $\Sigma_1$ is $\CAT(1)$. For any $v_1\in\Sigma_1$ let $\Sigma_2$ be equal to $\Sigma^g_{v_1}\Sigma_1$. We can iterate this construction. If $\Sigma_k$ is already constructed and is non-empty pick $v_k\in \Sigma_k$ and set
 $\Sigma_{k+1}=\Sigma_{v_k}^g\Sigma_k$.
 Following Kleiner we adopt the following definition
 \begin{definition}
 Let $(X,d)$ be a $\CAT(\kappa)$ space. We define the \emph{splitting dimension} $\dims  X$ of $X$ to be the the $\sup k$ such that there exist a chain of the above form with all $\Sigma_k\ne\varnothing$.\footnote{Kleiner calls this the geometric dimension in ~\cite{kleiner}. We use a different term to avoid a clash of terminology with geometric dimension of $\RCD$ spaces.}
 \end{definition}
 Note that for general  $CAT(\kappa)$ spaces it's possible to have $\dims X=\infty$ even if $X$ is compact. Also, it's obvious that if $X$ is connected and not a point then $\dims X\ge 1$. Kleiner showed that $\dims\le\dimh$. Therefore all elements of $\CC$ have finite splitting dimension. For $X\in \CC$ we define the geometric dimension of $X$ $\dimg X$ to be the largest $k$ such that $\reg_k\ne\varnothing$. If all $\reg_k$ are empty we set $\dimg X=-1$. As we will see later this case can not occur but this is not obvious at the moment.

 Next we study the relations between various notions of dimension in case $X\in \CC$.
 
 We prove the following 
 
 \begin{theorem}\label{c-dim}
Let $X\in \CC$.
Then $\dimt X=\dimg X=\dims X\le \dimh X\le N$.

Moreover $\reg_{k}\ne\varnothing$ for $k=\dims X$.
 \end{theorem}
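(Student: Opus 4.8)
The plan is to reduce everything to the single assertion $\reg_{k}\ne\varnothing$ for $k=\dims X$, and to dispose of the remaining (in)equalities by soft arguments. The outer bounds are immediate: $\dimh X\le N$ is the uniform consequence of the doubling hypothesis recorded above, and $\dims X\le\dimh X$ is Kleiner's inequality \cite{kleiner}. For the comparison of $\dimt$ with the other two I would use Kleiner's dimension theory for $\CAT$ spaces \cite{kleiner} to obtain $\dimt X\le\dims X$; the opposite bound $\dimt X\ge\dimg X$ comes for free once a $\dimg X$-regular point is produced, since by Proposition \ref{prop:reg-pooints}\eqref{g-reg-implies-man} such a point has an open neighbourhood homeomorphic to $\R^{\dimg X}$, whose covering dimension is $\dimg X$. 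Finally $\dimg X\le\dims X$ is elementary: if $p\in\reg_m$ then $p\in\greg_m$ by Proposition \ref{prop:reg-pooints}\eqref{greg-iff-reg}, so $\Sigma^g_pX\cong\SS^{m-1}$, and since the space of directions of a round sphere $\SS^{l}$ at any point is $\SS^{l-1}$, iterating produces a chain $\SS^{m-1},\SS^{m-2},\dots,\SS^{0}$ realizing $\dims X\ge m$; maximizing over $m$ gives $\dimg X\le\dims X$. Thus, once $\reg_{k}\ne\varnothing$ for $k=\dims X$ is known, it forces $\dimg X\ge k$, hence $\dimg X=\dims X$, and then $\dimt X=\dims X$ as well.

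To produce a $k$-regular point of $X$ with $k=\dims X$, I would induct on $k$. The cases $k\le 1$ are elementary (for $k=0$ the space is a point and $\reg_0=X$; a one-dimensional space in $\CC$ has $1$-regular interior points of geodesics, which are caught by the transfer step below). Assume $k\ge2$. From the definition of the splitting dimension one has $\dims X=1+\max_{p\in X}\dims\Sigma^g_pX$, so fix $p$ with $\dims\Sigma^g_pX=k-1$ and set $\Sigma:=\Sigma^g_pX$. As $k-1\ge1$, $\Sigma\not\cong\SS^0$, so $T^g_pX=C(\Sigma)\in\CC$ by the convexity axiom and $\Sigma\in\CC$ by the cone axiom; in particular $\Sigma$ is a connected non-point. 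By the inductive hypothesis $\reg_{k-1}(\Sigma)\ne\varnothing$, and this set is open by Proposition \ref{prop:reg-pooints}\eqref{g-reg-implies-man} and \eqref{man-implies-g-reg}; since genuine geodesic directions are dense in $\Sigma=\Sigma^g_pX$, we may pick $w\in\reg_{k-1}(\Sigma)$ realized by a geodesic $\gamma$ with $\gamma(0)=p$, $\dot\gamma(0)=w$. Regarding $w$ as the unit point $(w,1)\in C(\Sigma)=T^g_pX$ and using the cone splitting of Remark \ref{geod-cone-rcd} together with $T_w\Sigma\cong\R^{k-1}$, we get $T_{(w,1)}T^g_pX\cong\R\times T_w\Sigma\cong\R^{k}$, i.e. $(w,1)\in\reg_k(T^g_pX)$. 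Thus the tangent cone $T^g_pX$ has a $k$-regular point, even though a priori $X$ need not.

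The final and hardest step is to pull this regular point back from the tangent cone into $X$ itself, and this is where I expect the genuine difficulty to lie. Since $(w,1)$ is $k$-regular in $T^g_pX\in\CC$, Proposition \ref{prop:reg-pooints}\eqref{g-reg-implies-man} together with \cite[Theorem A]{Kramer11} shows that a small punctured ball about $(w,1)$ in $T^g_pX$ is homotopy equivalent to $\SS^{k-1}$, hence carries a generator $[\sigma]$ of $\tilde H_{k-1}\cong\Z$. Now $T^g_pX=\lim_i(\tfrac1{t_i}X,p)$, and $(w,1)=\lim_i\tfrac1{t_i}x_i$ for the points $x_i:=\gamma(t_i)\in X$, which converge to $p$. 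I would then run the single-scale version of the homology-propagation argument from the proof of Proposition \ref{reg=reg-g}: after replacing the Gromov\textendash Hausdorff approximations $\tfrac1{t_i}X\leftrightarrow T^g_pX$ near $(w,1)$ by continuous maps through a center-of-mass construction, the composition of an approximation with its quasi-inverse is uniformly close to the identity on the relevant annulus and therefore homotopic to it via the straight-line homotopy available in $\CAT$ spaces at small scale, which avoids the punctures once $i$ is large. Consequently $[\sigma]$ survives, and $\tilde H_{k-1}\big(B_{\rho}(x_i)\setminus\{x_i\}\big)\ne0$ in $X$ for all large $i$ and suitable $\rho=\rho(i)>0$.

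By \cite[Theorem A]{Kramer11} this forces $\Sigma^g_{x_i}X$ to be non-contractible, so $x_i\in\reg X$ by Proposition \ref{prop:reg-pooints}\eqref{non-contr-sp-dir}; writing $\Sigma^g_{x_i}X\cong\SS^{m-1}$ (Lemma \ref{cor-cd-cut-susp}), the surviving class in degree $k-1\ge1$ forces $m=k$, whence $x_i\in\reg_k(X)$. This completes the induction, and with it the equalities $\dimt X=\dimg X=\dims X$. The technical crux is precisely the stability of the top nonzero homology class under the blow-up limit at $p$; all the surrounding steps are bookkeeping built on the structure results already established, chiefly Proposition \ref{prop:reg-pooints}, Remark \ref{geod-cone-rcd}, Lemma \ref{cor-cd-cut-susp}, and Kleiner's dimension theory.
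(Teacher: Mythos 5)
Your reduction steps are consistent with the paper (the outer bounds, $\dimg X\le \dims X$ via the chain $\SS^{m-1},\dots,\SS^0$, and the $\dimt$ bookkeeping all match; the paper gets $\dimt X=\dims X$ from Kleiner's Theorem A, i.e. $\dims X=\sup\{\dimt K: K\subset X \text{ compact}\}$ plus properness). The inductive production of a $k$-regular point, however, has a genuine gap at exactly the step you flag as the crux. First, your identification $T_p^gX=\lim_i(\tfrac{1}{t_i}X,p)$ is precisely what is \emph{not} known: the blow-up limits are the tangent cones $T_pX$, and by Lemma \ref{geom-vs-blowup-t-cones} the geodesic cone $T_p^gX$ only embeds into $T_pX$ as a closed convex subcone; the paper explicitly records after that lemma that surjectivity of this embedding is open (it holds when geodesics from $p$ are uniformly extendible, which you cannot assume at an arbitrary $p$ maximizing $\dims\Sigma_p^gX$). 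Your point $(w,1)$ is $k$-regular \emph{intrinsically} in $T_p^gX$, but this gives no control on the punctured-ball homology of the ambient $T_pX$ near $(w,1)$: the $1$-Lipschitz nearest-point retraction $T_pX\to T_p^gX$ does not preserve punctures (the fiber over $(w,1)$ can be large), so the class $[\sigma]$ may die in $B_\rho((w,1))\setminus\{(w,1)\}\subset T_pX$, and it is the latter space that the rescalings of $X$ approximate. Second, even granting the correct limit, a single-scale transfer cannot yield $\tilde H_{k-1}(B_\rho(x_i)\setminus\{x_i\})\ne 0$: a chain bounding $g_i(\sigma)$ in the punctured ball may approach $x_i$ at scales far below the GH-approximation error, where you have no control. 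The multi-scale induction in Proposition \ref{reg=reg-g} works only because the center point there is assumed regular, so that \emph{every} scale converges to $\R^k$ and the class can be propagated down through all annuli; here nothing is known about small scales at $x_i$ --- indeed regularity of $x_i$ is what you are trying to prove, so the argument is circular at its core.

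The paper avoids this entire difficulty by quoting Kleiner's Theorem B: for a $\CAT(\kappa)$ space of finite splitting dimension, $\dims Y=\max\{k:\ \exists\, p\in Y \text{ with } \tilde H_{k-1}(\Sigma_p^gY)\ne 0\}$. This produces, at an honest point $p\in X$ (no blow-up needed), a space of directions with nontrivial $(k-1)$-st reduced homology for $k=\dims X$; then $\Sigma_p^gX$ is non-contractible, so $p\in\greg_m=\reg_m$ by Proposition \ref{prop:reg-pooints}\eqref{non-contr-sp-dir}, whence $\Sigma_p^gX\cong\SS^{m-1}$ and $\tilde H_{k-1}(\SS^{m-1})\ne 0$ forces $m=k$. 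In other words, Kleiner already did the hard work of detecting the splitting dimension homologically at actual points of $X$; your proposal, which never invokes Theorem B, in effect tries to re-prove this detection by blow-up and homology propagation, and that is the step that does not go through. If you want to salvage your induction, you would need either the open surjectivity statement $T_p^gX=T_pX$ or a replacement for the single-scale transfer; citing \cite[Theorem B]{kleiner} is the replacement.
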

We conjecture that if $X$ satisfies  \eqref{eq:cd+cat} then in the above theorem the second to last inequality is always an equality i.e. $\dims X= \dimh X$ .  
 \begin{proof}
 It's well known that $\dimt\le  \dimh$ for general metric spaces and since $X\in \CC$ we have that $\dimt X\le \dimh X\le N$.
 
 By \cite[Theorem A]{kleiner} it holds that
 \[
 \dims X=\sup \{\dimt K| K\subset X \text{ is compact}\}
 \]
 
 Since $X$ is proper  $\dimt X=\sup \{\dimt K| K\subset X \text{ is compact}\}$ and therefore
\[
  \dims X=\dimt X\le \dimh X\le N
\]
 
 By \cite[Theorem B]{kleiner} for a $\CAT(\kappa)$ space $Y$ with finite splitting dimension it holds that
 \begin{equation}\label{eq:dim}
 \dims Y=\max\{k| \text{ there is } p\in Y \text{ with }  \tilde H_{k-1}(\Sigma_p^gY)\ne 0\}
 \end{equation}
 
 Since if $p\in\reg_n$ then $\Sigma_pX\cong \SS^{n-1}$  and $\tilde H_{n-1}(\SS^{n-1})\ne 0$ this implies that $\dims X\ge \dimg X$.
 
 Now let $k=\dims X$. Then by \eqref{eq:dim}  there is $p\in X$ such that  $\tilde H_{k-1}(\Sigma_p^g X)\ne 0$. Hence $\Sigma_p^g X$ is not contractible and therefore $p\in\greg_m$ for some $m$. Then $\tilde H_{k-1}(\SS^{m-1})\ne 0$ and hence $k=m$. This shows that $\dims X\le \dimg X$.
 
 Moreover this argument also shows that  $\reg_{k}\ne\varnothing$ which finishes the proof of the theorem.
 
 \end{proof}

 In view of Theorem ~\ref{c-dim} from now on for $X\in\CC$ we will not distinguish between $\dimt X$, $\dimg X$ and $\dims  X$ and will refer to any of these numbers as the dimension of $X$ which will denote by $\dim X$.

 \begin{lemma}\label{t-cone-dim}
For any $p\in X$  and any tangent cone $T_pX$ it holds that $\dim T_p^gX\le \dim T_pX\le \dim X$.
 \end{lemma}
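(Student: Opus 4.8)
The plan is to establish the two inequalities $\dim T_p^gX\le\dim T_pX$ and $\dim T_pX\le\dim X$ separately, in each case passing to topological dimension by means of Theorem~\ref{c-dim}. This is legitimate because both cones lie in $\CC$: we have $T_p^gX\in\CC$ by axiom \eqref{c-convex}, while $T_pX\in\CC$ because $\tfrac1{r_i}X=\lambda_iX\in\CC$ for $\lambda_i=1/r_i\ge1$ by \eqref{c-rescale} and $\CC$ is closed under pointed GH limits by \eqref{c-closed-limits}. Consequently $\dim T_p^gX=\dimt T_p^gX$, $\dim T_pX=\dimt T_pX$, and $\dim X=\dimt X$.

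For the first inequality I would use part \eqref{geom-tcone-embeds} of Lemma~\ref{geom-vs-blowup-t-cones}, which embeds $T_p^gX$ isometrically into $T_pX$ as a closed convex subcone. Since the covering dimension of a subspace of a separable metric space never exceeds that of the ambient space, and every member of $\CC$ is proper and hence separable, this gives $\dimt T_p^gX\le\dimt T_pX$, as required.

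For the second inequality, set $k=\dim T_pX=\dimg T_pX$ and choose a $k$-regular point $q\in T_pX$, which exists by the final assertion of Theorem~\ref{c-dim}. By part \eqref{g-reg-implies-man} of Proposition~\ref{prop:reg-pooints}, the point $q$ has an open neighbourhood $U\subset T_pX$ homeomorphic to $\R^k$; fix a closed disc $\bar{D}^k\subset U$. Writing $T_pX=\lim_i\tfrac1{r_i}X$ and using that at small scale both $T_pX$ and $\tfrac1{r_i}X$ are $\CAT(\kappa_i)$ with $\kappa_i\to0$, the center-of-mass regularisation of Kleiner \cite{kleiner} already invoked in the proof of Proposition~\ref{reg=reg-g} produces \emph{continuous} $\delta_i$-GH approximations $g_i\co U\to\tfrac1{r_i}X$ with continuous near-inverses $f_i\co\tfrac1{r_i}X\to U$, $\delta_i\to0$, such that $f_i\circ g_i$ is uniformly close to the inclusion $\bar D^k\hookrightarrow U$. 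The $\CAT(0)$ geodesic homotopy in $T_pX$ keeps $f_i\circ g_i$ inside $U$ for large $i$, so after composing with a radial retraction $U\to\bar D^k$ the resulting self-map $\bar D^k\to\tfrac1{r_i}X\to\bar D^k$ is homotopic to the identity.

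The heart of the matter --- and the step I expect to be the only genuine obstacle --- is the dimension-theoretic fact that a degree-one self-map of $\bar D^k$ cannot factor through a compact space of covering dimension $<k$: on \v{C}ech cohomology the composite $\check H^k(\bar D^k,\partial D^k)\to\check H^k(\tfrac1{r_i}X,A_i)\to\check H^k(\bar D^k,\partial D^k)$ is the identity for a suitable closed $A_i$, forcing $\check H^k(\tfrac1{r_i}X,A_i)\ne0$ and hence $\dimt\tfrac1{r_i}X\ge k$; since rescaling preserves topological dimension, $\dim X=\dimt X\ge k=\dim T_pX$. The advantage of running the argument through the genuine manifold chart at $q$, rather than through the homology of a punctured ball, is that it uses only a single scale and so avoids the scale-by-scale induction of Proposition~\ref{reg=reg-g}; the point requiring care is the verification that the two-sided continuous approximations can be arranged so that the composite is homotopic to the identity on $\bar D^k$, which is precisely where the uniform upper curvature bound at small scales enters.
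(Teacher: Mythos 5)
Your proof is correct in substance, but for the second inequality it takes a genuinely different route from the paper. For the first inequality the two arguments essentially coincide: the paper simply declares $\dim T_p^gX\le \dim T_pX$ obvious from the isometric inclusion $T_p^gX\subset T_pX$ given by Lemma~\ref{geom-vs-blowup-t-cones}, and your detour through subspace monotonicity of covering dimension is a spelled-out version of this (legitimate, since you correctly verify $T_pX\in\CC$ via \eqref{c-rescale} and \eqref{c-closed-limits}, so Theorem~\ref{c-dim} identifies $\dim$ with $\dimt$ for all three spaces). For the second inequality, however, the paper constructs no maps at all: it invokes the metric characterization of the splitting dimension from \cite[Theorem A]{kleiner} --- $\dims Y$ equals the supremum of all $k$ for which there exist $q\in Y$, $R_j\to 0$ and subsets $S_j\subset Y$ with $d(S_j,q)\to 0$ and $\frac{1}{R_j}S_j$ GH-converging to $\bar B_1(0)\subset\R^k$ --- which interfaces directly with the definition of $T_pX$ as a blow-up limit: a $k$-regular point of $T_pX$ (supplied, as in your argument, by Theorem~\ref{c-dim}) yields such subsets inside $T_pX$, and a diagonal argument pulls them back to subsets of $X$ at scales $r_iR_j$ near $p$, giving $\dims X\ge k$ in two lines. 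You replace this with a topological transfer: continuous two-sided GH approximations built by Kleiner's center-of-mass as in Proposition~\ref{reg=reg-g}, geodesic homotopies in the $\CAT(0)$ cone (your observation that the straight-line homotopy stays in $U$ because the $\delta_i$-neighbourhood of the compact disc lies in $U$ and $\CAT(0)$ geodesics stay in balls is the right justification), and the \v{C}ech-cohomological fact that $\check H^k$ of a closed pair vanishes on a compactum of covering dimension $<k$. This is sound; the pair bookkeeping you flag --- working with a closed collar $\{1-\eta\le |x|\le 1\}$ in place of $\partial D^k$ and homotopies moving points by less than $\eta/2$, and restricting $f_i$ to a compact ball containing the image of $g_i$ --- is routine. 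But note that it re-creates by hand exactly the mechanism that \cite[Theorem A]{kleiner} already packages, at the additional cost of the chart at $q$ from Proposition~\ref{prop:reg-pooints}\eqref{g-reg-implies-man} and a degree-type argument. What your version buys is independence from the ``rescaled Euclidean balls'' formulation of Kleiner's theorem (you use only the subspace theorem and the inequality between cohomological and covering dimension, both classical); what the paper's version buys is brevity and the complete avoidance of continuous-map constructions.
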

 \begin{proof}
 The first inequality is obvious since $ T_p^gX\subset T_pX$. The second inequality is an immediate consequence of the definition of a tangent cone and   \cite[Theorem A]{kleiner} which shows that for a general $\CAT(\kappa)$ space $Y$ it holds that
 $\dims Y$ is equal to the supremum of all $k$ such that there exist $q\in Y, R_j\to 0, S_j\subset Y$  such that $d(S_j,q)\to 0$ and  $\frac{1}{R_j}S_j$ Gromov\textendash  Hausdorff converges to $\bar{B}_1(0)\subset \R^k$.
 \end{proof}
 \begin{remark}
We currently don't know any examples where any of the inequalities in the above lemma are strict.
 \end{remark}
\begin{remark}
For spaces satisfying  \eqref{eq:cd+cat} the inequality $\dimg T_pX\le \dimg X$ also follows from lower semicontinuity of  geometric dimension for $\RCD(K,N)$ spaces ~\cite{KitaPOTA}.
\end{remark}

\begin{theorem}\label{th:reg-points}
Let $X\in \CC$ and set $m=\dim X$. Then  
\begin{enumerate}[(i)]
\item \label{R=R_m} $\Xreg=\Xreg_m$.
\item \label{reg-conv-dense-open} $\Xreg$ is dense,  geodesically convex and open. 
\item \label{reg-str-convex}$\Xreg$ is strongly convex in the following sense: if $\gamma(t_0)\in \Xreg$ for some $t_0\in(0,1)$ then $\gamma(t)\in \Xreg$ for all $t\in(0,1)$.


\item \label{geod-non-extend-bry} If  $p\in \Xreg$ and $y\in X \backslash \Xreg$ then no geodesic $\gamma$ between
$p$ and $y$ can be locally  extended beyond $y$.

\item\label{uniform-ext-geod} For any compact set $C\subset \Xreg$ there is $\eps=\eps(C)>0$ such that every geodesic starting in $C$ can be extended to length at least $\eps$.
\end{enumerate}
\end{theorem}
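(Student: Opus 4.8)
The engine of the whole proof is a single \emph{propagation principle}: along a nontrivial local geodesic, regularity of one interior point forces regularity of all of them. Concretely, combine Proposition~\ref{prop:reg-pooints}\eqref{non-contr-sp-dir} (a point $q$ is regular if and only if $\Sigma^g_qX$ is non-contractible) with the equivalence $(1)\Leftrightarrow(2)$ of Proposition~\ref{inner-iff-non-contr-iff-non-contr-minus-geod} (non-contractibility of $\Sigma^g_{\gamma(s)}X$ is independent of $s$) to obtain: if $\gamma$ is a nontrivial local geodesic on an open interval and $\gamma(s_0)\in\Xreg$ for some interior $s_0$, then $\gamma(s)\in\Xreg$ for every interior $s$. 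Part~\eqref{reg-str-convex} is then immediate, being exactly this statement for a geodesic parametrized on $(0,1)$. For part~\eqref{geod-non-extend-bry}, recall that $p\in\Xreg$ is inner by Proposition~\ref{prop:reg-pooints}\eqref{reg-iff-inner}, so I would extend the geodesic $[p,y]$ slightly beyond $p$, making $p$ an interior point; if the geodesic \emph{also} extended beyond $y$, then $y$ would be an interior point of a local geodesic already containing the interior regular point $p$, and the propagation principle would force $y\in\Xreg$, contradicting $y\in X\setminus\Xreg$.

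\textbf{Openness and convexity (part~\eqref{reg-conv-dense-open}).} Openness is formal: each $p\in\Xreg_k$ has a neighborhood homeomorphic to $\R^k$ by Proposition~\ref{prop:reg-pooints}\eqref{g-reg-implies-man}, and that neighborhood lies in $\Xreg_k$ by Proposition~\ref{prop:reg-pooints}\eqref{man-implies-g-reg}; hence the local dimension $x\mapsto k$ (for $x\in\Xreg_k$) is locally constant and $\Xreg=\bigcup_k\Xreg_k$ is open. For geodesic convexity, given $x,y\in\Xreg$ joined by a geodesic, I would use that $x$ is inner to extend slightly beyond $x$, turning $x$ into an interior regular point, and then apply the propagation principle to conclude the whole interior is regular; together with the endpoints this puts the entire geodesic in $\Xreg$.

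\textbf{Density and part~\eqref{R=R_m}.} I would prove the sharper statement that $\reg_m$ itself is dense, which simultaneously yields density in~\eqref{reg-conv-dense-open} and the identity $\Xreg=\Xreg_m$. Set $A=\overline{\reg_m}$; it is nonempty since $\reg_m\ne\varnothing$ by Theorem~\ref{c-dim}. To see $A$ is open, take $x\in A$ and a nearby $y$; choose $p\in\reg_m$ with $d(x,p)$ so small that $d(y,p)<\pi$, so that the unique geodesic $[y,p]$ exists. Extending it beyond the inner point $p$ makes $p$ an interior regular point, so by propagation the whole interior arc is regular; since the local dimension is locally constant along this connected arc and equals $m$ at $p$, the arc lies in $\reg_m$ and accumulates at $y$, giving $y\in A$. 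Thus $A$ is clopen in the connected space $X$, so $A=X$ and $\reg_m$ is dense. Finally, if some $\reg_l$ with $l\ne m$ were nonempty, a point of it would have an open neighborhood contained in $\reg_l$ (disjoint from $\reg_m$), contradicting the density of $\reg_m$; hence $\Xreg=\reg_m=\Xreg_m$.

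\textbf{Uniform extendibility (part~\eqref{uniform-ext-geod}).} Here I would argue by contradiction and compactness. If no uniform $\eps$ worked for the compact set $C\subset\Xreg$, there would be maximal geodesics $\gamma_i$ starting at points $p_i\in C$ with lengths $\ell_i\to 0$ and endpoints $y_i=\gamma_i(\ell_i)$. Since $p_i\in\Xreg$ and $\gamma_i$ cannot be extended beyond $y_i$, part~\eqref{geod-non-extend-bry} (a regular endpoint would be inner, hence admit an extension) forces $y_i\notin\Xreg$. But $d(p_i,y_i)=\ell_i\to 0$ with $p_i$ in the compact set $C$, so along a subsequence $y_i\to p\in C\subset\Xreg$, contradicting the openness of $\Xreg$ established in~\eqref{reg-conv-dense-open}. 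I expect the main obstacle to be the density argument: the propagation principle is purely local, so the work lies in the open--closed argument that globalizes it, and in the observation that the local dimension is constant along the connecting arcs, which is what upgrades ``$\Xreg$ is dense'' to ``$\reg_m$ is dense'' and thereby pins down a single dimension.
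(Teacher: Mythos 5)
Your proposal is correct and takes essentially the same route as the paper: the same propagation principle (Proposition~\ref{prop:reg-pooints}\eqref{non-contr-sp-dir} combined with the equivalence $(1)\Leftrightarrow(2)$ of Proposition~\ref{inner-iff-non-contr-iff-non-contr-minus-geod}) drives parts (ii)--(iv), openness of each $\Xreg_k$ plus connectedness pins down the single dimension $m$, and compactness of $C$ gives (v), exactly as in the paper's proof. The only cosmetic difference is that you globalize density of $\reg_m$ by a clopen argument using short geodesics (and deduce (i) from density plus openness), whereas the paper runs the propagation directly along a geodesic from a fixed regular point to an arbitrary point and compares the two endpoint dimensions; both variants rest on the same lemmas.
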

\begin{proof}
By Theorem~\ref{c-dim} $\Xreg_m$ is non-empty and in Proposition \ref{prop:reg-pooints} we have shown that $\Xreg_k$ is open for any $k$. 

 Let $p$ and $q$ be two distinct points  with $p\in \Xreg_k$ for some $k$. Let $\gamma \co [0,1]\to X$ be a constant speed geodesic with $\gamma(0)=p$ and $\gamma(1)=q$. By Proposition~\ref{prop:reg-pooints} for $t$ close to $0$ it holds that $\Sigma_{\gamma(t)}^gX\cong \SS^{k-1}$ and in particular it is non-contractible. Therefore by Proposition~\ref{inner-iff-non-contr-iff-non-contr-minus-geod}  $\Sigma_{\gamma(t)}^gX$ is non-contractible for all $t\in [0,1)$. Hence $\gamma(t)\in\reg_{m(t)}$ for each $t\in [0,1)$ by Proposition \ref{prop:reg-pooints}.
 Since each $\reg_{m(t)}$ is open and $[0,1)$ is connected this can only happen if $\gamma(t)\in \reg_k$ for all $t\in [0,1)$.  This argument also shows that if $\gamma$ can be locally extended past $q$ then $q\in \reg_k$ as well.
This implies that $\reg$ is dense in $X$ and that a geodesic from a regular point to a point $q\in X\backslash \reg$ can not be locally extended past $q$. This proves parts \eqref{reg-conv-dense-open}, \eqref{reg-str-convex} and \eqref{geod-non-extend-bry}.

 Furthermore the same proof shows that if $\gamma(0)\in \reg_k$ and $\gamma(1)\in \reg_l$ then $k=l$. Thus there is only one $k$   such that $\reg_k\ne\varnothing$. Since we've shown that $\Xreg_m\ne\varnothing$ this $k$ must be equal to $m$. This proves \eqref{R=R_m}.

Finally, part \eqref{uniform-ext-geod} immediately follows from above and compactness of $C$.

\end{proof}
\begin{remark}
Note that once the equivalence of Corollary \ref{inner-iff-non-contr-iff-non-contr-minus-geod} is proven it is possible 
to show that $\Xreg$ is a geodesically convex open smooth manifold with a $C^0$ Riemannian metric using Berestovski\u{\i}'s argument in \cite{berestovskii02}.
As we will see later,  using recent work of Lytchak and Nagano for spaces satisfying  \eqref{eq:cd+cat} this can be improved to show that this Riemannian metric  is $ \BV_0$.
\end{remark}

\section{Boundary}\label{sec:boundary}
In this section we introduce the notion of the boundary of spaces in $\CC$ (in particular for $\RCD$+$\CAT$ spaces) and study its properties.

Let $X\in \CC$ and $p\in X$. Since $T_p^gC\in \CC$,  it is non-branching and therefore $\Sigma_p^gX$ has at most two components and the only way it can have two components is if both are points.

Further, if $\dim T_p^gX=1$ then $T_p^gX$ must be isometric to either $\R$ or $[0,\infty)$. We'll say that $T_p^gX$ has boundary  (equal to $\{0\}$) in the latter case but has no boundary in the former case. 

We'll say that $T_p^gX$ of $\dim> 1$ has boundary if there is $v\in \Sigma_p^gX$ such that $T_{v}^g\Sigma_p^gX$ has boundary. This definition makes sense since  $\dim T_v^g\Sigma^g_pX<\dim T_p^gX\le \dim X$.

 For $X\in \CC$ of $\dim\ge 1$ we define the boundary $\bCAT X$ as the set of all points $p\in X$ such that $T_p^gX$ has boundary. 
 Lastly if $\dim X=0$ (this can only occur if $X=\{pt\}$) we set $\bCAT X=\varnothing$.

Note that this definition is analogous to the definition of the boundary of Alexandrov spaces and to the definition of the boundary of non-collapsed $\RCD(K,N)$ spaces introduced in~\cite{KapMon}. All three definitions agree if 
 $X$ satisfies  \eqref{eq:cd+cat} and $\dim X=N$ (such space is automatically Alexandrov by Corollary~\ref{cor:noncol} and \cite{Kap-Ket-18}).
 
 Obviously, if $p\in\greg$ then $p\notin \bCAT X$. We show that the converse is also true.
\begin{proposition}\label{int-impl-reg}

Let $X\in \CC$. 
Suppose $p\notin \bCAT X$. Then $p\in \greg$.
\end{proposition}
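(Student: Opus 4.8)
The plan is to prove the nontrivial implication (the reverse implication being immediate from the definition of $\bCAT X$) by induction on $\dim X$, reducing everything to a statement about the geodesic space of directions $\Sigma:=\Sigma_p^gX$. Writing $T_p^gX=C(\Sigma)$, the key observation is that by Lemma~\ref{cor-cd-cut-susp} it suffices to show that \emph{every point of $\Sigma$ has an opposite}: once this is known, $\Sigma\cong\SS^{k}$ for some $k\le N$, hence $T_p^gX=C(\Sigma)\cong C(\SS^{k})=\R^{k+1}$ and $p\in\greg_{k+1}$. So the entire argument comes down to producing, for each $v\in\Sigma$, a point at distance $\ge\pi$. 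The low-dimensional cases are handled directly: if $\dim T_p^gX\le 1$ then $T_p^gX$ is a point, $\R$, or $[0,\infty)$, and since $[0,\infty)$ has boundary the hypothesis $p\notin\bCAT X$ forces $T_p^gX\cong\R^{m}$ with $m\le1$, whence $p\in\greg$. This also covers the base case $\dim X=0$.

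For the inductive step, assume $\dim T_p^gX=m\ge2$ and that the proposition holds for all spaces in $\CC$ of dimension $<\dim X$. First I would arrange that \emph{every} point of $\Sigma$ is inner. Since $C(\Sigma)\not\cong\R$, property~\eqref{c-cone} gives $\Sigma\in\CC$, and $\dim\Sigma=\dim T_p^gX-1<\dim X$ (the same dimension drop that makes the inductive definition of $\bCAT$ well-founded). Unwinding the definition of the boundary in dimension $>1$, the hypothesis $p\notin\bCAT X$ says precisely that $T_v^g\Sigma$ has no boundary, i.e. $v\notin\bCAT\Sigma$, for \emph{every} $v\in\Sigma$. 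Applying the induction hypothesis to $\Sigma$ then yields $v\in\greg(\Sigma)$ for all $v\in\Sigma$, and Proposition~\ref{prop:reg-pooints}\eqref{g-reg-implies-man} shows that every such $v$ is an inner point of $\Sigma$.

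The crux is then a short maximal-distance argument. Fix $v\in\Sigma$. Since $C(\Sigma)\in\CC$, the space $\Sigma$ is proper with $\diam\Sigma\le\pi$, so it is compact and the function $w\mapsto d(v,w)$ attains its maximum $R\le\pi$ at some $w_0$. If $R=\pi$ then $w_0$ is an opposite of $v$ and we are done. If $R<\pi$, join $v$ to $w_0$ by the (unique) geodesic $\gamma\co[0,R]\to\Sigma$; because $w_0$ is inner, $\gamma$ extends beyond $w_0$ to a local geodesic of length $R+\eps<\pi$, which is automatically minimizing in a $\CAT(1)$ space. This produces a point at distance $R+\eps>R$ from $v$, contradicting the maximality of $R$. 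Hence $R=\pi$ and every $v\in\Sigma$ has an opposite, and Lemma~\ref{cor-cd-cut-susp} finishes the proof.

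I expect the main obstacle to be the reduction itself: recognizing that ``$T_p^gX$ has no boundary'' should be processed through the inductive boundary definition into ``every geodesic direction is inner,'' and then combining this with the maximal-distance trick to force a farthest point at distance exactly $\pi$. The remaining ingredients—verifying $\Sigma\in\CC$ via~\eqref{c-cone}, the dimension bookkeeping that keeps the induction well-founded, and the degenerate cases $T_p^gX\cong\R$ or a point—are routine but must be checked so that the induction closes.
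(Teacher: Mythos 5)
Your proof is correct, and while it shares the paper's skeleton---induction on dimension, unwinding the inductive definition of the boundary to get $\bCAT\Sigma_p^gX=\varnothing$, and applying the induction hypothesis to $\Sigma=\Sigma_p^gX\in\CC$ (via property \eqref{c-cone}, with the same dimension drop $\dim\Sigma<\dim T_p^gX\le\dim X$) to conclude that every $v\in\Sigma$ is geodesically regular---your endgame is genuinely different. The paper finishes topologically: from Proposition~\ref{prop:reg-pooints}\eqref{g-reg-implies-man} it extracts that each $v$ has a neighborhood homeomorphic to $\R^{d(v)}$, concludes that $\Sigma$ is a closed manifold of dimension $\ge 1$, hence non-contractible, and then invokes the criterion of Proposition~\ref{prop:reg-pooints}\eqref{non-contr-sp-dir} to get $p\in\greg$. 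You instead extract from the same proposition the \emph{inner} property of each $v\in\Sigma$ and run a farthest-point argument: $\Sigma$ is proper with $\diam\Sigma\le\pi$, hence compact; a farthest point $w_0$ from $v$ at distance $R<\pi$ would admit a local extension of $[v,w_0]$ to length $R+\eps\le\pi$, which is minimizing in a $\CAT(1)$ space, contradicting maximality; so $R=\pi$, every point of $\Sigma$ has an opposite, and Lemma~\ref{cor-cd-cut-susp} identifies $\Sigma\cong\SS^k$, whence $T_p^gX\cong\R^{k+1}$ directly. What each buys: the paper's route passes through the non-contractibility criterion (and the fact that closed manifolds are non-contractible), whereas yours bypasses that entirely and pins down the isometry type of $\Sigma$ by purely metric means---incidentally the same extension-past-a-farthest-point trick the paper itself deploys later in Section~\ref{sec-class-c} to show $\rad X<\pi$ forces $\bCAT X\ne\varnothing$; on the other hand, both routes still rest on Proposition~\ref{prop:reg-pooints}\eqref{g-reg-implies-man} (hence on the Kramer and Lytchak--Schroeder machinery), so the overall dependencies coincide. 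Your bookkeeping is also in order: the degenerate cases $T_p^gX\cong\R$ or $[0,\infty)$ handle both the base of the induction and the exclusion in \eqref{c-cone}, and connectivity of $\Sigma$ when $\dim T_p^gX\ge 2$ follows since a disconnected space of directions of a non-branching cone forces $C(\Sigma)\cong\R$.
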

\begin{proof}
We only need to consider the case when $X$ is connected and is not a point. Then $\dims X\ge 1$.

We will prove by induction on $\dims X$ that if  $T_p^gX$ has no boundary then it's isometric to $\R^l$ for some $l\le \dims X$.

The base of induction $\dims T_p^gX=1$ was already discussed above. 

Induction step. Suppose $l>1$  and we've already  proved this for spaces of $\dim <l$. Suppose $p\in X$ and  $\dim T_p^gX=l$. 
Then for any $v\in \Sigma_p^gX$ we have that  $\dim T_v^g\Sigma^g_pX<l$. Further by the definition of boundary $\bCAT\Sigma_p^gX=\varnothing$.
Therefore by the induction assumption  $\Sigma^g_v\Sigma_p^gX$ is isometric to a round sphere of some dimension $d(v)-1\le l$ where $d(v)$ can a priori depend on $v$. 

Now by Proposition~\ref{prop:reg-pooints}
we get that a small neighborhood of $v$ in $\Sigma_p^gX$  is homeomorphic to $\R^ {d(v)}$. Since $v\in \Sigma_k$ was arbitrary this means that   $\Sigma_p^gX$  is a  closed manifold of dimension $\ge 1$.  Therefore $\Sigma_p^gX$is non-contractible. Therefore $p\in \greg$ by Proposition~\ref{prop:reg-pooints}.

\end{proof}

Combining the above proposition with   Proposition~\ref{prop:reg-pooints} we immediately obtain 

\begin{theorem}
Let $X\in \CC$. 

Then a point $p\in X$ belongs to $\bCAT X$ iff $p\in X\backslash \greg$ iff $\Sigma_p^gX$ is contractible. In particular $\bCAT X=X\backslash \greg$ is closed.

\end{theorem}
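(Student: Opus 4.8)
The plan is to combine the three ingredients that the excerpt has already assembled. The statement asserts the equivalence of three conditions on a point $p\in X$: membership in $\bCAT X$, failure of geodesic regularity, and contractibility of $\Sigma_p^gX$. I would organize the proof as a cycle of implications, drawing on Proposition~\ref{int-impl-reg}, Proposition~\ref{prop:reg-pooints}, and the closedness argument at the end.

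First I would establish the equivalence $p\in\bCAT X \iff p\notin\greg$. One direction is immediate from the definitions: if $p\in\greg$, say $T_p^gX\cong\R^l$, then $T_p^gX$ has no boundary (a Euclidean space is not a half-space, and inductively its spaces of directions are round spheres with no boundary), so $p\notin\bCAT X$; this is the observation recorded just before Proposition~\ref{int-impl-reg}. The converse, that $p\notin\bCAT X$ forces $p\in\greg$, is precisely the content of Proposition~\ref{int-impl-reg}, which I may invoke directly. Hence $\bCAT X = X\setminus\greg$ as sets.

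Next I would add the third equivalent condition using Proposition~\ref{prop:reg-pooints}\eqref{non-contr-sp-dir}, which states that $q\in\Xreg$ if and only if $\Sigma_q^gX$ is non-contractible. Since Proposition~\ref{prop:reg-pooints}\eqref{greg-iff-reg} gives $\Xreg_m=\greg_m$ for every $m$, and hence $\Xreg=\greg$, the condition $p\in\greg$ is equivalent to $\Sigma_p^gX$ being non-contractible. Taking the negation, $p\in X\setminus\greg$ if and only if $\Sigma_p^gX$ is contractible, which closes the triangle of equivalences.

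Finally, for the closedness of $\bCAT X$, I would argue that its complement $\greg=\Xreg$ is open. This follows from Proposition~\ref{prop:reg-pooints}\eqref{g-reg-implies-man}: each geodesically regular point $p\in\greg_m$ has an open neighbourhood homeomorphic to $\R^m$, and by part \eqref{man-implies-g-reg} every point of such a neighbourhood lies in $\Xreg_m=\greg_m$; thus $\greg$ is open and $\bCAT X$ is closed. The only step requiring genuine work is the set equality $\bCAT X = X\setminus\greg$, and its substance lives in Proposition~\ref{int-impl-reg}, which has already been proved; everything here is bookkeeping that threads the earlier results together, so I do not anticipate a serious obstacle beyond citing the correct parts.
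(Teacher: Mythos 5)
Your proposal is correct and follows the paper's proof exactly: the paper likewise obtains the theorem by combining Proposition~\ref{int-impl-reg} (for $p\notin\bCAT X\Rightarrow p\in\greg$, with the converse direction noted as immediate from the definition of boundary) with Proposition~\ref{prop:reg-pooints} (parts \eqref{greg-iff-reg} and \eqref{non-contr-sp-dir} for the contractibility equivalence, and parts \eqref{g-reg-implies-man} and \eqref{man-implies-g-reg} for openness of $\greg$). Your write-up merely spells out the bookkeeping that the paper compresses into a single sentence.
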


Next we show that spaces in $\CC$ are topological manifolds with boundary.
\begin{theorem}\label{c-manifold-with-boundary}
Let $X\in \CC$. Then $X$ is homeomorphic to an $m$-dimensional manifold
with boundary with $m=\dim X$. Furthermore, the manifold boundary $\partial X$ is equal to the geometric boundary $\bCAT X$.
\end{theorem}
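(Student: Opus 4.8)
The plan is to argue by induction on $m=\dim X$, treating $\Xreg$ as the prospective manifold interior and $\bCAT X$ as the prospective boundary. By Theorem~\ref{th:reg-points}\eqref{R=R_m} and Proposition~\ref{prop:reg-pooints} every point of $\Xreg=\Xreg_m$ already has a neighbourhood homeomorphic to $\R^m$, so these will be interior points; and by the theorem immediately preceding this one, $\bCAT X=X\setminus\greg=X\setminus\Xreg$ is closed and consists exactly of the points $p$ for which $\Sigma_p^gX$ is contractible. Thus everything reduces to producing, for each $p\in\bCAT X$, a neighbourhood homeomorphic to the closed half-space $\R^m_+$ with $p$ mapped into the bounding hyperplane; granting this, $\Xreg$ is the manifold interior and $\bCAT X$ the manifold boundary, which is precisely the second assertion.

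First I would establish the local model. As in the proof of Proposition~\ref{prop:reg-pooints}\eqref{g-reg-implies-man}, the logarithm map $\Phi\colon\bar B_\eps(p)\to T_p^gX=C(\Sigma_p^gX)$ sending $y$ to $(\dot\gamma(0),d(p,y))$, where $\gamma$ is the geodesic from $p$ to $y$, is continuous and, because $X$ is non-branching (property~\eqref{c-non-branch}), injective; since $\bar B_\eps(p)$ is compact (properness from doubling) and $T_p^gX$ is Hausdorff, $\Phi$ is a homeomorphism onto its image, a compact radial neighbourhood $S$ of the vertex $\cvertex$. Because $\Sigma_p^gX$ is compact (Lemma~\ref{geom-vs-blowup-t-cones}\eqref{geom-tcone-embeds}) and $X$ is proper, the maximal extension length of geodesics issuing from $p$ is a positive, lower semicontinuous function of the direction, hence bounded below by some $\delta>0$; consequently $S\supset\bar B_\delta(\cvertex)$, and since $\Phi$ preserves the distance to $p$ it restricts to a homeomorphism $\bar B_\delta(p)\cong\bar B_\delta(\cvertex)\subset T_p^gX$. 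Thus a neighbourhood of $p$ in $X$ is homeomorphic to a neighbourhood of the cone vertex in $T_p^gX$, and the problem reduces to identifying $T_p^gX=C(\Sigma_p^gX)$ with $\R^m_+=C(D^{m-1})$.

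To run the induction I would strengthen the statement to: every $X\in\CC$ with $\dim X=m$ is a manifold with boundary $\bCAT X$, and every compact contractible $X\in\CC$ of dimension $m$ is homeomorphic to the closed disk $D^m$ with $\bCAT X\cong\SS^{m-1}$. The base cases $m=0,1$ are immediate. For the step, fix $p\in\bCAT X$ and set $\Sigma=\Sigma_p^gX$; this lies in $\CC$ and is contractible precisely because $p\in\bCAT X$. Since $\Xreg$ is dense (Theorem~\ref{th:reg-points}\eqref{reg-conv-dense-open}), the local model forces $\bar B_\delta(\cvertex)\subset T_p^gX$ to contain $m$-regular points, so $\dim T_p^gX=m$ by Lemma~\ref{t-cone-dim} and hence $\dim\Sigma=m-1$. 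By the inductive hypothesis $\Sigma\cong D^{m-1}$, whence $T_p^gX=C(\Sigma)\cong C(D^{m-1})=\R^m_+$ and the local model yields the desired half-space chart, giving clause~(1). To propagate the strengthened clause, if $X$ is compact and contractible then by~(1) it is a compact manifold with boundary $\partial X=\bCAT X$; Lefschetz duality together with contractibility forces $\partial X$ to be a homology $(m-1)$-sphere, and once it is known to be simply connected the topological Poincaré theorem identifies it with $\SS^{m-1}$ and the $h$-cobordism/disk-recognition theorem identifies $X$ with $D^m$.

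I expect the disk recognition to be the main obstacle: proving that the contractible link $\Sigma_p^gX$ is genuinely a disk rather than merely a contractible manifold with boundary. The homological input is routine, but upgrading it needs simple connectivity of $\bCAT\Sigma_p^gX$ and the resolved topological Poincaré conjecture across all dimensions; the simple connectivity is best folded into the same induction (inductively each such boundary is a sphere), so the delicate point is organizing the induction so that the sphere-recognition for boundaries and the disk-recognition for contractible spaces feed each other correctly. Everything else — the log-map homeomorphism, the cone reduction, and the final identification $\partial X=\bCAT X$ — then follows from the structure already established for the class $\CC$.
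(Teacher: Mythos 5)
Your plan breaks at its very first reduction. You want a neighbourhood of a boundary point $p$ to be homeomorphic, via the logarithm map, to a neighbourhood of the vertex $\cvertex$ in $T_p^gX$, and for this you assert that the maximal extension length of geodesics issuing from $p$ is a positive lower semicontinuous function of the direction, ``hence bounded below by some $\delta>0$,'' so that the image of $\bar B_\eps(p)$ contains $\bar B_\delta(\cvertex)$. This is false precisely at the points you need it for. First, $\Sigma_p^gX$ is the metric \emph{completion} of the set of actual geodesic directions, so directions in $\Sigma_p^gX$ need not carry any geodesic at all; second, even on genuine directions the extension length need not be bounded away from $0$. A concrete counterexample satisfying \eqref{eq:cd+cat}: the convex region $X=\{(x_1,x_2)\in\R^2 : x_2\ge x_1^2\}$ with Lebesgue measure, $p$ the origin. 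Here $\Sigma_p^gX\cong[0,\pi]$, the two endpoint directions carry no geodesic, and the maximal geodesic from $p$ in the direction of $(a,a^2)$ ends at $(a,a^2)$ (no continuation direction at $(a,a^2)$ makes angle $\pi$ with the incoming chord), so its length tends to $0$ as $a\to 0$. Consequently the log-map image contains no ball about $\cvertex$ and your ``local model'' never gets off the ground. This is exactly why the paper's Lemma \ref{geom-vs-blowup-t-cones}\eqref{t-cone-emb-onto} carries the hypothesis of uniform extendibility of geodesics at $p$: by Proposition \ref{prop:reg-pooints}\eqref{reg-iff-inner} that hypothesis characterizes \emph{regular} points, and by Theorem \ref{th:reg-points}\eqref{geod-non-extend-bry} it fails at every point of $\bCAT X$.

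Even granting the reduction, your disk-recognition step does not close. Lefschetz duality plus contractibility only makes $\partial X$ a homology $(m-1)$-sphere, and simple connectivity of the boundary is \emph{not} automatic: Mazur's compact contractible $4$-manifolds have non-simply-connected homology $3$-sphere boundaries, so some geometric input beyond ``compact contractible manifold in $\CC$'' is indispensable. Your suggestion to ``fold the simple connectivity into the induction'' does not work as stated, because the inductive hypothesis concerns contractible members of $\CC$, whereas $\partial X$ is a \emph{closed} $(m-1)$-manifold to which that hypothesis says nothing. The paper avoids both obstacles with a different and more elementary device: instead of charting from $p$ itself, it charts from a nearby regular point $y\in\Xreg_m\cap B_{R/2}(p)$, where geodesics \emph{are} uniformly extendible. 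For $z$ in the sphere $S_r(y)\cong\SS^{m-1}$ one takes the maximal geodesic $\gamma_z$ from $y$ through $z$, of length $f(z)$; non-branching, closedness of $\bCAT X$ and Theorem \ref{th:reg-points}\eqref{geod-non-extend-bry} give continuity of $f$ near the direction of $p$, and then $\Psi(z,t)=\gamma_z(tf(z))$, $t\in(\tfrac12,1]$, is the desired half-space chart, with $\partial X=\bCAT X$ falling out of Proposition \ref{prop:reg-pooints}\eqref{g-reg-implies-man} and \eqref{man-implies-g-reg}. No induction on dimension, no duality, no Poincar\'e or $h$-cobordism theorem is needed; the same radial map later also proves the Sphere Theorem \ref{sphere-thm-c}, which is the correct replacement for the disk recognition you were attempting.
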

\begin{proof}
By  Theorem~\ref{th:reg-points}  we know that $\greg \subset X$ is open and it is a connected manifold of dimension $m=\dim X$. Thus we only need to understand the topology of $X$ near boundary points.

Let $p\in\bCAT X$. We will show that  it admits an open neighborhood $U$  homeomorphic to $\mathbb{R}_{+}^{m}$. Recall that $X$ is $\CAT(1)$. Let $0<R<\pi/10$. Since $\greg$ is dense there is  $y\in\Xreg_m\cap B_{\frac{R}{2}}(p)$. Since  $0<R<\pi/10$ all geodesics in $\bar{B}_{R}(y)$ are unique. Furthermore, by   Proposition~\ref{prop:reg-pooints} there is $0<r<\min\{R/10, d(y,\bCAT X)/10\}$ such that the metric sphere $S_r(y)$ is homeomorphic to $\SS^{m-1}$.

By Proposition~\ref{prop:reg-pooints}  we know that any geodesic ending in a regular point can be locally extended past  that point and by Theorem~\ref{th:reg-points}  a geodesic starting at a regular point can not be locally extended past a boundary point. Further recall that in a $\CAT(1)$ space local geodesics of length $<\pi$ are geodesics. For any $z\in S_r(y)$ let $\gamma_z \co [0, f(z)]\to  B_R(y)$ be a maximal unit speed geodesic starting at $y$ and passing through $z$. Since $X$ is non-branching such $\gamma_z$ is unique. By above $\gamma(f(z))\in \bCAT X$ or $\gamma(f(z))\in S_R(y)$. 

Let $\Phi(z)=\gamma_z({ f(z)})$. By above if $f(z)<R$ then $\Phi(z)\in\bCAT X$. Let {$z_0=[y,p]\cap S_r(y)$, $ p\in \bCAT X$}. By construction $f(z_0)<{{R/2<}}R$.

{\bf Claim.} $f$ is continuous near $z_0$.

It's enough to prove continuity at $z_0$ since it will imply that $f(z)<R$ and hence $\Phi(z)\in \bCAT X$ for all $z$ close to $z_0$.

Suppose there is $z_i\in S_r(y)$ converging to $z_0$ such that $f(z_i)\to l<f(z_0)<{ R}$. Then the geodesics $[y,\Phi(z_i)]$ subconverge to a geodesic $[y,q]$ of length $l$ starting at $y$ and passing through $z_0$. By uniqueness and non-branching of geodesics $[y,q]$ is a part of the geodesic $[y,\Phi(z_0)]$.  Since $\bCAT X$ is closed we must have that $q\in \bCAT X$. But $q$ is an interior point of $[y,\Phi(z_0)]$. This is impossible by { Theorem \ref{th:reg-points} (iv)}. This is a contradiction and hence $f$ is lower semicontinuous at $z_0$.
Now suppose there is $z_i\in S_r(y)$ converging to $z_0$ such that $f(z_i)\to l>f(z_0)$. Then again $[y,\Phi(z_i)]$ subconverge to a geodesic $[y,q]$ of length $l$ and since $l>f(z_0)$ we must have that $z_0$ is an interior point of  $[y,q]$. This again is impossible since $p=\Phi(z_0)\in \bCAT X$. Thus $f$ is upper semicontinuous and hence continuous at $p$.  This finishes the proof of the Claim.

The claim immediately implies that $\Phi$ is continuous near $z_0$. Since  geodesics of length less than $\pi$ in a $\CAT(1)$ space are unique $\Phi$ is one-to-one near $z_0$. 

Next, by the $CAT(\kappa)$-condition the map $\Phi^{-1}$ is continuous (in fact,  Lipschitz) on $B_\eps(p)\cap \bCAT X$ for all small $\eps$. Therefore $\Phi$ is a homeomorphism from a small  neighborhood $U$ of $z_0$ in $S_r(y)$ to a small  neighborhood of $p$ in $\bCAT X$. Furthermore,
the map $\Psi:U\times \red{(\frac 1 2,1]}\to X$ defined by 
\[
\Psi(z,t)=\gamma_{z}(tf(z))
\]
is a homeomorphism onto a neighborhood of $p$  in $X$. This proves that $X$ is an $m$-manifold with boundary.

Let us verify that $\bCAT X=\partial X$. The inclusion $\partial X\subset \bCAT X$ follows from Proposition~\ref{prop:reg-pooints}\eqref{g-reg-implies-man}    since regular points have open neighborhoods homeomorphic to $\R^m$.
The inclusion $\bCAT X\subset \partial X$ follows  by Proposition~\ref{prop:reg-pooints}\eqref{man-implies-g-reg}   which says that if $p$ is in the manifold interior of $X$ then it is regular and hence does not lie in $\bCAT X$.

\end{proof}
\begin{corollary}\label{rcd+cat-imply-mnfld-bry}
Let $X$ satisfy  \eqref{eq:cd+cat} and set $n=\dim X$. Then $X$ is a  $n$-dimensional manifold with boundary and $\partial X=\bCAT X$.
\end{corollary}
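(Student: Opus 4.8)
The plan is to obtain the corollary as a direct specialization of Theorem~\ref{c-manifold-with-boundary}, so that the only thing requiring argument is that a space satisfying \eqref{eq:cd+cat} belongs, after a harmless normalization, to the class $\CC$, and that this normalization leaves both the manifold structure and the geometric boundary untouched.

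First I would normalize the curvature bounds. As noted in the excerpt, if $(X,d,m)$ satisfies \eqref{eq:cd+cat} then for sufficiently large $\lambda$ the rescaled space $(X,\lambda d,m)$ satisfies $\RCD(-1,N)$ and $\CAT(1)$. Multiplying the metric by a positive constant is a homeomorphism, so it preserves the property of being a topological manifold with boundary and preserves the manifold boundary $\partial X$. Likewise $\bCAT X$ is defined inductively through the geodesic tangent cones $T^g_pX$, which only change by isometry under rescaling; hence $\bCAT X$ is scale-invariant. It therefore suffices to treat the normalized case $K=-1$, $\kappa=1$.

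Next I would invoke the observation, established in the discussion preceding Theorem~\ref{c-manifold-with-boundary}, that the class $\CC$ consisting of all spaces satisfying \eqref{eq:cd+cat} with $K=-1,\kappa=1$ together with the geodesic spaces of directions at their points satisfies the closure axioms \eqref{c-closed-limits}--\eqref{c-split}. The essential inputs are: connectedness and the geodesic property (from $\RCD(-1,N)$ with $N<\infty$), non-branching (Proposition~\ref{prop:nonbra}), uniform doubling and properness (from the $\RCD(-1,N)$ bound), mGH-stability of the combined condition, and the splitting theorem for the geodesic tangent cones (Remark~\ref{geod-cone-rcd}). Consequently the normalized $X$ lies in $\CC$.

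Finally, Theorem~\ref{c-manifold-with-boundary} applied to $X\in\CC$ yields that $X$ is homeomorphic to a topological manifold with boundary of dimension $m=\dim X$ and that $\partial X=\bCAT X$; by Theorem~\ref{c-dim} one has $\dim X=\dimg X=n$, so $m=n$. Reverting the rescaling, which affects neither the manifold structure nor $\bCAT X$, gives the statement for the original $X$. I expect no genuine obstacle here: all the substance sits inside Theorem~\ref{c-manifold-with-boundary}, and the only points demanding care are verifying membership in $\CC$ and recording the scale-invariance of $\bCAT X$.
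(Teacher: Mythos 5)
Your proposal is correct and coincides with the paper's (implicit) argument: the corollary is stated as a direct specialization of Theorem~\ref{c-manifold-with-boundary}, using the fact recorded after the axioms \eqref{c-closed-limits}--\eqref{c-split} that spaces satisfying \eqref{eq:cd+cat} normalized to $K=-1$, $\kappa=1$ (together with their geodesic spaces of directions) form a class $\CC$, with $\dim X=\dimg X=n$ supplied by Theorem~\ref{c-dim}. Your added remarks on the scale-invariance of $\bCAT X$ and of the manifold structure under rescaling are correct bookkeeping that the paper leaves tacit.
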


\begin{theorem}[Sphere Theorem] \label{sphere-thm-c}
Suppose $\Sigma$ and $X=C(\Sigma)$ lie in $\CC$. Let  $m+1=\dim X$. Then the following dichotomy holds.

 If $\bCAT \Sigma=\varnothing$ then $\Sigma\cong \SS^m$.

If $\bCAT \Sigma\ne \varnothing$ then $\Sigma$ is homeomorphic to the closed disk $\bar D^m$.

\end{theorem}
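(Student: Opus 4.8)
The proof is governed by the behaviour of the cone at its vertex $\cvertex$. Since $X=C(\Sigma)$ is a Euclidean cone over the $\CAT(1)$ space $\Sigma$ and $\diam\Sigma\le\pi$, we have $\Sigma^g_{\cvertex}X=\Sigma$ and $T^g_{\cvertex}X=C(\Sigma)=X$. Hence, as soon as $\dim X=m+1\ge 2$, the inductive definition of the geometric boundary gives that $\cvertex\in\bCAT X$ if and only if there is $v\in\Sigma$ with $T^g_v\Sigma$ having boundary, i.e. if and only if $\bCAT\Sigma\ne\varnothing$; by the characterization $\bCAT X=X\setminus\greg$ (with $\Sigma^g_pX$ contractible exactly when $p\in\bCAT X$) this is in turn equivalent to $\Sigma$ being contractible. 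Thus the two alternatives correspond precisely to $\cvertex$ being a regular point of $X$ (when $\bCAT\Sigma=\varnothing$) or a boundary point (when $\bCAT\Sigma\ne\varnothing$); I treat these in turn and dispose of the degenerate case $m=0$, where $\Sigma$ is a single point, by inspection.

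Suppose first that $\bCAT\Sigma=\varnothing$. Then $\cvertex\notin\bCAT X$, so by Proposition~\ref{int-impl-reg} the vertex is geodesically regular, and since $\dim X=m+1$, Proposition~\ref{prop:reg-pooints}\eqref{g-reg-implies-man} forces $\cvertex\in\greg_{m+1}$ with $T^g_{\cvertex}X\cong\R^{m+1}$. But $T^g_{\cvertex}X=C(\Sigma)$, and a Euclidean cone over a space of diameter $\le\pi$ is isometric to $\R^{m+1}$ precisely when its link is the round sphere $\SS^m$. Therefore $\Sigma\cong\SS^m$, which is the first alternative.

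Now suppose $\bCAT\Sigma\ne\varnothing$, so that $\cvertex\in\partial X$. By Theorem~\ref{c-manifold-with-boundary}, $X$ is an $(m+1)$--manifold with boundary; moreover $\Sigma\in\CC$ is itself, by the same theorem, a compact $m$--manifold with nonempty boundary $\partial\Sigma=\bCAT\Sigma$, and it is \emph{contractible} because $\Sigma=\Sigma^g_{\cvertex}X$ and $\cvertex\in\bCAT X$. To pin down its homeomorphism type I would first identify $\partial\Sigma$. A direct check using the cone splitting $T^g_{(\xi,t)}X\cong\R\times T^g_\xi\Sigma$ at non--vertex points shows that a point $(\xi,t)$ with $t>0$ lies in $\bCAT X$ iff $\xi\in\bCAT\Sigma$, so the manifold boundary of $X$ is the Euclidean cone $\partial X=C(\partial\Sigma)$. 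Being the boundary of $X$, the space $\partial X$ is a topological $m$--manifold without boundary; removing the cone point gives $\partial X\setminus\{\cvertex\}=\partial\Sigma\times(0,\infty)\simeq\partial\Sigma$, while a small deleted Euclidean neighborhood of the interior manifold point $\cvertex$ is homotopy equivalent to $\SS^{m-1}$. Hence $\partial\Sigma$ is a homotopy $(m-1)$--sphere, and by the (now resolved) topological Poincar\'e theorem $\partial\Sigma\cong\SS^{m-1}$. Finally, a compact contractible $m$--manifold with boundary $\SS^{m-1}$ is homeomorphic to $\bar D^m$: capping the boundary with a disk produces a simply connected homology $m$--sphere, hence a genuine sphere, and excising the disk again returns $\bar D^m$ via a Schoenflies--type argument. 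The low--dimensional cases $m\le 3$ may instead be read off from the classification of manifolds, and alternatively one can organize the whole of Case~2 as an induction on $m$, using the Sphere Theorem in lower dimensions to recognize the links $T^g_v\Sigma\cong C(\bar D^{m-1})$ along $\partial\Sigma$.

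The first case is immediate; essentially all the work lies in the second, and within it the genuine obstacle is the topological recognition step --- upgrading ``a contractible manifold with boundary whose boundary is a homotopy sphere'' to ``homeomorphic to a ball''. This is exactly where the resolved Poincar\'e conjecture, together with a flatness/Schoenflies input, enters, and it cannot be avoided by soft local--homology arguments since both a boundary point of $\R^{m+1}_+$ and the vertex of $C(\Sigma)$ for contractible $\Sigma$ have trivial local homology. The remaining care is in the degenerate vertex situations and the low--dimensional base cases, which should be verified by hand.
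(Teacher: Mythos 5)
Your proof is correct in outline, but it takes a genuinely different route from the paper's, and your closing claim that the deep topological input ``cannot be avoided'' is refuted by the paper itself. The first alternative you handle exactly as the paper does ($\cvertex\notin\bCAT X$, hence $\cvertex\in\greg$ by Proposition~\ref{int-impl-reg}, hence $C(\Sigma)=T^g_{\cvertex}X\cong\R^{m+1}$ and $\Sigma\cong\SS^m$). For the second alternative the paper argues geometrically rather than topologically: if $\diam\Sigma=\pi$ it reduces to $\diam\Sigma<\pi$ via the suspension structure (property \eqref{c-split} of $\CC$, inducting since a suspension of a disk is a disk); then geodesics from a regular point $p\in\greg_m$ are unique, $\Sigma=B_{R-\eps}(p)$ with $R<\pi$, every maximal geodesic from $p$ terminates on $\bCAT\Sigma$ by Theorem~\ref{th:reg-points}\eqref{geod-non-extend-bry}, the endpoint map $\Phi\co S_r(p)\cong\SS^{m-1}\to\bCAT\Sigma$ and the length function $f$ are continuous exactly as in the proof of Theorem~\ref{c-manifold-with-boundary}, and the radial map $\Psi(t,z)=\gamma_z(tf(z)/r)$ is an \emph{explicit} homeomorphism $\bar D^m\to\Sigma$. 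This uses only non-branching, uniqueness of short geodesics, and the convexity/inextendability of $\reg$ --- no Poincar\'e conjecture, no Schoenflies, uniformly in all dimensions. Your route treats $\Sigma$ as an abstract compact contractible manifold with boundary and pays for that abstraction with the topological Poincar\'e theorem invoked twice (Perelman and Freedman in low dimensions) plus Brown's Schoenflies theorem; what it buys is independence from the star-shapedness of $\Sigma$, but in the present setting that geometry is available for free, so the paper's argument is strictly more elementary and, unlike yours, produces the homeomorphism concretely.

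Beyond the heaviness, one step of yours has a genuine gap as written: from ``$\partial X\setminus\{\cvertex\}\simeq\partial\Sigma$'' and ``a small deleted Euclidean neighborhood of $\cvertex$ is homotopy equivalent to $\SS^{m-1}$'' you conclude that $\partial\Sigma$ is a homotopy $(m-1)$-sphere, but deleted neighborhoods of a manifold point of different shapes need not be homotopy equivalent --- this is precisely the double-suspension phenomenon, where the link of a manifold cone point fails to be a sphere. The fix is standard but must be said: $\partial\Sigma$ is an honest closed $(m-1)$-manifold because $\bCAT\Sigma=\partial\Sigma$ by Theorem~\ref{c-manifold-with-boundary} applied to $\Sigma$ itself (manifoldness of $\partial\Sigma\times(0,\infty)$ alone would \emph{not} give this, by Bing's examples); it is a homology sphere by Lefschetz duality on the contractible $\Sigma$; it is simply connected ($m\ge3$) by a sandwich argument interleaving Euclidean charts and deleted cone neighborhoods of $\cvertex$, using that inclusions of nested deleted cone neighborhoods are homotopy equivalences; then Hurewicz--Whitehead upgrades this to a homotopy sphere. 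Also note that your ``inspection'' at $m=0$ actually reveals that the stated dichotomy is vacuously off there: $\Sigma=\{pt\}$ has $\bCAT\Sigma=\varnothing$ by the paper's convention, yet $\Sigma\cong\bar D^0\not\cong\SS^0$, so the vertex of $X=[0,\infty)$ lies in $\bCAT X$ despite $\bCAT\Sigma=\varnothing$; the theorem is really about $m\ge1$, where your equivalence $\cvertex\in\bCAT X\Leftrightarrow\bCAT\Sigma\ne\varnothing$ does hold.
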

\begin{proof}
Suppose $\bCAT \Sigma=\varnothing$. Then $X=C\Sigma$ has no boundary either and therefore every point in $X$ including the vertex $o$ is regular and $T_oX\cong \R^{m+1}$. But $T_oX\cong C(\Sigma)$. Therefore $\Sigma\cong \SS^m$.

Now suppose $\bCAT \Sigma\ne\varnothing$.  If $\diam \Sigma=\pi$ then for $p,q\in \Sigma$ with $d(p,q)=\pi$ we have that $\Sigma$ is isometric to the spherical suspension over $Y=\Sigma_p^g\Sigma\in \CC$ and $\bCAT Y\ne \varnothing$.

 Thus we can reduce the problem to the case when $\diam\Sigma<\pi$. Then geodesics between any two points in $\Sigma$ are unique and depend continuously on the endpoints. Let $p\in\greg_m$. There exists $R<\pi$ and $\eps>0$ such that $X=B_{R-\eps}(p)$.
Pick a small $r>0$ such that $\bar B_r(p)\subset \greg_m$ and $S_r(p)$ is homeomorphic to $\SS^{m-1}$. Let $\Phi\co S_r(p)\to \partial X$ and $f\co S_r(p)\to \R$ be the same maps as in the proof of Theorem \ref{c-manifold-with-boundary}. Then as before $f$ is continuous and $\Phi$ is a homeomorphism. Further, radially extending $\Phi $ to the closed unit ball around the vertex in $C(S_r(p))$ by the formula
\[
\Psi(t,z)=\gamma_z(tf(z)/r)
\]
we get a homeomorphism from $\bar D^m$ to $\Sigma$.
\end{proof}
\begin{corollary}\label{rcd+cat-sphere-thm}
Let $(X,d,m)$ be $\RCD(N-1,N)$ and $\CAT(1)$ where $N>1$. If $\bCAT X\ne \varnothing$  and $X$ is homeomorphic to a closed disk of dimension $\le N$.
On the other hand, if $\bCAT X = \varnothing$  then $N$ is an integer and $X$ is metric measure isomorphic to $(\SS^{N}, const\cdot \mathcal H_N)$.
\end{corollary}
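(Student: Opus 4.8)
The plan is to read off the topological dichotomy directly from the cone Sphere Theorem~\ref{sphere-thm-c}, and then to upgrade the no-boundary case to a metric measure rigidity statement by feeding a distance-function argument into the maximal diameter theorem for $\RCD(N-1,N)$ spaces and inducting on $N$. First I would set up the ambient structure. As $(X,d,m)$ is $\RCD(N-1,N)$ it is compact with $\diam X\le\pi$ by Bonnet--Myers, and by Theorem~\ref{c-dim} its dimension satisfies $m:=\dim X=\dimg X\le\dimh X\le N$. Since $X$ is moreover $\CAT(1)$, the Euclidean cone $C(X)$ equipped with the cone measure is $\CAT(0)$ and $\RCD(0,N+1)$; hence $C(X)$ satisfies \eqref{eq:cd+cat} and both $X$ and $C(X)=C(\Sigma)$ lie in $\CC$ with $\Sigma:=X$. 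Applying Theorem~\ref{sphere-thm-c} then yields the dichotomy at once: if $\bCAT X\neq\varnothing$ then $X$ is homeomorphic to the closed disk $\bar D^{m}$ with $m\le N$, which is the first assertion; and if $\bCAT X=\varnothing$ then $X\cong\SS^{m}$ topologically. What remains is to promote this homeomorphism to a metric measure isomorphism and to see that $N\in\N$.

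So assume $\bCAT X=\varnothing$. The key geometric input I would establish is that $\diam X=\pi$. Arguing by contradiction, suppose $\diam X<\pi$, fix $p\in X$, and let $q$ maximize $d(p,\cdot)$, so $\rho:=d(p,q)=\sup_{y}d(p,y)\le\diam X<\pi$ and the geodesic $[p,q]$ is unique. Since $\bCAT X=\varnothing$, every point of $X$ is regular (by the characterization of $\bCAT X$ as $X\setminus\greg$ established after Proposition~\ref{int-impl-reg}), hence inner by Proposition~\ref{prop:reg-pooints}. Thus $[p,q]$ extends past $q$ to a local geodesic of length $\rho+\eps<\pi$; because local geodesics of length $<\pi$ in a $\CAT(1)$ space are minimizing, its endpoint lies at distance $\rho+\eps>\rho$ from $p$, contradicting the maximality of $\rho$. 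Therefore $\diam X=\pi$.

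The rigidity step follows. With $\diam X=\pi$, the maximal diameter theorem for $\RCD(N-1,N)$ spaces (Ketterer) realizes $X$ as the spherical suspension $S(Y)$ of an $\RCD(N-2,N-1)$ space $Y$. Since $S(Y)$ is $\CAT(1)$, so is $Y$; and inspecting the geodesic tangent cone $T^{g}_{\pm}X=C(Y)$ at a pole (where $\Sigma^{g}_{\pm}X=Y$) shows that $\bCAT X=\varnothing$ forces $\bCAT Y=\varnothing$. I would then induct on $N$: by the inductive hypothesis $N-1\in\N$ and $Y$ is metric measure isomorphic to $(\SS^{N-1},c'\,\mathcal H_{N-1})$, whence $X=S(Y)$ is isomorphic to $(\SS^{N},c\,\mathcal H_{N})$ and $N\in\N$. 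The base case is one-dimensional: a compact $\CAT(1)$ space that is a closed $1$-manifold with $\diam=\pi$ is the unit circle $\SS^{1}$, which is $\RCD(0,1)$, giving $N=1$.

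The main obstacle is this rigidity step rather than the topology. The delicate points are verifying that the maximal diameter theorem applies in the present generality and, crucially, that both the $\CAT(1)$ condition and the vanishing of the boundary descend to the suspension factor $Y$, so that the induction closes; together with the low-dimensional bookkeeping needed to force $N$ to be an integer and to match the measure with $c\cdot\mathcal H_{N}$. By contrast, the reduction to $\diam X=\pi$ is the clean geometric ingredient that turns the $\CAT$ no-boundary hypothesis into a form on which the $\RCD$ rigidity machinery can act.
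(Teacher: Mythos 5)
Your first half is fine: identifying $\Sigma=X$, $C(\Sigma)=C(X)$ (which is $\CAT(0)$ and, by \cite{Ket-cones}, $\RCD(0,N+1)$ since $\diam X\le\pi$), and invoking Theorem~\ref{sphere-thm-c} is exactly how the paper gets the dichotomy, and your $\diam X=\pi$ argument via inner points is correct. But the rigidity induction has a genuine gap, and it sits precisely at the one assertion your route is supposed to deliver: the integrality of $N$. Ketterer's maximal diameter theorem produces a suspension factor $Y$ that is $\RCD(N-2,N-1)$, and this is only meaningful and only proved when $N-1\ge 1$, i.e. $N\ge 2$ (the $\CD$ condition is defined for parameter in $[1,\infty)$). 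The hypothesis of the corollary allows any real $N>1$: for $N\in(1,2)$ your very first application of the maximal diameter theorem is unavailable, and for non-integer $N>2$ the descent $N\mapsto N-1$ strands you at a parameter in $(1,2)$ with neither an inductive hypothesis nor a base case. You cannot write ``by the inductive hypothesis $N-1\in\N$'': integrality is a conclusion, not something the induction scheme can presuppose, and as set up the induction simply does not close for non-integer $N$. (Your base case also quietly assumes $\diam Y=\pi$ for the bottom factor, whereas the suspension factor only comes with $\diam Y\le\pi$; one instead gets length $2\pi$ from $\CAT(1)$ together with $\diam\le\pi$.)

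You also misread Theorem~\ref{sphere-thm-c}: in the boundaryless case its proof shows every point of $C(X)$, including the vertex $o$, is regular, so $C(X)\cong T_oC(X)\cong\R^{m+1}$ \emph{isometrically}, whence $X\cong\SS^m$ isometrically --- there is no homeomorphism to ``promote.'' This is what the paper exploits, and it makes the whole suspension induction unnecessary: since the metric measure cone over $X$ is $\RCD(0,N+1)$ by \cite{Ket-cones} and is isometric to $\R^{l+1}$, the splitting theorem forces it to be metric measure isomorphic to $(\R^{l+1},\mathrm{const}\cdot\mathcal H_{l+1})$, so $m=\mathrm{const}\cdot\mathcal H_l$ on $X\cong\SS^l$; finally $(\SS^l,\mathrm{const}\cdot\mathcal H_l)$ satisfies $\RCD(N-1,N)$ only if $N\le l$, while $l\le N$ by Theorem~\ref{c-dim}, giving $l=N\in\N$ for every real $N>1$ in one stroke. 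If you want to salvage your approach, you must supply a separate argument ruling out boundaryless examples with parameter in $(1,2)$ at every level of the descent --- which, in effect, is what the paper's cone-plus-splitting argument does uniformly.
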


\begin{proof}
Due to the Sphere Theorem we only need to prove the second part. If $\partial X=\varnothing$ then by the Sphere Theorem $X$ is isometric to $\SS^l$ with $l\le N$.
Since the metric measure cone over $X$ is $\RCD (0,N+1)$~\cite{Ket-cones}  and  is isometric to $\R^{l+1}$  by the splitting theorem it must be metric measure isomorphic to $(\R^{l+1}, const\cdot \mathcal H_{l+1})$. Therefore $m=const\cdot \mathcal H_{l}$.
This obviously implies that $l=N$.
\end{proof}

The Sphere Theorem immediately implies
\begin{corollary}
Let $X$ satisfy  \eqref{eq:cd+cat} and $p\in \bCAT X=\partial X$.
Then $T_p^gX$ is homeomorphic to $\R^l_+$ and $\Sigma_p^g$ is homeomorphic to $\bar D^{l-1}$ where $l\le \dim X$.
\end{corollary}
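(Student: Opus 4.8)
The plan is to read off both homeomorphisms from the Sphere Theorem (Theorem~\ref{sphere-thm-c}), applied to the space of directions $\Sigma_p^gX$ and its Euclidean cone $T_p^gX=C(\Sigma_p^gX)$. After rescaling so that $K=-1,\uk=1$ we may assume $X\in\CC$; this changes neither the space of directions, nor the geodesic tangent cone, nor the dimension, so no generality is lost. I would set $l=\dim T_p^gX$. By Lemma~\ref{t-cone-dim} we have $l\le\dim X$, which is the asserted bound, and since $p\in\bCAT X=X\setminus\greg$ the point $p$ is not geodesically regular, so $T_p^gX\not\cong\R$.

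If $l=1$ the claim is immediate: by the definition of the boundary in dimension one, $p\in\bCAT X$ forces $T_p^gX\cong[0,\infty)=\R^1_+$, and then $\Sigma_p^gX$ is a single point, i.e. $\bar D^0$. So I would concentrate on the case $l\ge 2$.

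For $l\ge 2$ I would first check the hypotheses of Theorem~\ref{sphere-thm-c}. Property~\eqref{c-convex} gives $T_p^gX\in\CC$, and since $T_p^gX=C(\Sigma_p^gX)$ is not isometric to $\R$, property~\eqref{c-cone} gives $\Sigma_p^gX\in\CC$ as well; in particular $\Sigma_p^gX$ is connected. Thus the Sphere Theorem applies with $m+1=\dim C(\Sigma_p^gX)=l$, i.e. $m=l-1\ge 1$. To select the branch of the dichotomy I would invoke the characterization of the boundary proved above, namely that $p\in\bCAT X$ is equivalent to $\Sigma_p^gX$ being contractible. Since $\SS^{l-1}$ is not contractible, the branch $\bCAT\Sigma_p^gX=\varnothing$, $\Sigma_p^gX\cong\SS^{l-1}$, is excluded, so we land in the other branch and $\Sigma_p^gX\cong\bar D^{l-1}$.

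Finally, coning preserves homeomorphisms, so $\Sigma_p^gX\cong\bar D^{l-1}$ yields $T_p^gX=C(\Sigma_p^gX)\cong C(\bar D^{l-1})$, and I would finish with the elementary identification $C(\bar D^{l-1})\cong\R^l_+$: writing $\bar D^{l-1}$ as the closed upper hemisphere of $\SS^{l-1}$, the radial map $(\omega,r)\mapsto r\omega$ is a homeomorphism of $C(\bar D^{l-1})$ onto $\{x\in\R^l:x_l\ge0\}=\R^l_+$. I do not anticipate any real obstacle: the argument is essentially dimension bookkeeping together with the verification that $\Sigma_p^gX$ and $T_p^gX$ lie in $\CC$, after which the Sphere Theorem and the routine cone identification do all the work.
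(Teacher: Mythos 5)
Your proof is correct and takes exactly the paper's route: the paper deduces the corollary directly from the Sphere Theorem (Theorem~\ref{sphere-thm-c}), landing in the disk branch because $p\in\bCAT X$ forces $\Sigma_p^gX$ to be contractible while $\SS^{l-1}$ is not, with $l\le\dim X$ coming from Lemma~\ref{t-cone-dim}. Your extra details (the $l=1$ case, the $\CC$-membership checks via properties \eqref{c-convex} and \eqref{c-cone}, and the identification $C(\bar D^{l-1})\cong\R^l_+$) merely flesh out what the paper dismisses as ``immediate.''
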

At the moment we don't know if in the above corollary $l$ must be equal to $\dim X$. 
\begin{question}
Is it true that for any $p\in \partial X$ it holds that $T_p^gX$ is homeomorphic to $\R^n_+$ where $n=\dim X$? Weaker, is $\dim T^g_pX$ locally constant on $\bCAT X$?
\end{question}

We conclude this section by studying the boundary as seen from a regular point. 
Let $\mathcal{C}_{0}\subset\mathcal{C}$ be the class of $\CAT(0)$ spaces
which split off lines. An easy observation is the following:
\begin{lemma}
If $(X,d)\in\mathcal{C}_{0}$ is non-compact then either $X$ is isometric
to a product of the real line and some compact $(X',d')\in\mathcal{C}_{0}$
with $\partial X'\ne\varnothing$ or $X$ has exactly one geodesic
end, i.e. for any $x_{n},y_{n}\in X\backslash\bar{B}_{R}(x)$ with
$x_{n},y_{n}\to\infty$ it holds 
$[x_{n},y_{n}]\cap \bar{B}_{R}(x)=\varnothing$ eventually.
In particular, any space $(X,d)\in\mathcal{C}_{0}$ with $\partial X=\varnothing$
is isometric to $\mathbb{R}^{m}$ where $m=\dim X$.
\end{lemma}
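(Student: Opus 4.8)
The engine of the proof is the splitting theorem, which for the class $\mathcal{C}$ is available as the chief consequence of the $\RCD$ hypothesis singled out in the introduction: a member of the class containing a line splits off an isometric $\R$--factor. Since $X\in\mathcal{C}_0$ contains a line by definition, I would begin by writing an isometric splitting $X\cong\R\times X'$ with $X'$ a $\CAT(0)$ space. As $X$ is proper (by the uniform doubling bound) its factor $X'$ is proper as well, so $X'$ is compact if and only if it is bounded. The whole dichotomy is then read off from whether $X'$ is compact.

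If $X'$ is \emph{non-compact}, I would show that $X\cong\R\times X'$ has a single end: removing any ball $\bar B_R(x)$ from the product of $\R$ with a non-compact proper geodesic space leaves a connected complement, since one may first push the $\R$--coordinate far out and then move freely inside the non-compact factor without re-entering the ball. This is the one-end alternative. If instead $X'$ is \emph{compact}, then $\R\times X'$ has exactly two ends and we land in the first alternative with compact factor $X'$; it remains only to produce a boundary point. For this I would use that a compact $\CAT(0)$ space of positive dimension cannot be geodesically complete, because geodesic completeness forces every local geodesic to prolong to a full line, which is unbounded. Hence $X'$ has a non-inner point, and by Proposition~\ref{prop:reg-pooints} together with Theorem~\ref{c-manifold-with-boundary} (equivalently, via $\bCAT X'=X'\setminus\greg$) this exhibits $\partial X'=\bCAT X'\ne\varnothing$. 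The degenerate subcase $X'=\{pt\}$, i.e.\ $X\cong\R$, is covered by the final assertion.

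For the final assertion, assume $\partial X=\bCAT X=\varnothing$. By the boundary characterization $X=\greg$, so every point of $X$ is inner, and by Proposition~\ref{prop:reg-pooints} every local geodesic prolongs indefinitely in both directions; since $X$ is $\CAT(0)$ each prolongation is a line, i.e.\ $X$ is geodesically complete. I would now iterate the splitting: if $\dim X\ge 1$ then $X$ contains a line and splits as $\R\times X'$, and $X'$ inherits geodesic completeness, because a geodesic of $X'$ lifts to a geodesic of $X$ lying in a single $\R$--slice, its prolongation stays in that slice by non-branching (the constant $\R$--coordinate can only extend as a constant), and it projects to a prolongation in $X'$. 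Repeating this, and using $\dim X\le N<\infty$ from Theorem~\ref{c-dim} to guarantee termination, strips off lines until the remaining factor has dimension $0$, hence is a point. Therefore $X\cong\R^{m}$ with $m=\dim X$.

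I expect two points to demand the most care. The first, and genuine, obstacle is the global splitting $X\cong\R\times X'$ and, along the induction, the claim that the factor $X'$ again lies in $\mathcal{C}$: the infinitesimal form of splitting is built into axiom~\eqref{c-split} for cones (an opposite pair $\pm v$ in $\Sigma$ makes $C(\Sigma)$ a product), but upgrading a line to a \emph{global} Euclidean factor is exactly the splitting theorem, and one must verify that non-branching, the uniform doubling bound and closure under the cone operations all descend to $X'$, so that the structure theory of Sections~\ref{sec: CD+CAT-str}--\ref{sec:boundary}, and in particular Theorem~\ref{c-manifold-with-boundary}, applies to it. The second is a matter of matching conventions: the one-end conclusion I obtain is the topological statement that complements of large balls are connected, and I would restate it in the geodesic language of `one geodesic end' used in the statement, the essential content in either formulation being precisely the presence or absence of a compact complementary factor.
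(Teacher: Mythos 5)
The paper records this lemma without any written proof (it is offered as ``an easy observation''), so the comparison has to be with the argument that its later use dictates; judged on those terms your proposal has a genuine gap, and it occurs in the very first line. You read $\mathcal{C}_0$ as the class of spaces that \emph{do} split off a line, but it must be read as the class of $\CAT(0)$ spaces in $\mathcal{C}$ for which the splitting theorem \emph{holds}, i.e.\ which split off any line they happen to contain: global splitting is not among the axioms of $\mathcal{C}$ (axiom (vii) gives it only for cones), which is precisely why the subclass $\mathcal{C}_0$ is introduced, and the lemma itself speaks of compact members $X'\in\mathcal{C}_0$, while a compact space --- or a ray $[0,\infty)$, a perfectly good non-compact member --- contains no line at all. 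Consequently your unconditional splitting $X\cong\R\times X'$ is unavailable, and the actual content of the lemma never gets proved: one must show that if the one-geodesic-end condition \emph{fails}, so that there are $x_n,y_n\to\infty$ whose geodesics $[x_n,y_n]$ all meet a fixed ball $\bar{B}_{R}(x)$, then $X$ contains a line. That is where the work is: parametrize $\gamma_n=[x_n,y_n]$ by arclength from a point of $\bar{B}_{R}(x)$, note that both endpoint parameters diverge, and use properness (a consequence of the doubling axiom) together with Arzel\`a--Ascoli to extract a limit complete geodesic, which in a $\CAT(0)$ space is a minimizing line; only then does membership in $\mathcal{C}_0$ convert the line into a splitting. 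This limit-line construction is exactly how the lemma is applied in the theorem that follows it (the geodesics $\xi$ passing through the compact set $K$ force $X$ to split off a line), and it is entirely absent from your proof.

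The second gap is the step ``non-compact factor $\Rightarrow$ one geodesic end''. What you actually prove is topological one-endedness (complements of large balls are connected), and your closing remark that the topological and geodesic formulations carry the same content is false: the flat half-plane $\R\times[0,\infty)$, an $\RCD(0,2)$ and $\CAT(0)$ space with non-compact factor, is topologically one-ended, yet the geodesics $[(-n,0),(n,0)]$ run along the boundary line through every ball centered at the origin, so the displayed condition fails for every $R$. Connectivity of the complement says nothing about where geodesics travel. (This example in fact shows that the dichotomy, read literally, is loose --- the one-end alternative really belongs to spaces containing no line, and $X=\R$ with $X'$ a point is a similar edge case --- but your route of deducing the geodesic statement from non-compactness of the factor is not repairable, whereas the no-line route is.) Your remaining pieces are sound: a compact $\CAT(0)$ space of positive dimension cannot be geodesically complete, non-inner points lie in the boundary by Proposition~\ref{prop:reg-pooints} and the identity $\bCAT X=X\setminus\greg$, and the final assertion (empty boundary implies every point is inner, hence geodesic completeness since local geodesics in $\CAT(0)$ spaces are minimizing, then iterated splitting terminating by finiteness of dimension) is essentially the intended argument, modulo the factor-membership verifications that you correctly flag as needing care.
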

\begin{remark}
As a simple corollary we see that an end is also a geodesic end and
vice versa. 
\end{remark}
Let $(X,d)\in\mathcal{C}_{0}$ and pick a regular point $p\in\mathcal{R}$.
Define a function 
\[
f:\Sigma_{p}X\to(0,\infty]
\]
where  $f_{p}(v)$ is the length of the maximal unit-speed geodesic $\gamma$
issuing from $p$ with $\dot{\gamma}(0)=v$. It is not difficult to
verify that $f$ is bounded from below and continuous. Set
\begin{align*}
\Sigma_{p}^{\mathrm{fin}}X & =\{f_{p}<\infty\}\\
\Sigma_{p}^{\mathrm{inf}}X & =\{f_{p}=\infty\}
\end{align*}
and note $\Sigma_{p}^{\mathrm{fin}}X$ is open and $\Sigma_{p}^{\mathrm{inf}}X$
is closed. 
\begin{theorem}
Either $\Sigma_{p}^{\mathrm{inf}}X$ is connected or $X$ splits off
a line.
\end{theorem}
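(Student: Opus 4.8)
The plan is to establish the dichotomy by proving its contrapositive form: if $X$ does not split off a line, then $\Sigma_{p}^{\mathrm{inf}}X$ is connected. Since the spaces in $\mathcal{C}_{0}$ split off any geodesic line they contain, the hypothesis ``$X$ does not split off a line'' is equivalent to ``$X$ contains no complete geodesic line''. If $X$ is bounded it is a point (being a proper $\mathrm{CAT}(0)$ space in $\mathcal{C}$ with no rays), so $\Sigma_{p}^{\mathrm{inf}}X=\varnothing$ and there is nothing to prove; hence I assume $X$ is non-compact. By the Lemma just proved, a non-compact $X\in\mathcal{C}_{0}$ that is not isometric to $\mathbb{R}\times X'$ has exactly one geodesic end, and I will use this single-end property as the main structural input.

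Fix two directions realized by rays $\gamma_{\xi},\gamma_{\eta}$ issuing from $p$, with initial directions $\xi,\eta\in\Sigma_{p}^{\mathrm{inf}}X$ (we may assume $\xi\ne\eta$); the goal is to place them in a common connected component of $\Sigma_{p}^{\mathrm{inf}}X$. Set $c_{t}=[\gamma_{\xi}(t),\gamma_{\eta}(t)]$, of length $L_{t}$, and $d_{t}=d(p,c_{t})$. Applying the one-end hypothesis to $x_{t}=\gamma_{\xi}(t)$ and $y_{t}=\gamma_{\eta}(t)$ forces $c_{t}\cap\bar{B}_{R}(p)=\varnothing$ eventually, for every $R$, that is $d_{t}\to\infty$. (Alternatively, without the ends Lemma: distinct rays from $p$ diverge in a $\mathrm{CAT}(0)$ space, so $L_{t}\to\infty$; were $d_{t}$ bounded along a subsequence, properness and Arzel\`a--Ascoli would exhibit a complete geodesic line as a limit of the $c_{t}$, contradicting the assumption.)

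With $d_{t}\to\infty$ in hand, consider the ``visual'' maps $\Psi_{t}\colon[0,1]\to\Sigma_{p}X$ sending $s$ to the initial direction at $p$ of the unique geodesic $[p,c_{t}(sL_{t})]$; these are well defined and continuous since $p\notin c_{t}$ and geodesics in the $\mathrm{CAT}(0)$ space $X$ are unique and depend continuously on endpoints (the log map is continuous on $X\setminus\{p\}$). Each image $C_{t}=\Psi_{t}([0,1])\subset\Sigma_{p}X\cong\mathbb{S}^{m-1}$ is compact, connected, and contains $\dot{\gamma}_{\xi}(0)=\xi$ and $\dot{\gamma}_{\eta}(0)=\eta$. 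Passing to a Hausdorff-convergent subsequence $C_{t}\to C_{\infty}$ inside the compact sphere $\Sigma_{p}X$, the limit $C_{\infty}$ is compact, connected (a Hausdorff limit of connected compacta is connected), and still contains $\xi$ and $\eta$. The crucial point is that $C_{\infty}\subset\Sigma_{p}^{\mathrm{inf}}X$: any $w\in C_{\infty}$ is a limit $w=\lim_{t}\Psi_{t}(s_{t})$ of initial directions of segments $[p,c_{t}(s_{t}L_{t})]$ whose endpoints satisfy $d(p,c_{t}(s_{t}L_{t}))\ge d_{t}\to\infty$; reparametrized by arclength on intervals of length tending to $\infty$ and having initial directions converging to $w$, these segments subconverge (properness plus uniqueness of geodesics) to a ray from $p$ with initial direction $w$, whence $f_{p}(w)=\infty$ and $w\in\Sigma_{p}^{\mathrm{inf}}X$. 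Therefore $\xi$ and $\eta$ lie in the connected subset $C_{\infty}$ of $\Sigma_{p}^{\mathrm{inf}}X$, so in the same component; as $\xi,\eta$ were arbitrary, $\Sigma_{p}^{\mathrm{inf}}X$ is connected.

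The main obstacle is precisely this last step, namely extracting useful information from the receding geodesics $c_{t}$: because they leave every compact set, one cannot simply pass to a limiting path in $X$. The fix is to transport the problem to the compact direction sphere $\Sigma_{p}X$ via the maps $\Psi_{t}$, take a Hausdorff limit of their connected images, and then verify that ``directions pointing toward points escaping to infinity are exactly directions of rays'', which is where properness and uniqueness of geodesics are essential. The auxiliary facts invoked along the way---lower boundedness and continuity of $f_{p}$, continuity of the direction map on $X\setminus\{p\}$, divergence of distinct rays from $p$, and convergence of initial directions under convergence of geodesics from a fixed point---are all routine consequences of the $\mathrm{CAT}(0)$ geometry and the properness already available for spaces in $\mathcal{C}$.
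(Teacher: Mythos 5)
Your proof is correct, and it runs the paper's argument in the opposite logical direction. The paper argues by contradiction from disconnectedness: it separates $\Sigma_{p}^{\mathrm{inf}}X$ by disjoint open sets $U,V$, observes that $A=\Sigma_{p}X\backslash(U\cup V)\subset\Sigma_{p}^{\mathrm{fin}}X$ is compact, so that $f_{p}$ is bounded on $A$ and the union $K$ of all maximal geodesics with initial direction in $A$ is compact; then for the geodesic joining the escaping endpoints of two infinite rays (your $c_{t}$), the continuous curve of directions $\rho$ must cross $A$ by an intermediate-value argument, forcing the connecting geodesic to meet the fixed compact $K$ --- which violates the one-geodesic-end alternative of the preceding lemma, so $X$ splits. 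You instead prove the contrapositive: from the one-end alternative you get $d(p,c_{t})\to\infty$, transport the connecting geodesics into the compact sphere $\Sigma_{p}X$ via the visual maps $\Psi_{t}$, take a Hausdorff limit $C_{\infty}$ of the connected images, and verify $C_{\infty}\subset\Sigma_{p}^{\mathrm{inf}}X$ by a limiting-ray argument. Both proofs rest on the same two inputs --- the one-end lemma and continuity of the direction map $q\mapsto\dot{[p,q]}(0)$ on $X\backslash\{p\}$, which in $\CAT(0)$ follows from $\angle\le\tilde{\angle}$ --- but they distribute the remaining work differently: the paper uses continuity and finiteness of $f_{p}$ on the compact separator $A$, whereas you replace this with the self-contained statement that segments $[p,q_{k}]$ with $d(p,q_{k})\to\infty$ and converging initial directions subconverge (properness plus Arzel\`a--Ascoli) to a ray in the limit direction; your route in fact yields the slightly stronger conclusion that any two points of $\Sigma_{p}^{\mathrm{inf}}X$ lie on a compact connected subset of it.

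Two minor points. Your claim that a bounded $X\in\mathcal{C}_{0}$ ``is a point'' is false (any compact convex $\CAT(0)$ space in the class qualifies), but harmless: boundedness alone gives $\Sigma_{p}^{\mathrm{inf}}X=\varnothing$, which is all you use. In the step $f_{p}(w)=\infty$ you implicitly use that at the regular point $p$ the maximal geodesic in a given direction is unique (non-branching, as in the proof of Proposition~\ref{prop:reg-pooints}), so the limit ray really computes $f_{p}(w)$; and the maps $\Psi_{t}$ should only be invoked for $t$ large enough that $d(p,c_{t})>0$ --- if $p\in c_{t}$ for some $t$, the two rays would already concatenate to a line --- which is consistent with how you use them.
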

\begin{proof}
If $\Sigma_{p}^{\mathrm{inf}}X$ is not connected then there are two
disjoint open sets $U,V\subset\Sigma_{p}X$ with $\Sigma_{p}^{\inf}X\subset U\cup V$.
Since $\Sigma_{p}X$ is compact the set $A=\Sigma_{p}X\backslash(U\cup V)\subset\Sigma_{p}^{\mathrm{fin}}X$
is compact. 

Let $K\subset X$ be the subset of points on unit-speed geodesics
$\gamma$ with $\dot{\gamma}(0)\in A$. By compactness of $A$ and
continuity of $f$ we see that $f$ is uniformly bounded on $A$ so
that $K$ is closed and bounded. In particular, it is compact. 

Now let $\gamma$ and $\eta$ be unit speed geodesics with $\dot{\gamma}(0)\in\Sigma_{p}^{\inf}X\cap U$
and $\dot{\eta}(0)\in\Sigma_{p}^{\inf}X\cap V$. Let $\xi:[0,1]\to X$
be a geodesic connecting the endpoints of $\gamma$
and $\eta$. Then either $p\in\xi([0,1])$ or there is a continuous
curve $\rho:[0,1]\to\Sigma_{p}X$ such that $\rho(t)=\dot{\zeta}^{t}(0)$
where $\zeta^{t}$ is a unit speed geodesic connecting $p$ and $\xi(t)$.
Since $\rho(0)\in U$ and $\rho(1)\in V$ there must be a $t\in(0,1)$
with $\rho(t)\in A$ implying $\xi(t)\in K$. By the previous lemma
$X$ must split off a line. 
\end{proof}


{
\section{$\DC$ coordinates on $\RCD$+$\CAT$ spaces.} \label{sec:dc}
{
\subsection{$\DC$ coordinates on $\CAT$ spaces}\label{ln}
{In \cite{Lytchak-Nagano18} Lytchak and Nagano developed a structure theory of finite dimensional $\CAT$ spaces with locally extendible geodesics. In particular, they constructed $\DC$ coordinates on such spaces. This mirrors results of Perelman from  \cite{Per-DC} where a similar theory was developed for Alexandrov spaces with curvature bounded below.

In this paper we will only need the following special case of the Lytchak-Nagano theory.

Let $(X,d)$ be a $\CAT(\uk)$ space. Let $\hat{U}\subset X$ be open and suppose $\hat{U}$ is a topological $n$-manifold.
It is well known (see e.g. ~\cite[Lemma 3.1]{Kap-Ket-18} or the proof of Proposition~\ref{prop:reg-pooints}\eqref{g-reg-implies-man})
  that this implies that  geodesics in $\hat{U}$ are locally extendible. Suppose further that for any $ \hat U\subset \greg_n$, i.e. $T_q^gX\cong \R^n$ for any $q\in \hat U$.}

Then by ~\cite{Lytchak-Nagano18}  for any $p\in \hat U$  there exist $\DC$ coordinates near $p$ with respect to which the distance on $\hat U$ is locally induced by a  $\BV_0$-Riemannian metric $g$.

More precisely, let $a_1,\ldots, a_n, b_1,\ldots, b_n$ be points near $p$ such that $d(p,a_i)=d(p,b_i)=r$, $p$ is the midpoint of $[a_i,b_i]$ and $\angle a_ipa_j=\pi/2$ for all $i\ne j$ and all comparison angles $\tilde \angle a_ipa_j, \tilde\angle a_ipb_j,  \tilde\angle b_ipb_j$ are sufficiently close to $\pi/2$ for all $i\ne j$.

Let $x\co \hat U\to \R^n$ be given by $x=(x_1,\dots,x_n)=(d(\cdot,a_1),\ldots, d(\cdot, a_n))$.

Then by  ~\cite[Corollary 11.12]{Lytchak-Nagano18}  for any sufficiently small $0<\eps<\pi_k/4$ the restriction $x|_{B_{2\eps(}p)}$ is bi-Lipschitz onto an open subset  of $\R^n$. Let $U=B_\eps(p)$ and $V=x(U)$.
By  ~\cite[Proposition 14.4]{Lytchak-Nagano18} $x\co U\to V$ is a $\DC$ equivalence in the sense that $h\co U\to \R$ is $\DC$ iff $h\circ x^{-1}$ is DC on $V$.

Further, by  ~\cite[Theorem 1.2]{Lytchak-Nagano18} the distance  on $U$  is induced by a $\BV_0$ Riemannian metric $g$ which in $x$ coordinates is given by a $2$-tensor $g^{ij}(p)=\cos \alpha_{ij}$ where 
$\alpha_{ij}$ is the angle at $p$ between geodesics connecting $p$ and $a_i$ and $a_j$ respectively. By the first variation formula $g^{ij}$ is the derivative of $d(a_i, \gamma(t))$ at $0$ where $\gamma$ is the geodesic with $\gamma(0)=p$ and $\gamma(1)=a_j$. Since $d(a_i,\cdot)$, $i=1,\dots n$, are Lipschitz, $g^{ij}$ is in $L^{\infty}$. We denote $\langle v,w\rangle_g(p)= g^{ij}(p)v_iw_j$ the inner product of 
$v,w \in \mathbb R^n$ at $p$. The Riemannian metric $g^{ij}$ induces a distance function $d_g$ on $V$ such that $x$ is a metric space isomorphism for $\epsilon>0$ sufficiently small.

If $u$ is a Lipschitz function on $U$, $u\circ x^{-1}$ is a Lipschitz function on $V$, and therefore differentiable $\mathcal{L}^n$-a.e. in $V$ by Rademacher's theorem. 
Hence,  we can define the gradient of $u$ at points of differentiability of $u$ in the usual way as the metric dual of its differential. Then the usual Riemannian formulas hold and 
$\nabla u=g^{ij}\frac{\partial u}{\partial x_i}\frac{\partial}{\partial x_j}$ and $|\nabla u|^2_g=g^{ij}\frac{\partial u}{\partial x_i}\frac{\partial u}{\partial x_j}$ {a.e.\ .
}
}
\medskip

Let $\tilde {\mathcal A}$ be the algebra of functions of  
the form $\phi(f_1,\ldots, f_m)$ where $f_i=d(\cdot, q_i)$ for some $q_1,\ldots, q_m$ with $|q_ip|>\eps$ and $\phi$ is smooth.

Together with the first variation formula for distance functions continuity of $g$ implies that for any $u,h\in \tilde {\mathcal A}$ it holds that $\langle \nabla u, \nabla h\rangle_g$ is continuous on $V$. 

Furthermore, since $\frac{\partial}{\partial x_i}=\sum_jg_{ij}\nabla x_j$ where $g_{ij}$ is the pointwise inverse of $g^{ij}$, continuity of $g$ also implies that  any $u\in \tilde {\mathcal A}$ is $C^1$ on $V$. Hence, any such $u$ is $\DC_0$ on $V$.  {Therefore $\hat U$ can be given  a natural structure of $\DC_0$ (and in particular $C^1$) $n$-dimensional manifold with an atlas given by $\DC$ coordinate charts and $g$ is $\BV_0$ with respect to this  $\DC_0$ structure.}

By the same argument as in ~\cite[Section 4]{Per-DC} (cf. \cite{palvs}, \cite{ambrosiobertrand})  it follows that any $u\in \tilde {\mathcal A}$ lies in  $D({\bf \Delta},U,\mathcal H_n)$  and the $\mathcal{H}^n$-absolutely continuous part of ${\bf \Delta}_0 u$ can be computed using standard Riemannian geometry formulas; that is 
\begin{align}\label{equ:laplace}
[{\bf\Delta}_0u]_{ac}=\frac{1}{\sqrt{|g|}}\frac{\partial^{ap}}{\partial x_j}\biggl( g^{jk}\sqrt{|g|}\frac{\partial u}{\partial x_k}\biggr)
\end{align}
where $|g|$ denotes the pointwise determinant of $g^{ij}$.  Here ${\bf \Delta} _0$ denotes the measure valued Laplacian on $(U, d, \mathcal H_n)$.
Note that $g$, $\sqrt{|g|}$ and $\frac{\partial u}{\partial x_i}$ are $\BV_0$-functions, and the
derivatives on the right are understood as approximate derivatives.

{The definition of ${\bf \Delta}_0$ is analogous to the definition of the measure valued Laplacian for $\RCD$ spaces. In this case the inner product $\langle \cdot, \cdot\rangle$ that was introduced before Definition \ref{def:rcd} is replaced by $\langle \cdot,\cdot\rangle_g$. However, we will also see in Lemma \ref{lem:lipschitz} below that assuming \eqref{eq:cd+cat} $\langle \cdot,\cdot\rangle$ and $\langle \cdot,\cdot\rangle_g$ coincide for Lipschitz functions.}

\subsection{$\DC$ structure on $\RCD$+$\CAT$ spaces}

Now suppose that  $X$ satisfies  \eqref{eq:cd+cat}. Let $n=\dim X$
and let $\Xreg$ be the set of regular points $p$ in $X$. Recall that by Theorem~\ref{th:reg-points}  $\Xreg=\Xreg_n$ and it is open. Further, by Proposition~\ref{prop:reg-pooints} for any $p\in \reg$ there is an open  neighbourhood $\hat U$ of $p$ in $\Xreg$ homeomorphic to $\R^n$. 

Thus all of the theory from Subsection \ref{ln} applies with $\hat U=\reg$ and we obtain:

\begin{theorem}\label{thm-DC-metric-structure}
 Let $X$ satisfy \eqref{eq:cd+cat} and let $n=\dim X$. Then $\reg$ admits the structure of an $n$-dimensional $\DC_0$ manifold (and in particular a $C^1$ manifold). Furthermore $\reg$ admits a $ BV_0$ Riemannian metric $g$ which induces the original distance $d$ on $\reg$. \end{theorem}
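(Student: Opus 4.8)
The plan is to apply the Lytchak--Nagano structure theory recalled in Subsection~\ref{ln} directly to the open set $\reg$, and then to upgrade the resulting local statements to global ones on $\reg$ using geodesic convexity. First I would verify that $\reg$ satisfies the three standing hypotheses of Subsection~\ref{ln}: that it is an open subset of the $\CAT(\uk)$ space $X$, that it is a topological $n$-manifold, and that $\reg\subset\greg_n$. Openness together with the identity $\reg=\reg_n$ is given by Theorem~\ref{th:reg-points}. By Proposition~\ref{prop:reg-pooints}\eqref{greg-iff-reg} we have $\reg_n=\greg_n$, so $T_q^gX\cong\R^n$ for every $q\in\reg$, and by Proposition~\ref{prop:reg-pooints}\eqref{g-reg-implies-man} every such $q$ has a neighbourhood homeomorphic to $\R^n$, so $\reg$ is a topological $n$-manifold. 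Local extendibility of geodesics in $\reg$, the remaining input used in Subsection~\ref{ln}, then follows automatically as noted there, so the constructions of Subsection~\ref{ln} apply verbatim with $\hat U=\reg$.

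Next I would assemble the local charts into a global $\DC_0$ atlas carrying a global $\BV_0$ metric. For each $p\in\reg$, Subsection~\ref{ln} (via \cite[Corollary 11.12, Proposition 14.4]{Lytchak-Nagano18}) produces a distance chart $x=(d(\cdot,a_1),\dots,d(\cdot,a_n))\colon U\to V\subset\R^n$ on a small ball $U=B_\eps(p)$ which is a bi-Lipschitz homeomorphism onto an open set and a $\DC$ equivalence. To see that two such charts $x\colon U\to V$ and $x'\colon U'\to V'$ are $\DC_0$-compatible, I would note that on the overlap each coordinate $x_i'=d(\cdot,a_i')$ of the second chart lies in the algebra $\tilde{\mathcal A}$ associated with the first, its centre $a_i'$ sitting at definite distance from $U$, and that every element of $\tilde{\mathcal A}$ is $\DC_0$, i.e. $\DC$ and $C^1$, in the $x$-coordinates; hence the transition map $x'\circ x^{-1}$ and, by symmetry, its inverse are $\DC_0$. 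This gives $\reg$ the structure of an $n$-dimensional $\DC_0$ (in particular $C^1$) manifold. By \cite[Theorem~1.2]{Lytchak-Nagano18} the distance on each $U$ is induced by a Riemannian metric $g$ with $g^{ij}=\cos\alpha_{ij}$, where $\alpha_{ij}$ is the angle at the base point between the defining geodesics, and this $g$ is $\BV_0$ in the $x$-coordinates. Since the value of $g$ at a point is the intrinsic inner product on $T_q^gX\cong\R^n$ and the charts are $C^1$-compatible, the local coordinate representations transform correctly and glue to a single global $\BV_0$ Riemannian metric $g$ on $\reg$.

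Finally I would verify that $g$ induces the original distance $d$ on all of $\reg$, not merely on each chart. Locally this is exactly the content of Subsection~\ref{ln}, so the $g$-length and the $d$-length of any rectifiable curve in $\reg$ coincide, length being additive over subarcs each of which can be placed in a single chart. For $p,q\in\reg$, geodesic convexity of $\reg$ from Theorem~\ref{th:reg-points}\eqref{reg-conv-dense-open} puts the $X$-geodesic $[p,q]$ inside $\reg$, so the Riemannian distance $d_g$ satisfies $d_g(p,q)\le\ell_g([p,q])=\ell_d([p,q])=d(p,q)$, where $\ell_g,\ell_d$ denote $g$- and $d$-length; the reverse inequality $d(p,q)\le d_g(p,q)$ holds because every path in $\reg\subset X$ has $d$-length at most its $g$-length. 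Hence $d_g=d$ on $\reg$.

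I expect the only genuinely non-routine part to be the gluing in the second paragraph --- checking $\DC_0$-compatibility of the distance charts and that the pointwise data $g^{ij}=\cos\alpha_{ij}$ computed from different base points patch into one global tensor --- together with the passage from the local to the global distance-inducing statement via geodesic convexity. All the hard analytic content is supplied by \cite{Lytchak-Nagano18} and already recorded in Subsection~\ref{ln}, so the theorem is essentially an instantiation of that theory at $\hat U=\reg$.
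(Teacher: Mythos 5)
Your proposal is correct and follows essentially the same route as the paper, which proves the theorem precisely by verifying (via Theorem~\ref{th:reg-points} and Proposition~\ref{prop:reg-pooints}) that the hypotheses of Subsection~\ref{ln} hold with $\hat U=\reg$ and then invoking the Lytchak--Nagano theory recorded there, including the observation that elements of $\tilde{\mathcal A}$ are $\DC_0$, which yields the compatibility of distance charts. The details you supply explicitly --- the $\DC_0$-compatibility of overlapping charts via $\tilde{\mathcal A}$, the patching of $g^{ij}=\cos\alpha_{ij}$ into a global tensor, and the passage from local to global $d_g=d$ using geodesic convexity of $\reg$ --- are exactly the points the paper leaves implicit, and your treatment of them is sound.
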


Recall that for a Lipschitz function $u$ on $V$ we have two a-priori different notions of the norm of the gradient defined $m$-a.e.: the "Riemannian"  norm of the gradient $|\nabla u|^2_g=g^{ij}\frac{\partial u}{\partial x^i}\frac{\partial u}{\partial x_j}$ and 
 the minimal weak upper gradient $|\nabla u|$  when $u$ is viewed as a  Sobolev functions in $ W^{1,2}(\m)$. We observe that these two notions are equivalent.


\begin{lemma}\label{lem:lipschitz}
Let $u, h: U\rightarrow \mathbb{R}$ be  Lipschitz functions. Then $|\nabla u|=|\nabla u|_g$, $|\nabla h|=|\nabla h|_g$ $\m$-a.e. and $\langle \nabla u,  \nabla h\rangle =\langle \nabla u,  \nabla h\rangle_g$ $\m$-a.e..

In particular, $g^{ij}=\langle \nabla x_i,\nabla x_j\rangle_g=\langle \nabla x_i,\nabla x_j\rangle$ {$\m$-a.e..}
\end{lemma}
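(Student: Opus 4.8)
The plan is to squeeze the two inner products against each other through the local slope, reduce the whole statement to the coordinate functions $x_i=d(\cdot,a_i)$, and then close the remaining gap with a pointwise linear-algebra trick fed by the eikonal equation.

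First I would record two comparisons. Since the local slope $\Lip u$ is always an upper gradient, it is in particular a relaxed gradient, so $|\nabla u|\le \Lip u$ $\m$-a.e. On the other hand, because $g$ is \emph{continuous} and $d=d_g$, in the chosen coordinates one has $d(y,p)=\|y-p\|_{g(p)}\,(1+o(1))$ as $y\to p$; hence at every point where $u\circ x^{-1}$ is differentiable (i.e. $\mathcal L^n$-a.e., by Rademacher) the slope equals the $g(p)$-dual norm of the differential, that is $\Lip u=|\nabla u|_g$ $\m$-a.e. Combining the two gives $|\nabla u|\le|\nabla u|_g$ $\m$-a.e. for every Lipschitz $u$. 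Writing $A_{ij}=\langle\nabla x_i,\nabla x_j\rangle$ and noting that $\langle\nabla x_i,\nabla x_j\rangle_g=g^{ij}$ is immediate from $\frac{\partial x_i}{\partial x_k}=\delta_{ik}$, I apply the inequality to the linear combinations $u=\sum_i\xi_i x_i$ and use the coordinate chain rule $\nabla u=\sum_i\frac{\partial u}{\partial x_i}\nabla x_i$ to deduce $\sum_{ij}A_{ij}\xi_i\xi_j\le\sum_{ij}g^{ij}\xi_i\xi_j$ for all $\xi\in\R^n$, i.e. the symmetric matrices satisfy $A\le(g^{ij})$ $\m$-a.e.

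Next I would compute the diagonal entries exactly. For each $i$ the function $x_i=d(\cdot,a_i)$ satisfies $\Lip x_i\equiv 1$ on $U$: it is $1$-Lipschitz, and since geodesics through regular points are locally extendible (Theorem~\ref{th:reg-points}), the geodesic $[a_i,p]$ extends beyond any $p\in U$, along which $d_{a_i}$ grows at unit rate, forcing slope $1$. Hence $\langle\nabla x_i,\nabla x_i\rangle_g=|\nabla x_i|_g^2=(\Lip x_i)^2=1$. On the $\RCD$ side the eikonal property $|\nabla d_q|=1$ $\m$-a.e., which holds in any $\RCD$ space, gives $A_{ii}=|\nabla x_i|^2=1$ as well. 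Thus $A$ and $(g^{ij})$ have identical diagonals $\m$-a.e.

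Finally, the matrix $C=(g^{ij})-A$ is symmetric, positive semidefinite by the previous inequality, and has vanishing diagonal $\m$-a.e.; a positive semidefinite matrix with zero diagonal must vanish, so $C=0$, giving $\langle\nabla x_i,\nabla x_j\rangle=g^{ij}=\langle\nabla x_i,\nabla x_j\rangle_g$ $\m$-a.e., which is the ``in particular'' assertion. For arbitrary Lipschitz $u,h$ I would then expand both inner products in the coordinate frame, $\langle\nabla u,\nabla h\rangle=\sum_{ij}\frac{\partial u}{\partial x_i}\frac{\partial h}{\partial x_j}\langle\nabla x_i,\nabla x_j\rangle$ and the same for $\langle\cdot,\cdot\rangle_g$, so that equality of the coefficient matrices yields $\langle\nabla u,\nabla h\rangle=\langle\nabla u,\nabla h\rangle_g$, and taking $h=u$ gives the two norm identities. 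The main obstacle is precisely the reverse inequality $|\nabla u|_g\le|\nabla u|$: the slope comparison only delivers the easy direction, and the genuine new input is the eikonal identity $|\nabla d_q|=1$ on the diagonal, which the positive-semidefinite argument then propagates to the full matrix. A secondary technical point is justifying the coordinate chain rule, i.e. that $dx_1,\dots,dx_n$ generate the cotangent module of the $n$-dimensional $\RCD$ space $\reg$; this is part of the standard calculus for bi-Lipschitz charts on $\RCD$ spaces.
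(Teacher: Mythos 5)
Your proposal is correct in substance but takes a genuinely different route from the paper's, and the point you label ``secondary'' is actually where all the weight sits. The paper's proof consists of exactly your second observation --- at every point of differentiability of $u\circ x^{-1}$ one has $\Lip u(p)=|\nabla u|_{g(p)}$, using continuity of $g^{ij}$ and the fact that $d$ is induced by $g$ --- followed by a single citation: since $(U,d,\m)$ is doubling and supports a local $1$-$1$ Poincar\'e inequality, Cheeger's theorem \cite{cheegerlipschitz} gives $\Lip u=|\nabla u|$ $\m$-a.e., and the inner-product identity then follows from the parallelogram rule, since both pairings are quadratic. You instead use only the easy half $|\nabla u|\le \Lip u = |\nabla u|_g$ and recover equality on the coordinate frame via the eikonal identity $|\nabla d_{a_i}|=1$ plus the positive-semidefinite/zero-diagonal matrix trick; this part is correct and gives a nice self-contained derivation of $g^{ij}=\langle\nabla x_i,\nabla x_j\rangle$, since for the frame you need no chain rule --- only linearity of $\nabla$ in the infinitesimally Hilbertian setting and a countable dense set of $\xi$'s to get the quadratic-form inequality $\m$-a.e.\ simultaneously in $\xi$ (also, extendibility of geodesics is not needed for $\Lip x_i\equiv 1$: approaching $p$ along $[p,a_i]$ already forces slope $1$). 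However, the passage from the frame to arbitrary Lipschitz $u,h$ requires precisely that $\nabla u=\sum_i \partial_i(u\circ x^{-1})\,\nabla x_i$ $\m$-a.e., i.e.\ a Rademacher-type theorem for the bi-Lipschitz chart. This is true and citable --- it can be assembled from $\m|_{\reg}\ll\mathcal H^n$ \cite{gipa,kellmondino,DeP-Mar-Rin}, classical Rademacher on $V\subset\R^n$, and $|\nabla w|\le\Lip w$ applied to $w=u-\sum_i q_i x_i$ over rational $q$, or quoted from \cite{gipa} --- but it is of comparable depth to the Cheeger identity the paper invokes, and once one has Cheeger's $\Lip u=|\nabla u|$ the entire matrix argument becomes redundant. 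So: your proof is correct modulo making that chain-rule step precise; what it buys is a derivation of the ``in particular'' clause that bypasses \cite{cheegerlipschitz}, at the cost of a longer reduction for general Lipschitz functions, whereas the paper's proof is two lines given the citation.
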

\begin{proof}
First note that since both  $\langle \nabla u, \nabla h\rangle$ and $\langle \nabla u,  \nabla h\rangle_g$ satisfy the parallelogram rule, it's enough to prove that $|\nabla u|=|\nabla u|_g$ a.e..

Recall that $g^{ij}$ is continuous on $U$.
Fix a point $p$ where $u$ is differentiable. Then
\begin{align*}
\Lip u(p)= \limsup_{q\rightarrow p}\frac{|u(p)-u(q)|}{d(p,q)}&= \limsup_{q\rightarrow p}\frac{|u(p)-u(q)|}{|p-q|_{g(p)}}\\
&=\sup_{|v|_{g(p)}=1} D_v u=\sup_{|v|_{g(p)}=1} \langle v, \nabla u\rangle_{g(p)}=|\nabla u|_{g(p)}.
\end{align*}
In the second equality we used that $d$ is induced by $g^{ij}$, and that $g^{ij}$ is continuous.
Since $(U,d,\m)$ admits a local 1-1 Poincar\'e inequality and is doubling, the claim follows from \cite{cheegerlipschitz} where it is proved that for such spaces $\Lip u=|\nabla u|$ a.e..
\end{proof}

In view of the above Lemma from now on we will not distinguish between  $|\nabla u|$ and $ |\nabla u|_g$ and between $\langle \nabla u, \nabla h\rangle$ and $\langle \nabla u,  \nabla h\rangle_g$.

\section{Density functions}\label{sec: dens}

\subsection{Non-collapsed case}
Let $(X,d, f\mathcal H_n)$ be $\RCD(\ke,n)$ and $\CAT(\uk)$ where $0\le f\in L^1_{loc}(\mathcal{H}^n)$.

\begin{remark}
If $(X,d,\m)$ is a weakly non-collapsed $\RCD$ space in the sense of \cite{GP-noncol} or a space satisfying  the generalized Bishop inequality in the sense of \cite{kbg} and if $(X,d)$ is $\CAT(\uk)$, the assumptions are satisfied by \cite[Theorem 1.10]{GP-noncol}.
\end{remark}
Following Gigli and  De Philippis~\cite{GP-noncol} for any $x\in X$ we consider the monotone quantity $\frac{m(B_r(x))}{v_{k,n}(r)}$ which is non increasing in $r$ by the Bishop-Gromov volume comparison. 
Let $\theta_{n,r}(x)=\frac{m(B_r(x))}{\omega_nr^n}$. Consider the density function $[0,\infty]\ni \theta_{n}(x)=\lim_{r\to 0}\theta_{n,r}(x)=\lim_{r\to 0}\frac{m(B_r(x))}{\omega_nr^n}$.

 Since $n$ is fixed throughout the proof we will drop the  subscripts $n$  and from now on use the notations $\theta(x)$ and $\theta_{r}(x)$ for $\theta_{n}(x)$ and $\theta_{n,r}(x)$ respectively.

By Theorem \ref{th:reg-points} and \cite[Theorem 1.10]{GP-noncol} we have that $m$-almost all points $p\in X$ are regular and $\theta(x)=f(x)$ a.e. with respect to $m$.

Therefore we can and will assume from now on that $f=\theta$ everywhere.
 
\begin{remark}
Monotonicity of  $r\mapsto \frac{m(B_r(x))}{v_{k,n}(r)}$ immediately implies that $f(x)=\theta(x)>0$ for all $x$.
 \end{remark}
 
By rescaling the metric and lowering $K$ we can assume that $\uk=1$ and $K=-(n-1)$.
\begin{lem}\label{lem-theta-est}
There  are constants $C>0$ and  $0<R_0<\pi/100$ such that the following holds.

Let $(X,d, f\mathcal H_n)$ be $\RCD(-(n-1),n)$ and $\CAT(1)$
and let 
$\gamma:[0,1]\to X$ is a geodesic in $B_{2R}(x_{0})$ with $R<R_0$.

Then for all $r\in(0,R)$ and $t\in(0,1)$ it holds 
\[t\cdot  \m(B_{r}(\gamma(1)))^{\frac{1}{n}}\le
(1+CR^{2})\cdot\m(B_{t r}(\gamma(t)))^{\frac{1}{n}}
\]

Moreover, $\theta_{n}(\gamma(1))^{\frac{1}{n}}\le(1+4CR^{2})\theta_{n}(\gamma(t))^{\frac{1}{n}}$.\end{lem}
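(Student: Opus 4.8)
\emph{Strategy.} The plan is to run the $\CD(-(n-1),n)$ inequality (displacement convexity of the Rényi entropy $S_n$) along the geodesic contraction toward $p:=\gamma(0)$, comparing a Dirac mass at $p$ with a uniform measure on a small ball near $\gamma(1)$. Fix $r\in(0,R)$ and $t\in(0,1)$, and let $H_s\colon y\mapsto$ the point at parameter $s$ on the unique geodesic $[p,y]$ (unique since $d(p,y)\le 5R<\pi$) be the geodesic homothety centered at $p$, so that $H_s(\gamma(1))=\gamma(s)$. Set $\rho:=r/(1+C_0R^2)$ for a constant $C_0$ fixed below, and put $\mu_1:=\m(B_\rho(\gamma(1)))^{-1}\,\m|_{B_\rho(\gamma(1))}$, $\mu_0:=\delta_p$. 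The unique $W_2$-geodesic joining them is $\mu_s=(H_s)_\ast\mu_1$, with $\supp\mu_t\subset \overline{H_t(B_\rho(\gamma(1)))}$. Since $\delta_p$ is not $\m$-absolutely continuous, I would first run the argument with $\mu_0$ replaced by the normalized restriction of $\m$ to $B_\eps(p)$ and let $\eps\to 0$; non-branching (Proposition~\ref{prop:nonbra}) forces the approximating geodesics to converge to $(H_s)_\ast\mu_1$, while the term carrying $\rho_0^{-1/n}=\m(B_\eps(p))^{1/n}\to 0$ drops out.

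\emph{The contraction estimate.} The geometric input from $\CAT(1)$ is the Lipschitz modulus of $H_t$. By $\CAT(1)$-comparison, for $y,z$ with $d(p,y),d(p,z)\le 5R$ one has $d(H_ty,H_tz)\le \bar d$, where $\bar d$ is the corresponding distance in the comparison triangle of the model surface of curvature $1$; a Taylor expansion of the spherical law of cosines (using $t\le 1$) shows $\bar d\le t(1+C_0R^2)\,d(y,z)$. Hence, after shrinking $\rho$ by a harmless factor to pass from closed source to open target, $H_t(B_\rho(\gamma(1)))\subset B_{t(1+C_0R^2)\rho}(\gamma(t))=B_{tr}(\gamma(t))$, so $\supp\mu_t\subset B_{tr}(\gamma(t))$ (metric spheres being $\m$-null). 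This is where the upper curvature bound enters, and I expect it to be the main technical point: one must check the correction is genuinely $O(R^2)$ with a constant uniform in $t$.

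\emph{Feeding into $\CD$ and Bishop--Gromov.} Since $K/(N-1)=-(n-1)/(n-1)=-1$, we have $\sigma_{K,n-1}^{(t)}(\theta)=\sinh(t\theta)/\sinh\theta$ and thus $\tau_{K,n}^{(t)}(\theta)=t^{1/n}\big[\sinh(t\theta)/\sinh\theta\big]^{1-1/n}$; expanding for $\theta=d(p,y)\le 5R$ gives $\tau_{K,n}^{(t)}(\theta)\ge t(1-C_1R^2)$. Using that $\rho_1^{-1/n}\equiv \m(B_\rho(\gamma(1)))^{1/n}$ is constant, the $\CD$ inequality together with the Jensen bound $S_n(\mu_t|\m)\ge -\m(\supp\mu_t)^{1/n}\ge -\m(B_{tr}(\gamma(t)))^{1/n}$ yields
\[
\m(B_{tr}(\gamma(t)))^{1/n}\ \ge\ t(1-C_1R^2)\,\m(B_\rho(\gamma(1)))^{1/n}.
\]
Finally, Bishop--Gromov volume monotonicity for $\RCD(-(n-1),n)$ compares the source ball of radius $\rho$ with that of radius $r$, giving $\m(B_\rho(\gamma(1)))^{1/n}\ge (1-C_2R^2)\,\m(B_r(\gamma(1)))^{1/n}$. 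Combining the two displays and absorbing all $O(R^2)$ corrections into one constant (via $(1-x)^{-1}\le 1+2x$ for small $x$) produces $t\,\m(B_r(\gamma(1)))^{1/n}\le (1+CR^2)\,\m(B_{tr}(\gamma(t)))^{1/n}$, the first assertion; $R_0<\pi/100$ is taken small enough that all expansions are valid.

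\emph{The density inequality.} For the ``Moreover'' part I would divide the first inequality by $\omega_n^{1/n}\,tr$ to obtain
\[
\frac{\m(B_r(\gamma(1)))^{1/n}}{\omega_n^{1/n}\,r}\ \le\ (1+CR^2)\,\frac{\m(B_{tr}(\gamma(t)))^{1/n}}{\omega_n^{1/n}\,tr},
\]
and let $r\to 0$ with $t$ fixed. Both sides converge, since Bishop--Gromov makes $r\mapsto \theta_{n,r}$ monotone so that $\theta_n(\gamma(1))$ and $\theta_n(\gamma(t))$ exist; passing to the limit gives $\theta_n(\gamma(1))^{1/n}\le (1+CR^2)\,\theta_n(\gamma(t))^{1/n}$, and the trivial bound $(1+CR^2)\le (1+4CR^2)$ absorbs the stated slack.
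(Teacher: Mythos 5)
Your proof is correct, and in substance it follows the same skeleton as the paper's: a $\CAT(1)$ geodesic-contraction estimate for the ball $B_r(\gamma(1))$ toward $\gamma(0)$, a $\CD(-(n-1),n)$ volume inequality with one endpoint degenerate at a point, a Taylor expansion of the distortion coefficients showing the error is $1+O(R^2)$, and finally division by $t\cdot r$ and $r\to 0$ (the limits defining $\theta_n$ existing in $[0,\infty]$ by Bishop--Gromov monotonicity, exactly as you say). The differences are in how the two middle steps are executed. The paper simply invokes the Brunn--Minkowski inequality for $\CD(K,N)$ spaces with $A_0=\{\gamma(0)\}$ and $A_1=B_r(\gamma(1))$, obtaining $\sigma^{(t)}_{K,n}(R)\,\m(A_1)^{1/n}\le \m(A_{t,\gamma(0)})^{1/n}$ in one line; you instead re-derive this from the definition of $\CD$ via displacement convexity of $S_n$, a Dirac endpoint approximated by normalized restrictions to $B_\eps(p)$, and the Jensen bound $S_n(\mu_t|\m)\ge-\m(\supp\mu_t)^{1/n}$. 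That is a legitimate unpacking (and dropping the $\rho_0^{-1/n}$-term is harmless even before $\eps\to0$, since it has a favorable sign), but it is longer than needed given that Brunn--Minkowski is standard for $\CD(K,N)$.

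On the $\CAT(1)$ step you are actually \emph{more} careful than the paper. The paper asserts the clean inclusion $A_{t,\gamma(0)}\subset B_{tr}(\gamma(t))$, citing \cite[Lemma 5.5]{Kap-Ket-18}; but, as you anticipate, the sharp spherical comparison gives the contraction modulus $t(1+O(R^2))$ rather than $t$ (already on $\mathbb{S}^2$: for two geodesics of length $\ell$ issuing from $p$ at small angle, midpoint separation is $\tfrac{\sin(\ell/2)}{\sin\ell}\,d$, exceeding $\tfrac12 d$ by a factor $1+O(\ell^2)$), and indeed when this paper reuses the same lemma in the proof of Proposition~\ref{prop-semiconcave} the radius carries the enlargement factor $(1+c_1l^2)$. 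Your device of pre-shrinking the source radius to $\rho=r/(1+C_0R^2)$ so that the target is exactly $B_{tr}(\gamma(t))$, and then restoring $\m(B_\rho)^{1/n}\ge(1-C_2R^2)\,\m(B_r)^{1/n}$ by Bishop--Gromov, is a correct and tidy way to absorb this correction; since everything is $1+O(R^2)$, the constants combine as in the statement. The only micro-points to polish are the open-versus-closed-ball bookkeeping in the $\eps\to0$ limit (easiest handled by proving the inequality for slightly smaller radii and taking a supremum, rather than appealing to null spheres) and noting that the support bound applies to \emph{whichever} $W_2$-geodesic the $\CD$ condition furnishes, since any optimal dynamical plan is concentrated on geodesics between the two supports; neither affects the argument.
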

\begin{proof}
 Let $A=B_{r}(\gamma(1))$
and 
\[
A_{t,\gamma(0)}=\{\xi(t)\,|\,\xi\text{ is a geodesic between \ensuremath{\gamma(0)} and a point in  \ensuremath{A}}\}.
\]
Since $R_0<\pi/100$ by the $\mathrm{CAT}(1)$ condition (cf. \cite[Lemma 5.5]{Kap-Ket-18})
we get that 
\[
A_{t,\gamma(0)}\subset B_{tr}(\gamma(t)).
\]

A Taylor expansion argument in $R$ at $R=0$ shows that there are $C_1>0$ and $R_1>0$ such that $\sigma_{K,n}^{(t)}(R)\ge(1-C_{1}R^{2})t>0$ for any $t\in[0,1]$ and $0<R\le R_1$.
Also note that since $K=-(n-1)<0$ the function $\theta\mapsto\sigma_{K,n}^{(t)}(\theta)$ is monotone decreasing.

Combining the above with the Brunn\textendash Minkowski inequality
with $A_{0}=\{x\}$ and $A_{1}=A$ yields
\begin{align*}
(1-C_{1}R^{2})t\m(A)^{\frac{1}{n}} & \le\sigma_{K,n}^{(t)}(R)\m(A)^{\frac{1}{n}}\\
 & \le\m(A_{t,\gamma(0)})^{\frac{1}{n}}\le\m(B_{tr}(\gamma(t)))^{\frac{1}{n}}.
\end{align*}
Let  $C=2C_1$, and $R_0=\min\{R_1,\frac{1}{\sqrt{2C_1}}\}$. Then for any $0<R<R_0$ we have
\[
0<\frac{1}{1-C_1R^2}<1+CR^2
\]
and therefore

\[
t\cdot \m(A)^{\frac{1}{n}}\le \frac{1}{1-C_1R^2}\m(B_{tr}(\gamma(t)))^{\frac{1}{n}}\le (1+CR^2)\m(B_{tr}(\gamma(t)))^{\frac{1}{n}}
\]
which yields the fist claim in the lemma.
The last claim is obtained by dividing the above  inequality
by $t\cdot r$ and taking the limit as $r\to 0$.
\end{proof}
{
\begin{corollary}\label{cor-loc-bounded}
Let $(X,d, f\mathcal H_n)$ be $\RCD(-(n-1),n)$ and $\CAT(1)$. Then $\theta_n$ is locally bounded.
\end{corollary}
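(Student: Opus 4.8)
The plan is to exploit the ``moreover'' statement of Lemma~\ref{lem-theta-est}, which bounds the density at the \emph{endpoint} of a geodesic by the density at \emph{any interior} point, up to the factor $(1+4CR^{2})$. The key observation is that a single well-chosen interior point with finite density then controls the density at every endpoint of a geodesic running through it; so I only need \emph{one} good auxiliary point, rather than a full-measure set near each $p$.

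First I would fix $x_{0}\in X$ and, keeping the normalization $\uk=1$, $K=-(n-1)$ of the lemma, recall that $\theta_{n}=f\in L^{1}_{loc}(\mathcal H_{n})$ is finite $\mathcal H_{n}$-a.e., while by Theorem~\ref{th:reg-points} the regular set $\Xreg$ has full $\m$-measure. Since $\m(B_{\delta}(x_{0}))>0$ for every $\delta>0$, I can select a regular point $y_{*}\in B_{\delta}(x_{0})\cap\Xreg$ with $\theta_{n}(y_{*})<\infty$. By Proposition~\ref{prop:reg-pooints} the point $y_{*}$ is inner, so every geodesic ending at $y_{*}$ extends beyond $y_{*}$ as a local geodesic.

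Next, for an arbitrary $p\in B_{\rho}(x_{0})$ with $\rho,\delta$ small, I would take the unique geodesic from $p$ to $y_{*}$ (unique because all the relevant scales are below $\pi$ in the $\CAT(1)$ space) and extend it a short distance beyond $y_{*}$ to a point $w$. As local geodesics of length $<\pi$ in a $\CAT(1)$ space are minimizing, the concatenation is a minimizing geodesic $\gamma\colon[0,1]\to X$ with $\gamma(0)=w$, $\gamma(1)=p$ and $\gamma(t_{0})=y_{*}$ for some $t_{0}\in(0,1)$; choosing $\rho$, $\delta$ and the extension length small enough keeps $\gamma$ inside $B_{2R}(x_{0})$ for a fixed $R<R_{0}$. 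The ``moreover'' part of Lemma~\ref{lem-theta-est} then gives $\theta_{n}(p)^{1/n}=\theta_{n}(\gamma(1))^{1/n}\le(1+4CR^{2})\,\theta_{n}(\gamma(t_{0}))^{1/n}=(1+4CR^{2})\,\theta_{n}(y_{*})^{1/n}$, hence $\theta_{n}(p)\le(1+4CR^{2})^{n}\,\theta_{n}(y_{*})$ uniformly in $p\in B_{\rho}(x_{0})$. Since $x_{0}$ was arbitrary, this is exactly local boundedness.

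The main obstacle is purely the geometric bookkeeping ensuring that the auxiliary geodesic through $y_{*}$ is genuinely minimizing and stays within the scale $R_{0}$ where Lemma~\ref{lem-theta-est} applies: this is precisely where innerness of the regular point $y_{*}$ (to produce the extension past $y_{*}$) and the $\CAT(1)$ fact that short local geodesics minimize are both invoked. Note that $p$ itself is allowed to be a boundary point --- only $y_{*}$ must be inner --- so the estimate covers all of $B_{\rho}(x_{0})$, including $\bCAT X$.
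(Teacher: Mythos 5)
Your proof is correct and takes essentially the same route as the paper: the paper's (terser) argument likewise picks a regular point $x_0$ with $\theta_n(x_0)<\infty$, uses extendability of geodesics at regular points to place that point in the \emph{interior} of a geodesic ending at an arbitrary nearby $p$, applies the ``moreover'' estimate of Lemma~\ref{lem-theta-est} to get $\theta_n\le(1+4CR^2)^n\theta_n(x_0)$ on $B_{2R}(x_0)$, and concludes by density of $\Xreg$. Your write-up only makes explicit the bookkeeping (the extension past $y_*$, minimality of short local geodesics in $\CAT(1)$, keeping the concatenation inside $B_{2R}$) that the paper leaves implicit.
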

\begin{proof}
Since $\Xreg$ has full measure and $\theta_n \in L^1_{loc}(\mathcal H^n)$, $\{x\in \Xreg: \theta(x)<\infty\}$ has full measure. By extendability of geodesics at regular points $\theta_n\leq (1+4CR^2)^n\theta(x_0)$ on $B_{2R}(x_0)$ for $x_0\in \Xreg$ with $\theta_n(x_0)<\infty$, $R\in (0,R_0)$ and $R_0>0$ as in the previous lemma. Then, the claim follows since $\Xreg$ is also dense.
\end{proof}
}

\begin{cor}\label{cor:theta-const}
The function $\theta_{n}$ is constant in the interior of every geodesics
$\xi\co [0,1]\to X$, i.e it holds $\theta_{n}(\xi(t))=\theta_{n}(\xi(s))$ for
$t,s\in(0,1)$.
\end{cor}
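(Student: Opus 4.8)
The plan is to derive the statement from the density estimate $\theta_n(\gamma(1))^{\frac1n}\le(1+4CR^2)\theta_n(\gamma(t))^{\frac1n}$ of Lemma~\ref{lem-theta-est} by a subdivision (telescoping) argument. That estimate only compares the density at an \emph{endpoint} of a geodesic to the density at an \emph{interior} point, so the first move is to upgrade it to a two-sided comparison of two nearby interior points of $\xi$: this will be achieved by applying the lemma twice, once to a short sub-geodesic traversed forwards and once to a short sub-geodesic traversed backwards. Feeding in a finer and finer subdivision will then force the accumulated multiplicative error to $1$.

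First I would fix $0<s<t<1$; it suffices to show $\theta_n(\xi(s))=\theta_n(\xi(t))$. By Corollary~\ref{cor-loc-bounded} the density $\theta_n$ is locally bounded, and by the monotonicity remark above it is strictly positive, so $\theta_n$ is a finite positive real at every point of the compact set $\xi([s,t])$ and all ratios below are well defined. Let $\ell$ denote the length of $\xi|_{[s,t]}$. For $N\in\mathbb N$ I subdivide $[s,t]$ into $s=u_0<u_1<\dots<u_N=t$ so that each $\xi|_{[u_{i-1},u_i]}$ has length $\ell/N$, and I fix a small $\delta$ with $\delta<\min\{s,\,1-t,\,\ell/N\}$ and with $R:=\ell/N+\delta<R_0$, where $R_0$ is as in Lemma~\ref{lem-theta-est}. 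Note that all the parameters $u_{i-1}-\delta$ and $u_i+\delta$ then lie inside $[0,1]$, precisely because $s$ and $t$ are interior.

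For each $i$ I would apply Lemma~\ref{lem-theta-est} twice. Applying it to the reparametrization of $\xi|_{[u_{i-1}-\delta,\,u_i]}$, whose image lies in a ball of radius $R$ about $\xi(u_{i-1}-\delta)$ (hence in some $B_{2R}(x_0)$ with $R<R_0$), makes $\xi(u_i)$ the endpoint and $\xi(u_{i-1})$ an interior point, giving $\theta_n(\xi(u_i))^{\frac1n}\le(1+4CR^2)\,\theta_n(\xi(u_{i-1}))^{\frac1n}$. Applying it to the \emph{reversed} reparametrization of $\xi|_{[u_{i-1},\,u_i+\delta]}$ makes $\xi(u_{i-1})$ the endpoint and $\xi(u_i)$ interior, yielding the opposite bound $\theta_n(\xi(u_{i-1}))^{\frac1n}\le(1+4CR^2)\,\theta_n(\xi(u_i))^{\frac1n}$. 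Multiplying over $i=1,\dots,N$ and telescoping gives
\[
(1+4CR^2)^{-N}\le \frac{\theta_n(\xi(t))^{\frac1n}}{\theta_n(\xi(s))^{\frac1n}}\le (1+4CR^2)^{N}.
\]

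Finally I would let $\delta\to0$ and then $N\to\infty$. Since $R=\ell/N+\delta$, one has $N\log(1+4CR^2)\le 4CN(\ell/N+\delta)^2\to 4C\ell^2/N\to 0$, so $(1+4CR^2)^{N}\to1$ and the ratio is squeezed to $1$, whence $\theta_n(\xi(s))=\theta_n(\xi(t))$. The step I expect to be the main obstacle (really the only subtle point) is the geometric bookkeeping: arranging that each sub-geodesic lies in a ball whose radius is of order $\ell/N$ rather than of order $\ell$, so that the product of the $N$ error factors collapses to $1$, while simultaneously using the small extensions by $\delta$ to render the relevant point interior and keeping every parameter inside $[0,1]$.
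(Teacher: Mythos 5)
Your proposal is correct and is essentially the paper's own argument: both rest on applying Lemma~\ref{lem-theta-est} two-sidedly to nearby interior points of $\xi$ (extending the sub-geodesic slightly past whichever point must play the interior role, which is legitimate precisely because the points are interior), together with Corollary~\ref{cor-loc-bounded} for finiteness of $\theta_n$. The only cosmetic difference is the finish: the paper converts the multiplicative estimate into $|\theta_n^{1/n}(\xi(t))-\theta_n^{1/n}(\xi(s))|\le \mathrm{const}\cdot d(\xi(t),\xi(s))^2$ and concludes that $t\mapsto\theta_n^{1/n}(\xi(t))$ has identically vanishing derivative, which is exactly the infinitesimal form of your explicit telescoping over a fine subdivision.
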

\begin{proof}
Without loss of generality we can assume that length of $\gamma$ is smaller than $R_0$ given by Lemma~\ref{lem-theta-est}.
Then by Corollary~\ref{cor-loc-bounded} there is $D>0$ such that $\theta_n(\gamma(t))\le D$ for all $t\in [0,1]$.

Choose points $x,y\in\xi((0,1))$. We may assume that either $x$
or $y$ assumes the role of $\gamma(t)$ in the lemma above and the
other the role of $\gamma(1)$. In the lemma above we may choose
$R=2d(x,y)$ and obtain
\[
\theta_{n}(x)^{\frac{1}{n}}\le(1+2Cd(x,y)^{2})\theta_{n}(y)^{\frac{1}{n}}
\]
so that 
\[
\frac{\theta_{n}(x)^{\frac{1}{n}}-\theta_{n}(y)^{\frac{1}{n}}}{d(x,y)}\le\frac{8Cd(x,y)^{2}}{d(x,y)}\theta(y)^{\frac{1}{n}}.
\]
Exchanging the roles of $x$ and $y$ 
\[
\frac{|\theta(y)^{\frac{1}{n}}-\theta(x)^{\frac{1}{n}}|}{d(x,y)}\le\frac{8Cd(x,y)^{2}}{d(x,y)}\max\{\theta(x)^{\frac{1}{n}},\theta(y)^{\frac{1}{n}}\}\le {8CDd(x,y)}
\]
for another constant $D>0$.
Thus the function $F:(0,1)\to\mathbb{R}$ defined by $F(t)=\theta^{\frac{1}{n}}(\xi(t))$ satisfies
\[
F'(t)=0 \quad \text{ for  all }t\in (0,1)
\]
and therefore $F\equiv\mathsf{const}$ on $(0,1)$.
\end{proof}
\begin{cor}\label{cor:noncol}
Let $(X,d,f\mathcal{H}^{n})$ be $\RCD(\ke,n)$ and $\CAT(\kappa)$.
Then $f\equiv\theta_{n}\equiv\mathsf{const}$ almost everywhere. 
\end{cor}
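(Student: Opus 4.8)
The plan is to upgrade Corollary~\ref{cor:theta-const}, which gives constancy of $\theta_n$ only along the \emph{interior} of each geodesic, to genuine local, and then global, constancy on the regular set, and finally to invoke that $\Xreg$ carries full $\m$-measure. Recall from the setup of this subsection that $f=\theta_n$ everywhere, and from Theorem~\ref{th:reg-points} that $\Xreg=\Xreg_n$ is open and geodesically convex and that its points are inner, so that every geodesic ending at a regular point extends beyond it; moreover $\m(X\setminus\Xreg)=0$. After the rescaling in force here we have $\kappa=1$, so $\pi_\kappa=\pi$.

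First I would show that $\theta_n$ is locally constant on $\Xreg$. Fix $p\in\Xreg$ and choose $\eta>0$ with $\bar B_\eta(p)\subset\Xreg$ and $\eta<\pi/3$. Given $x,y\in B_\eta(p)$, the geodesic $[x,y]$ has length $<2\eta<\pi$. Since $x$ and $y$ are inner, the geodesic $[y,x]$ extends a little beyond $x$ and the geodesic $[x,y]$ extends a little beyond $y$; taking both extensions short enough, the concatenation is a local geodesic of total length $<\pi=\pi_\kappa$, hence a genuine (minimizing) geodesic $\xi\colon[0,1]\to X$ having $x$ and $y$ as interior points. By Corollary~\ref{cor:theta-const}, $\theta_n$ is constant on $\xi((0,1))$, so $\theta_n(x)=\theta_n(y)$. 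As $x,y\in B_\eta(p)$ were arbitrary, $\theta_n\equiv\theta_n(p)$ on $B_\eta(p)$.

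Next I would globalize. Since $\Xreg$ is geodesically convex (Theorem~\ref{th:reg-points}), any two of its points are joined by a geodesic lying in $\Xreg$, so $\Xreg$ is path-connected and in particular connected. A locally constant function on a connected set is constant, hence $\theta_n\equiv c$ on $\Xreg$ for some constant $c$. Finally, because $\m(X\setminus\Xreg)=0$ and $f=\theta_n$, we conclude $f\equiv\theta_n\equiv c$ $\m$-almost everywhere, which is the assertion.

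The main obstacle is exactly the endpoint gap in Corollary~\ref{cor:theta-const}: a priori $\theta_n$ might jump at the ends of a geodesic, so one cannot compare two points directly through the single geodesic between them. The device that removes this difficulty is the extendability of geodesics at inner (regular) points together with the fact that a local geodesic of length $<\pi_\kappa$ in a $\CAT(\kappa)$ space is minimizing; this realizes any two nearby regular points as interior points of one honest geodesic. The only points needing care are keeping the total length of the extended geodesic below $\pi_\kappa$ (ensured by localizing to a small ball) and checking that the resulting local geodesic is genuinely minimizing.
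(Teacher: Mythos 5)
Your proof is correct and is essentially the paper's argument: the paper likewise uses that regular points are inner (Proposition~\ref{prop:reg-pooints}) to extend a geodesic between two regular points to a local geodesic containing both as interior points, and then invokes Corollary~\ref{cor:theta-const}. Your only variation is a slightly more careful packaging of the same idea: by working inside a small ball you keep the extended local geodesic of length $<\pi$, hence genuinely minimizing, so that Corollary~\ref{cor:theta-const} applies verbatim, and you then globalize via connectedness of the open, geodesically convex set $\Xreg$, whereas the paper applies the corollary directly to the extended local geodesic in a single step.
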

\begin{proof}
Just note that by Proposition~\ref{prop:reg-pooints} a geodesic connecting  two regular points $x$
and $y$ can be extended to a local geodesic so that $x$ and $y$ are in the interior
of that local geodesic. By Corollary~\ref{cor:theta-const} this implies $\theta(x)=\theta(y)$ and proves the
result.
\end{proof}
\begin{rem}
The result is also true for weakly non-collapsed $\MCP(\ke,n)$+$\CAT(\kappa)$ spaces.
We postpone the proof to a subsequent work as it relies on adjusted
versions of the splitting theorem, Propositions
\ref{prop:reg-pooints} and  \cite[Theorem 1.10]{GP-noncol}.
\end{rem}



%


\subsection{General case} \label{sec:noncol}

Let $(X,d,\m)$ be $\RCD(\ke,N)$ and $\CAT(\uk)$ where $K,\uk\in \R, N\ge 1$.  Let $n=\dim X$. Recall that $\Xreg=\reg_n$.
Denote
\begin{align*}
\Xreg_*:= \left\{x\in \Xreg: \exists \lim_{r\rightarrow 0^+} \frac{\m(B_r(x)}{r^n\omega_n}\in (0,\infty)\right\}.
\end{align*}
By results in \cite{gipa}, \cite{kellmondino}, \cite{DeP-Mar-Rin} $\m(\Xreg\backslash \Xreg_*)=0$ (and hence $\Xreg_*$ has full measure) and $\m|_{\Xreg_*}$ and $\mathcal H^n|_{\Xreg_*}$ are mutually absolutely continuous and 
\begin{align*}
\lim_{r\rightarrow 0^+}\frac{\m(B_r(x))}{\omega_nr^n}= \frac{d \m|_{\Xreg_*}}{d \mathcal H^n|_{\Xreg_*}}(x)=: \theta(x)
\end{align*}
for $\m$-a.e. $x\in \Xreg_*$. 
\begin{proposition}\label{prop-semiconcave}
$\theta$ admits an a.e. modification $\hat \theta\co X\to [0,\infty)$ such that 
\begin{align*}
\theta(\gamma(t))^{\frac{1}{N}}\geq \sigma_\eta^{(t)}(l)
\theta(\gamma(0))^{\frac{1}{N}}+  \sigma_\eta^{(t)}(l)\theta(\gamma(1))^{\frac{1}{N}}.
\end{align*}
for any geodesic $\gamma$ and some $\eta=\eta(\kappa, K, N, n)<0$. In particular, ${\hat \theta^{\frac 1 N}}$ is semi-concave.
\end{proposition}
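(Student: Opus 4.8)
The plan is to push the Brunn--Minkowski inequality of the $\CD(K,N)$ condition along a fixed geodesic and read off the claimed concavity of the density in the limit of vanishing radii. Fix a geodesic $\gamma\colon[0,1]\to X$ (constant speed) of length $l=d(\gamma(0),\gamma(1))$, which we may assume is short, since semi-concavity is a local statement; by rescaling we may also take $K=-(n-1)$ and $\kappa=1$ as in Lemma~\ref{lem-theta-est}. For small $r>0$ set $A_0=B_r(\gamma(0))$, $A_1=B_r(\gamma(1))$ and let $A_t$ be the set of $t$-intermediate points of geodesics joining $A_0$ to $A_1$. The two inputs are: the $\CD(K,N)$ Brunn--Minkowski inequality, which lower bounds $\m(A_t)^{1/N}$; and the $\CAT(\kappa)$ condition, which confines $A_t$ to a small ball about $\gamma(t)$.

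First I would run the geometric step. By convexity of the distance function in $\CAT(\kappa)$ spaces (the estimate of \cite[Lemma 5.5]{Kap-Ket-18} already used in Lemma~\ref{lem-theta-est}) there is $C=C(\kappa)$ with $A_t\subset B_{(1+Cr^2)r}(\gamma(t))$ for every $t\in[0,1]$. Combining this with Brunn--Minkowski applied to $A_0,A_1$ — using that for $K<0$ the coefficient $\theta\mapsto\tau_{K,N}^{(t)}(\theta)$ is monotone, so the supremal distance $l+2r$ may safely be inserted — gives, for all $t\in(0,1)$,
\[
\m\bigl(B_{(1+Cr^2)r}(\gamma(t))\bigr)^{\frac1N}\ \geq\ \tau_{K,N}^{(1-t)}(l+2r)\,\m(B_r(\gamma(0)))^{\frac1N}+\tau_{K,N}^{(t)}(l+2r)\,\m(B_r(\gamma(1)))^{\frac1N}.
\]

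Next I would let $r\to0$. By definition of $\Xreg_*$, whenever $x\in\Xreg_*$ one has $\m(B_\rho(x))=\theta(x)\omega_n\rho^n(1+o(1))$, so dividing the display by $r^{n/N}$ kills the factor $(1+Cr^2)^{n/N}\to1$, replaces $l+2r$ by $l$, and cancels the common $\omega_n^{1/N}$. This yields
\[
\theta(\gamma(t))^{\frac1N}\ \geq\ \tau_{K,N}^{(1-t)}(l)\,\theta(\gamma(0))^{\frac1N}+\tau_{K,N}^{(t)}(l)\,\theta(\gamma(1))^{\frac1N}
\]
for every triple with $\gamma(0),\gamma(t),\gamma(1)\in\Xreg_*$. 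Finally, the standard comparison $\tau_{K,N}^{(s)}(l)\geq\sigma_{K,N}^{(s)}(l)$ (i.e. $\CD(K,N)\Rightarrow\CD^*(K,N)$) lets us replace $\tau_{K,N}$ by $\sigma_\eta$ with $\eta=\eta(\kappa,K,N,n)<0$; after the clean limit one may simply take $\eta=K/N$. This gives the stated inequality, with coefficients $\sigma_\eta^{(1-t)}(l)$ and $\sigma_\eta^{(t)}(l)$.

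It remains to produce the everywhere-defined modification $\hat\theta$. The last display holds along a.e. geodesic for a.e. interior parameter, and its right-hand side is exactly the value at $t$ of the $\sigma_\eta$-interpolation of the endpoint data; such interpolants are the model functions governing $\eta$-concavity, so the inequality says $\theta^{1/N}$ is $\eta$-concave along a.e. geodesic. A standard density-and-continuity argument for semi-concave functions then yields a representative $\hat\theta^{1/N}$ satisfying the inequality along \emph{every} geodesic, which is precisely $\eta$-concavity and hence semi-concavity; the local boundedness needed to close this comes from extendability of geodesics at regular points (Theorem~\ref{th:reg-points}). The main obstacle is the measure-theoretic bookkeeping in the limit step: $\theta$ is controlled only on the full-measure set $\Xreg_*$, so one must arrange (via a Fubini argument in the interior parameter) that $\gamma(0),\gamma(t),\gamma(1)\in\Xreg_*$ for enough geodesics, and then upgrade the resulting a.e. inequality to a genuinely semi-concave modification $\hat\theta$ defined at every point. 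Securing the uniform $\CAT(\kappa)$ containment $A_t\subset B_{(1+Cr^2)r}(\gamma(t))$ with a constant independent of the location of $\gamma(t)$ is the remaining point requiring care, but it follows directly from the cited convexity estimate.
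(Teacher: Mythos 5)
Your overall strategy (CAT containment of intermediate sets plus Brunn--Minkowski, then $r\to 0$) is the paper's strategy, but there is a genuine quantitative error at the heart of your limit step. The $\CAT(\kappa)$ enlargement of the intermediate set is \emph{not} $A_t\subset B_{(1+Cr^2)r}(\gamma(t))$: comparison in $\SS^2_\kappa$ shows that geodesics with endpoints in $B_r(\gamma(0))$ and $B_r(\gamma(1))$ spread apart by a factor $1+c_1l^2$ where $l$ is the \emph{length of $\gamma$}, i.e.\ $A_t\subset B_{r(1+c_1l^2)}(\gamma(t))$, exactly as in the paper's proof (and in the one-ball version in Lemma~\ref{lem-theta-est}). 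This defect is independent of $r$, so it does \emph{not} disappear when you divide by $\omega_n r^{n/N}$ and let $r\to0$; what survives is an inequality of the form
\begin{align*}
(1+cl^2)^{\frac{n}{N}+1}\,\theta(\gamma(t))^{\frac1N}\ \geq\ \tau_{K,N}^{(1-t)}(l)\,\theta(\gamma(0))^{\frac1N}+\tau_{K,N}^{(t)}(l)\,\theta(\gamma(1))^{\frac1N},
\end{align*}
and the essential step of the paper's proof — a Taylor expansion showing that for some $\eta(\kappa,K,N,n)<0$ and small $l$ one has $\frac12(1+cl^2)^{-(\frac nN+1)}\geq\sigma_\eta^{(1/2)}(l)$, so the persistent defect can be absorbed into a strictly worse concavity modulus — is entirely missing from your argument. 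Your concluding remark that ``one may simply take $\eta=K/N$'' is false for exactly this reason: the upper curvature bound genuinely degrades the modulus, which is why the proposition's $\eta$ depends on $\kappa$ and $n$ and not only on $K,N$.

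Secondarily, your route to the everywhere-defined modification is soft where the paper is concrete. You derive the inequality only when $\gamma(0),\gamma(t),\gamma(1)\in\Xreg_*$ and then invoke a ``standard density-and-continuity argument'' plus Fubini; but upgrading an a.e.-along-a.e.-geodesics inequality to a pointwise semiconcave representative is not mere bookkeeping (you would need a priori local boundedness and some continuity of $\theta$, which at this stage are unavailable — Corollary~\ref{Cor-Lip-1} is a \emph{consequence} of this proposition). The paper avoids the issue entirely: it defines $\hat\theta(x)=\liminf_{r\to0}\m(B_r(x))/(\omega_nr^n)$ at \emph{every} point, passes to the liminf in the ball inequality (which is valid for every geodesic, no exceptional set), deduces finiteness of $\hat\theta$ everywhere from the midpoint inequality itself, and then fills in all $t\in[0,1]$ from $t\in[1/4,3/4]\cup\{0,1\}$ by a midpoint-closure argument. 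You should replace your a.e.\ upgrade by this liminf construction, and insert the Taylor-absorption step producing $\eta<0$; with those two repairs your argument matches the paper's proof.
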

\begin{proof}
We extend $\theta$ via 
\begin{align*}
 [0,\infty]\ni \liminf_{r\rightarrow 0} \frac{\m(B_r(x))}{\omega_n r^n}=\hat{\theta}(x)
\end{align*}
to a function that is defined everywhere on $X$. { Note that at this point we have not ruled out the possibility that $\hat \theta$ might take the value $\infty$.}

Let $\gamma$ be geodesic with length $l< L <\frac{\pi_\kappa}{100}$.
By the upper curvature bound and the Brunn\textendash Minkowski inequality (see the proof of Lemma 5.5 in \cite{Kap-Ket-18} ) one shows that 
\begin{align*}
(1+c_2l^2)\m(B_{r(1+c_1 l^2)}(\gamma(1/2)))\geq \frac{1}{2}\m(B_r(\gamma(0)))+  \frac{1}{2}\m(B_r(\gamma(1)))
\end{align*}
for some constants $c_1=c_1(\kappa)$ and $c_2=c_2(K,N,{ L} )$. We set $c(\kappa, K, N)=\max\{c_1,c_2\}$. By Taylor expanding $\sigma_\eta^{(1/2)}(l)$ w.r.t. $l$ one can see that
there exists some $\eta(\kappa,K,N,n):=\eta<0$ and $L>0$ such that for any $0<l<L$ it holds that
\begin{align*}
\frac{1}{2}\frac{1}{(1+c l^2)^{\frac{n}{N}+1}}\geq \sigma_\eta^{(1/2)}(l)
\end{align*}
and therefore
\begin{align*}
\left(\frac{\m(B_{r(1+c_1 l^2)}(\gamma(1/2)))}{\omega_n r^n (1+c l^2)^{n} }\right)^{\frac{1}{N}}\geq \sigma_\eta^{(1/2)}(l)
\left(\frac{\m(B_r(\gamma(0)))}{\omega_n r^n}\right)^{\frac{1}{N}}+  \sigma_\eta^{(1/2)}(l)\left(\frac{\m(B_r(\gamma(1)))}{\omega_n r^n}\right)^{\frac{1}{N}}
\end{align*}
Hence, for $r\rightarrow 0$ and $t=1/2$ we obtain
\begin{align}\label{eq:theta-semiconcave}
\hat \theta(\gamma(t))^{\frac{1}{N}}\geq \sigma_\eta^{(t)}(l)
\hat \theta(\gamma(0))^{\frac{1}{N}}+  \sigma_\eta^{(1-t)}(l)\hat \theta(\gamma(1))^{\frac{1}{N}}
\end{align}
for any geodesic $\gamma$ with length $l$ less than $L$. 

Now, we observe that for any point $x\in X$ there exists at least one geodesic $\gamma$ of length less than $L$ such that $\gamma(0)=x$ and $\hat \theta(\gamma(1/2))<\infty$.  By above this implies that
$\hat \theta(x)<\infty$ as well.

{It's easy to see that the same proof that works for $t=1/2$ shows that  \eqref{eq:theta-semiconcave} holds for all $t\in[1/4,3/4]$ (we are still assuming that length of $\gamma$ is smaller than $L$).
Let $J\subset [0,1]$ be the set of points $t$ for which \eqref{eq:theta-semiconcave} holds. We know that $J$ contains $[1/4,3/4],\{0\},\{1\}$. Algebraic properties of $\sigma_\eta^{(t)}(l)$ imply that $J$ is closed under taking midpoints. This immediately implies that $J=[0,1]$.

 }
\end{proof}

\begin{remark} One can also show that 
$-\log \hat \theta$ is $\frac{K-8nc}{2}$-convex.
Indeed, since $\RCD(\ke,N)$ implies $\RCD(\ke,\infty)$,
one obtains that
\begin{align*}\label{ineq-1}
&-\log \frac{\m(B_{r(1+cl^2)}(\gamma(t))}{\omega_n r^n(1+cl^2)^n} + \frac{K}{2}t(1-t) W_2(\mu^r_0,\mu^r_1)^2\nonumber\\
&\ \ \ \ \ \ \ \ \ \ \ \ \ \ \ \ \ \ \ \ \ \ \ \ \leq -(1-t)\log \frac{\mu(B_r(\gamma(0)))}{\omega_nr^n} - t \log \frac{\mu(B_r(\gamma(1)))}{\omega_nr^n} +n\log(1+cl^2)
\end{align*}
where
$\mu^r_i=\m(B_r(\gamma(0)))^{-1}\m|_{B_r(\gamma(0))}$, $i=0,1$. 
Moreover, $\log(1+cl^2)\leq cl^2$. 
Since $\mu_i^r\rightarrow \delta_{\gamma(i)}$, $i=0,1$ w.r.t. $W_2$, 
\begin{align*}
\lim_{r\rightarrow 0} W_2(\mu_0,\mu_1)=W_2(\delta_{\gamma(0)},\delta_{\gamma(1)})= d(\gamma(0),\gamma(1)).
\end{align*}
Hence, for $t=\frac{1}{2}$
\begin{align*}
-\log \hat \theta(\gamma\left(\frac{1}{2}\right))  \leq -\frac{1}{2}\log \hat \theta(\gamma(0)) - \frac{1}{2} \log \hat \theta(\gamma(1)) - (K-\frac{8nc}{2})\frac{1}{4}d(\gamma(0),\gamma(1))^2.
\end{align*}
\end{remark}
{ In the following we will identify $\hat\theta$ with $\theta$.}


%


\begin{corollary}\label{Cor-Lip-1}
The function $\theta$ is locally Lipschitz {and positive} near any {$p\in \Xreg$.} 
\end{corollary}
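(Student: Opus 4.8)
The plan is to work throughout with the function $h:=\theta^{1/N}$, which by Proposition~\ref{prop-semiconcave} satisfies the $\sigma_\eta$-concavity inequality \eqref{eq:theta-semiconcave} along every geodesic of length $<L$, and which is finite everywhere by the argument given in that proof. Since $\theta=h^{N}$ and $t\mapsto t^{N}$ is smooth and Lipschitz on bounded intervals, it suffices to show that near any $p\in\Xreg$ the function $h$ is bounded, locally Lipschitz, and strictly positive. The two structural inputs I will use repeatedly are that $\Xreg$ is open and every $p\in\Xreg$ is an inner point whose geodesics extend --- in fact, by Theorem~\ref{th:reg-points}\eqref{uniform-ext-geod}, on a fixed compact ball $\bar B_{\rho}(p)\subset\Xreg$ there is a uniform $\epsilon_{0}>0$ such that every geodesic issuing from $\bar B_{\rho}(p)$ extends to length at least $\epsilon_{0}$ --- and that $\Xreg_*$ has full measure, hence is dense in $\Xreg$.

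First I would establish the two-sided a priori bounds. For the upper bound, fix $p$ and let $z\in B_{\rho}(p)$ with $\rho\le\epsilon_{0}$. Since $p$ is inner I can extend the geodesic $[z,p]$ beyond $p$ by $d(z,p)$ to a geodesic $\gamma$ having $p$ as its midpoint, $\gamma(0)=z$, $\gamma(1)=z'$, $\gamma(1/2)=p$, of length $l=2d(z,p)\le 2\rho$. Applying \eqref{eq:theta-semiconcave} at $t=1/2$ and discarding the nonnegative contribution of $z'$ gives $h(p)\ge \sigma_\eta^{(1/2)}(l)\,h(z)$, whence $h(z)\le h(p)/\sigma_\eta^{(1/2)}(2\rho)=:M$, using that $\sigma_\eta^{(1/2)}(\cdot)$ is positive and bounded below on $[0,2\rho]$. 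For positivity I would pick, using density of $\Xreg_*$, a point $q\in\Xreg_*\cap B_{\rho}(p)$ with $h(q)>0$, extend $[q,p]$ beyond $p$ so that $p=\gamma(s)$ lies in the interior of a geodesic with $\gamma(0)=q$, and read off from \eqref{eq:theta-semiconcave} that $h(p)\ge\sigma_\eta^{(s)}(l)\,h(q)>0$; once continuity of $h$ is known this forces $h>0$ on a neighborhood of $p$.

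For the Lipschitz estimate I would exploit that \eqref{eq:theta-semiconcave}, applied to all sub-geodesics, is precisely the statement that along any arclength-parametrized geodesic $\bar\gamma$ the function $w(\tau)=h(\bar\gamma(\tau))$ is a supersolution of $w''=|\eta|\,w$; combined with the bound $0\le w\le M$ this yields $w''\le |\eta|M$ in the barrier sense, so that $w(\tau)-\tfrac{|\eta|M}{2}\tau^{2}$ is concave. Given $x,y\in B_{\rho/2}(p)$ with $d(x,y)$ small, I extend the geodesic $[x,y]$ beyond both endpoints by $\epsilon_{0}$ (possible, and staying inside $B_{\rho}(p)$, by the uniform extension above), so that $x,y$ correspond to arclength parameters $\tau_{0}<\tau_{1}$ with $\tau_{1}-\tau_{0}=d(x,y)$, both at distance $\ge\epsilon_{0}$ from the endpoints of the extended geodesic. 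The monotonicity of slopes of the concave function $w(\tau)-\tfrac{|\eta|M}{2}\tau^{2}$ then bounds its difference quotient on $[\tau_{0},\tau_{1}]$ by $2M'/\epsilon_{0}$ (with $M'$ a bound for $|w|+\tfrac{|\eta|M}{2}\tau^{2}$), and reinstating the quadratic correction gives $|h(x)-h(y)|\le C\,d(x,y)$ with $C$ depending only on $M,\eta,\epsilon_{0}$ and the size of the neighborhood. Hence $h$, and therefore $\theta=h^{N}$, is locally Lipschitz near $p$; together with $h(p)>0$ this shows $\theta$ is positive near $p$.

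The main obstacle is that semi-concavity of $h$ is intrinsically a one-sided (lower) bound along geodesics, so the crucial preliminary step is the local upper bound on $h$: this is exactly where the inner/extendability property of regular points is essential, and the midpoint-reflection argument above is the device that converts concavity into an upper bound. Once boundedness is in hand, the passage to a genuine Lipschitz estimate is the classical slope-monotonicity argument for concave functions, the only care needed being to keep all auxiliary geodesic extensions inside the region $\bar B_{\rho}(p)\subset\Xreg$ on which the extension length $\epsilon_{0}$ and the bound $M$ are uniform.
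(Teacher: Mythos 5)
Your proof is correct and follows essentially the same route as the paper's: local boundedness of $\theta^{1/N}$ via semiconcavity, nonnegativity and uniform local extendability of geodesics on $\Xreg$ (your midpoint-reflection device is precisely what the paper's one-line observation amounts to), then the standard slope-monotonicity fact that a bounded semiconcave function on an interval is Lipschitz away from the endpoints, and finally positivity by propagating $\theta(q)>0$ from some $q\in\Xreg_*$ along a geodesic and then using the continuity just established. The only deviations are harmless constant bookkeeping (the midpoint construction needs $2\rho\le\epsilon_0$ rather than $\rho\le\epsilon_0$, and the extensions must be kept inside $\bar B_\rho(p)$ by shrinking constants), and your positivity step --- placing $p$, not $q$, in the interior of the extended geodesic so that \eqref{eq:theta-semiconcave} yields a \emph{lower} bound $\theta(p)^{1/N}\ge\sigma_\eta^{(s)}(l)\,\theta(q)^{1/N}$ --- is in fact the correct reading of the paper's slightly garbled sentence, which speaks of extending past $q$.
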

\begin{proof}
First observe that semiconcavity of $\theta^{\frac1 N}$, 
the fact that $\theta \ge 0$ and local extendability of geodesics on $\Xreg$
imply that $\theta$ must be locally bounded.
Now, the fact that $\theta$ is Lipschitz near a point $p\in \Xreg$ is a consequence of Proposition~\ref{prop-semiconcave}, the fact that geodesics are locally extendible a definite amount near $p$ by Proposition~\ref{prop:reg-pooints}
and the fact that a semiconcave function on $(0,1)$ is locally Lipschitz.

Let $p\in \Xreg$. Pick any  $q\in \Xreg_*$. Then $\theta(q)>0$. Since a geodesic from $p$ to $q$ can be locally extended past $q$ Proposition~\ref{prop-semiconcave} implies that $\theta(p)>0$. Now the fact that $\theta$ is Lipschitz near $p$ implies that $\theta$ is positive near $p$.

\end{proof}

\begin{cor}\label{dens-limit-exists}
For all $x\in \Xreg$ it holds 
\[
\theta(x)=\lim_{r\to0}\frac{\m(B_{r}(x))}{\omega_{n}r^{n}}.
\]
\end{cor}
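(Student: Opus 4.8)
The plan is to upgrade the $\liminf$ that \emph{defines} $\theta$ to an honest limit. Recall that after the identification $\hat\theta=\theta$ we have, by construction, $\theta(x)=\liminf_{r\to0}\frac{\m(B_r(x))}{\omega_nr^n}$ for every $x\in\Xreg$, so it suffices to bound the corresponding $\limsup$ from above by $\theta(x)$. Since $\Xreg$ is open (Theorem~\ref{th:reg-points}), for small $r$ the ball $B_r(x)$ lies in $\Xreg$, and by Theorem~\ref{thm-DC-metric-structure} the distance on $\Xreg$ is induced by a continuous ($\BV_0$) Riemannian metric $g$ in $\DC$-coordinates. I will use the standard fact that for a continuous Riemannian metric the Hausdorff measure $\mathcal H^n$ agrees with the Riemannian volume $\sqrt{|g|}\,\mathcal L^n$ and that, choosing coordinates with $g(x)=\mathrm{Id}$, the metric balls are squeezed between Euclidean balls of radii $(1\pm o(1))r$ while $\sqrt{|g|}\to1$; this yields the volume asymptotic
\begin{equation}\tag{$\ast$}
\lim_{r\to0}\frac{\mathcal H^n(B_r(x))}{\omega_nr^n}=1\qquad\text{for every }x\in\Xreg.
\end{equation}

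The heart of the argument is to show that $\m=\theta\,\mathcal H^n$ on $\Xreg$ with $\theta$ serving as a \emph{continuous} density. By the structure theory for $\RCD(K,N)$ spaces one has $\m\ll\mathcal H^n$, so on $\Xreg$ we may write $\m=\rho\,\mathcal H^n$ with $\rho\in L^1_{loc}(\mathcal H^n)$, $\rho\ge0$. Since $(\Xreg,d,\mathcal H^n)$ is locally doubling (being a continuous Riemannian manifold), the Lebesgue--Besicovitch differentiation theorem gives $\lim_{r\to0}\frac{\m(B_r(x))}{\mathcal H^n(B_r(x))}=\rho(x)$ for $\mathcal H^n$-a.e.\ $x$, and combining this with $(\ast)$ shows that $\lim_{r\to0}\frac{\m(B_r(x))}{\omega_nr^n}$ exists and equals $\rho(x)$ for $\mathcal H^n$-a.e.\ $x\in\Xreg$. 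At any such point this limit coincides with the $\liminf$, i.e.\ with $\theta(x)$; hence $\rho=\theta$ $\mathcal H^n$-a.e. Because $\theta$ is continuous (and positive) on $\Xreg$ by Corollary~\ref{Cor-Lip-1}, the continuous function $\theta$ is a representative of $\rho$, so $\m=\theta\,\mathcal H^n$ on $\Xreg$ with continuous $\theta$.

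With this identification the conclusion is a routine continuity squeeze. Fix $x\in\Xreg$ and $\eps>0$; continuity of $\theta$ furnishes $\delta>0$ with $|\theta(y)-\theta(x)|\le\eps$ on $B_\delta(x)$, whence for $0<r<\delta$
\[
(\theta(x)-\eps)\,\frac{\mathcal H^n(B_r(x))}{\omega_nr^n}\;\le\;\frac{\m(B_r(x))}{\omega_nr^n}\;\le\;(\theta(x)+\eps)\,\frac{\mathcal H^n(B_r(x))}{\omega_nr^n}.
\]
Letting $r\to0$ and invoking $(\ast)$ gives $\theta(x)-\eps\le\liminf_{r\to0}\theta_r(x)\le\limsup_{r\to0}\theta_r(x)\le\theta(x)+\eps$, and then $\eps\to0$ yields the claim.

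I expect the main obstacle to be the two places where a.e.\ information has to be promoted to pointwise information on all of $\Xreg$: namely establishing the volume normalization $(\ast)$ for a merely $C^0$ Riemannian metric, and identifying the continuous function $\theta$ of Corollary~\ref{Cor-Lip-1} with the genuine $\mathcal H^n$-density of $\m$ (which is exactly where the positivity and continuity of $\theta$, together with the fact that $\theta$ is \emph{by definition} the $\liminf$ of $\theta_r$, are essential). Once $\m=\theta\,\mathcal H^n$ with continuous $\theta$ is in hand, the remaining step is the elementary squeeze above.
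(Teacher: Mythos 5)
Your proof is correct, but it takes a genuinely different route from the paper's. The paper's own argument stays entirely inside the measure-theoretic framework of Section~\ref{sec: dens}: starting from the full-measure set $\Omega\subset\Xreg$ on which the limit defining $\theta$ exists, it uses a selection argument as in \cite[Lemma 6.1]{kell2017} to produce, for a dense set of points $y$, a geodesic from $x$ to $y$ whose midpoint lies in $\Omega$, and then feeds the $\limsup$ at the endpoint $x$ into the log-concavity inequality from the remark following Proposition~\ref{prop-semiconcave}; letting $y\to x$ through this dense set and using continuity of $\theta$ at $x$ (Corollary~\ref{Cor-Lip-1}) forces $\limsup_{r\to0}\theta_r(x)\le\liminf_{r\to0}\theta_r(x)$. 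You instead import the Lytchak--Nagano structure (Theorem~\ref{thm-DC-metric-structure}): since $d$ near a regular point is the length metric of a continuous Riemannian metric, every point of $\Xreg$ has $\mathcal H^n$-ball density one, and Besicovitch differentiation on the locally doubling space $(\Xreg,d,\mathcal H^n)$ identifies the continuous function $\theta$ with the density of $\m$; that is, you first establish $\m|_{\Xreg}=\theta\,\mathcal H^n$ (which is part of the content of Theorem~\ref{th:function-semiconcave}) and then conclude by an elementary continuity squeeze. Both arguments are sound, and there is no circularity in yours, since Theorem~\ref{thm-DC-metric-structure} and Corollary~\ref{Cor-Lip-1} are proved before and independently of Corollary~\ref{dens-limit-exists}. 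The trade-off: your route relies on the full strength of \cite{Lytchak-Nagano18} plus two standard geometric-measure-theory facts (that the Hausdorff measure of a $C^0$ Riemannian metric is $\sqrt{|g|}\,\mathcal L^n$, and the differentiation theorem for doubling measures --- both purely local, so the non-completeness of $\Xreg$ is harmless), whereas the paper's proof uses only the Brunn--Minkowski/$\CAT$ comparison behind Proposition~\ref{prop-semiconcave}, which makes it more portable to settings where the $\DC$ machinery is unavailable or unproven (e.g.\ the $\MCP$+$\CAT$ generalization the authors flag after Corollary~\ref{cor:noncol}). In return, your argument delivers the stronger intermediate statement $\m|_{\Xreg}=\theta\,\mathcal H^n$ with a continuous, positive density in one stroke, rather than obtaining it downstream of the corollary.
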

\begin{proof}
Pick any $x \in \Xreg$ and let $\Omega\subset \Xreg$ be the set such that for all $y\in\Omega$
\[
\theta(y)=\lim_{r\to0}\frac{\m(B_{r}(y))}{\omega_{n}r^{n}}.
\]
Arguing as in \cite[Lemma 6.1]{kell2017} there is a set of full measure
$\Omega'\subset\Omega$ such that for all $y\in\Omega'$ there is
a unique geodesic $\gamma$ connecting $x$ and $y$ such that $z=\gamma(\frac{1}{2})\in\Omega$.
Note that $\Omega'$ is dense in $\Xreg$.
For $y\in \Omega'$ we can use the arguments of the previous proof and the fact that $z\in\Omega$
to show  
\[
-\log\theta(z)\le-\frac{1}{2}\log\left(\limsup_{r\to0}\frac{\m(B_{r}(x))}{\omega_{n}r^{n}}\right)-\frac{1}{2}\log\theta(y)-(K-\frac{8nc}{2})\frac{1}{4}d(\gamma(0),\gamma(1))^{2}.
\]
Because $\Omega'$ is dense we can take a sequence $y_{n}\to x$ with $y_{n}\in\Omega'$.
Using the fact that $\theta$ is continuous at $x$ we get 
\[
-\frac{1}{2}\log\left(\liminf_{r\to0}\frac{\m(B_{r}(x))}{\omega_{n}r^{n}}\right)\le-\frac{1}{2}\log\left(\limsup_{r\to0}\frac{\m(B_{r}(x))}{\omega_{n}r^{n}}\right)
\]
which implies 
\[
\limsup_{r\to0}\frac{\m(B_{r}(x))}{\omega_{n}r^{n}}\le\liminf_{r\to0}\frac{\m(B_{r}(x))}{\omega_{n}r^{n}}
\]
and thus the claim. 
\end{proof}
\begin{remark}\label{dens-finite-everywhere}
The proof actually shows that $\theta$ is given as a $\lim$ at $x$ whenever $x$ is a continuity point of $\theta$ restricted to $\Xreg\cup\{x\}$. More generally, one obtains  
\[
\limsup_{r\to0}\frac{\m(B_{r}(x))}{\omega_{n}r^{n}}<\infty
\]
 for any $x\in X$.
\end{remark}

The above results allow us to compute the Laplacian in local $\DC$-coordinates using standard Riemannian geometry formulas.

\begin{corollary} Set again $f=\theta$.
Let $V\subset X$ be open and $u\in D({\bf\Delta},V, \m)$.
Then $u\in D({\bf \Delta}_0, \Xreg\cap V, \mathcal{H}^n)$ and 
\begin{equation}\label{another}
{\bf \Delta} u|_{\Xreg\cap V}=  {\bf \Delta}_0 u - \langle \nabla u, \nabla \log f\rangle \m .
\end{equation}
In particular, if $u\in D_{L^2}(\Delta)$, then ${\bf \Delta}_0 u=[ \Delta_0 u] \mathcal H^n$ with $\Delta_0 u\in L^2_{loc}(\Xreg,\m)$.

Let $U=B_\epsilon(p)\subset \Xreg$ be a domain for $\DC$-coordinates and $\tilde{\mathcal A}$ be as in Subsection \ref{ln}. If $u\in \tilde{\mathcal{A}}$ then $ u\in D({\bf \Delta}, \Xreg,\m)$ and $\Delta u|_U\in L^{\infty}_{loc}(\Xreg,\m)$ with 
\begin{align}\label{newid}
{\Delta} u|_{U}= \frac{1}{\sqrt{|g|}}\frac{\partial^{ap}}{\partial x_j}\bigl( g^{jk}\sqrt{|g|}\frac{\partial u}{\partial x_k}\bigr)+ \langle \nabla u, \nabla \log f\rangle.
\end{align}
%
\end{corollary}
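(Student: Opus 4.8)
The plan is to reduce the whole statement to a single change-of-measure identity between the two measure-valued Laplacians ${\bf \Delta}$ (taken with respect to $\m$) and ${\bf \Delta}_0$ (taken with respect to $\mathcal H^n$), exploiting that on $\Xreg$ one has $\m=f\,\mathcal H^n$ with $f=\theta$ \emph{locally Lipschitz and strictly positive} by Corollary~\ref{Cor-Lip-1}. This regularity of the density is the crucial ingredient: it is what allows one to multiply and divide admissible test functions by $f$ without leaving the relevant test-function class, and it guarantees $\nabla\log f\in L^\infty_{loc}(\Xreg)$.

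First I would take $u\in D({\bf \Delta},V,\m)$ (so $u\in W^{1,2}$ and $|\nabla u|\in L^2(\m)$) and a Lipschitz function $g$ with compact support in $\Xreg\cap V$. Since $f$ is locally Lipschitz and bounded below by a positive constant on $\supp g$, the function $g/f$ is again Lipschitz with compact support in $\Xreg\cap V$, hence admissible for ${\bf \Delta}$. Using $\mathcal H^n=f^{-1}\m$ on $\Xreg$, the Leibniz rule \eqref{equ:leibniz} (legitimate a.e.\ in the approximate sense, with $\langle\cdot,\cdot\rangle=\langle\cdot,\cdot\rangle_g$ by Lemma~\ref{lem:lipschitz}) and $\nabla f=f\,\nabla\log f$, I compute
\[
\int\langle\nabla g,\nabla u\rangle\,d\mathcal H^n
=\int\big\langle\nabla(g/f),\nabla u\big\rangle\,d\m+\int\frac{g}{f}\,\langle\nabla\log f,\nabla u\rangle\,d\m
=-\int\frac{g}{f}\,d{\bf \Delta}u+\int\frac{g}{f}\,\langle\nabla\log f,\nabla u\rangle\,d\m .
\]
The right-hand side is a signed Radon functional of $g$ (the first term because $1/f$ is locally bounded, the second because $\langle\nabla\log f,\nabla u\rangle\in L^2_{loc}(\m)$), so $u\in D({\bf \Delta}_0,\Xreg\cap V,\mathcal H^n)$ and, comparing with \eqref{equ:integrationbyparts} for ${\bf \Delta}_0$, we obtain on $\Xreg\cap V$ the identity ${\bf \Delta}u=f\,{\bf \Delta}_0u+\langle\nabla\log f,\nabla u\rangle\,\m$, which gives \eqref{another}. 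If in addition $u\in D_{L^2}(\Delta)$, then ${\bf \Delta}u=\Delta u\,\m$ is $\m$-absolutely continuous, whence ${\bf \Delta}_0u=(\Delta u-\langle\nabla\log f,\nabla u\rangle)\,\mathcal H^n$ is $\mathcal H^n$-absolutely continuous with density $\Delta_0u=\Delta u-\langle\nabla\log f,\nabla u\rangle\in L^2_{loc}$.

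For $u\in\tilde{\mathcal A}$ I would run the same integration by parts in the opposite direction: by the Lytchak--Nagano/Perelman calculus recalled in Subsection~\ref{ln} such $u$ already lies in $D({\bf \Delta}_0,U,\mathcal H^n)$ with $[{\bf \Delta}_0u]_{ac}$ given by \eqref{equ:laplace}. Testing against $g$, with $fg$ now playing the role of the admissible test function for ${\bf \Delta}_0$, gives $u\in D({\bf \Delta},U,\m)$ together with the same identity ${\bf \Delta}u=f\,{\bf \Delta}_0u+\langle\nabla\log f,\nabla u\rangle\,\m$; taking $\m$-densities is exactly \eqref{newid}. The one non-formal point is the asserted $L^\infty_{loc}$ bound on $\Delta u|_U$. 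I would get it by sandwiching: under \eqref{eq:cd+cat} the generating distance functions $d(\cdot,q_i)$ have ${\bf \Delta}$ bounded \emph{above} by the $\RCD(K,N)$ Laplacian comparison and bounded \emph{below} by the convexity estimate coming from the $\CAT(\uk)$ upper bound, so ${\bf \Delta}\,d(\cdot,q_i)$ is squeezed between constant multiples of $\m$ and is therefore $\m$-absolutely continuous with $L^\infty_{loc}$ density; the smooth chain rule of $\DC$ calculus propagates this to all of $\tilde{\mathcal A}$, while $\langle\nabla\log f,\nabla u\rangle\in L^\infty_{loc}$ since both factors are locally Lipschitz.

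The main obstacle is not the formal bookkeeping but the justification of the two manipulations underlying the displayed computation in the low-regularity ($\BV_0/\DC$, measure-valued Laplacian) setting: that one may replace a test function $g$ by $g/f$ (resp.\ $fg$) while remaining in the domain, and that the product/Leibniz rule holds pointwise a.e.\ for these objects. Both rest squarely on Corollary~\ref{Cor-Lip-1} (local Lipschitz positivity of $\theta$) and Lemma~\ref{lem:lipschitz} (agreement of the Sobolev and Riemannian gradients). A secondary subtlety, already flagged above, is ruling out a singular part of ${\bf \Delta}_0u$ for $u\in\tilde{\mathcal A}$ and upgrading the $ac$-density to $L^\infty_{loc}$, for which the simultaneous use of the lower Ricci and upper sectional curvature bounds is essential.
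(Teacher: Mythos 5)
Your first half is essentially the paper's own argument carried out by hand: the substitution $g\mapsto g/f$ for test functions, legitimized by Corollary~\ref{Cor-Lip-1} (local Lipschitz positivity of $\theta$) and Lemma~\ref{lem:lipschitz}, is precisely the proof of Proposition 4.18 in \cite{giglistructure}, which the paper simply cites; your reverse substitution $g\mapsto fg$ for $u\in\tilde{\mathcal A}$ is the same mechanism and is unobjectionable. One remark: the identity you actually derive, ${\bf \Delta}u=f\,{\bf \Delta}_0u+\langle\nabla u,\nabla\log f\rangle\,\m$, is the correct change-of-measure formula and is the one consistent with \eqref{newid} via \eqref{equ:laplace}; the displayed \eqref{another} as printed (with the minus sign and without the factor $f$) appears to contain a typo, so you were right not to contort the computation to match it literally.

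The genuine divergence, and the one real gap, concerns the $L^\infty_{loc}$ bound for $u\in\tilde{\mathcal A}$. You and the paper agree on the upper bound: ${\bf \Delta}d_q\le C\,\m$ by the $\RCD(K,N)$ Laplace comparison, propagated to $\tilde{\mathcal A}$ by the chain rule since such $u$ only involve distances to points outside $U$. But for the lower bound and the absence of a singular part you invoke a ``convexity estimate coming from the $\CAT(\uk)$ upper bound'', i.e.\ you want to upgrade the barrier-sense semiconvexity of $d_q$ along geodesics to the measure inequality ${\bf \Delta}d_q\ge -C\,\m$. That upgrade is exactly the nontrivial point: semiconvexity controls second-derivative measures direction by direction along geodesics, whereas the singular part of the measure-valued Laplacian in a $\DC$ chart is built from $\BV$ derivative measures of $\partial_k d_q$ against merely continuous coefficients $g^{jk}\sqrt{|g|}$, and none of the results in the paper's toolkit delivers the signed singular part from the $\CAT$ condition alone. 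The paper sidesteps this entirely: since geodesics in $U\subset\Xreg$ are locally extendible (Proposition~\ref{prop:reg-pooints}), Corollary 4.19 of \cite{cavmon} applies and yields in one stroke that ${\bf \Delta}d_q=[{\bf \Delta}d_q]_{ac}\,\m$ with density locally bounded below --- so the upper curvature bound enters only through extendibility of geodesics on the regular set, not through a convexity estimate. To close your argument you would either need to prove the measure-valued lower bound from $\CAT$ convexity as a separate lemma, or substitute the Cavalletti--Mondino route as the paper does; everything else in your proposal then goes through.
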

\begin{proof}
Since $f$ is locally Lipschitz on $\Xreg$, $u\in D({\bf \Delta}_0, \Xreg\cap V, \mathcal{H}^n)$ follows exactly as in the proof of Proposition 4.18 in \cite{giglistructure}.

Formula \eqref{another} follows from the previous statement since $\Delta u\in L^2(\m)$, $f$ is positive on $\Xreg$ and  $f$ and $\log f$ are locally Lipschitz on $\Xreg$.

For the second claim about $u\in \tilde {\mathcal A}$ first recall that since $(X,d,m)$ is $\RCD$, for any $q\in X$ we have that $d_q$ lies in $D({\bf \Delta},U\backslash \{q\},m)$  and  ${\bf \Delta}d_q$ is locally bounded above on $ U\backslash \{q\}$ by $const\cdot m$ by 
Laplace comparison in $\RCD$ spaces \cite{giglistructure}.

Furthermore, since all geodesics in $U$ are locally extendible  
we have ${\bf \Delta}d_q= \left[{\bf \Delta} d_q\right]_{ac} \cdot\m$ on $U\backslash \left\{q\right\}$ and $\left[{\bf \Delta} d_q\right]_{ac}$ is locally bounded below on $ U\backslash \{q\}$ again by Corollary 4.19 in \cite{cavmon}. Therefore ${\bf \Delta} d_q $ is in $L^\infty_{loc}(U\backslash \{q\},\m)$.

By the chain rule for ${\bf \Delta}$ \cite{giglistructure} the same holds for any $u,h\in \tilde {\mathcal A}$ on all of $U$ as by construction $u$ and $h$ only involve distance functions to points outside $U$.

Finally, since $u\in D({\bf \Delta}, U,\m)$, \eqref{newid} follows from \eqref{another} together with \eqref{equ:laplace}. 
\end{proof}

\section{Continuity of Tangent Cones}\label{sec: t-cones-cont}

Colding and Naber proved in ~\cite{coldingnaberI}  that for  Ricci limits same-scale tangent cones are continuous along interiors of limit geodesics.

We prove that this property holds for $\CD$+$\CAT$ spaces.

\begin{theorem}\label{t-cone-cont}
Let $(X,d,m)$ satisfy \eqref{eq:cd+cat}. Let $\gamma \co [0,1]\to X$ be a geodesic. Let $0<s_0<1/2$.

Then for any $\eps>0$ there is $\delta>0$ {and $r_0>0$ such that for all $0<r<r_0$ and} and for all $t,t'\in [s_0,1-s_0]$ with $|t-t'|<\delta$ it holds that

\[
d_{GH}\left((\frac{1}{r}B_r(\gamma(t)), \gamma(t)), (\frac{1}{r}B_r(\gamma(t')), \gamma(t')) \right)<\eps
\]
\end{theorem}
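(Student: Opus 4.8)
The plan is to argue by contradiction and blow-up, exploiting that at every interior point of a geodesic the geodesic direction splits off a line in every tangent cone. Suppose the conclusion fails: then there are $\eps_0>0$, parameters $t_i,t_i'\in[s_0,1-s_0]$ with $|t_i-t_i'|\to 0$, and scales $r_i\in(0,r_0)$ with
\[
d_{GH}\Big(\big(\tfrac1{r_i}B_{r_i}(\gamma(t_i)),\gamma(t_i)\big),\big(\tfrac1{r_i}B_{r_i}(\gamma(t_i')),\gamma(t_i')\big)\Big)\ge \eps_0 .
\]
Passing to a subsequence we may assume $t_i\to\bar t\in[s_0,1-s_0]$, hence $t_i'\to\bar t$ and $a_i:=d(\gamma(t_i),\gamma(t_i'))\to 0$. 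The two objects being compared are precisely the unit balls of the \emph{single} rescaled space $Z_i:=\tfrac1{r_i}X$ around the centres $x_i=\gamma(t_i)$ and $x_i'=\gamma(t_i')$, which in $Z_i$ lie at distance $\ell_i:=a_i/r_i$. Passing to a further subsequence, exactly one of the following occurs: (a) $r_i\to\bar r>0$; (b) $r_i\to0$ and $\ell_i\to\ell<\infty$; (c) $r_i\to0$ and $\ell_i\to\infty$.

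In case (a) both $\gamma(t_i),\gamma(t_i')\to\gamma(\bar t)$, so $B_{r_i}(\gamma(t_i))$ and $B_{r_i}(\gamma(t_i'))$ converge to the same ball $B_{\bar r}(\gamma(\bar t))$ and the rescaled distance tends to $0$, a contradiction. Case (b) is the geometric heart. Since the class is uniformly doubling, $(Z_i,x_i)$ subconverges in the pointed measured Gromov--Hausdorff topology to a limit $Y$ which is $\RCD(0,N)$ and $\CAT(0)$, with $x_i'\to\bar x'$ and $d_Y(\bar x,\bar x')=\ell$. Because $\bar t$ is interior, $\gamma$ extends a definite amount on each side of $\gamma(\bar t)$; as $r_i\to0$ the $\tfrac1{r_i}$--rescalings of $\gamma$, which pass through $x_i$ and $x_i'$, converge to a bi-infinite geodesic line through $\bar x$ and $\bar x'$. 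By the splitting theorem (see condition~\eqref{c-split} and Remark~\ref{geod-cone-rcd}) we get $Y\cong\R\times C'$ with this line as the $\R$--factor, and $\bar x,\bar x'$ lie on it, differing by a translation of the $\R$--factor. Translation along the $\R$--factor is an isometry of $Y$, so $B_1^Y(\bar x)\cong B_1^Y(\bar x')$; since the two rescaled unit balls converge to these isometric balls, their $d_{GH}$ tends to $0$, contradicting $\ge\eps_0$.

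The main obstacle is case (c), $\ell_i=a_i/r_i\to\infty$, i.e. scale much smaller than separation, which genuinely occurs (e.g. $|t_i-t_i'|$ fixed small, $r_i\to0$). Here the two centres are not simultaneously visible in any single blow-up and, since tangent cones are not known to be unique, the compactness argument does not apply directly. To treat it I would subdivide $[t_i,t_i']$ and transport the rescaled balls along $\gamma$ by an explicit flow $\Phi^{r}_{s,s'}\co B_{r}(\gamma(s))\to B_{r}(\gamma(s'))$ built from the geodesic structure; the $\CAT(\uk)$ condition and non-branching (Proposition~\ref{prop:nonbra}) make $\Phi$ well defined and, after rescaling, uniformly Lipschitz. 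The crucial point is to bound the total distortion of $\Phi$ by an integral $\int_{t_i}^{t_i'}\omega(s)\,\dd s$ of an infinitesimal ``non-splitting'' rate $\omega$, and then to control $\omega$ by the variation of the density along $\gamma$. Because $\theta$ is continuous and almost monotone along the geodesic (Corollary~\ref{Cor-Lip-1}, Lemma~\ref{lem-theta-est}, Proposition~\ref{prop-semiconcave}), the integrated defect tends to $0$ as $|t_i-t_i'|\to0$, uniformly in $r$; this telescopes the per-step errors and avoids the naive blow-up of the step count $\sim a_i/r_i$ that defeats a crude chaining of the case-(b) estimate. Establishing this volume-controlled integral bound --- the $\CAT$ analogue of the Colding--Naber segment/excess estimate --- is the step I expect to be most delicate, precisely because it must be genuinely uniform in the scale $r$.
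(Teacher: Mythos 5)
There is a genuine gap: your case (c) --- $r_i$ much smaller than the separation $a_i=d(\gamma(t_i),\gamma(t_i'))$ --- is not proved, only sketched, and it is precisely the whole content of the theorem. Cases (a) and (b) are the regimes where both centres survive in a single blow-up and are handled by soft compactness plus splitting; the difficulty the statement addresses is exactly the regime where they do not. Your plan for (c) --- a transport map along $\gamma$ whose distortion is bounded by an integrated ``non-splitting rate'' controlled by the density --- is in effect an attempt to reprove the Colding--Naber segment/H\"older estimate. That technology is not available here: as the paper notes in the introduction, this continuity statement is known for Ricci limits via Colding--Naber but is open for general $\RCD(K,N)$ spaces, so a proof in this setting cannot lean on an unestablished ``$\CAT$ analogue'' of their integral estimate. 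Moreover, chaining per-step distortion bounds along $\sim a_i/r_i$ steps requires the per-step error to be $o(r/a)$ rather than merely $o(1)$, and nothing in your outline produces that quantitative gain; the semiconcavity and continuity of $\theta$ (Corollary~\ref{Cor-Lip-1}, Proposition~\ref{prop-semiconcave}) control the density along $\gamma$ but give no metric distortion estimate at scale $r$ by themselves.

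The paper's proof avoids all of this with one observation that makes the scale-versus-separation dichotomy disappear: compare the two balls directly by \emph{geodesic homotheties from the endpoints of $\gamma$}. Let $\Phi_i$ be the homothety centred at $p=\gamma(0)$ taking $\gamma(t_i)$ to $\gamma(t_0)$, and $\Psi_i$ the homothety centred at $q=\gamma(1)$ taking $\gamma(t_0)$ to $\gamma(t_i)$. The $\CAT(\kappa)$ condition makes these maps $1$-Lipschitz on the relevant balls --- exact one-sided metric control, with no distortion integral to estimate and valid at \emph{every} scale $r$ simultaneously --- while the $\RCD$ Brunn--Minkowski inequality makes them almost measure non-decreasing, with error $\kappa(|t_i-t_0|)$. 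The composition $f_i=\Phi_i\circ\Psi_i$ is then an almost measure non-decreasing $1$-Lipschitz self-map of $B_{r_i}(\gamma(t_0))$; Bishop--Gromov forces the two balls to have nearly equal measure, the images are almost dense, and the rescaled $f_i$ subconverge to a $1$-Lipschitz surjection of the compact limit ball onto itself, which is an isometry by Lemma~\ref{noncontracting-isom}. Hence $\Psi_i$ itself is a $\mu_i$-GH-approximation with $\mu_i\to0$, contradicting the assumed lower bound $\eps$. In short: where you try to build a transport map with quantitatively summable distortion, the paper gets a distortion-free ($1$-Lipschitz) map from the upper curvature bound and upgrades it to an almost-isometry by a volume argument --- this is the idea your proposal is missing, and without it or the (unavailable) Colding--Naber estimate, case (c) remains open.
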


Note that the theorem implies that same scale tangent cones (if they exists) are uniformly continuous on $ [s_0,1-s_0]$.

The following lemma is well-known (see e.g. ~\cite[Theorem 1.6.15]{BBI}).

\begin{lemma}\label{noncontracting-isom}
Let $K$ be a compact metric space. Let $f\co K\to K$ be distance non-decreasing.
Then $f$ is an isometry.
\end{lemma}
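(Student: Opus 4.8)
The plan is to use compactness to force near-recurrence of the forward orbits of $f$, and then to observe that the non-contracting inequality, being monotone along orbits, must collapse to an equality. First I would fix two points $p,q\in K$ and $\eps>0$ and consider the iterates $f^n(p),f^n(q)$ for $n\ge 0$. Since $K\times K$ is compact, the sequence $(f^n(p),f^n(q))_{n\ge0}$ has a convergent, hence Cauchy, subsequence, so one can choose indices $m<n$ with
\[
d(f^m(p),f^n(p))<\eps\quad\text{and}\quad d(f^m(q),f^n(q))<\eps.
\]
Because $f$ is distance non-decreasing, so is each iterate $f^m$; applying this to the pairs $(p,f^{n-m}(p))$ and $(q,f^{n-m}(q))$, and using $f^m(f^{n-m}(p))=f^n(p)$, gives
\[
d(p,f^{n-m}(p))\le d(f^m(p),f^n(p))<\eps,\qquad d(q,f^{n-m}(q))<\eps.
\]
Thus, writing $k=n-m\ge1$, every point comes $\eps$-close to a point in its own forward orbit.

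Next I would deduce that $f$ preserves distances. Along any orbit the quantity $d(f^j(p),f^j(q))$ is non-decreasing in $j$ by the non-contracting hypothesis, so in particular
\[
d(p,q)\le d(f(p),f(q))\le d(f^k(p),f^k(q)).
\]
On the other hand, the triangle inequality together with the near-recurrence bounds gives
\[
d(f^k(p),f^k(q))\le d(f^k(p),p)+d(p,q)+d(q,f^k(q))<d(p,q)+2\eps.
\]
Combining these yields $d(p,q)\le d(f(p),f(q))<d(p,q)+2\eps$, and letting $\eps\to0$ forces $d(f(p),f(q))=d(p,q)$. Since $p,q$ were arbitrary, $f$ is distance preserving.

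Finally I would establish surjectivity. The near-recurrence estimate shows $d(p,f^k(p))<\eps$ with $k\ge1$, and $f^k(p)\in f(K)$; as $\eps$ was arbitrary, $p\in\overline{f(K)}$ for every $p\in K$. Having just shown $f$ is distance preserving, it is in particular continuous, so $f(K)$ is a compact, hence closed, subset of $K$; therefore $\overline{f(K)}=f(K)=K$ and $f$ is onto. The one delicate point, which the argument is designed to circumvent, is that a non-contracting self-map of $K$ is not assumed continuous at the outset, so one cannot directly argue that $f(K)$ is closed or invoke continuity at the start. The recurrence trick extracts the isometry property purely from the monotonicity of orbit distances and the compactness of $K\times K$, after which both continuity and surjectivity follow for free.
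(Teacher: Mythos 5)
Your proof is correct, and since the paper offers no argument of its own for this lemma---it simply cites \cite[Theorem 1.6.15]{BBI}---your recurrence argument is essentially the classical one found there: use compactness of $K\times K$ to find $k\ge 1$ with $f^k$ nearly recurrent at both $p$ and $q$, squeeze $d(f(p),f(q))$ between $d(p,q)$ and $d(p,q)+2\eps$ via monotonicity of orbit distances, and only then invoke continuity to get $f(K)$ closed and hence surjectivity. You also correctly flag and avoid the one genuine pitfall, namely that continuity of $f$ cannot be assumed at the outset.
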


\begin{proof}[Proof of Theorem \ref{t-cone-cont}]

Without loss of generality we can assume that the length of $\gamma$ is less than $\min\{1,\pi_\kappa/2\}$.

We will use the convention that $\kappa(\delta)$ denotes a positive function on $[0,\infty)$ such that $\kappa(\delta)\to 0$ as $\delta\to 0$.

Suppose the theorem is false. Then it fails for some $\eps>0$.  That is there exist $t_i, t_i'\in [s_0, 1-s_0]$ with $|t_i-t_i'|\to 0$ and $r_i\to 0$ such that

\[
d_{GH}\left((\frac{1}{r_i}B_{r_i}(\gamma(t_i)), \gamma(t_i)), (\frac{1}{r_i}B_{r_i}(\gamma(t_i')), \gamma(t_i')) \right)\ge \eps
\]

By passing to a subsequence we can assume that  $t_i, t_i'\to t_0\in [s_0,1-s_0]$. By the triangle inequality and by possibly relabeling and switching $t_i$ with $t_i'$ we can assume that 

\[
d_{GH}\left((\frac{1}{r_i}B_{r_i}(\gamma(t_i)), \gamma(t_i)), (\frac{1}{r_i}B_{r_i}(\gamma(t_0)), \gamma(t_0)) \right)\ge \eps \ \mbox{ 
for all $i$.}
\]

 By passing to a subsequence we can assume that $ (\frac{1}{r_i}X_i,\gamma(t_0))$ pointed GH-converges to some tangent cone $(T_{\gamma(t_0)}X,\cvertex)$ so that

\[
d_{GH}\left((\frac{1}{r_i}B_{r_1}(\cvertex), \cvertex), (\frac{1}{r_i}B_{r_i}(\gamma(t_0)), \gamma(t_0)) \right)\to 0
\]

Let $p=\gamma(0), q=\gamma(1)$. Assume $t_i>t_0$.
Consider the homothety map $\Phi_i$ centered at $p$ such that $\Phi_i(\gamma(t_i))=\gamma(t_0)$ and let $\Psi_i$ be the homothety map centered at $q$ such that $\Psi_i(\gamma(t_0))=\gamma(t_i)$.

Since $L(\gamma)<\pi_\kappa/2$, by the $\CAT(\kappa)$-condition both of these maps are $1$-Lipschitz and by the $\RCD$ condition are almost measure nondecreasing on $B_{10\delta_i}(\gamma(t_0))$ (meaning that the image of any set $A$ has measure $\ge (1-\kappa(\delta_i)m(A)$) where $\delta_i=|t_0-t_i|$. 

Then the composition $f_i=\Phi_i\circ \Psi_i$ maps $B_{r_i}(\gamma(t_0))$ to itself and is measure almost nondecreasing. Together with Bishop-Gromov this implies that
 \[
 \frac {m(B_{r_i}(\gamma(t_i))}{m(B_{r_i}(\gamma(t_0))}\ge 1-\kappa(\delta_i)
 \]
The same argument for $g_i=\Psi_i\circ \Phi_i$ shows that

 \[
  1- \kappa(\delta_i)\le \frac {m(B_{r_i}(\gamma(t_i))}{m(B_{r_i}(\gamma(t_0))}\le  1+ \kappa(\delta_i)
 \]

This in turn implies that  the image of $\Psi_i$ is $\kappa(\delta_i)\cdot r$ dense in $B_{r_i}(\gamma(t_i))$ since it has almost full volume. The same holds for $\Phi_i$ and for $f_i$ for similar reasons.

By Gromov's Arzela\textendash Azcoli theorem the  rescaled maps $f_i\co r_i^{-1}\bar B_{r_i}(\gamma(t_0))\to r_i^{-1}  \bar B_{r_i}(\gamma(t_0))$ subconverge to a self map of the unit ball in the tangent space $f\co \bar B_1(\cvertex)\to \bar B_1(\cvertex)$.

Moreover, $f$ is $1$-Lipschitz and onto and hence its inverse $f^{-1}$ is non-contracting. By Lemma~\ref{noncontracting-isom} $f^{-1}$ is an isometry and hence so is $f$.

Therefore $f_i$ is a $\mu_i$-GH-approximation with $\mu_i\to 0$. Therefore $\Psi_i\co B_{r_i}(\gamma(t_0))\to B_{r_i}(\gamma(t_i))$ is also a  $\mu_i$-GH-approximation. This is a contradiction for large $i$.

\end{proof}
\begin{remark}
Theorem~\ref{t-cone-cont} can be used to give an alternate proof of convexity of $\Xreg$ than the one given in Theorem~\ref{th:reg-points}. Indeed, let $\gamma \co [0,1]\to X$ be a geodesic with $\gamma(0),\gamma(1)\in \Xreg$. Then the set $\gamma^{-1}(\Xreg)$ is nonempty, it is open by Proposition~\ref{prop:reg-pooints} and closed by Theorem~\ref{t-cone-cont}. Therefore $\gamma^{-1}(\Xreg)=[0,1]$.
\end{remark}
Theorem~\ref{t-cone-cont} can be improved to show that the uniform convergence holds with respect to pointed measured Gromov\textendash  Hausdorff convergence. This  was proved for Ricci limits in~\cite{Kap-Li}.

Recall that for doubling metric measure spaces Sturm's $\D$-convergence is equivalent to mGH convergence, see \cite{stugeo1} for details on the transport distance $\D$.

\begin{theorem}\label{t-cone-mGH-cont}
Let $(X,d,m)$ satisfy \eqref{eq:cd+cat}. Let $\gamma \co [0,1]\to X$ be a geodesic. Let $0<s_0<1/2$.

Then for any $\eps>0$ there is $\delta>0$ and $r_0>0$ such that for all $0<r<r_0$ and $t,t'\in [s_0,1-s_0]$ with $|t-t'|<\delta$ it holds that

\[
{\D}\left((\frac{1}{r}\bar B_r(\gamma(t)),\frac{1}{\m(\bar{B}_{r}(\gamma(t)))}{\m}|_{\bar B_{r}(\gamma(t))}), (\frac{1}{r}\bar B_r(\gamma(t')),\frac{1}{\m(\bar{B}_{r}(\gamma(t')))}{\m}|_{\bar B_{r}(\gamma(t'))}) \right)<\eps
\]
\end{theorem}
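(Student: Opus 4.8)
The plan is to run the same contradiction scheme as in the proof of Theorem~\ref{t-cone-cont}, but to track the renormalized measures alongside the metric. Since $(X,d,\m)$ satisfies \eqref{eq:cd+cat}, all small balls are uniformly doubling, so by the equivalence of $\D$-convergence and mGH-convergence for doubling spaces it is enough to produce, for each $\eps>0$, thresholds $\delta,r_0>0$ so that whenever $0<r<r_0$ and $|t-t'|<\delta$ there is a map between the two rescaled balls that is simultaneously a $\kappa(\eps)$-GH-approximation and carries one normalized measure to within $W_2$-distance $\kappa(\eps)$ of the other (here $\kappa(\cdot)$ denotes, as in the proof of Theorem~\ref{t-cone-cont}, a generic function tending to $0$). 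Suppose this fails for some $\eps>0$; exactly as before one extracts $t_i,t_i'\to t_0\in[s_0,1-s_0]$ and $r_i\to0$ and, after relabeling and using the triangle inequality, reduces to the case $t_i'=t_0$, writing $\delta_i:=|t_i-t_0|\to0$.

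I would then reuse the two homotheties from that proof: $\Phi_i$ centered at $p=\gamma(0)$ with $\Phi_i(\gamma(t_i))=\gamma(t_0)$, and $\Psi_i$ centered at $q=\gamma(1)$ with $\Psi_i(\gamma(t_0))=\gamma(t_i)$. By the $\CAT(\kappa)$-condition both maps are $1$-Lipschitz and injective, and by the Brunn\textendash Minkowski estimate used in the proof of Theorem~\ref{t-cone-cont} they are almost measure-nondecreasing, i.e. $\m(\Psi_i(A))\ge(1-\kappa(\delta_i))\,\m(A)$ for Borel $A\subset \bar B_{r_i}(\gamma(t_0))$; being $1$-Lipschitz with $\Psi_i(\gamma(t_0))=\gamma(t_i)$, it maps $\bar B_{r_i}(\gamma(t_0))$ into $\bar B_{r_i}(\gamma(t_i))$. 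That proof moreover already supplies the two-sided volume comparison
\[
1-\kappa(\delta_i)\ \le\ \frac{\m(\bar B_{r_i}(\gamma(t_i)))}{\m(\bar B_{r_i}(\gamma(t_0)))}\ \le\ 1+\kappa(\delta_i)
\]
and shows that $\Psi_i\colon \tfrac1{r_i}\bar B_{r_i}(\gamma(t_0))\to\tfrac1{r_i}\bar B_{r_i}(\gamma(t_i))$ is a $\mu_i$-GH-approximation with $\mu_i\to0$. This takes care of the metric half; it remains to see that $\Psi_i$ is almost measure-preserving.

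The crux is the following pushforward estimate. Since $\Psi_i$ is injective with image inside $\bar B_{r_i}(\gamma(t_i))$, the almost-nondecreasing bound gives $\m(\Psi_i^{-1}(E))\le(1-\kappa(\delta_i))^{-1}\m(E)$ for Borel $E\subset \bar B_{r_i}(\gamma(t_i))$, hence
\[
(\Psi_i)_*\big(\m|_{\bar B_{r_i}(\gamma(t_0))}\big)\ \le\ (1-\kappa(\delta_i))^{-1}\,\m|_{\bar B_{r_i}(\gamma(t_i))}
\]
as measures. Writing $\bar\m_i^{\,t}$ for the rescaled normalized restriction of $\m$ to $\bar B_{r_i}(\gamma(t))$, the two sides have total mass agreeing up to the factor $1\pm\kappa(\delta_i)$ by the volume comparison, so after normalization the probability measures satisfy $(\Psi_i)_*\bar\m_i^{\,t_0}\le(1+\kappa(\delta_i))\,\bar\m_i^{\,t_i}$ and therefore differ by at most $\kappa(\delta_i)$ in total variation. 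Because the rescaled ball has diameter $\le2$, coupling the common part by the identity and transporting the remaining $\kappa(\delta_i)$ of mass over distance $\le2$ yields
\[
W_2\big((\Psi_i)_*\bar\m_i^{\,t_0},\ \bar\m_i^{\,t_i}\big)\ \le\ \kappa(\delta_i)\ \longrightarrow\ 0 .
\]

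Combining the two halves, $\Psi_i$ is a $\nu_i$-mGH-approximation with $\nu_i\to0$, so the $\D$-distance between the two rescaled normalized balls tends to $0$, contradicting the standing assumption that it stays $\ge\eps$; this proves the theorem. I expect the main obstacle to be exactly the measure step above: converting the synthetic Brunn\textendash Minkowski Jacobian bound and the sharp volume ratio into a genuine Wasserstein estimate on the pushforward. The decisive inputs are the injectivity of the homothety together with the almost-nondecreasing property, which make the pushforward of the source measure dominated by a factor $(1+\kappa(\delta_i))$ times the target measure; for probability measures this forces total-variation (hence, since diameters are bounded, $W_2$) closeness. A secondary point to verify is that all the $\kappa(\delta_i)$-estimates are genuinely uniform in the scale $r_i$, which holds because the homothety ratio $1-O(\delta_i)$ and the relevant distances ($\le L(\gamma)<\pi_\kappa/2$) are independent of $r_i$.
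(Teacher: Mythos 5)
Your proposal is correct, and it takes a genuinely different route from the paper at the decisive step. Both arguments share the same skeleton: the contradiction setup with $t_i\to t_0$, $r_i\to 0$, the homotheties $\Phi_i$ (centered at $\gamma(0)$) and $\Psi_i$ (centered at $\gamma(1)$), the Brunn--Minkowski almost-measure-nondecreasing bound with modulus $\kappa(\delta_i)$ uniform in $r_i$, the two-sided volume ratio, and the fact that $\Psi_i$ is a $\mu_i$-GH-approximation (which, as you implicitly acknowledge, is not the statement of Theorem~\ref{t-cone-cont} but comes from re-running its subsequence argument --- limit maps converge to an isometry by Lemma~\ref{noncontracting-isom} --- so your proof is not wholly limit-free on the metric side). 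Where you diverge is the measure half. The paper passes to limits: it extracts converging rescaled balls and measures, lets $\Psi_i,\Phi_i,f_i$ converge in Gromov--Arzel\`a--Ascoli sense to isometries, shows via a ball comparison and Vitali's covering theorem that the limit of $f_i=\Phi_i\circ\Psi_i$ is measure nondecreasing hence measure preserving, deduces $(\Psi)_{\#}\m_\infty=\m_\infty'$, and concludes the limit balls are metric measure isomorphic, contradicting $\D\ge\eps$. You instead stay at finite scale: the inclusion $\Psi_i(\Psi_i^{-1}(E))\subset E$ (which, note, does not even require the injectivity you invoke) combined with the Jacobian bound yields the pushforward domination $(\Psi_i)_*\bar\m_i^{t_0}\le(1+\kappa(\delta_i))\bar\m_i^{t_i}$, and the elementary fact that one-sided domination between \emph{probability} measures forces two-sided total-variation closeness (since $\mu(E)-\nu(E)\le\kappa\,\nu(E)$ and, by complementation, $\nu(E)-\mu(E)\le\kappa$) then gives the $W_2$ bound on a set of diameter $\le 2$ (strictly, $W_2\le 2\sqrt{\mathrm{TV}}$, absorbed in your generic $\kappa$). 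This buys a quantitative, Vitali-free argument with explicit moduli uniform in the scale $r$, and it never needs convergence of measures on limit spaces; the paper's soft limit argument buys brevity at the measure step and avoids any bookkeeping of constants. The final passage from ``$\Psi_i$ is a GH-approximation pushing one normalized measure $W_2$-close to the other'' to $\D$-smallness is the standard coupling construction underlying the $\D$/mGH equivalence for uniformly doubling spaces that the paper quotes, so your conclusion stands.
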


\begin{proof}{
Suppose the statement is false. Then, for some $\epsilon>0$ one can find $t_i,t_i' \in (s_0,1-s_0)$ with $|t_i-t_i'|\rightarrow 0$ and $r_i\downarrow 0$ such that 
\begin{align*}
{\D}\left((\frac{1}{r_i}\bar{B}_{r_i}(\gamma(t_i)),\frac{1}{\m(\bar{B}_{r_i}(\gamma(t_i)))}\m|_{\bar B_{r_i}(\gamma(t_i))}), ((\frac{1}{r_i}\bar{B}_{r_i}(\gamma(t_i')),\frac{1}{\m(\bar{B}_{r_i}(\gamma(t'_i)))}{\m}|_{\bar B_{r_i}(\gamma(t_i'))})\right)\geq \epsilon
\end{align*}
for all $i\in \N$.  As before we can assume that $t'_i=t_0$ is fixed. To simplify notations we set $\frac{1}{r_i}\bar B_{r_i}(\gamma(t_i))= B_i$ and $\frac{1}{r_i}\bar B_{r_i}(\gamma(t_0))={B}'_i$, and the corresponding probability measure  we denote $ \m_i'$ and $\m_i$, respectively.

We already showed that $B_i$ and $ B_i'$ are GH-close for $i$ large. More precisely, $\Psi_i:B_i\rightarrow  B_i'$ is a $1$-Lipschitz map and a $\mu_i$-GH-approximation with $\mu_i\rightarrow 0$ for $i\rightarrow \infty$. Similar, $\Phi_i: B_i'\rightarrow B_i$ is $1$-Lipschitz map and $\mu_i$-GH-approximations as well. Note that $\Phi_i$ and $\Psi_i$ are indeed $1$-Lipschitz.

Letting $i\rightarrow \infty$ and after taking a subsequence we deduce that $B_i$ and $B_i'$ converge in pointed GH-sense to limit spaces $\bar B_1(\cvertex)$ and $\bar {B}_1'(\cvertex')$, respectively. The set $\bar B_1'(\cvertex)$ is again the (closed) $1$-ball in a tangent space at $\gamma(t_0)$.
 The maps $\Psi_i$, $\Phi_i$ and $f_i=\Phi_i\circ \Psi_i$ converge in Gromov\textendash Arzela\textendash  Ascoli sense to isometries $\Phi:\bar B'_1(\cvertex)\rightarrow \bar{B}_1(\cvertex)$,  $\Psi:{\bar B}_1(\cvertex)\rightarrow \bar B_1'(\cvertex)$ and $f:\bar B_1(\cvertex')\rightarrow \bar B_1(\cvertex)$ where $f=\Phi\circ \Psi$.

Moreover, possibly after taking another subsequence, $\m_i$ and $ \m_i'$ converge to measures $\m_{\infty}$ and ${\m}_{\infty}'$ on $\bar B_1(\cvertex)$ and $\bar{B}_1'(\cvertex')$ repsectively. In particular, $(B_i,\m_i)$ and $({B}_i',\m_i')$ converge in the measured GH-sense,  and hence w.r.t. ${\bf D}$,  to $(\bar B_1(\cvertex),\m_{\infty})$ and $(\bar B_1'(\cvertex'),\m_{\infty}')$ respectively.

{\it Claim:} $(\Psi)_{\#}\m_{\infty}=\m_\infty'$.

First, we show that $f$ is measure preserving: Let
$q_i\in B_i$ converge to $q\in \bar B_1'(\cvertex')$. Then $m_i(B_R(q_i))\to m_\infty(B_R(q))$ for $R>0$ sufficiently small. Also $f_i(q_i)\to f(q)$ and $m_i(B_R(f_i(q_i)))\to m_\infty(B_R(f(q)))$. By the volume noncontracting property of $f_i$ we have that $m_i(f_i(B_R(q_i)))\ge (1-k(\delta_i)) \m_i(B_R(q_i))$. Since $f(B_R(q_i))\subset B_{R}(f_i(q_i))$, it follows $m_i( B_{R}(f_i(q_i))) \ge (1-\kappa(\delta_i))m_i(B_R(q_i))$. Since $m_i( B_{R}(f_i(q_i))) \to m_\infty(B_R(f(q))))$ by passing to the limit we get that $m_\infty(B_R(f(q))\ge m_\infty(B_R(q))$. Since this holds for an arbitrary ball $B_R(q)$, Vitali's covering theorem implies that $f$ is measure nondecreasing. But since $f$ can not increase the overall measure of $\bar B_1(\cvertex)$ this implies that $f$ is measure preserving.

The same argument shows that $\Psi$ and $\Phi$ are measure nondecreasing. 


The combination of the previous two steps yields the claim.

Finally, we obtain that $\Psi: \bar B_1(\cvertex)\rightarrow  \bar B_1'(\cvertex')$ is a metric measure isomorphism and consequently ${\bf D}\left( (\bar B_1(\cvertex),\m_{\infty}), ( \bar B_1'(\cvertex'),\m_{\infty}')\right)=0$. Hence, ${\bf D}\left( (B_i,\m_i),(B_i', \m_i')\right)\rightarrow 0$ for $i\rightarrow \infty$. That is a contradiction.}
\end{proof}

Similarly to ~\cite{Kap-Li} we also obtain that same scale tangent cones along the interior of $\gamma$ have the same dimension.

\begin{theorem}
Let $(X,d,m)$ satisfy \eqref{eq:cd+cat}. Let $\gamma \co [0,1]\to X$ be a geodesic. Let $t,t'\in (0,1)$. Then for any $r_i\to 0$ if there exist  the tangent cones $(T_{\gamma(t)}X,d_t, m_\infty, \cvertex_t)$ and  $(T_{\gamma(t')}X,d_{t'}, m_\infty', \cvertex_{t'})$ corresponding to rescalings $\frac{1}{r_i}$ then $\dim T_{\gamma(t)}X = \dim T_{\gamma(t')}X$.
\end{theorem}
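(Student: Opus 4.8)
The plan is to upgrade the measured continuity of Theorem~\ref{t-cone-mGH-cont} to a genuinely continuous family of tangent cones along $\gamma$, and then to read off the dimension from the volume growth of the limit measure at the cone vertex.

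First I would fix the sequence $r_i\to 0$ along which both $T_{\gamma(t)}X$ and $T_{\gamma(t')}X$ exist, and choose $s_0\in(0,\min\{t,t',1-t,1-t'\})$ so that $t,t'\in[s_0,1-s_0]$. For each $i$ consider the map $F_i\co[s_0,1-s_0]\to\mathcal M$ sending $s$ to the normalized pointed space $\bigl(\tfrac1{r_i}\bar B_{r_i}(\gamma(s)),\,\m(\bar B_{r_i}(\gamma(s)))^{-1}\m,\,\gamma(s)\bigr)$, where $\mathcal M$ carries Sturm's distance $\mathbf D$. Theorem~\ref{t-cone-mGH-cont} says the $F_i$ are uniformly equicontinuous, while uniform doubling of $X$ makes every $F_i(s)$ lie in a common $\mathbf D$-precompact subset of $\mathcal M$; hence by Arzel\`a--Ascoli a subsequence $F_{i_k}$ converges uniformly to a continuous family $s\mapsto Y_s$. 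Since $F_{i_k}(t)$ converges both to $T_{\gamma(t)}X$ and to $Y_t$, uniqueness of limits gives $Y_t=T_{\gamma(t)}X$, and likewise $Y_{t'}=T_{\gamma(t')}X$. Each $Y_s$ is (the unit ball of) a pointed GH-limit of rescalings $\tfrac1{r_{i_k}}X$ based at $\gamma(s)$, hence of a tangent cone of $X$; in particular $Y_s\in\CC$ by closedness of $\CC$ under rescaling and pGH-limits, and, by the structure theory of $\RCD(K,N)$ spaces, it is the unit ball of an $\RCD(0,N)$ metric measure cone with vertex $o_s$ and limit measure $m^s_\infty$.

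Next I would detect $n_s:=\dim Y_s$ through the vertex. By Theorem~\ref{c-dim} applied to $Y_s\in\CC$ this is a well-defined integer, and I claim that $m^s_\infty(B_\rho(o_s))=C_s\,\rho^{n_s}$ for $\rho\in(0,1]$ with $C_s=m^s_\infty(B_1(o_s))\in(0,\infty)$. The cone dilations act on $Y_s$ and scale $m^s_\infty$ by a fixed power $\rho^{\alpha_s}$, so the vertex volume growth is exactly $\rho^{\alpha_s}$; on the other hand $Y_s$ has geometric dimension $n_s$, so $m^s_\infty\ll\mathcal H^{n_s}$ with $m^s_\infty$-a.e. positive density (as in Subsection~\ref{sec:noncol} and the general $\RCD$ structure theory), and since $\mathcal H^{n_s}$ is $n_s$-homogeneous the two homogeneities are compatible only if $\alpha_s=n_s$. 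This identification of the vertex volume exponent with the geometric dimension is the step I expect to be the main obstacle, since it rests on the facts that every tangent cone is a metric measure cone and that its reference measure is absolutely continuous with respect to $\mathcal H^{n_s}$.

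Finally I would conclude by continuity. The uniform $\mathbf D$-convergence makes $s\mapsto Y_s$ continuous in the measured sense, so for $s_k\to s_0$ the based measures converge weakly; as a cone measure charges no distance sphere, $m^{s_k}_\infty(B_\rho(o_{s_k}))\to m^{s_0}_\infty(B_\rho(o_{s_0}))$ for every $\rho\in(0,1)$. Passing to a subsequence along which $n_{s_k}=b$ is constant and $C_{s_k}\to C_*\in[0,\infty)$, I obtain $C_*\rho^{b}=C_{s_0}\rho^{n_{s_0}}$ for all $\rho\in(0,1)$; letting $\rho\uparrow1$ yields $C_*=C_{s_0}>0$, and hence $b=n_{s_0}$. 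Thus every sequence $s_k\to s_0$ satisfies $n_{s_k}=n_{s_0}$ eventually, so the integer-valued map $s\mapsto n_s$ is locally constant; being defined on the connected interval $[s_0,1-s_0]$ it is constant, and in particular $\dim T_{\gamma(t)}X=n_t=n_{t'}=\dim T_{\gamma(t')}X$.
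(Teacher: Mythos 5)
Your first step---upgrading Theorem~\ref{t-cone-mGH-cont} via Arzel\`a--Ascoli to a $\mathbf D$-continuous family $s\mapsto Y_s$ with $Y_t=T_{\gamma(t)}X$ and $Y_{t'}=T_{\gamma(t')}X$---is sound, and your final locally-constant argument would work \emph{if} the dimension could be read off the vertex volume growth. But the dimension-detection step is a genuine gap, and for two separate reasons. First, you have no dilation action on $(Y_s,\m^s_\infty)$: blow-up tangent cones of spaces satisfying \eqref{eq:cd+cat} are not known to be metric cones (Lemma~\ref{geom-vs-blowup-t-cones} only embeds $T_p^gX$ into $T_pX$; surjectivity is explicitly open), the paper's Remark~\ref{geod-cone-rcd} states that even the geodesic tangent cone is not known to be a \emph{volume} cone, and since the sequence $r_i$ is fixed, rescaling $Y_s$ by $\lambda$ yields a tangent cone along $\lambda^{-1}r_{i_k}$, which without uniqueness need not be isomorphic to $Y_s$. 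Second, even granting a metric measure cone structure, the identification $\alpha_s=n_s$ is simply false: the space $([0,\infty),|\cdot|,x\,dx)$ is $\RCD(0,2)$ and $\CAT(0)$, is a metric measure cone over a point, has geometric dimension $n=1$, and its measure is absolutely continuous with respect to $\mathcal H^1$ with a.e.\ positive density---exactly the hypotheses you invoke---yet $\m(B_\rho(0))=\rho^2/2$, so $\alpha=2\ne n$. The flaw in your compatibility argument is that the density may itself be homogeneous and vanish at the vertex; and since $o_s$ may well be a boundary point of $Y_s$ (the theorem does not assume $\gamma(t)$ regular), nothing forces the vertex growth exponent to equal $n_s$.

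The paper's proof avoids cones entirely and instead compares the two tangent cones directly. Assuming $m=\dim T_{\gamma(t)}X<n=\dim T_{\gamma(t')}X$ with $t<t'$, it uses the homotheties centered at the endpoints of $\gamma$: the map $\Psi$ taking $\gamma(t)$ to $\gamma(t')$ is $1$-Lipschitz by the $\CAT(\kappa)$ condition and satisfies $\m(\Psi(A))\ge c\,\m(A)$ by Brunn--Minkowski; passing to the limit as in the proof of Theorem~\ref{t-cone-mGH-cont} gives a $1$-Lipschitz, volume quasi-nondecreasing map $\Psi_\infty\co \bar B_1(\cvertex_t)\to\bar B_1(\cvertex_{t'})$. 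Picking a \emph{regular} point $x$ of $T_{\gamma(t)}X$ (the tangent cone satisfies $\RCD(0,N)$ and $\CAT(0)$, so Section~\ref{sec: dens} applies to it), Corollary~\ref{dens-limit-exists} gives a positive finite $m$-density at $x$, whence $\m_\infty'(B_r(\Psi_\infty(x)))\ge C_1r^m$ for small $r$, while Remark~\ref{dens-finite-everywhere} gives $\m_\infty'(B_r(y))\le C_2r^n$ at \emph{every} point $y$; these are incompatible when $m<n$. In short, two-sided density estimates at regular points substitute for the cone-homogeneity you assumed; if you replace your vertex-growth mechanism with this density comparison, you need a volume-controlled map between the two cones, which is precisely the homothety construction above---at which point you have reproduced the paper's argument.
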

\begin{proof}
Without loss of generality we can assume that the length of  $\gamma$ is $\le \min\{\pi_\kappa/2,1\}$.
Suppose the theorem is false and there exist $t,t'\in (0,1)$ and $r_i\to 0$ such that the corresponding tangent cones have different dimensions.
Let $m=\dim T_{\gamma(t)}X$ and $n=\dim T_{\gamma(t')}X$. Without loss of generality $t<t'$ and $m<n$.
 As before we have a homothety $\Psi$ centered at $q=\gamma(1)$ such that $\Psi(\gamma(t))=\gamma(t')$ and a homothety  $\Phi$ centered at $p=\gamma(0)$ such that $\Phi(\gamma(t'))=\gamma(t)$. By the $\CAT(\kappa)$ condition $\Psi$  is 1-Lipschitz and hence $\Psi(B_{r_i}(\gamma(t))\subset B_{r_i}(\gamma(t'))$. Also by the Brunn\textendash Minkowski inequality $\Psi$ satisfies that

\begin{equation}
m(\Psi(A))\ge c \cdot m(A) \quad \text{ for any }A\subset B_{r_i}(\gamma(s))
\end{equation}
where $1>c=c(K,N, t, t')>0$.
Since we also have a similar inequality of $\Phi$ we get that 

 \[
c \le \frac {m(B_{r_i}(\gamma(t))}{m(B_{r_i}(\gamma(t'))}\le  \frac{1}{c}
 \]
 
 Passing to the limit as in the proof of Theorem~\ref{t-cone-mGH-cont} we get a limit map $\Psi_\infty\co \bar B_1(\cvertex_t)\to \bar B_1(\cvertex_{t'}) $.  Moreover, the map $\Psi_\infty$ is $1$-Lipschitz and satisfies
 \[
m_\infty'(B_r( \Psi_\infty(x))\ge C m_\infty(B_r(x))
 \]
for any $r>0,\ x\in T_{\gamma(t)}X$ and some $C>0$. Let us pick $x\in \Xreg( T_{\gamma(t)}X)$. Then the density $\theta_m(x)=\lim_{r\to0}\frac{\m_\infty(B_{r}(x))}{\omega_{m}r^{m}}$ is defined and positive by Corollary~\ref{dens-limit-exists}. But then for $y=\Psi_\infty(x)$ we also have that $m_\infty'(B_r(y))\ge C_1r^m$ for some $C_1>0$ and all small $r>0$.  On the other hand by Remark~\ref{dens-finite-everywhere} we have that 
 $m_\infty'(B_r(y))\le C_2 r^n$ for some constant $C_2>0$ and all small $r>0$. This is a contradiction since $m<n$.
\end{proof}

\section{Weakly Stably Non-branching $\protect\CAT(1)$ spaces}\label{sec-class-c}

In this section we show that most of the results of Sections  \ref{sec: CD+CAT-str}  and ~\ref{sec:boundary} also apply to 
a more general class of $\CAT(1)$ spaces that satisfy a stable form of the 
non-branching condition. We decided to present this as a separate proof because it 
is somewhat less intuitive as it is done in an inductive way due to the lack of a 
splitting theorem (resp. the suspension theorem). 

Since in this section we will never consider blow up tangent cones  and will only work with geodesic tangent cones we will drop the superindex $g$ when denoting geodesic tangent cones and geodesic spaces of directions. Further, we will only deal with the splitting dimension of $\CAT$ spaces and therefore $\dim $ will denote $\dims$.

Let $\mathcal{NB}$ be the class of non-branching $\CAT(1)$ spaces
and define inductively 
\begin{align*}
\mathcal{C}_{1} & =\{X\in\mathcal{NB}\,|\,\dim X=1\}\\
\mathcal{C}_{n} & =\{X\in\mathcal{NB}\,|\,\dim X\le n,\forall p:T_{p}X\in\mathcal{NB},\Sigma_{p}X\in\mathcal{T}_{n-1}\}
\end{align*}
where the subclass $\mathcal{T}_{n}$ of $\mathcal{C}_{n}$ is defined
as follows: 
\[
\mathcal{T}_{n}=\{X\in\mathcal{C}_{n}\,|\,\diam X\le\pi,\forall v\in X:|\Sph_{\pi}^{X}(v)|\le1\}.
\]
{Here $|A|$ denotes the cardinality of a set $A$.}

It is not difficult to see that the class $\mathcal{T}_n$ contains convex 
balls of the $n$-sphere and $\mathcal{C}_n$ contains all
$n$-dimensional smooth Riemannian manifolds that are globally $\CAT(1)$  and 
whose boundary is either empty or convex and smooth.

For $X\in\CC_n$ we define the geometric boundary $\bCAT X$ in the same way it was defined in section \ref{sec:boundary}. Also as before we set the regular set to be $\mathcal R=\cup_m\mathcal R_m$ where $\mathcal R_m=\{p\in X| T_pX\cong \R^m\}$.}

Let $\mathcal{C}_{n}^{*}$ be the subclass of $\mathcal{C}_{n}$ consisting of at most $n$-dimensional manifolds with boundary such 
that $\partial X = \bCAT X$ and $X\backslash \partial X$ is strongly convex, i.e. if $\gamma$ is a geodesic with $\gamma(t)$ regular
for some $t\in(0,1)$ then $\gamma(t)$ is regular for all $t\in(0,1)$.

The main theorem of this section is the following:
\begin{theorem}
\label{thm:nb-CAT1}The following holds for all $n\in\mathbb{N}$:
\begin{itemize}
\item $\mathcal{C}_{n}=\mathcal{C}_{n}^{*}$;
\item if $X\in\mathcal{T}_{n}$ then either $\rad X<\pi$ and $\partial X\ne\varnothing$
or $X\cong\mathbb{S}^{l}$ where $l=\dim X$;
\item if $X\in\mathcal{T}_{n}$ admits opposites $v,-v\in X$ then either
$X\cong\mathbb{S}^{n}$ or $v,-v\in\bCAT X$. 
\end{itemize}
\end{theorem}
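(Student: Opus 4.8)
The plan is to prove the three assertions \emph{simultaneously} by induction on $n$, since the definition of $\mathcal{C}_n$ pushes the spaces of directions down into $\mathcal{T}_{n-1}$, and the inductive hypothesis on $\mathcal{T}_{n-1}$ is precisely what must replace the missing splitting/suspension theorem. For the base case $n=1$ one classifies connected $1$-dimensional non-branching $\CAT(1)$ spaces directly: $X\in\mathcal{C}_1$ is an interval, a ray, a line, or a circle, and $\mathcal{T}_1$ consists of the arcs of length $\le\pi$ (for which $\rad X<\pi$ and $\bCAT X\ne\varnothing$) together with $\SS^1$; all three statements are then immediate. For the inductive step I assume the theorem in all dimensions $<n$ and prove, in this order, the first assertion, then the third, then the second, so that every invocation of the sphere dichotomy at level $n$ has already been secured.

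For the first assertion, fix $X\in\mathcal{C}_n$. Each $p\in X$ has $\Sigma_pX\in\mathcal{T}_{n-1}$, so the inductive second assertion applied to $\Sigma_pX$ gives either $\Sigma_pX\cong\SS^{k-1}$, whence $T_pX\cong\R^k$ and $p$ is regular, or $\bCAT\Sigma_pX\ne\varnothing$ and $p\in\bCAT X$; these are mutually exclusive, so $X=\mathcal R\sqcup\bCAT X$. At a regular point the logarithm map $\bar B_\eps(p)\to\bar B_\eps(\cvertex)\subset T_pX$ is onto by extendability of geodesics (\cite{Lyt-Schr07}, via non-contractibility of $\Sigma_pX$ and \cite{Kramer11}) and injective by non-branching, hence a homeomorphism; thus $\mathcal R$ is open, and Kleiner's dimension formula \cite[Theorem B]{kleiner} together with the inductive second assertion (a non-contractible member of $\mathcal{T}_{n-1}$ must be a sphere) produces a regular point, so $\mathcal R\ne\varnothing$. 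Strong convexity and density then follow as in Theorem~\ref{th:reg-points}: along a geodesic $\gamma$ the homotopy type of $\Sigma_{\gamma(s)}X\setminus\{\pm\dot\gamma(s)\}$ is independent of $s$ by Lemma~\ref{lem-kramer}, while the pointwise dichotomy (the inductive third assertion applied to $\Sigma_{\gamma(s)}X$ with the opposite pair $\pm\dot\gamma(s)$) shows this complement is non-contractible in the regular case and contractible in the boundary case (there $\Sigma_{\gamma(s)}X$ has $\rad<\pi$, hence is contractible, cf. Lemma~\ref{lem-geod-noncotr-2}). So the regular/boundary type and the local dimension are constant on the interior of every geodesic. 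The half-space chart at a boundary point is then produced exactly as in Theorem~\ref{c-manifold-with-boundary}, using density of $\mathcal R$, non-extendability of geodesics past boundary points, and uniqueness of geodesics in small balls. This gives $X\in\mathcal{C}_n^{*}$, and since $\mathcal{C}_n^{*}\subseteq\mathcal{C}_n$ by definition, $\mathcal{C}_n=\mathcal{C}_n^{*}$.

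The core of the step, and the inductive substitute for the splitting theorem, is the following suspension lemma, which yields both the third and second assertions: \emph{if $X\in\mathcal{T}_n$, $v$ is regular and has an opposite $-v$, then $X$ is a spherical suspension with poles $v,-v$ over some $Y\in\mathcal{T}_{n-1}$ with $\bCAT Y=\varnothing$.} I would prove it by showing that every geodesic issuing from $v$ extends to a geodesic of length $\pi$ terminating at $-v$: the interior of $[v,-v]$ is regular by non-extendability, the set of directions in $\Sigma_vX\cong\SS^{m-1}$ whose geodesic reaches $-v$ is non-empty, and it is both open and closed — openness from the local manifold structure along the regular geodesic, closedness because any limiting endpoint lies at distance $\pi$ from $v$ while the defining condition $|\Sph^X_\pi(v)|\le1$ forces opposites to be unique, so the endpoint must be $-v$. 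Connectedness of $\SS^{m-1}$ then gives $d(v,x)+d(x,-v)=\pi$ for all $x$, and the Lune Lemma~\ref{cat-susp} produces $X=S(Y)$; since $v\notin\bCAT X$ one gets $\bCAT Y=\varnothing$, so the inductive second assertion forces $Y\cong\SS^{m-1}$ and $X\cong\SS^{m}$, $m=\dim X$. The third assertion follows since if either opposite is regular the lemma gives a sphere, while if neither is regular both lie in $\bCAT X$; the second assertion follows since $\bCAT X=\varnothing$ forces every point regular with $\rad X=\pi$ (a closed manifold of positive dimension cannot be contractible), hence an opposite exists and the lemma yields a sphere, whereas $\bCAT X\ne\varnothing$ with $\rad X=\pi$ would again produce a regular point with an opposite and so a boundaryless sphere, a contradiction, forcing $\rad X<\pi$.

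The main obstacle is exactly the open–closed argument inside the suspension lemma, i.e.\ establishing the identity $d(v,x)+d(x,-v)=\pi$ without a splitting theorem. The naive $\CAT(1)$ hinge comparison only reproduces the triangle inequality, because the relevant triangles have perimeter $\ge2\pi$; the argument must therefore be routed through geodesic extendability at the regular pole together with the uniqueness-of-opposite condition built into $\mathcal{T}_n$. Controlling the endpoints of nearby geodesics as they approach $-v$, and in particular ruling out early termination at $\bCAT X$, is the delicate point, and is precisely where the inductive manifold structure from the first assertion and the hypothesis $|\Sph^X_\pi(v)|\le1$ are indispensable.
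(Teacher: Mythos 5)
Your overall scheme is the paper's: the same induction on $n$, the same base case (Lemma~\ref{lem:classification-1dim}), essentially the same proof of $\mathcal{C}_n=\mathcal{C}_n^*$ (your dichotomy-along-geodesics argument is the paper's Lemma~\ref{lem:mfl-ind} feeding into Corollary~\ref{cor:Cn-Cnstar}), and the same endgame via $d(x,v)+d(x,-v)=\pi$ and the Lune Lemma~\ref{cat-susp}. But your ``suspension lemma'' --- $v$ regular with opposite $-v$ forces a suspension --- contains a genuine gap at exactly the step you yourself call delicate: the \emph{openness} of the set $G\subset\Sigma_vX$ of directions whose geodesic reaches $-v$ is never proved, and the natural limiting argument cannot close it. If $u_i\to u_0\in G$ and the maximal geodesics $\gamma_{u_i}$ terminate at lengths $\ell_i<\pi$, their terminal points lie in $\bCAT X$ (by the level-$n$ first assertion), and passing to a limit geodesic (which by non-branching, using the opposite direction $-u_0$ at the regular point $v$, must coincide with $\gamma_{u_0}$) only rules out accumulation of these boundary endpoints at \emph{interior} points of $\gamma_{u_0}$, which are regular while $\bCAT X$ is closed. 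Nothing excludes $\ell_i\to\pi$ with boundary endpoints accumulating at $-v$ itself: in your one-regular-pole setting $-v$ is permitted to lie in $\bCAT X$, so the scenario your open/closed argument must forbid is precisely the one it cannot reach. Saying that the manifold structure and $\lvert\Sph_\pi^X(v)\rvert\le 1$ are ``indispensable'' names the tools without supplying the step.

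The paper closes this hole by two moves absent from your sketch. First, it never argues with one regular pole: in Corollary~\ref{cor:direction-dichotomy} the geodesic through $v$ and $-v$ is extended slightly past the regular pole; strong convexity makes the shifted points regular, and since local geodesics of length $\pi$ in a $\CAT(1)$ space are minimal, they form a pair of \emph{regular} opposites, reducing everything to Lemma~\ref{lem:regular-opposites}. Second, with both poles regular, the open/closed argument is replaced entirely: for each $s\in(0,\pi)$ the endpoint map $u\mapsto\gamma_u(a_u)$ is a bijection (hence a homeomorphism, by uniqueness and continuous dependence of geodesics of length $<\pi$, since $R_s=\sup_{w\in\Sph_s(-v)}d(v,w)<\pi$) from $\Sigma_v^sX$ onto the metric sphere $\Sph_s(-v)$, which is a \emph{closed manifold because $-v$ is regular}; a closed $(l-1)$-manifold embedded in $\Sigma_vX\cong\SS^{l-1}$ must be everything, so \emph{all} geodesics from $v$ reach $\Sph_s(-v)$, which is what rules out early termination at $\bCAT X$ and then yields $\bCAT X=\varnothing$, geodesic completeness, $\Sph_\pi(v)=\{-v\}$, and the suspension. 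Regularity of the second pole is thus not a convenience but the mechanism your proposal lacks. A smaller slip: the contractibility of $\Sigma_{\gamma(s)}X\setminus\{\pm\dot\gamma(s)\}$ in the boundary case is not Lemma~\ref{lem-geod-noncotr-2} (which concerns suspensions, unavailable here) but the content of Lemma~\ref{lem:mfl-ind}, proved by geodesically contracting toward a regular point at radius $<\pi$ along geodesics that avoid the two boundary poles by strong convexity.
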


We prove the theorem inductively and start by classifying one-dimensional
spaces. The proof of this elementary lemma is left to the reader.
\begin{lemma}
\label{lem:classification-1dim}A space $(X,d)$ is in $\mathcal{C}_{1}$
if and only if $(X,d)$ is isometric to a circle $\lambda\mathbb{S}^{1}$
with $\lambda\ge1$ or a closed connected subset of $\mathbb{R}^{1}$.
Furthermore, if $(X,d)\in\mathcal{T}_{1}$ then either $X\cong\mathbb{S}^{1}$
or $X$ is an interval of length at most $\pi$. In particular, Theorem
\ref{thm:nb-CAT1} holds for $n=1$.
\end{lemma}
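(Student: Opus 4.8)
The plan is to classify $X \in \mathcal{C}_1$ by counting geodesic directions at each point and then recognizing $X$ as a flat $1$-manifold. First I would show that $\Sigma_p X$ has at most two points for every $p$. Since $X$ is connected and not a point and $\dim X = \dims X = 1$, the recursive definition of splitting dimension forces $\Sigma_p X$ to be discrete: a non-isolated $v \in \Sigma_p X$ would sit in a nontrivial geodesic path component, so $\Sigma_v \Sigma_p X \neq \varnothing$ and the chain $\Sigma_p X \supset \Sigma_v \Sigma_p X$ would have length $2$, contradicting $\dims X = 1$. Being discrete and complete, $\Sigma_p X$ coincides with the set $S_p^g X$ of honest geodesic directions, and any two distinct points of it are at distance exactly $\pi$ (distances are $\le \pi$ because angles lie in $[0,\pi]$, and two points in a common path component would be joined by a geodesic, contradicting discreteness). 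Non-branching then caps the cardinality: given three distinct $v_1, v_2, v_3$, the opposite concatenations $\gamma_1 \ast \gamma_2$ and $\gamma_1 \ast \gamma_3$ of the corresponding short geodesics are both geodesics through $p$ (angle $\pi$ makes each concatenation a local geodesic) that agree on the incoming $\gamma_1$-segment but diverge past $p$, contradicting $X \in \mathcal{NB}$. Hence $|\Sigma_p X| \le 2$.

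Next I would globalize. A two-direction point has its directions opposite, so a neighborhood is isometric to an open interval of $\mathbb{R}$; a one-direction point is an endpoint with a half-open interval neighborhood; zero directions forces $X = \{\mathrm{pt}\}$, excluded by $\dim X = 1$. Thus $X$ is a complete connected flat $1$-manifold with boundary, and developing these isometric charts identifies $X$, up to isometry, with $\mathbb{R}$, a ray, a compact interval, or a circle of circumference $C$. All convex subsets of $\mathbb{R}$ are $\CAT(0) \subset \CAT(1)$, while a circle is $\CAT(1)$ iff $C \ge 2\pi$ (a shorter circle is itself a closed geodesic of length $< 2\pi$, forbidden in $\CAT(1)$), i.e. iff $X \cong \lambda \mathbb{S}^1$ with $\lambda \ge 1$. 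This gives the first assertion, the reverse inclusions being clear.

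For the $\mathcal{T}_1$ statement I would impose $\diam X \le \pi$: this excludes $\mathbb{R}$ and the rays, forces $\lambda \le 1$ hence $\lambda = 1$ for circles, and leaves exactly $X \cong \mathbb{S}^1$ or a compact interval of length $\le \pi$. Both automatically satisfy the opposite condition $|\Sph_{\pi}^{X}(v)| \le 1$: on $\mathbb{S}^1$ each point has a unique antipode, and on a short interval only the endpoints of a length-$\pi$ interval are at distance $\pi$.

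Finally the three clauses of Theorem \ref{thm:nb-CAT1} for $n=1$ follow by inspecting this short list. Every point of $\lambda \mathbb{S}^1$ and every interior point of an interval is regular, while the endpoints constitute $\bCAT X$; hence $\partial X = \bCAT X$ and $X \setminus \partial X$ is (trivially strongly) convex, giving $\mathcal{C}_1 = \mathcal{C}_1^*$. For $X \in \mathcal{T}_1$ one has either $X \cong \mathbb{S}^1 = \mathbb{S}^{\dim X}$ or $\rad X = L/2 < \pi$ with $\partial X \neq \varnothing$. And a pair of opposites at distance $\pi$ occurs either on $\mathbb{S}^1$ or as the two endpoints of a length-$\pi$ interval, which lie in $\bCAT X$. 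I expect the only delicate steps to be the identification of $\dims X = 1$ with discreteness of the spaces of directions and the non-branching cardinality bound; the chart-developing argument and the final case checks are routine.
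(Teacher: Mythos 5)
You give the whole argument yourself, which is appropriate here: the paper's ``proof'' of this lemma is literally ``left to the reader'', so there is no official argument to compare against. Your route is the natural one, and most of it is fully correct: $\dims X=1$ forces every point of $\Sigma_p^g X$ to be isolated (a cluster point would lie in a nontrivial path component, which is geodesic, producing a chain of length $2$), distinct directions are then at distance exactly $\pi$, and the cardinality bound $|\Sigma_p^g X|\le 2$ is sound --- angle $\pi$ forces the comparison angle to equal $\pi$, so each concatenation is a genuine geodesic of total length $<\pi$, and two of them branch along the common $\gamma_1$-segment, contradicting non-branching. The $\mathcal{T}_1$ computation and the verification of the three clauses of the theorem for $n=1$ are also correct.

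There is, however, one genuine gap, and it sits exactly in the step you dismiss as ``routine chart-developing''. From $\Sigma_p^g X=\{v,-v\}$ with $d(v,-v)=\pi$ you conclude that a neighborhood of $p$ is isometric to an interval, but this needs the injectivity of the map $q\mapsto(\mathrm{dir}[pq],\,d(p,q))$ on a small ball, i.e.\ that two geodesics from $p$ in the same direction coincide where both are defined. That is \emph{not} a general $\CAT(\kappa)$ fact: ``same direction'' means angle zero, and distinct geodesics issuing from a common point can make angle zero --- e.g.\ the two boundary geodesics of the planar cusp region $\{(x,y): x\ge 0,\ 0\le y\le x^{3}\}$ with its intrinsic metric, which is $\CAT(0)$; they diverge immediately but $d(\gamma_1(t),\gamma_2(t))\le t^{3}=o(t)$. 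Under your standing hypotheses the pathology is excluded, but by an argument, not by fiat: if $q_1\ne q_2$ with $d(p,q_1)=d(p,q_2)=s$ and $\angle q_1pq_2=0$, then $p\notin[q_1q_2]$ (else the angle at $p$ would be $\pi$); along $\sigma=[q_1q_2]$ every angle at every point lies in $\{0,\pi\}$ by your two-direction bound, and an angle $\pi$ at $q_i$ would make $[pq_i]\cup\sigma$ a geodesic, forcing $d(p,q_{3-i})=s+d(q_1,q_2)>s$. Hence both end angles are $0$, so by the first variation formula $d(p,\sigma(\cdot))$ has one-sided derivative $-1$ at both endpoints and attains an interior minimum $m$; but $\Sigma_m^g X=\{\pm\dot\sigma(m)\}$, so the direction of $[mp]$ coincides with one of $\pm\dot\sigma(m)$, and first variation gives derivative $-1$ at $m$ in that direction, contradicting minimality. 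With this injectivity (and the nesting of same-direction geodesics it yields), your development of $X$ as $\mathbb{R}$, a ray, a compact interval, or a circle of circumference $\ge 2\pi$ goes through as you describe. (One harmless side remark: taken literally, a one-point space is a closed connected subset of $\mathbb{R}$ but has $\dims=0$, so the ``if'' direction of the lemma as stated silently excludes it; that is slack in the paper's formulation, not in your argument.)
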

The classes $\mathcal{C}_{n}$ and $\mathcal{T}_{n}$ behave well
w.r.t. geometric constructions.
\begin{lemma}
\label{lem:closed-under-geometric-construction}For all $n\ge1$ the
following holds: 
\begin{itemize}
\item $X\in\mathcal{C}_{n}$ if and only if $X\times\mathbb{R}\in\mathcal{C}_{n+1}$
\item $X\in\mathcal{T}_{n}$ if and only if the Euclidean cone $C(X)$ over $X$ is in $\mathcal{C}_{n+1}$
\item $X\in\mathcal{T}_{n}$ if and only if the spherical suspension $S(X)$
over $X$ is in $\mathcal{T}_{n+1}$.
\end{itemize}
\end{lemma}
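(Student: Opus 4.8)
The plan is to prove the three equivalences --- call them $(\mathrm P_n)$, $(\mathrm C_n)$, $(\mathrm S_n)$ for the product, cone and suspension statements --- simultaneously by induction on $n$, using the recursive definition of $\mathcal C_n$ and $\mathcal T_n$. The base case $n=1$ is handled by the classification of Lemma~\ref{lem:classification-1dim}: for a one-dimensional $X\in\mathcal C_1$ every space of directions $\Sigma_xX$ is a point or $\mathbb S^0$, so $S(\Sigma_xX)$ is an arc of length $\pi$ or $\mathbb S^1$, and one verifies the statements for cylinders $X\times\mathbb R$, Euclidean cones $C(X)$ and spherical suspensions $S(X)$ by hand; formally this amounts to adopting the convention $\mathcal T_0=\{\mathrm{pt},\mathbb S^0\}$.

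First I would assemble the local toolkit. The geodesic tangent cones and spaces of directions of the three constructions are
$$T_{(x,t)}(X\times\mathbb R)=T_xX\times\mathbb R,\qquad \Sigma_{(x,t)}(X\times\mathbb R)=S(\Sigma_xX),$$
while $T_oC(X)=C(X)$, $\Sigma_oC(X)=X$ at the cone vertex and $T_{\mathbf 0_\pm}S(X)=C(X)$, $\Sigma_{\mathbf 0_\pm}S(X)=X$ at a suspension vertex, all non-vertex points reproducing the product formulas $T_xX\times\mathbb R$ and $S(\Sigma_xX)$. To these I add the curvature-bound facts that $C(X)$ is $\CAT(0)$ iff $X$ is $\CAT(1)$ (Berestovski\u\i), $S(X)$ is $\CAT(1)$ iff $X$ is $\CAT(1)$, and $X\times\mathbb R$ is $\CAT(1)$ iff $X$ is $\CAT(1)$ (a closed geodesic in $X\times\mathbb R$ must lie in a fibre $X\times\{t\}$), together with the elementary fact that $\dim$ increases by one under each construction, via Kleiner's characterisation~\eqref{eq:dim}.

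With the toolkit in place, the reverse implications of $(\mathrm C_n)$ and $(\mathrm S_n)$ are almost immediate: membership $C(X)\in\mathcal C_{n+1}$, resp.\ $S(X)\in\mathcal T_{n+1}\subset\mathcal C_{n+1}$, forces the link at the vertex into $\mathcal T_n$, and that link is exactly $X$. For the forward implications, and for $(\mathrm P_n)$, I would check the four defining requirements of $\mathcal C_{n+1}$ in turn. The curvature and dimension bounds come from the toolkit. The link condition is split into vertex points (link $=X\in\mathcal T_n$, by hypothesis) and non-vertex points (link $=S(\Sigma_xX)$, which lies in $\mathcal T_n$ by the inductive hypothesis $(\mathrm S_{n-1})$ applied to $\Sigma_xX\in\mathcal T_{n-1}$). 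The tangent-cone non-branching at non-vertex points follows from $T_xX\in\mathcal{NB}$ and preservation of non-branching under $\times\mathbb R$, and at the vertices it is the assertion $C(X)\in\mathcal{NB}$. For $(\mathrm S_n)$ one finally records $\diam S(X)=\pi$ and that the opposite of an interior point $(x,\theta)$ is $(x',\pi-\theta)$ with $x'\in\Sph^X_\pi(x)$, so that $S(X)$ has at most one opposite per point iff $X$ does. Taking the statements at level $n$ in the order $(\mathrm C_n),(\mathrm S_n),(\mathrm P_n)$ removes any circularity: $(\mathrm C_n)$ and $(\mathrm P_n)$ use only $(\mathrm S_{n-1})$ and the standalone geometric facts, while $(\mathrm S_n)$ uses in addition $(\mathrm C_n)$ to control the vertex tangent cone $C(X)$.

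The main obstacle is the non-branching bookkeeping for the cone and the suspension, namely the direct verification that $X\in\mathcal T_n$ forces $C(X),S(X)\in\mathcal{NB}$ and, conversely, that these cannot be non-branching unless $X$ satisfies the $\mathcal T$-conditions. The key point is that branching away from a vertex reduces, through the product description $T_xX\times\mathbb R$, to branching of $X$, whereas branching at (or through) a vertex corresponds to a geodesic entering the vertex and continuing to an opposite direction; such a continuation exists and is non-unique precisely when some $v$ has $|\Sph^X_\pi(v)|\ge 2$, or when $\diam X>\pi$ produces a whole interval of opposites. This is exactly where the hypotheses $\diam X\le\pi$ and $|\Sph^X_\pi|\le1$ enter, and it explains why the correct hypothesis for the cone and suspension is membership in $\mathcal T_n$ rather than merely in $\mathcal C_n$. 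The $\CAT(1)$-preservation under $\times\mathbb R$, although standard, is the secondary delicate point and is cleanest to invoke through the absence of short closed geodesics.
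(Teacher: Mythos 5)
Your proposal follows essentially the same route as the paper: the same induction on $n$ anchored in Lemma~\ref{lem:classification-1dim}, the same tangent-cone and link formulas ($T_{(x,t)}(X\times\R)\cong T_xX\times\R$, $\Sigma_{(x,t)}(X\times\R)\cong S(\Sigma_xX)$, $\Sigma_{\cvertex}C(X)=X$, $T_{\cvertex}S(X)\cong C(X)$), and the same key mechanism, namely that links at non-vertex points are suspensions $S(\Sigma_xX)$ handled by the suspension statement one level down applied to $\Sigma_xX\in\TT_{n-1}$, while the reverse implications are immediate because the link at a vertex is $X$ itself and the definition of $\CC_{n+1}$ forces links into $\TT_n$.

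Two remarks on the differences. On the positive side, where the paper disposes of the non-branching of $C(X)$ and $S(X)$ with ``one readily verifies,'' you actually supply the verification: branching away from a vertex reduces to branching in $X$, while branching at or through a vertex is exactly a non-unique geodesic continuation into an opposite direction, which occurs precisely when $\diam X>\pi$ or $|\Sph_{\pi}^{X}(v)|\ge 2$ for some $v$ --- this is the real content behind the $\TT$-conditions, and your explicit ordering $(\mathrm C_n),(\mathrm S_n),(\mathrm P_n)$, with $(\mathrm S_n)$ invoking $(\mathrm C_n)$ to handle the vertex tangent cone of the suspension, also tidies up the paper's rather loose index bookkeeping. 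On the critical side, one toolkit item is incorrectly justified as stated: ``$X\times\R$ is $\CAT(1)$ iff $X$ is $\CAT(1)$, since a closed geodesic in $X\times\R$ must lie in a fibre.'' The absence of short closed geodesics is \emph{not} the globalization criterion for spaces with upper curvature bounds; the correct criterion, quoted in this paper's introduction, is that every loop of length less than $2\pi$ be contractible through loops of length less than $2\pi$. The nontrivial direction ($X$ $\CAT(1)$ implies $X\times\R$ $\CAT(1)$) is true for the proper spaces at hand --- for instance because any two points of $X\times\R$ at distance less than $\pi$ project to points of $X$ at distance less than $\pi$, so short geodesics are unique, and one can then globalize, or one can invoke Alexander--Bishop-type curvature bounds for products --- but it needs such an argument or citation rather than your parenthetical. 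To be fair, the paper's own proof is entirely silent on this point (membership of $X\times\R$ in $\mathcal{NB}$ is tacitly assumed there), so this does not distinguish your proof from the paper's; it only affects the self-containedness of your toolkit.
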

\begin{proof}
For $n=1$ the claim follows from Lemma \ref{lem:classification-1dim}. 

Assume $n>1$ and the three claims hold for $n$: If $X\in\mathcal{C}_{n+1}$
and $Y=X\times\mathbb{R}$ then $T_{(p,r)}Y\cong T_{p}X\times\mathbb{R}$
is non-branching and $\Sigma_{(p,r)}Y\cong S(\Sigma_{p}X)$.
Since $\Sigma_{p}X\in\mathcal{T}_{n-1}$ we must have $\Sigma_{(p,r)}Y\in\mathcal{T}_{n}$. 
Similarly, if $Y\in\mathcal{C}_{n+2}$ then $T_{(p,r)}Y\cong T_{p}X\times\mathbb{R}$
is non-branching and $\Sigma_{(p,r)}Y\cong S(\Sigma_{p}X)\in\mathcal{T}_{n+1}$.
The induction step implies $\Sigma_{p}X\in\mathcal{T}_{n}$. Because
$p\in X$ was arbitrary we obtain the claim.

For the second claim note $T_{\cvertex}{ C(X)}\cong C(X)$ and $T_{(p,r)}{ C(X)}\cong T_{p}X\times\mathbb{R}$
for $r>0$ and $p\in X$. By definition of $\mathcal{C}_{n+1}$ we
see $C(X)\in\mathcal{C}_{n+2}$ implies $X\in\mathcal{T}_{n+1}$.
If $X\in\mathcal{T}_{n+1}$ then one readily verifies that all geodesic
tangent cones are non-branching and $\Sigma_{\cvertex}{ C(X)}=X\in\mathcal{T}_{n+1}$.
By the  induction assumption and the fact that $\Sigma_{p}X\in\mathcal{T}_{n}$
we see $S(\Sigma_{p}X)\cong\Sigma_{(p,r)}{ C(X)}\in\mathcal{T}_{n+1}$ implying
$C(X)\in\mathcal{C}_{n+2}$. 

Let us prove the third claim.  Suppose $S(X)\in \TT_{n+1}$.  Since  $X\cong \Sigma_{\cvertex}(SX)$ and  $\TT_{n+1}\subset  \CC_{n+1}$ by the definition of $ \CC_{n+1}$ this implies that $X\in \TT_n$.

Conversely, suppose $X\in \TT_n$. The conditions $\diam X\le \pi$ and  $|\Sph_{\pi}^{X}({p})|\le1$ for all $p\in X$  easily imply that the same holds for $S(X)$. As before it's easy to see that $S(X)$ is non-branching. 

Next,  for $0<r<\pi$ and $p\in X$ we have that $\Sigma X_{(p,r)}\cong S(\Sigma_p X)\in \TT_n$ by the induction assumption. Also $\Sigma_{\cvertex}(S(X))\cong X\in \TT_n$. Hence $S(X)\in\CC_{n+1}$. This finishes the proof of the third claim and of the lemma.
\end{proof}
\begin{corollary}
\label{cor:good-tangent-cone}If $X\in\mathcal{C}_{n}$ then $T_{p}X\in\mathcal{C}_{n}$
for all $p\in X$. 
\end{corollary}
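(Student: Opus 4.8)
The plan is to deduce this directly from the cone characterization in Lemma~\ref{lem:closed-under-geometric-construction}, since by definition the geodesic tangent cone is the Euclidean cone $T_pX = C(\Sigma_p X)$ over the space of geodesic directions. The only real content has already been absorbed into that lemma, so the corollary should amount to little more than a rephrasing of its second bullet, together with a direct treatment of the base case.

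First I would dispose of the base case $n=1$. If $X \in \mathcal{C}_1$, then by Lemma~\ref{lem:classification-1dim} the space $X$ is a circle $\lambda\mathbb{S}^1$ with $\lambda\ge 1$ or a closed connected subset of $\mathbb{R}$. In either case $\Sigma_p X$ consists of at most two points, which are at distance $\pi$ when there are two; hence $T_p X = C(\Sigma_p X)$ is isometric to either the half-line $[0,\infty)$ or the line $\mathbb{R}$. Both are non-branching one-dimensional $\CAT(1)$ spaces, and therefore lie in $\mathcal{C}_1$.

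For the inductive range $n \ge 2$ the argument is immediate. Since $X \in \mathcal{C}_n$, the defining condition of $\mathcal{C}_n$ gives $\Sigma_p X \in \mathcal{T}_{n-1}$ for every $p \in X$. Applying the second equivalence of Lemma~\ref{lem:closed-under-geometric-construction} with the index $n-1$ in place of $n$ (which is legitimate precisely because $n-1 \ge 1$), the membership $\Sigma_p X \in \mathcal{T}_{n-1}$ is equivalent to $C(\Sigma_p X) \in \mathcal{C}_n$. As $T_p X = C(\Sigma_p X)$, this says exactly that $T_p X \in \mathcal{C}_n$, as desired.

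I do not expect any genuine obstacle here beyond bookkeeping: all of the conditions packaged into membership in $\mathcal{C}_n$ — non-branching of the cone, the dimension bound $\dim C(\Sigma_p X) \le n$, and the requirements on the iterated tangent cones and spaces of directions of the cone — are already supplied by the lemma. The single point that must be watched is that $n=1$ cannot be routed through the lemma, as that would invoke the undefined class $\mathcal{T}_0$; this is why the case $n=1$ is handled separately via the explicit classification above.
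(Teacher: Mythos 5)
Your proof is correct and is exactly the argument the paper intends: the corollary is stated there without proof because, for $n\ge 2$, membership $X\in\mathcal{C}_n$ gives $\Sigma_p X\in\mathcal{T}_{n-1}$ by definition, and the second equivalence of Lemma~\ref{lem:closed-under-geometric-construction}, applied with index $n-1$, yields $T_pX=C(\Sigma_pX)\in\mathcal{C}_n$. Your separate treatment of $n=1$ via the classification in Lemma~\ref{lem:classification-1dim} is a sensible precaution that the paper leaves implicit, since $\mathcal{T}_0$ is indeed undefined and the lemma cannot be invoked there.
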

\begin{corollary}
A subclass $\mathcal{D}_{n}$ of at most $n$-dimensional non-branching
$\CAT(1)$ spaces is in $\mathcal{C}_{n}$ if for all $X\in\mathcal{D}_{n}$ and 
any $p\in X$ the geodesic tangent cone $T_{p}X$ is in $\mathcal{D}_{n}$. In particular, the class of $\CAT(1)$ spaces satisfying
the $\mathrm{MCP}(K,N)$ condition with $K\le 0$  is in $\mathcal{C}_{\lfloor N\rfloor}$.
\end{corollary}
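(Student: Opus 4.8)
The plan is to prove, by induction on $n$, the abstract statement: \emph{if $\mathcal{D}$ is a class of non-branching $\CAT(1)$ spaces, each of dimension $\le n$, with $T_{p}X\in\mathcal{D}$ for every $X\in\mathcal{D}$ and every $p\in X$, then $\mathcal{D}\subseteq\mathcal{C}_{n}$}, and then to check that the $\MCP$ class satisfies these hypotheses. The base case $n=1$ is Lemma \ref{lem:classification-1dim}. For the inductive step I would fix $X\in\mathcal{D}$ and $p\in X$. Since $X$ and $T_{p}X\in\mathcal{D}$ are non-branching and $\dim X\le n$, verifying $X\in\mathcal{C}_{n}$ reduces, by the definition of $\mathcal{C}_n$, to the single assertion $\Sigma_{p}X\in\mathcal{T}_{n-1}$. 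Here $T_{p}X=C(\Sigma_{p}X)$ is non-branching, and a non-branching Euclidean cone automatically has $\diam\Sigma_{p}X\le\pi$ and unique opposites, i.e. $|\Sph_{\pi}^{\Sigma_{p}X}(v)|\le1$ for all $v$ (this is the property recorded in the discussion of the class $\mathcal{C}$). Hence everything reduces to showing $\Sigma_{p}X\in\mathcal{C}_{n-1}$.

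To obtain this from the inductive hypothesis I would introduce the auxiliary class $\mathcal{D}'$ defined as the closure of $\{\Sigma_{q}Y:Y\in\mathcal{D},\,q\in Y\}$ under the operation $Z\mapsto T_{v}Z$. By construction $\mathcal{D}'$ is self-replicating; since geodesic tangent cones do not increase dimension (\cite{kleiner}, cf. Lemma \ref{t-cone-dim}) and $\dim\Sigma_{q}Y=\dim T_{q}Y-1\le n-1$, every member of $\mathcal{D}'$ has dimension $\le n-1$, and all members are $\CAT(1)$ (spaces of directions by Theorem \ref{geod-tangent-cone}, tangent cones being $\CAT(0)$). The one substantial point is that every $Y\in\mathcal{D}'$ is \emph{non-branching}, which I would establish by carrying along the invariant that for each such $Y$ there is some $j\ge0$ with $\mathbb{R}^{j}\times C(Y)\in\mathcal{D}$ or $\mathbb{R}^{j}\times Y\in\mathcal{D}$. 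The base elements satisfy $C(\Sigma_{q}Y)=T_{q}Y\in\mathcal{D}$, and the tangent-cone relations $T_{(v,r)}C(Z)\cong\mathbb{R}\times T_{v}Z$ and $T_{(\mathbf{0},w)}(\mathbb{R}^{j}\times Z)\cong\mathbb{R}^{j}\times T_{w}Z$ (already used in Lemma \ref{lem:closed-under-geometric-construction}) show the invariant is preserved under $Z\mapsto T_{v}Z$. Now if some $Y\in\mathcal{D}'$ branched at a point $m$, then after reducing to immediate branching the two forward directions would be \emph{distinct} opposites of the common backward direction in $\Sigma_{m}Y$ (distinctness uses the $\CAT$ rigidity that two geodesics meeting at angle $0$ coincide on an initial segment); hence $T_{m}Y=C(\Sigma_{m}Y)$ would branch at its vertex, and by the invariant a suitable geodesic tangent cone of a member of $\mathcal{D}$ — which lies in $\mathcal{D}$ and is therefore non-branching — would branch, a contradiction. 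Thus $\mathcal{D}'$ satisfies the hypotheses in dimension $n-1$, the inductive hypothesis gives $\mathcal{D}'\subseteq\mathcal{C}_{n-1}$, and in particular $\Sigma_{p}X\in\mathcal{C}_{n-1}$; together with the diameter and opposite bounds this yields $\Sigma_{p}X\in\mathcal{T}_{n-1}$, completing the induction.

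For the final assertion I would let $\mathcal{M}$ be the class of $\CAT(1)$ spaces satisfying $\MCP(K,N)$ with $K\le0$. Such spaces are $\CAT(1)$ and non-branching (arguing as in Proposition \ref{prop:nonbra}, combining local uniqueness of geodesics from the $\CAT$ bound with the measure contraction property), and since $\MCP(K,N)$ forces $\dimh\le N$ while for $\CAT$ spaces $\dim=\dims\le\dimh$ is an integer, every member has $\dim\le\lfloor N\rfloor$. Finally, the geodesic tangent cone $T_{p}X$ is $\CAT(0)$, hence $\CAT(1)$, and it inherits the $\MCP(0,N)$ condition by the construction of Remark \ref{geod-cone-rcd} (stability of $\MCP$ under pointed measured Gromov--Hausdorff convergence, the rescalings improving $K$ to $0$); thus $T_{p}X\in\mathcal{M}$ and $\mathcal{M}$ is self-replicating. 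Applying the abstract statement with $n=\lfloor N\rfloor$ then gives $\mathcal{M}\subseteq\mathcal{C}_{\lfloor N\rfloor}$. I expect the main obstacles to be exactly the non-branching step — propagating a branch of $Y$ to a branch of a genuine tangent cone in $\mathcal{D}$ via the opposite-direction count and the angle-zero rigidity — and, in the $\MCP$ application, verifying that the \emph{geodesic} tangent cone, a priori only a convex subcone of the blow-up cone, really carries the $\MCP(0,N)$ structure.
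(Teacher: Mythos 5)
Your induction is essentially the paper's: the paper also proves the statement by induction on $n$, introducing an auxiliary class to which the $(n-1)$-case is applied and using the identity $T_{(v,t)}(T_pX)\cong T_v(\Sigma_pX)\times\R$; the only structural difference is bookkeeping. The paper takes $\mathcal{D}_{n-1}=\{Y$ non-branching $\CAT(1)\,:\,T_qY\times\R\in\mathcal{D}_n$ for all $q\in Y\}$, so non-branching is built into the definition and membership of $\Sigma_pX$ is immediate (its cone $T_pX\in\mathcal{D}_n$ is non-branching), whereas you take the closure $\mathcal{D}'$ of the spaces of directions under tangent cones and must \emph{prove} non-branching; your invariant ($\R^j\times C(Y)\in\mathcal{D}$ or $\R^j\times Y\in\mathcal{D}$) and its propagation under $Z\mapsto T_vZ$ are correct and are the exact analogue of the paper's defining property. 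Your treatment of the $\MCP$ application also matches the paper's (stability under blow-ups plus the construction of Remark \ref{geod-cone-rcd} to obtain $\MCP(0,N)$ on the geodesic tangent cone), and you rightly flag non-branching of $\MCP$+$\CAT(1)$ spaces (as in Proposition \ref{prop:nonbra}) as an input, at the same level of detail as the paper itself.

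There is, however, one genuinely wrong step in your non-branching argument: the ``$\CAT$ rigidity that two geodesics meeting at angle $0$ coincide on an initial segment'' is false in general $\CAT(\kappa)$ spaces --- this is exactly why the paper defines $S_p^g(X)$ as \emph{equivalence classes} of geodesics making angle zero. Concretely, the region $\{(x,y)\in\R^2:\ y\le x^2\}$ with its induced length metric is $\CAT(0)$ (flat, simply connected, with concave boundary), and at the origin the ray along the positive $x$-axis and the boundary arc of the parabola are distinct geodesics making angle $0$; moreover this space branches at the origin (concatenate the negative $x$-axis with either continuation) while $T_0^g$ is a flat half-plane, so the implication ``$Y$ branches at $m$ $\Rightarrow$ $T_mY=C(\Sigma_mY)$ branches at its vertex'' can genuinely fail. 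The usual proof of such rigidity extends a geodesic backwards through the point and invokes non-branching --- unavailable to you, since non-branching of $Y$ is exactly what you are proving, so as written the step is circular. Fortunately the detour through $\Sigma_mY$ is unnecessary: your invariant kills branching directly. If $Y\in\mathcal{D}'$ branched, then $\R^j\times Y$ would branch; and a branching configuration in $Y$, truncated so that all geodesics involved have length $<\pi$, lifts away from the vertex to a branching configuration in $C(Y)$, so $\R^j\times C(Y)$ would branch --- in either case contradicting that members of $\mathcal{D}$ are non-branching. With that one-line replacement your proof is complete and agrees with the paper's.
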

\begin{proof}
{ Observe that for any $\CAT(1)$ space $X$ and  $v\in\Sigma_{p}X$
and $t>0$ it holds
\[
T_{(v,t)}(T_{p}X)\cong T_{v}(\Sigma_{p}X)\times\mathbb{R}
\]
which is non-branching if and only if $T_{v}(\Sigma_{p}X)$ is
non-branching. 

We can now prove the Corollary by induction on $n$. The base of induction $n=1$ is easy and is left to the reader.
Now suppose the statement holds for $n-1\ge 1$ and a class $\mathcal{D}_{n}$ satisfies the assumptions of the lemma. Let $\mathcal{D}_{n-1}$ be the class of non-branching $\CAT(1)$ spaces such that for every $Y\in \mathcal{D}_{n-1}$ and every $q\in Y$ it holds that $T_qY\times \R\in \mathcal D_n$. Then $\mathcal{D}_{n-1}$ satisfies the induction assumptions for $n-1$ and hence $\mathcal{D}_{n-1}\subset \CC_{n-1}$. In particular $\Sigma_pX\in \mathcal D_{n-1}\subset \CC_{n-1}$ for any $X\in \mathcal D_n, p\in X$. Since $T_pX=C(\Sigma_pX)$ is non-branching this implies that in fact $\Sigma_pX\in  \mathcal T_{n-1}$ and hence $X\in \mathcal C_n$ by the definition of $\CC_n$.}

The last claim follows by observing that the class of $\CAT(1)$ spaces with 
$\MCP(K,N)$ condition for some $K \in \mathbb{R}$ is stable under taking 
GH-tangent spaces. As in Remark \ref{geod-cone-rcd} this shows that 
geodesic tangents are $\CAT(0)$ spaces with $\MCP(0,N)$ condition as well. 
\end{proof}
\begin{remark}
The corollary also implies that $\mathcal{C}_{n}$ agrees with the class of 
$n$-dimensional non-branching $\CAT(1)$ spaces that is 
stable under taking geodesic tangents.
\end{remark}

We continue with the proof of Theorem \ref{thm:nb-CAT1} with the following 
well-known fact about $\CAT(1)$ spaces.

\begin{lemma}
If $X$ is a $\CAT(1)$ space with $\rad X<\pi$ then $X$ is contractible.
\end{lemma}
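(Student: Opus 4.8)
The plan is to contract $X$ to a center point along geodesics. Since $\rad X<\pi$, I would first fix $R$ with $\rad X<R<\pi$ and a point $p\in X$ with $B_R(p)=X$, so that $d(p,x)<\pi$ for every $x\in X$. Because $X$ is $\CAT(1)$ and $\pi_1=\pi$, any two points at distance less than $\pi$ are joined by a \emph{unique} geodesic; hence for each $x$ there is a well-defined constant-speed geodesic $\gamma_x\colon[0,1]\to X$ with $\gamma_x(0)=p$ and $\gamma_x(1)=x$.

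Next I would set $H(x,t)=\gamma_x(1-t)$, so that $H(\cdot,0)=\mathrm{id}_X$ and $H(\cdot,1)\equiv p$. Granting that $H$ is continuous, this is a contraction of $X$ onto $p$ and the lemma follows. This is exactly the geodesic contraction already used in the proof of Lemma~\ref{lem-geod-noncotr-2}, where uniqueness of the relevant geodesics was precisely what made the contraction well-defined.

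The only real point to check is continuity of $H$, equivalently that $x\mapsto\gamma_x$ is continuous uniformly in the parameter. Here I would invoke $\CAT(1)$ triangle comparison: fixing $x_0$ and letting $x\to x_0$, for $x$ close enough to $x_0$ the triangle $\triangle(p,x_0,x)$ has perimeter $d(p,x_0)+d(p,x)+d(x_0,x)<2\pi$, so comparison with the model triangle in $\SS^2$ is legitimate and yields $d(\gamma_{x_0}(t),\gamma_x(t))\le d(\bar\gamma_{x_0}(t),\bar\gamma_x(t))$ for all $t\in[0,1]$. As $x\to x_0$ the comparison triangle degenerates, the right-hand side tends to $0$ uniformly in $t$, and hence $\gamma_x\to\gamma_{x_0}$ uniformly; combined with continuity of each individual geodesic this gives continuity of $H$ at every point.

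The main (and essentially only) obstacle is ensuring the comparison inequality is genuinely available: the perimeter bound $<2\pi$ could fail if $x$ and $x_0$ were far apart, but since continuity is a local statement and we only ever compare a point with nearby points, the bound $d(p,\cdot)<\pi$ keeps the perimeter below $2\pi$ for $x$ near $x_0$. I would stress that no properness or completeness of $X$ is needed, which matters since the ambient spaces considered here need not be proper.
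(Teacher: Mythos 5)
Your proof is correct and is exactly the argument the paper has in mind: the lemma is stated there without proof as a well-known fact, and contracting along the unique geodesics to a center $p$ with $B_r(p)=X$, $r<\pi$, is the same device the paper itself uses in Lemma~\ref{lem-geod-noncotr-2} and in the proof of Corollary~\ref{cor:Cn-Cnstar}. Your continuity check via spherical comparison on triangles of perimeter $<2\pi$ (made legitimate locally since $d(p,x)\le d(p,x_0)+d(x_0,x)<\pi+\text{small}$) is precisely the standard proof that geodesics between points at distance $<\pi$ in a $\CAT(1)$ space are unique and vary continuously with their endpoints, cf.~\cite[Chapter II.1]{BH99}, so nothing is missing.
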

%

The following lemma is a replacement of Proposition \ref{inner-iff-non-contr-iff-non-contr-minus-geod}
as the space of directions of spaces in $\mathcal{C}_n$ might not satisfy a suspension theorem. 

\begin{lemma}\label{c-int-geod}
\label{lem:mfl-ind}Let $n>1$ and assume Theorem \ref{thm:nb-CAT1} holds
for $n-1$. Then for all $X\in\mathcal{C}_{n}$ and all non-trivial
geodesics $\gamma:[0,1]\to X$ the following are equivalent:
\begin{enumerate}
\item $\Sigma_{\gamma(t)}X$ is non-contractible for some/all $t\in(0,1)$
\item $\Sigma_{\gamma(t)}X\backslash\{\pm\dot{\gamma}(t)\}$ is non-contractible
for some/all $t\in(0,1)$
\item $\Sigma_{\gamma(t)}X\cong\mathbb{S}^{l}$ where $l=\dim X\le n$ for
some/all $t\in(0,1).$
\end{enumerate}
\end{lemma}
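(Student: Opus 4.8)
The plan is to run the same point-by-point dichotomy used in Proposition~\ref{inner-iff-non-contr-iff-non-contr-minus-geod}, but to replace the suspension-based Lemma~\ref{lem-geod-noncotr-2} (unavailable here, since $\Sigma_{\gamma(t)}X$ need not be a suspension) by a direct geodesic-contraction argument fed by the inductive hypothesis. Fix $t\in(0,1)$ and write $p=\gamma(t)$, $v=\dot\gamma(t)$, $\Sigma=\Sigma_{p}X$. By the definition of $\mathcal{C}_n$ we have $\Sigma\in\mathcal{T}_{n-1}$, and $v,-v$ form an opposite pair in $\Sigma$. Since Theorem~\ref{thm:nb-CAT1} is assumed for $n-1$, its second bullet applied to $\Sigma\in\mathcal{T}_{n-1}$ gives the dichotomy: either $\rad\Sigma<\pi$, in which case $\Sigma$ is contractible by the preceding lemma, or $\Sigma\cong\mathbb{S}^{l}$ is a round sphere. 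This already yields the fixed-$t$ equivalence $(1)\Leftrightarrow(3)$, since a round sphere is non-contractible while the contractible alternative is not a sphere.

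First I would dispatch the sphere case. If $\Sigma\cong\mathbb{S}^{l}$ then $v,-v$ are antipodal (they are opposite, and antipodes are the only pairs at distance $\pi$ in a round sphere), so $\Sigma\setminus\{\pm v\}$ deformation retracts onto the equator $\mathbb{S}^{l-1}$ and is non-contractible; hence $(3)\Rightarrow(2)$ at this $t$. Moreover $T_pX\cong C(\Sigma)\cong\mathbb{R}^{l+1}$, so $p$ is a top-dimensional regular point, as recorded in $(3)$. The degenerate situation $l=0$ (where $X$ is locally one-dimensional and $\Sigma\setminus\{\pm v\}=\varnothing$) is covered by the base case, Lemma~\ref{lem:classification-1dim}.

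The main work is the contractible case, where I must show $\Sigma\setminus\{\pm v\}$ is contractible as well, giving $(2)\Rightarrow(1)$. Here I would invoke the third bullet of Theorem~\ref{thm:nb-CAT1} for $n-1$: since $\Sigma$ is contractible it is not isometric to $\mathbb{S}^{n-1}$, so the opposite pair $v,-v$ must lie in $\bCAT\Sigma=\partial\Sigma$, using $\mathcal{C}_{n-1}=\mathcal{C}_{n-1}^{*}$ to identify $\Sigma$ as a manifold with boundary on which the geometric and manifold boundaries coincide. By strong convexity of the regular set of $\Sigma$ (again from $\mathcal{C}_{n-1}=\mathcal{C}_{n-1}^{*}$) no geodesic can have a boundary point as an interior point; in particular geodesics are inextendible past $\pm v$. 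Since $\rad\Sigma<\pi$ I may choose a regular center $w\ne\pm v$ with $\Sigma=\bar B_{\rho}(w)$, $\rho<\pi$ (regular points are dense, being the manifold interior, and the radius bound persists for $w$ near the given center with $\rho$ slightly enlarged but still $<\pi$). The geodesic contraction $\Phi$ toward $w$ is then well defined and continuous because every geodesic issuing from $w$ has length $<\pi$ and is therefore unique and varies continuously with its endpoint. Inextendibility past $\pm v$ guarantees $\Phi_{s}(x)\ne\pm v$ whenever $x\ne\pm v$ and $s\in[0,1]$, so $\Phi$ restricts to a contraction of $\Sigma\setminus\{\pm v\}$. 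This is the technical heart of the proof and the step I expect to be the main obstacle, since it is exactly where the absence of a splitting/suspension theorem forces a hands-on replacement of Lemma~\ref{lem-geod-noncotr-2}.

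Combining the two cases shows that at each fixed $t$ the three statements have the same truth value. Finally, to promote "for some $t$" to "for all $t$", I would apply Lemma~\ref{lem-kramer}, which is valid for any non-branching $\CAT(\kappa)$ space and hence for $X\in\mathcal{C}_n$: it shows the spaces $\Sigma_{\gamma(t)}X\setminus\{\pm\dot\gamma(t)\}$, $t\in(0,1)$, are mutually homotopy equivalent. Thus $(2)$ is independent of $t$, and through the fixed-$t$ equivalence so are $(1)$ and $(3)$; the homotopy equivalence also forces the sphere dimension $l$ to be constant along the interior of $\gamma$. This establishes all the asserted equivalences, including the "some/all" clauses.
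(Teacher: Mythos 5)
Your proof is correct and essentially coincides with the paper's: both reduce the some/all clauses to a single $t$ via Lemma~\ref{lem-kramer}, both invoke Theorem~\ref{thm:nb-CAT1} for $n-1$ to get the dichotomy (either $\rad\Sigma_{\gamma(t)}X<\pi$ with $\pm\dot\gamma(t)\in\bCAT\Sigma_{\gamma(t)}X$, or a round sphere), and both contract $\Sigma_{\gamma(t)}X\backslash\{\pm\dot\gamma(t)\}$ by geodesic contraction toward a regular center realizing the radius, the contraction avoiding $\pm\dot\gamma(t)$ exactly as in the paper. One small caution: your blanket assertion that no geodesic can have a boundary point as an interior point is stronger than what $\mathcal{C}_{n-1}=\mathcal{C}_{n-1}^{*}$ provides (a geodesic could a priori run inside the boundary); what you actually need --- and what the paper uses --- is only that geodesics issuing from the \emph{regular} center have regular interior points (openness of the regular set near the center plus strong convexity), which already guarantees $\Phi_{s}(x)\ne\pm\dot\gamma(t)$ and makes your contraction argument go through.
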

\begin{proof}
Note first that by Lemma \ref{lem-kramer} the equivalence holds for
all $t\in(0,1)$ if it holds for some $t\in(0,1)$. 

For $n=2$ we know $\Sigma_{\gamma(t)}X$ is contractible for some
$t\in(0,1)$ if and only if $\Sigma_{\gamma(t)}X\backslash\{\pm\dot{\gamma}(t)\}$
has one contractible component if and only if $\Sigma_{\gamma(t)}X\cong[0,\pi]$. 

Assume $n>2$ and $Y=\Sigma_{\gamma(t)}X$ is contractible for some
$t\in(0,1)$. Then $\rad Y<\pi$ and $\pm\dot{\gamma}(t)\in\partial Y$
by the statement of Theorem \ref{thm:nb-CAT1} for $n-1$. Thus there
is a regular point $v\in Y$ and $r\in[\rad Y,\pi)$ with 
\[
\bar{B}_{r}^{Y}(v)=Y.
\]
Since $v$ is regular and $\pm\dot{\gamma}(t)$ are boundary points, 
all geodesics from $w\in\bar{B}_{r}^{Y}(v)\backslash\{\pm\dot{\gamma}(t)\}$
to $v$ avoid $\pm\dot{\gamma}(t)$. In particular, the geodesic contraction
towards $v$ induces a contraction of $\bar{B}_{r}^{Y}(v)\backslash\{\pm\dot{\gamma}(t)\}$
onto $\{w\}$. Hence $\bar{B}_{r}^{Y}(v)\backslash\{\pm\dot{\gamma}(t)\}$
is contractible. 

If on the other hand $Y\backslash\{\pm\dot{\gamma}(t)\}$ was contractible
then $\rad Y<\pi$ so that $Y$ must be contractible and not isometric
to $\mathbb{S}^{l}$. Finally, if $Y$ is not isometric to $\mathbb{S}^{l}$
then again $\rad Y<\pi$ so that $Y$ and $Y\backslash\{\pm\dot{\gamma}(t)\}$
are both contractible. 
\end{proof}
\begin{corollary}\label{cor:Cn-Cnstar}
If $n>1$ and Theorem \ref{thm:nb-CAT1} holds for $n-1$ then $\mathcal{C}_{n}=\mathcal{C}_{n}^{*}$ 
\end{corollary}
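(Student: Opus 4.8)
The inclusion $\mathcal{C}_n^*\subseteq\mathcal{C}_n$ is immediate from the definition, so the content is the reverse inclusion. Fix a connected $X\in\mathcal{C}_n$ that is not a point. My plan is to first establish, in this class, the same dictionary for regular points that Proposition~\ref{prop:reg-pooints} gives in the axiomatic setting, namely
\[
p\in\mathcal R \iff \Sigma_p X \text{ is non-contractible} \iff p \text{ is inner},
\]
now using the inductive hypothesis in place of the splitting axiom. Since $\Sigma_p X\in\mathcal T_{n-1}\subseteq\mathcal C_{n-1}$, the second bullet of Theorem~\ref{thm:nb-CAT1} for $n-1$ gives the dichotomy: either $\rad\Sigma_p X<\pi$, in which case $\Sigma_p X$ is contractible (a $\CAT(1)$ space of radius less than $\pi$ is contractible), or $\Sigma_p X\cong\SS^{l}$ and hence $T_p X\cong\R^{l+1}$ is Euclidean. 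This already yields that $\Sigma_p X$ is non-contractible iff $p\in\mathcal R$. The implication (non-contractible $\Rightarrow$ inner) is \cite[Theorem 1.5]{Lyt-Schr07} exactly as before, and (regular $\Rightarrow$ inner, with a Euclidean neighbourhood) is the logarithm-map argument of Proposition~\ref{prop:reg-pooints}\eqref{g-reg-implies-man}, which uses only $\CAT(1)$, non-branching and Kramer's theorem \cite{Kramer11}.

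The crucial remaining implication is (inner $\Rightarrow$ regular), and here the inductive hypothesis does the work formerly done by Corollary~\ref{cor-cd-cut-susp}. If $p$ is inner then every genuine geodesic direction at $p$ has an opposite $-v$, and these directions are dense in $\Sigma_p X$. Suppose $\Sigma_p X$ is not a round sphere. Applying the third bullet of Theorem~\ref{thm:nb-CAT1} for $n-1$ to each such opposite pair $v,-v$ forces $v\in\bCAT\Sigma_p X=\partial\Sigma_p X$, where I invoke $\mathcal C_{n-1}=\mathcal C_{n-1}^*$ to identify the geometric and manifold boundaries. Thus the dense set of geodesic directions would be contained in the closed, nowhere-dense manifold boundary $\partial\Sigma_p X$, forcing $\Sigma_p X=\partial\Sigma_p X$, which is impossible for a non-empty manifold with boundary. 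Hence $\Sigma_p X\cong\SS^{l}$ and $p$ is regular, completing the dictionary.

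With the dictionary in hand I would reproduce Theorem~\ref{th:reg-points} and Theorem~\ref{c-manifold-with-boundary} almost verbatim. Openness of $\mathcal R$ and the fact that manifold points are regular follow as in Proposition~\ref{prop:reg-pooints}; non-emptiness of $\mathcal R$ follows from Kleiner's homological formula $\dim X=\max\{k:\tilde H_{k-1}(\Sigma_p X)\ne 0\}$ \cite[Theorem B]{kleiner}, which produces a point with non-contractible space of directions. Strong convexity of $\mathcal R$ and constancy of the dimension along geodesic interiors are exactly the content of Lemma~\ref{c-int-geod} (the replacement for Proposition~\ref{inner-iff-non-contr-iff-non-contr-minus-geod}), and density of $\mathcal R$ follows since the interior of any geodesic issuing from a regular point is regular. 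To identify $\bCAT X=X\setminus\mathcal R$ (the analogue of Proposition~\ref{int-impl-reg}) I would argue inductively: if $T_p X$ has no boundary then $\bCAT\Sigma_p X=\varnothing$, so $\Sigma_p X\in\mathcal T_{n-1}$ has empty boundary and by the second bullet of Theorem~\ref{thm:nb-CAT1} for $n-1$ must be a sphere, i.e. $p\in\mathcal R$; conversely regular points have spherical directions and hence no boundary. Thus $\bCAT X=X\setminus\mathcal R$ is closed, and the neighbourhood construction in the proof of Theorem~\ref{c-manifold-with-boundary}—which uses only non-branching, uniqueness of short $\CAT(1)$ geodesics, closedness of $\bCAT X$, and density and strong convexity of $\mathcal R$—shows that $X$ is a manifold with boundary with $\partial X=\bCAT X$ and $X\setminus\partial X=\mathcal R$ strongly convex, i.e. $X\in\mathcal C_n^*$.

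The main obstacle is precisely the inner-implies-regular step: in the axiomatic class $\mathcal C$ one had a genuine splitting theorem, whereas here one must bootstrap from the $(n-1)$-dimensional case, and the only clean way I see to rule out ``$\Sigma_p X$ entirely boundary'' is to invoke $\mathcal C_{n-1}=\mathcal C_{n-1}^*$ so that $\partial\Sigma_p X$ is an honest nowhere-dense manifold boundary that cannot swallow the dense set of geodesic directions. Secondary care is needed to match the dimension indices appearing in Lemma~\ref{c-int-geod} and to dispatch the low-dimensional base cases via Lemma~\ref{lem:classification-1dim}.
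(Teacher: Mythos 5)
Your proposal is correct, and its skeleton is the same as the paper's: re-run the structure theory of Sections \ref{sec: CD+CAT-str}--\ref{sec:boundary} with Lemma \ref{lem:mfl-ind} standing in for Proposition \ref{inner-iff-non-contr-iff-non-contr-minus-geod}, the radius dichotomy (second bullet of Theorem \ref{thm:nb-CAT1} for $n-1$) standing in for the splitting-based Corollary \ref{cor-cd-cut-susp}, and the first bullet $\mathcal C_{n-1}=\mathcal C_{n-1}^*$ feeding the boundary identification; your closing identification of $\bCAT X=\partial X$ via ``$\Sigma_pX$ contractible $\Rightarrow$ $\bCAT\Sigma_pX=\partial\Sigma_pX\ne\varnothing$'' is exactly the paper's. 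The one genuinely different ingredient is your centerpiece, the implication (inner $\Rightarrow$ regular), proved by showing that geodesic directions at an inner point all admit opposites, invoking the third bullet to force them into $\bCAT\Sigma_pX=\partial\Sigma_pX$ unless $\Sigma_pX$ is a sphere, and then contradicting density of $S_p^g$ against nowhere-density of the manifold boundary. This argument is correct and is a legitimate replacement for Lemma \ref{lem-inner-tangent} in the absence of a splitting theorem, but the paper's proof never needs it: once (regular $\Leftrightarrow$ $\Sigma_pX$ non-contractible) is extracted from the dichotomy, non-emptiness comes from Kleiner's formula as in Theorem \ref{c-dim}, openness and charts from the logarithm-map argument of Proposition \ref{prop:reg-pooints}\eqref{g-reg-implies-man}, strong convexity and density from Lemma \ref{lem:mfl-ind}, and the collar construction of Theorem \ref{c-manifold-with-boundary} only requires closedness of the singular set $X\setminus\reg$ together with strong convexity --- extendability is only ever used \emph{at} regular points, where it follows from $\Sigma_pX\cong\SS^l$ via \cite[Theorem 1.5]{Lyt-Schr07}. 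So your extra step buys a sharper statement (the full dictionary regular $\Leftrightarrow$ inner in the weakly stably non-branching setting, which the paper does not record here) at the cost of some additional, but sound, work.
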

\begin{proof}
 Let $X\in\mathcal{C}_{n}$.  By the same argument as in the proof of Theorem~\ref{c-dim} the set of regular points is non-empty  and agrees with the set of points having non-contractible 
spaces of directions. 
The same argument as in the proof of Proposition~\ref{prop:reg-pooints} shows that the regular set is open and is an $n$-manifold without boundary. 

The same argument as in the proof of Proposition~\ref{th:reg-points} but using Lemma~\ref{c-int-geod} instead of Proposition~\ref{inner-iff-non-contr-iff-non-contr-minus-geod} shows that the regular set is strongly convex and dense.

Next, observe that in the proof of  Theorem \ref{c-manifold-with-boundary} the analysis of the topology of $X$ near boundary points 
only relies on local uniqueness of geodesics and the above properties the regular set.

Lastly, let us verify that  $\bCAT X=\partial X$.

Note that the space of directions $\Sigma_p X$ of
 a point $p \in \partial X$ must be contractible and, in particular, not isometric to a sphere. As $\Sigma_p X \in \mathcal{T}_{n-1}$ Theorem \ref{thm:nb-CAT1} shows $\bCAT \Sigma_p X = \partial \Sigma_p X \ne \varnothing$. Thus $\bCAT X \subset \partial X$.  Because regular points have neighborhoods homeomorphic to Euclidean balls the opposite inclusion
 is also true. In particular, $\partial X = \bCAT X=\X\backslash \reg$ implying $\mathcal{C}_n =\mathcal{C}_n^*$.
\end{proof}
\begin{lemma}
Suppose Theorem \ref{thm:nb-CAT1} holds for $n-1$. Let $X\in\text{\ensuremath{\mathcal{C}}}_{n}$
and $\rad X<\pi$.  Then $\bCAT X\ne\varnothing$.
\end{lemma}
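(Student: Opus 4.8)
The plan is to argue by contradiction, assuming $\bCAT X=\varnothing$ and producing a point lying farther from a fixed center than the radius permits.

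Since Theorem \ref{thm:nb-CAT1} is assumed for $n-1$, Corollary \ref{cor:Cn-Cnstar} applies and gives $\mathcal{C}_n=\mathcal{C}_n^*$. Thus $X$ is a topological manifold with $\partial X=\bCAT X=X\setminus\reg$. Under the assumption $\bCAT X=\varnothing$ this forces $\partial X=\varnothing$ and $X=\reg$: every point of $X$ is then an interior manifold point, hence \emph{inner}, i.e.\ every geodesic ending at it extends locally beyond it (exactly as in Proposition \ref{prop:reg-pooints}\eqref{g-reg-implies-man}, using \cite[Theorem 1.5]{Lyt-Schr07}).

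Next I would upgrade $\rad X<\pi$ to compactness. As a manifold, $X$ is locally compact, and being a complete geodesic space, the Hopf\textendash Rinow theorem shows $X$ is proper. Choosing $p\in X$ and $r<\pi$ with $X=\bar B_r(p)$, properness lets $d(p,\cdot)$ attain its maximum at some $q$, with $s:=d(p,q)\le r<\pi$; we may assume $X$ has more than one point, so $s>0$ and $q\ne p$.

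Finally I would extend a geodesic to reach the contradiction. The maximizer $q$ is regular, hence inner, so a minimizing geodesic $[p,q]$ extends to a local geodesic $\gamma$ beyond $q$ by some amount $\eta>0$, which may be taken small enough that $s+\eta<\pi$. A local geodesic of length $<\pi$ in a $\CAT(1)$ space is minimizing, so $d(p,\gamma(s+\eta))=s+\eta>s$, contradicting the maximality of $s$. Therefore $\bCAT X\ne\varnothing$. The geometric core here is a one-line extension argument; the points that require care are purely structural, namely that the inductive hypothesis genuinely delivers (via Corollary \ref{cor:Cn-Cnstar}) both the identification $X=\reg$ with all points inner and, together with Hopf\textendash Rinow, the properness needed to attain the farthest point.
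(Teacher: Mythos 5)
Your proof is correct and follows essentially the same route as the paper's: assuming $\bCAT X=\varnothing$, the identification $\mathcal{C}_n=\mathcal{C}_n^*$ makes every point regular, so the geodesic from a center to a farthest point extends to a local geodesic of length still less than $\pi$, which in a $\CAT(1)$ space is minimizing and contradicts maximality. The only difference is cosmetic: you make explicit, via local compactness and Hopf--Rinow, why the farthest point is attained, a step the paper's proof leaves implicit.
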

\begin{proof}
Assume by contradiction that $\bCAT X=\varnothing$ for some $X\in\mathcal{C}_{n}$.
Pick a regular point $x$ and a point $y\in X$ with 
\[
r=d(x,y)=\sup_{y'\in Y}(x,y')\in[\rad X,\pi).
\]
By definition of $r$ we must have $\bar{B}_{r}(x)=X$. 

Assume by contradiction $\bCAT X=\varnothing$. Then $y$ is regular
so that the unit speed geodesic $\gamma:[0,r]\to X$ connecting $x$
and $y$ can be extended to a local unit speed geodesic $\tilde{\gamma}:[0,r+\epsilon]\to M$
for some $\epsilon\in[0,\pi-\rad X]$. However, $r+\epsilon\le\pi$
so that $\tilde{\gamma}$ is still minimal. 
But then $\tilde{\gamma}(r+\epsilon)\notin\bar{B}_{r}(x)=X$. This a 
contradiction and hence $\bCAT X\ne\varnothing$.
\end{proof}
\begin{lemma}
\label{lem:regular-opposites}Suppose Theorem \ref{thm:nb-CAT1} holds
for $n-1$. If $X\in\mathcal{T}_{n}$ admits two opposites $\pm v\in X\backslash\bCAT X$
then $X$ is isometric to $\mathbb{S}^{l}$ for $l=\dim X$.
\end{lemma}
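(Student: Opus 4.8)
The plan is to first establish that $v$ and $-v$ are antipodal in the strong sense that $d(v,x)+d(x,-v)=\pi$ for every $x\in X$, and then to realize $X$ as a spherical suspension whose base is the space of directions at the regular point $v$.

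Since $\diam X\le\pi$ and $\pm v$ are opposites, $d(v,-v)=\pi$. By Corollary~\ref{cor:Cn-Cnstar} the hypothesis $\pm v\notin\bCAT X$ means $v$ is a regular, hence inner, point with $\Sigma_v X\cong\SS^{l-1}$, where $l=\dim X$, and geodesics issuing from $v$ are determined by their initial direction. Fix $x\neq v,-v$ and let $\gamma$ be the (unique, as $d(v,x)<\pi$) geodesic from $v$ to $x$, with direction $u\in\Sigma_v X$. Extending $\gamma$ slightly beyond $v$ exhibits $v$ as an interior regular point, so by strong convexity of $\reg$ every point of the extended geodesic is regular and therefore inner; consequently $\gamma$ extends to a local geodesic $\gamma_u\colon[0,\pi]\to X$. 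As local geodesics of length $\le\pi$ in a $\CAT(1)$ space are minimal, $d(v,\gamma_u(\pi))=\pi$; since $X\in\mathcal T_n$ gives $|\Sph_\pi^X(v)|\le1$ and $-v\in\Sph_\pi^X(v)$, we conclude $\gamma_u(\pi)=-v$. Thus $x=\gamma_u(d(v,x))$ lies on a minimal geodesic from $v$ to $-v$, whence $d(v,x)+d(x,-v)=\pi$.

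With this antipodality in hand, Lemma~\ref{cat-susp} shows that $X$ is a spherical suspension $S(Y)$ with vertices $v,-v$ over the $\CAT(1)$ space $Y=\{y:d(y,v)=d(y,-v)=\pi/2\}$. The geodesic space of directions at a suspension vertex is exactly the base, so $\Sigma_v X\cong Y$; but $v$ is regular, so $\Sigma_v X\cong\SS^{l-1}$. Hence $Y\cong\SS^{l-1}$ and $X=S(Y)\cong S(\SS^{l-1})\cong\SS^{l}$, as desired. (Consistently, Lemma~\ref{lem:closed-under-geometric-construction} identifies $Y\in\mathcal T_{n-1}$.)

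The main obstacle is the antipodality step: one must argue that a geodesic from the regular point $v$ can be continued to a \emph{global} minimal geodesic of length exactly $\pi$, which requires combining regularity (to start the extension), strong convexity of the regular set (to keep all interior points regular, hence inner, so that the extension never stalls), the fact that local $\CAT(1)$ geodesics of length $\le\pi$ are minimal, and the single-point condition $|\Sph_\pi^X(v)|\le1$ to pin the far endpoint to $-v$. All of the structure-theoretic input (regular set open, dense, strongly convex, a manifold of dimension $\dim X$) is supplied by Corollary~\ref{cor:Cn-Cnstar}, which is where the inductive assumption that Theorem~\ref{thm:nb-CAT1} holds for $n-1$ enters.
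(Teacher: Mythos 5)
Your overall architecture (antipodality for every point, then the Lune Lemma~\ref{cat-susp}, then $Y\cong\Sigma_vX\cong\SS^{l-1}$) matches the paper's, and the endgame is even a slight shortcut: the paper invokes Theorem~\ref{thm:nb-CAT1} for $n-1$ to identify $Y$, while you read off $Y\cong\Sigma_vX\cong\SS^{l-1}$ directly from regularity of the vertex; both are fine. The problem is the antipodality step, which is where the entire difficulty of the lemma sits, and your argument for it has a genuine gap. Strong convexity, as supplied by Corollary~\ref{cor:Cn-Cnstar}, says that if one \emph{interior} point of a geodesic is regular then all interior points are; it says nothing about endpoints. Extending $[v,x]$ slightly behind $v$ only tells you that the points strictly between $\gamma(-\delta)$ and $x$ are regular; the far endpoint $x$ is an arbitrary point of $X$ and may lie in $\bCAT X$, in which case the geodesic cannot be continued past $x$ at all. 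Even when $x$ is regular, iterating "endpoint is inner, extend a bit" produces a maximal local geodesic that may terminate at a boundary point at distance strictly less than $\pi$ from $v$: strong convexity keeps the open interior regular but cannot prevent the extension from stalling on $\bCAT X$. So the local geodesic $\gamma_u\colon[0,\pi]\to X$ you invoke may simply not exist, and asserting its existence amounts to assuming that no geodesic from $v$ meets the boundary before length $\pi$ --- which is essentially the content of the lemma (note that nothing short of this can be true: in a closed hemisphere, geodesics from the pole all terminate on the boundary at distance $\pi/2$).

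The paper closes exactly this gap, before anything else, by a topological argument you are missing. For small $s>0$ it considers the set $\Sigma_v^sX$ of directions $u\in\Sigma_vX$ whose geodesics reach the metric sphere $\Sph_s(-v)$ within length $\pi-s$, and shows that $u\mapsto\gamma_u(a_u)$ is a bijection from $\Sigma_v^sX$ onto $\Sph_s(-v)$ (using $\sup_{w\in\Sph_s(-v)}d(v,w)<\pi$ and uniqueness of geodesics). Since $-v$ is regular, $\Sph_s(-v)$ is a closed manifold of the same dimension as the connected closed manifold $\Sigma_vX$, so $\Sigma_v^sX$, being a compact submanifold of full dimension, equals all of $\Sigma_vX$: \emph{every} direction at $v$ is realized by a minimal geodesic reaching $\Sph_s(-v)$. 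As the interiors of these geodesics consist of regular points, any $y\in\bCAT X$ lying on one of them would be interior to it, forcing $d(v,y)>\pi-\eps$, which is impossible because $\bCAT X$ is closed and avoids neighborhoods of $\pm v$. Hence $\bCAT X=\varnothing$, $X$ is geodesically complete, and only then does your extension-to-length-$\pi$ argument (including the use of $|\Sph_\pi^X(v)|\le1$ to pin the endpoint to $-v$, which coincides with the paper's) become valid. Note also that this is the one place where regularity of $-v$, and not just of $v$, is genuinely used --- your proof never uses it, which is a symptom of the gap, since the hypothesis $\pm v\notin\bCAT X$ on \emph{both} points is what separates the lemma from the dichotomy in which $v,-v$ may both be boundary points.
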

\begin{proof}
For $s\in(0,\pi)$ let $\Sigma_{v}^{s}X$ be the subset of directions
$u\in\Sigma_{v}X$ such that for some $a_u \in (0,\pi-s]$ there is a 
geodesic $\gamma_{u}:[0,a_{u}]\to X$
with $\dot{\gamma}(0)=u$ and $\gamma(a_{u})\in\Sph_{s}(-v)$.
Then the assignment $u\mapsto \gamma(a_{u})$ from  $\Sigma_{v}^{s}X$ to $\Sph_{s}(-v)$
is onto and injective since 
\[
R_{s}=\sup_{w\in\Sph_{s}(-v)}d(v,w)<\pi.
\]
Thus $\Sigma_{v}^{s}X$ is homeomorphic to the $l$-dimensional 
closed manifold $\Sph_{s}(-v)$ where $l=\dim X$. But then 
$\Sigma_{v}^{s}X$ is an $l$-dimensional closed
submanifold of the $l$-dimension closed connected manifold $\Sigma_{v}X$.
Hence $\Sigma_{v}^{s}X=\Sigma_{v}X$. 

We claim $\bCAT X=\varnothing$. Indeed, if this was wrong then
\[
\sup_{y\in\bCAT X}d(v,y)<\pi-\epsilon
\]
{for some small $\eps>0$ 
because $\partial X\subset X\backslash B_{\delta}(\pm v)$ for
all small $\delta>0$ and $v$ has a unique opposite equal to $-v$. Since $X\in\mathcal{C}_{n}=\mathcal{C}_{n}^{*}$ and $\Sigma_{v}^{\eps}X=\Sigma_{v}X$,}
all geodesics starting at $v$ will stay in the regular
set until they hit the subset $\Sph_{\epsilon}(-v)$ of regular
point after a length $a\in[\pi-\epsilon,R_{\epsilon}]$. But then
$d(v,y)>\pi-\epsilon$ for some $y\in\bCAT X$ which is a contradiction. 

Therefore $\partial X=\varnothing$ and hence $X$ is geodesically complete and every geodesic can be extended
to a minimal geodesic of maximal length $\pi$. From $\{-v\}=\Sph_{\pi}(v)$
we see that   $d(x,v)+d(x,-v)=\pi$ for any $x\in X$. Thus by Lemma~\ref{cat-susp} 
$X$ is the spherical suspension $S(Y)$ over the convex set $Y=\{y\in X\,|\,d(v,x)=d(x,-y)=\frac{\pi}{2}\}\cong \Sigma_vX$.
Because $Y\in\mathcal{C}_{n-1}$ by Lemma \ref{lem:closed-under-geometric-construction}
and $\bCAT Y=\varnothing$, Theorem \ref{thm:nb-CAT1} for $n-1$
implies the claim. 
\end{proof}
\begin{corollary}
Suppose Theorem \ref{thm:nb-CAT1} holds for $n-1$. Whenever $X\in\mathcal{T}_{n}$
with $\rad X=\pi$ then $X$ is isometric to $\mathbb{S}^{l}$ for
$l=\dim X$.
\end{corollary}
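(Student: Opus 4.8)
The plan is to reduce the statement to Lemma~\ref{lem:regular-opposites} by exhibiting a pair of mutually opposite points that both lie in the regular set $\reg = X\setminus\bCAT X$; once this is done, that lemma gives $X\cong\SS^l$ with $l=\dim X$ at once. First I would record the metric consequences of the hypotheses. Since $X\in\mathcal{T}_n$ we have $\diam X\le\pi$, and as $\pi$ is the infimum of $p\mapsto\sup_y d(p,y)$ and also an upper bound for this quantity, we get $\sup_y d(p,y)=\pi$ for \emph{every} $p\in X$. Because $X$ is compact (spaces in $\mathcal{C}_n$ are proper, so a complete bounded such space is compact), this supremum is attained, so each $p$ has a point at distance exactly $\pi$; the defining condition $|\Sph_\pi^X(p)|\le 1$ of $\mathcal{T}_n$ makes this point unique, and I denote it $\alpha(p)=-p$.

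Next I would verify that the antipodal assignment $\alpha\colon X\to X$ is a homeomorphism. It is continuous: if $x_n\to x$, then by compactness any subsequence of $(\alpha(x_n))$ subconverges to some $z$ with $d(x,z)=\lim_n d(x_n,\alpha(x_n))=\pi$, so $z\in\Sph_\pi^X(x)=\{\alpha(x)\}$, whence $\alpha(x_n)\to\alpha(x)$. It is an involution: since $d(\alpha(x),x)=\pi$, the point $x$ is the unique opposite of $\alpha(x)$, i.e. $\alpha(\alpha(x))=x$. A continuous involution is its own continuous inverse, hence a homeomorphism.

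Finally, since Theorem~\ref{thm:nb-CAT1} holds for $n-1$, Corollary~\ref{cor:Cn-Cnstar} gives $\mathcal{C}_n=\mathcal{C}_n^*$, so $\reg=X\setminus\bCAT X$ is open and dense in $X$. As $\alpha$ is a homeomorphism, $\alpha(\reg)$ is again open and dense. The compact metric space $X$ is a Baire space, so $\reg\cap\alpha(\reg)$ is dense, in particular nonempty. Choosing $v$ in this intersection gives $v\in\reg$ together with $-v=\alpha(v)\in\reg$ (indeed $v=\alpha(w)$ for some $w\in\reg$, so $\alpha(v)=\alpha(\alpha(w))=w\in\reg$). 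Thus $\pm v$ are opposite points lying in $X\setminus\bCAT X$, and Lemma~\ref{lem:regular-opposites} yields $X\cong\SS^l$ with $l=\dim X$.

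The only genuinely technical point is the compactness (properness) of $X$, on which both the existence of opposites and the continuity of $\alpha$ rest; this I would justify by induction on $n$, the base case $n=1$ being the classification in Lemma~\ref{lem:classification-1dim}, and I would note that in all cases of interest (e.g. under an $\RCD$ or $\MCP$ assumption) properness is automatic. Were properness unavailable a priori, one would instead have to argue directly that the opposite of a regular point is again regular, which is the delicate step near boundary points and the main obstacle to a fully self-contained proof.
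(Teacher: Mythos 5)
Your proof is correct, but it reaches the key configuration --- a pair of mutually opposite points both lying in $\reg$ --- by a genuinely different route from the paper. The paper argues locally and constructively: pick one regular point $x$, take the geodesic $\gamma|_{[0,\pi]}$ from $x$ to its opposite $-x$, and extend it past the regular point $x$ to a local geodesic $\gamma\colon[-2\eps,\pi]\to X$. Since local geodesics of length at most $\pi$ in a $\CAT(1)$ space are minimal, $\gamma|_{[-\eps,\pi-\eps]}$ is a minimal geodesic of length $\pi$, and strong convexity of the regular set (i.e.\ $\mathcal{C}_n=\mathcal{C}_n^{*}$, from Corollary \ref{cor:Cn-Cnstar}) propagates regularity from $\gamma(0)=x$ to $v=\gamma(-\eps)$ and $-v=\gamma(\pi-\eps)$, producing regular opposites; Lemma \ref{lem:regular-opposites} then finishes, exactly as in your reduction. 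Your argument is instead global and soft: you show the antipodal map $\alpha$ is a continuous involution (using uniqueness of opposites from the $\mathcal{T}_n$ condition plus compactness) and then intersect $\reg$ with $\alpha(\reg)$. What your route buys is that it uses only openness and density of $\reg$, not its strong convexity; what the paper's route buys is that it needs no continuity of $\alpha$ and no category argument --- only extendability of a single geodesic at one regular point --- and it produces the regular opposite pair near any prescribed regular point. (A small simplification of your argument: two open dense sets intersect densely in any topological space, so the appeal to Baire is unnecessary.)

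On the one point you flag, properness, the fix is shorter than the induction you sketch: the standing hypothesis that Theorem \ref{thm:nb-CAT1} holds for $n-1$ already gives $\mathcal{C}_n=\mathcal{C}_n^{*}$ by Corollary \ref{cor:Cn-Cnstar} --- which you invoke anyway for density of $\reg$ --- so $X$ is a topological manifold with boundary, hence locally compact; a complete, locally compact length space is proper by Hopf--Rinow, and since $\diam X\le\pi$ it is compact. (Completeness is built into the class; compare the classification in Lemma \ref{lem:classification-1dim}.) Note also that the paper's own proof tacitly uses the same attainment: both the existence of the opposite $-x$ realizing $\sup_y d(x,y)=\pi$ and the existence of a geodesic joining $x$ to a point at distance exactly $\pi$ require properness, since near-antipodes in a $\CAT(1)$ space need not form a Cauchy family (the comparison inequality is unavailable for triangles of perimeter $\ge 2\pi$). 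So your explicit treatment of this point is, if anything, more careful than the original; with the Hopf--Rinow observation inserted, your proof is complete.
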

\begin{proof}
By assumption $\diam X=\pi$ and for all $x\in X$
\[
\sup_{y\in X}d(x,y)=\pi.
\]
In other words every point $x\in X$ has a (necessarily unique) opposite $-x$.
We claim that there is a regular $v$ such that $-v$ is regular as well.

Choose a regular point $x\in X$. 
Since $x$
is regular for some small $\eps>0$ there is a unit speed local geodesic $\gamma:[-2\epsilon,\pi]\to X$
such that $\gamma\big|_{[0,\pi]}$ connects $x$ and $-x$. 

Since $x=\gamma(0)$ is regular and $X\in\mathcal{C}_{n}^{*}=\mathcal{C}_{n}$, the points $\gamma(-\epsilon)$
and $\gamma(\pi-\epsilon)$ are regular as well. Because local geodesics of length at most $\pi$ in $\CAT(1)$ spaces are geodesics
we know that $\gamma\big|_{[-\epsilon,\pi-\epsilon]}$ is minimal. Thus $d(\gamma(-\epsilon),\gamma(\pi-\epsilon))=\pi$
so that $X$ admits regular opposites $v=\gamma(-\epsilon)$ and $-v=\gamma(\pi-\epsilon)$. Now Lemma \ref{lem:regular-opposites} implies 
the claim of the corollary.
\end{proof}
\begin{corollary}\label{cor:direction-dichotomy}
Suppose Theorem \ref{thm:nb-CAT1} holds for $n-1$ and $X\in\mathcal{T}_{n}$
admits opposites $v,-v\in X$. Then either $X$ is isometric to $\mathbb{S}^{l}$
for $l=\dim X$ or $v,-v\in\bCAT X$. 
\end{corollary}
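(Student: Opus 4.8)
The plan is to prove the contrapositive form of the dichotomy: assuming $X\not\cong\mathbb S^l$ with $l=\dim X$, I will show that both $v$ and $-v$ must lie in $\bCAT X$. Throughout I will use that, since Theorem \ref{thm:nb-CAT1} holds for $n-1$, Corollary \ref{cor:Cn-Cnstar} gives $\mathcal C_n=\mathcal C_n^*$; in particular the regular set $\reg=X\setminus\bCAT X$ is open and strongly convex, and every regular point is inner, so geodesics extend beyond it, exactly as for the class $\CC$ in Proposition \ref{prop:reg-pooints}. Since $X\in\mathcal T_n$ we have $\diam X\le\pi$, so the opposites satisfy $d(v,-v)=\pi$.

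The whole argument reduces to Lemma \ref{lem:regular-opposites}, which produces a sphere as soon as \emph{some} pair of opposites is regular. So suppose, towards a contradiction, that one of the two opposites — say $v$, after relabelling — is regular. I would fix a minimal geodesic $\sigma\colon[0,\pi]\to X$ from $v$ to $-v$. Because $v$ is an inner point, $\sigma$ extends beyond $v$ to a local geodesic $\tilde\sigma\colon[-2\varepsilon,\pi]\to X$ with $\tilde\sigma|_{[0,\pi]}=\sigma$ for some small $\varepsilon>0$; the extending segment issues from $v$ in the direction opposite to $\dot\sigma(0)$, which is available since $\Sigma_vX\cong\mathbb S^{l-1}$.

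The key step is to propagate regularity along the interior of $\tilde\sigma$. Since local geodesics of length $\le\pi$ in a $\CAT(1)$ space are minimal, for any $t_*\in(-2\varepsilon,\pi)$ I can choose a subinterval $(a,b)\subset(-2\varepsilon,\pi)$ of length $<\pi$ containing both $0$ and $t_*$; then $\tilde\sigma|_{[a,b]}$ is a minimal geodesic whose interior contains the regular point $v=\tilde\sigma(0)$, so strong convexity of $\reg$ forces $\tilde\sigma(t_*)$ to be regular. Hence every interior point of $\tilde\sigma$ is regular; in particular $w:=\tilde\sigma(-\varepsilon)$ and $w':=\tilde\sigma(\pi-\varepsilon)$ are regular. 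The segment $\tilde\sigma|_{[-\varepsilon,\pi-\varepsilon]}$ is a local geodesic of length exactly $\pi$, hence minimal, so $d(w,w')=\pi$ and $w,w'$ form a pair of opposites lying in $X\setminus\bCAT X$. Now Lemma \ref{lem:regular-opposites} yields $X\cong\mathbb S^l$, contradicting our assumption. Therefore neither $v$ nor $-v$ can be regular, i.e. $v,-v\in\bCAT X$, which is the desired alternative.

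The main obstacle is precisely this propagation step: strong convexity as stated only controls the \emph{open} interior of a single minimal geodesic, whereas I need both shifted endpoints $w,w'$ — separated by the full distance $\pi$ — to be interior regular points. This is why I extend $\sigma$ strictly beyond $v$ (by $2\varepsilon$ rather than $\varepsilon$) and cover the resulting over-length local geodesic by overlapping minimal subsegments of length $<\pi$, so that regularity spreads from $v$ to \emph{all} interior points and does not stop at an endpoint. Once this is in place the argument is a clean reduction to Lemma \ref{lem:regular-opposites}, and requires no new suspension or splitting input beyond what the induction hypothesis for $n-1$ already supplies.
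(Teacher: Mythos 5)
Your proof is correct and follows essentially the same route as the paper: assume one opposite, say $v$, is regular, extend the geodesic from $-v$ to $v$ past $v$ to a local geodesic of length $\pi+2\varepsilon$, use strong convexity of the regular set (via $\mathcal{C}_n=\mathcal{C}_n^*$) to see the shifted points $\tilde\sigma(-\varepsilon)$ and $\tilde\sigma(\pi-\varepsilon)$ are a regular pair of opposites, and invoke Lemma \ref{lem:regular-opposites} to force $X\cong\mathbb{S}^l$. The paper compresses this into ``a similar argument as above'' (referring to the preceding corollary's proof); your covering of the over-length local geodesic by overlapping minimal subsegments of length at most $\pi$ is exactly the detail that makes that step rigorous.
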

\begin{proof}
Assume $X$ admits opposites $v,-v\in X$ such that $v$ is regular.
By a similar argument as above we see that there is a regular $w$
which is on the local geodesic form by extending the geodesic connecting
$-v$ and $v$ such that $w$ admits an opposite $-w$ that is also
regular. But then $X$ is isometric to a $\SS^l$ where $l=\dim X$. This proves
the claim. 
\end{proof}

The proof of Theorem \ref{thm:nb-CAT1} now follows by induction from Lemma \ref{lem:classification-1dim} and 
Lemma \ref{lem:regular-opposites} and its corollaries.

\small{
\bibliographystyle{amsalpha}

\providecommand{\bysame}{\leavevmode\hbox to3em{\hrulefill}\thinspace}
\providecommand{\MR}{\relax\ifhmode\unskip\space\fi MR }
\providecommand{\MRhref}[2]{%
  \href{http://www.ams.org/mathscinet-getitem?mr=#1}{#2}
}
\providecommand{\href}[2]{#2}

}
\end{document}